\newtheorem{theorem}{Theorem}[section]
\newtheorem{lemma}[theorem]{Lemma}
\newtheorem{proposition}[theorem]{Proposition}
\newtheorem{corollary}[theorem]{Corollary}
\newtheorem*{property-P}{Property P}
\theoremstyle{definition}
\newtheorem{definition}[theorem]{Definition}
\newtheorem{conditions}[theorem]{Conditions}
\newtheorem{remark}[theorem]{Remark}
\newtheorem{example}[theorem]{Example}
\newtheorem{examples}[theorem]{Examples}
 \newcommand{\sem}[2][]{\mbox{ $[\![ #2 ]\!]^{#1}$}}
\renewcommand{\to}{\rightarrow}
\newcommand{\supp}{\ensuremath{\mathsf{Supp\,}}}
\newcommand{\coim}{\ensuremath{\mathsf{coim\,}}}
\newcommand{\coker}{\ensuremath{\mathsf{coker\,}}}
\newcommand{\Cok}{\ensuremath{\mathrm{Cok}}}
\renewcommand{\ker}{\ensuremath{\mathsf{ker\,}}}
\newcommand{\Cc}{\ensuremath{\mathcal{C}}}
\newcommand{\Fc}{\ensuremath{\mathcal{F}}}
\newcommand{\Bc}{\ensuremath{\mathcal{B}}}
\newcommand{\Tc}{\ensuremath{\mathcal{T}}}
\newcommand{\Ac}{\ensuremath{\mathcal{A}}}
\newcommand{\Mf}{\mathbb M}
\newcommand{\Ef}{\mathbb E}
\newcommand{\Pc}{\ensuremath{\mathcal{P}}}
\newcommand{\Ab}{\ensuremath{\mathsf{Ab}}}
\newcommand{\ab}{\ensuremath{\mathsf{ab}}}
\newcommand{\Dis}{\ensuremath{\mathsf{Dis}(\Ac)}}
\newcommand{\Disn}{\ensuremath{\mathsf{Dis^n}(\Ac)}}
\newcommand{\Nc}{\ensuremath{\mathcal{N}}}
\newcommand{\Z}{\ensuremath{\mathbb{Z}}}
\newcommand{\RG}{\ensuremath{\mathsf{RG}}}
\newcommand{\Set}{\ensuremath{\mathsf{Set}}}
\newcommand{\Gp}{\ensuremath{\mathsf{Gp}}}
\newcommand{\GpnonA}{\ensuremath{\mathsf{Gpd}}}
\newcommand{\EDM}{\ensuremath{\mathsf{EffDes}}}
\newcommand{\Eq}{\ensuremath{\mathsf{Eq(\Ac)}}}
\newcommand{\Gpd}{\ensuremath{\mathsf{Gpd}}}
\newcommand{\Conn}{\ensuremath{\mathsf{ConnGpd(\Ac)}}}
\newcommand{\Gpdn}{\ensuremath{\mathsf{Gpd}^{n}\!} (\Ac)}
\newcommand{\Gpdd}{\ensuremath{\mathsf{Gpd}^{2}\!} (\Ac)}
\newcommand{\Gpdnn}{\ensuremath{\mathsf{Gpd}^{n-1}\!} (\Ac)}
\newcommand{\Rng}{\ensuremath{\mathsf{Rng}}}
\newcommand{\TConn}{\mathsf{ConnComp}^{\mathbb{T}}}
\newcommand{\TCom}{\mathsf{HComp}^{\mathbb{T}}}
\newcommand{\TPro}{\mathsf{TotDis}^{\mathbb{T}}}
\newcommand{\CM}{\ensuremath{\mathsf{XMod}}}
\newcommand{\Ext}{\ensuremath{\mathsf{Ext}}}
\newcommand{\NExt}{\ensuremath{\mathsf{NExt}}}
\newcommand{\Top}{\ensuremath{\mathsf{Top}}}
\newcommand{\Arr}{\ensuremath{\mathsf{Arr}}}
\newcommand{\Arrn}{\ensuremath{\mathsf{Arr}^{n}\!}}
\newcommand{\CExt}{\ensuremath{\mathsf{CExt}}}
\newcommand{\CExtB}{\ensuremath{\mathsf{CExt_{\Bc}}}}
\newcommand{\DiscFib}{\ensuremath{\mathsf{DiscFib}}}
\newcommand{\Ec}{\ensuremath{\mathcal{E}}}
\newcommand{\MC}{\ensuremath{\mathcal{M}}}
\renewcommand{\hom}{\ensuremath{\mathrm{Hom}}}
\newcommand{\op}{\ensuremath{\mathrm{op}}}
\newbox\skewpullbackbox
\newbox\ksewpullbackbox
\newbox\pullbackbox
\def\pullback{\copy\pullbackbox}
\newbox\pushoutbox
\def\pushout{\copy\pushoutbox}
\begin{document}

\title[Protoadditive functors, derived torsion theories and homology]{Protoadditive functors, derived torsion theories and homology}

\author{ Tomas Everaert and Marino Gran}
 \address{Universit\'e catholique de Louvain,
Institut de Recherche en Math\'ematique et Physique,
Chemin du Cyclotron 2,
1348 Louvain-la-Neuve, Belgium }
 \address{
Vakgroep Wiskunde \\
Vrije Universiteit Brussel \\
 Department of Mathematics  \\
 Pleinlaan 2\\
1050 Brussel \\
 Belgium.
 }

\email{marino.gran@uclouvain.be}
\email{teveraer@vub.ac.be}

 \date{\today}

\maketitle

\begin{abstract}
Protoadditive functors are designed to replace additive functors in a non-abelian setting. Their properties are studied, in particular in relationship with torsion theories, Galois theory, homology and factorisation systems. It is shown how a protoadditive torsion-free reflector induces a chain of derived torsion theories in the categories of higher extensions, similar to the Galois structures of higher central extensions previously considered in semi-abelian homological algebra. Such higher central extensions are also studied, with respect to Birkhoff subcategories whose reflector is protoadditive or, more generally, factors through a protoadditive reflector. In this way we obtain simple descriptions of the non-abelian derived functors of the reflectors via higher Hopf formulae. Various examples are considered in the categories of groups, compact groups, internal groupoids in a semi-abelian category, and other ones.
\\

\noindent MSC: 18G50, 18G10, 18E40, 18A40, 20J05, 08B05  \\

\noindent \emph{Keywords}: protoadditive functor, semi-abelian category, torsion theory, Galois theory, homology, factorisation system.
\end{abstract}

\section*{Introduction}
In recent years, the theory of \emph{semi-abelian categories} \cite{JMT} has become a central subject in categorical algebra. Semi-abelian categories allow for a conceptual and unified treatment of the theories of groups, rings, algebras, and similar non-abelian structures, just like, say, abelian categories are suitable for the study of abelian groups and modules, or toposes for investigating the category of sets and categories of sheaves.

As explained in \cite{JMT}, the formulation of the notion of semi-abelian category can be seen as an appropriate solution to an old problem S.~Mac Lane mentioned in his classical article \cite{Dfg}, which, in fact, led to the introduction of the notion of abelian category a few years later \cite{Buchsbaum}.  

With the introduction of any mathematical structure naturally comes the question of defining a suitable notion of morphism. The meaning of ``suitable'' may of course vary, and depends on the applications one has in mind. For instance, between toposes one usually considers so-called ``geometric morphisms'', but the notion of ``logical morphism'' is of importance too. In asking for an appropriate notion of morphism between semi-abelian categories, we should therefore be more specific. As their name suggests, semi-abelian categories are a weaker notion than that of abelian category. Hence, it seems natural to ask if the classical notion of additive functor can be generalised, in a meaningful way, to the non-additive context of semi-abelian categories. We believe the answer is yes, with the sought-after notion being that of ``protoadditive functor'' we introduced in \cite{EG}, and which we intend to investigate more extensively in the present article. 

Before recalling the definition, it is useful to make some comparative remarks on abelian and semi-abelian categories. By a well-known theorem of M.~Tierney, a category is \emph{abelian} if and only if it is both \emph{exact} (in the sense of Barr \cite{Barr}) and \emph{additive}. Now, if we ignore some natural (co)completeness assumptions, \emph{semi-abelian} categories can be defined as exact categories which are also \emph{pointed} and \emph{protomodular} \cite{Bourn0}. Accordingly, a semi-abelian category can be seen as what remains of the notion of abelian category if one replaces ``additivity'' by the weaker (pointed) ``protomodularity'' condition. 

As observed by D.~Bourn, there is a simple way to express the ``difference'' between an additive and a pointed protomodular category. Classically, any split short exact sequence 
\begin{equation}\label{sses1}
\xymatrix{0 \ar[r] & K \ar[r]^-{\ker (f)} & A\ar[r]<-.8 ex>_f & B \ar[l]<-.8 ex>_s \ar[r] & 0
}
\end{equation}
 in an additive category $\Ac$  determines a canonical isomorphism $A \cong K \oplus B$, showing that any split short exact sequence is given by a biproduct. Since this property is actually equivalent to the additivity condition, we no longer have that it holds in an arbitrary pointed protomodular category: for instance, in the semi-abelian category $\mathsf{Grp}$ of groups, split short exact sequences are well known to correspond to semi-direct products, not to products. Nevertheless, it is still the case in any pointed protomodular category that
$A$ is the \emph{supremum} of $\ker (f) \colon K \rightarrow A$ and $s \colon B \rightarrow A$ as subobjects of $A$: $A \cong K \vee B$. In fact, we have that the following stronger property holds in a pointed category \emph{if and only if} it is protomodular: for every split short exact sequence \eqref{sses1},  $\ker (f)$ and $s$ are jointly \emph{extremal} epic (rather than just jointly epic).

Now recall that a functor between additive categories is additive if and only if it preserves (binary) biproducts. Taking into account the correspondence between biproducts and split short exact sequences in an additive category, as well as the above comparison between additive and pointed protomodular categories, it seems natural to call \emph{protoadditive} \cite{EG} any functor between pointed protomodular categories that preserves split short exact sequences. Then, of course, for a functor between additive categories, being protoadditive is the same thing as being additive, but there are many examples of interest beyond the additive context, as we shall see in this article.

This choice of definition is also motivated by the following reformulation. For a finitely complete category $\Ac$, write $\mathsf{Pt}(\Ac)$ for the category of ``points'' in $\Ac$: split epimorphisms with a given splitting. $\mathsf{Pt}(\Ac)$ is fibred over $\Ac$ via the codomain functor $\mathsf{Pt}(\Ac)\to \Ac$, the so-called ``fibration of points'' \cite{Bourn0, Bourn1996}, the cartesian morphisms being pullbacks along split epimorphisms. This fibration has been intensively studied during the past twenty years, mainly in connection to its strong classification properties in algebra (see \cite{BB}, for instance, and references therein). In particular, a category is protomodular if and only if the change of base functors of the fibration of points reflect isomorphisms. Now, it turns out that if a zero preserving functor $F\colon \Ac\to\Bc$ between pointed protomodular categories is protoadditive then it preserves at once \emph{arbitrary} pullbacks along split epimorphisms (and not only of morphisms from the zero-object). In other words, we have that $F$ is protoadditive if and only if the induced functor $\mathsf{Pt}(\Ac)\to\mathsf{Pt}(\Bc)$ between the categories of points  preserves cartesian morphisms, i.e. if it is a \emph{morphism of fibrations}.

The validity of the classical homological diagram lemmas, such as the five lemma or the snake lemma, make semi-abelian categories particularly suitable for a generalised treatment of non-abelian (co)homology theories. Given, moreover,  that the main domain of application of abelian categories and additive functors is homological algebra, it is then natural to investigate the role of protoadditive functors in semi-abelian homological algebra. We started this investigation in \cite{EG} and will continue it in the present article. 

Recall that, for any Birkhoff subcategory $\Bc$ (= a reflective subcategory closed under subobjects and regular quotients) of a semi-abelian monadic category $\Ac$, the Barr-Beck derived functors of the reflector $I\colon \Ac\to \Bc$ can be described via generalised \emph{Hopf formulae}  \cite{EGV}. These ``formulae'' were defined with respect to a certain chain of ``higher dimensional Galois structures'' naturally induced by the reflection. If $\Ac$ is abelian, then the case where $I$ is additive is of particular importance, as in this case we obtain classical abelian derived functors. Also for a semi-abelian category $\Ac$, the case of a protoadditive $I$ is of interest, since in this case the Hopf formulae take a simplified shape. 

In the present article, among other things, we shall be interested in extending the work of \cite{EG} in two directions: on the one hand, we shall consider reflections $F\colon \Ac\to\Fc$ where $\Ac$ need not be semi-abelian, but only homological and such that every regular epimorphism is an effective descent morphism, and where $\Fc$ is a torsion-free subcategory of $\Ac$ (not necessarily Birkhoff) with protoadditive reflector $F$. We prove that such type of reflections also induce a similar chain of higher dimensional Galois structures along with what we call \emph{derived torsion theories}. This shows, in particular, that protoadditivity is of interest also for functors between homological categories (not necessarily semi-abelian).
 On the other hand, we shall consider Birkhoff subcategories of semi-abelian categories whose reflector may itself not be protoadditive but only factors through a protoadditive reflector. Here, once again we shall obtain a simplified description of the derived functors via the associated Hopf formulae.

To give a simple illustration of this, consider the reflection
\begin{equation}\label{profgroups1}
 \xymatrix@=30pt{
{\mathsf{Grp(HComp)}  \, } \ar@<1ex>[r]_-{^{\perp}}^-{{I}} & {\mathsf{Grp(Prof)}, } \ar@<1ex>[l]^-V}
\end{equation}
where ${\mathsf{Grp(HComp)}}$ is the category of compact (Hausdorff) groups, ${\mathsf{Grp(Prof)}}$ the category of profinite groups, $V$ the inclusion and ${I}$ the functor sending a compact group $G$ to the quotient $I(G)= G/\Gamma_0 (G)$ of $G$ by the connected component $\Gamma_0(G)$ of the neutral element in $G$. It is well known that ${\mathsf{Grp(HComp)}}$ is a semi-abelian category with enough projectives, and the functor $I$ can be shown to be protoadditive (see Example \ref{exproto}.\ref{exdisc}). We can then consider a \emph{double presentation}
$$
\label{doublext}
\xymatrix{F \ar[r]^{} \ar[d] & F/K_1 \ar[d] \\
F/K_2  \ar[r] & G }
$$
of a compact group $G$, in the sense that $K_1$ and $K_2$ are closed normal subgroups of a free compact group $F$ with the property that both $F/K_1$ and $F/K_2$ are also free, and the square is a pushout. Then the third homology group of $G$ corresponding to the reflection (\ref{profgroups1}) (i.e. with coefficients in the functor $I$) is given by the formula
\[
H_3 (G, {\mathsf{Grp(Prof)} } ) = \frac{K_1 \cap K_2 \cap (\Gamma_0(F))} { \Gamma_0(K_1 \cap K_2)},
\]
which is therefore independent of the chosen double presentation.
By choosing a different reflective subcategory of ${\mathsf{Grp(HComp)}}$, for instance the category ${\mathsf{Ab(Prof)} }$ of profinite abelian groups, we get the composite reflection 
 $$
 \xymatrix@=30pt{
{\mathsf{Grp(HComp)}  \, } \ar@<1ex>[r]_-{^{\perp}}^-{\ab} & {\mathsf{Ab(HComp)}}
\ar@<1ex>[l]^-U  \ar@<1ex>[r]_-{^{\perp}}^-{\overline{I}} & {\mathsf{Ab(Prof),} } \ar@<1ex>[l]^-V}
$$
where $\ab \colon {\mathsf{Grp(HComp)}} \rightarrow {\mathsf{Ab(Prof)} }$ is the abelianisation functor, and $\overline{I}$ the (additive) restriction of the functor $I$ in (\ref{profgroups1}).
The results in the present article imply in particular that the corresponding homology group of $G$ is given by 
$$H_3 (G, {\mathsf{Ab(Prof)} } ) = \frac{K_1 \cap K_2 \cap (\overline{[F,F]}\cdot \Gamma_0(F))}{\overline{[K_1,K_2]}\cdot \overline{[K_1 \cap K_2, F]}\cdot \Gamma_0(K_1 \cap K_2)},$$
where the symbol $\cdot$ denotes the product of normal subgroups, and $\overline{[. ,.  ] }$ is the topological closure of the commutator subgroup $[. ,. ] $.
Similar formulas are obtained for the $n$-th homology group $H_n (G, {\mathsf{Ab(Prof)} })$ of $G$, for any $n \ge 2$. The same method applies to many other reflections, some of which are studied in the present article. This provides us with another motivation for studying protoadditive functors: their usefulness to ``compute'' the homology objects explicitely in a variety of situations.

Let us then give a brief overview of the different sections of the article.

\vspace{4mm}

\noindent {\bf Structure of the article.}
The first section is preliminary: we recall some definitions and results---concerning torsion theories, categorical Galois theory and reflective factorisation systems---needed in the text.

Section $2$ is mainly devoted to proving alternative characterisations of the protoadditivity condition in various situations. In particular, we show that the protoadditivity of a torsion-free reflector can be detected from a hereditariness condition of the corresponding torsion subcategory (Theorem \ref{protoM}). Several examples of protoadditive reflectors are examined, and some counter-examples considered, which show the independence from other important types of reflections (such as semi-left-exact, admissible or Barr-exact reflections). 

In Section $3$ we study torsion theories in homological categories whose torsion-free reflector is protoadditive. We prove that an effective descent morphism is a normal extension if and only if its kernel is torsion-free (Proposition \ref{protocentral}). Next, we establish a bijection between torsion theories satisfying a normality condition $(N)$ and stable factorisation systems $(\mathbb E, \mathbb M)$  having the property that every $e \in \mathbb E$ is a normal epimorphism (Proposition \ref{inducedfactorisation}). As a consequence of this, we obtain that every effective descent morphisms $f$ admits a stable ``monotone-light'' factorisation $f=m\circ e$ into a morphism $e$ inverted by the torsion-free reflector followed by a normal extension $m$. We conclude in particular that the category of normal extensions is reflective in the category of effective descent morphisms (Theorem \ref{protofactorisation}).

We continue our study of protoadditive torsion-free reflectors in Section $4$. It turns out that the category of normal extensions is not only reflective in the category of effective descent morphisms, but also torsion-free, and that the reflector is again protoadditive (Proposition \ref{firstderivedtt}). We use this result to construct a chain of derived torsion theories in the categories of so-called higher extensions (Theorem \ref{higherderivedT1}), by adopting the axiomatic approach to higher extensions from \cite{Ev}. 

Next, in Section $5$, we study the normal extensions with respect to a Birkhoff subcategory of a semi-abelian category, in the situation where the reflector is protoadditive. Similar to the case of torsion theories, we have that an effective descent morphism is a normal extension if and only if its kernel lies in the Birkhoff subcategory, but this time the protoadditivity of the reflector is also necessary for this characterisation of normal extensions to hold whenever the normality condition $(N)$ (see page \pageref{conditionN}) is satisfied (Proposition \ref{characterisationbyextensions}). A higher dimensional version of the same result is also proved (Theorem \ref{characterisationbyextensionshigher}). 

In the last section, we generalise results from the previous sections by characterising the normal extensions and higher dimensional normal extensions with respect to a composite reflection
 \[
 \xymatrix@=30pt{
{\Ac \, } \ar@<1ex>[r]_-{^{\perp}}^-{I} & {\, \Bc \, }
\ar@<1ex>[l]^H  \ar@<1ex>[r]_-{^{\perp}}^-{J} & \Cc \ar@<1ex>[l]^G   }
 \]
where $\Ac$ is a semi-abelian category, $\Bc$ a Birkhoff subcategory of $\Ac$, and $\Cc$ an admissible (normal epi)-reflective subcategory of $\Bc$ with protoadditive reflector (Theorem \ref{highercomposite}). The admissibility condition on $\Cc$ is satisfied both in the case where $\Cc$ is a torsion-free subcategory, and where a Birkhoff subcategory of $\Bc$: either case is investigated seperately (Proposition \ref{compositetorsion} and Theorems \ref{compositecommutator} and \ref{compositeintersection}). Finally, we apply the results for the latter case in order to obtain simple descriptions of the non-abelian derived functors of $J\circ I$ via higher Hopf formulae (Corollaries \ref{compositehopf} and \ref{compositehopf2}). We conclude with some new examples in the categories of groups, compact semi-abelian algebras, and internal groupoids in a semi-abelian category.

\tableofcontents

\section{Preliminaries}

\subsection*{Torsion theories}
Torsion theories, although classically defined in abelian categories, have been studied in more general contexts by various authors (see for instance \cite{CHK}, and more recently \cite{BG,CDT,BelReit,JT}). Here we recall the definition from \cite{JT}, which is essentially Dickson's definition from \cite{D}, except that the category $\Ac$ is not asked to be abelian, but only pointed.

Note that by a \emph{pointed} category we mean, as usual, a category $\Ac$ which admits a \emph{zero-object}, i.e.~an object $0\in\Ac$ which is both initial and terminal. For any pair of objects $A, B\in\Ac$, the unique morphism $A\to B$ factorising through the zero-object, will also be denoted by $0$. If $f\colon A\to B$ is a morphism in $\Ac$, we shall write $\ker(f)\colon K[f]\to A$ for its kernel (the pullback along $f$ of the unique morphism $0\to B$) and $\coker (f)\colon B\to\Cok [f]$ for its cokernel (the pushout by $f$ of $A\to 0$), provided they exist. A \emph{short exact sequence} in $\Ac$ is given by a composable pair of morphisms $(k,f)$, as in the diagram
\begin{equation}\label{ses}
\xymatrix{
0 \ar[r] & K \ar[r]^k & A \ar[r]^f \ar[r] & B\ar[r] & 0,}
\end{equation}
such that $k=\ker (f)$ and $f=\coker (k)$. Given such a short exact sequence, we shall sometimes denote the object $B$ by $A/K$.

\begin{definition}
Let $\Ac$ be a pointed category. A pair $(\Tc,\Fc)$ of full and replete subcategories of $\Ac$ is called a \emph{torsion theory} in $\Ac$ if the following two conditions are satisfied: 
\begin{enumerate}
\item
$\hom_{\Ac}(T,F)=\{0\}$ for any $T\in\Tc$ and $F\in\Fc$;
\item
for any object $A\in\Ac$ there exists a short exact sequence
\begin{equation}\label{torsionses}
0\to T \to A \to F \to 0
\end{equation}
such that $T\in\Tc$ and $F\in\Fc$. 
\end{enumerate}
\end{definition}
$\Tc$ is called the \emph{torsion part} and $\Fc$ the \emph{torsion-free part} of the torsion theory $(\Tc,\Fc)$. A full and replete subcategory $\Fc$ of a pointed category $\Ac$ is \emph{torsion-free} if it is the torsion-free part of some torsion theory in $\Ac$. \emph{Torsion} subcategories are defined dually. The terminology comes from the classical example $( \Ab_{t.}, \Ab_{t.f.})$ of torsion theory in the variety $\Ab$ of abelian groups, where 
$\Ab_{t.f.}$ consists of all torsion-free abelian groups in the usual sense (=abelian groups satisfying, for every $n\geq 1$, the implication $nx=0 \Rightarrow x=0$), and $\Ab_{t.}$ consists of all torsion abelian groups.
There are, of course, many more examples of interest, several of which will be considered below.

A torsion-free subcategory is necessarily a reflective subcategory, while a torsion subcategory is always coreflective: the reflection and coreflection of an object $A$ are given by the short exact sequence \eqref{torsionses}, which is uniquely determined, up to isomorphism. Such reflections $\Ac\to\Fc$, for which each unit $\eta_A \colon A \rightarrow F(A)$ is a normal epimorphism (=the cokernel of some morphism) will be called \emph{(normal epi)-reflective}. Given a (normal epi)-reflective subcategory of a pointed category, there are various ways to determine whether or not it is torsion-free. For instance, this happens when the induced radical is idempotent. 

In order to explain what this means, recall that a subfunctor $T \colon \Ac \to \Ac$ of the identity functor is a \emph{radical} if, for any $A\in\Ac$,  the canonical subobject $t_A \colon T(A) \rightarrow A$ is a normal monomorphism (=the kernel of some morphism) and $T(A/T(A)) = 0$ (assuming, in particular, that every $t_A$ admits a cokernel). $T$ is \emph{idempotent} if $T\circ T=T$ or, more precisely, $t_{T(A)}\colon T(T(A))\to T(A)$ is an isomorphism, for every $A\in\Ac$. 

 Any radical $T\colon \Ac\to\Ac$ induces a (normal epi)-reflection $F\colon \Ac\to\Fc$ with units $\eta_A\colon A\to A/T(A)$. Conversely, given any (normal epi)-reflection $F\colon \Ac\to\Fc$ one obtains a radical by considering the kernels $t_A=\ker (\eta_A)\colon K[\eta_A]\to A$, provided they exist. This bijection restricts to a bijection between torsion theories and idempotent radicals, as we shall recall in Theorem \ref{torsiontheorem}.

There are strong connections between torsion theories, admissible Galois structures in the sense of \cite{J} and reflective factorisation systems in the sense of \cite{CHK}. We briefly recall some of these connections in the present section, and refer the reader to the article \cite{CJKP} and to the book \cite{BoJ} for more details.

\subsection*{Admissible Galois structures}
In this subsection, we recall some definitions from Categorical Galois Theory \cite{J1, J}. We shall restrict ourselves to the special case where the basic adjunction in the Galois structure is a reflection (as in \cite{JK4}). 

\begin{definition}
A  \emph{Galois structure} $\Gamma = ( \Ac,\Fc, F ,U,{\mathcal{E}})$ on a category $\Ac$ consists of a full replete reflective subcategory $\Fc$ of $\Ac$, with inclusion $U\colon \Fc\to\Ac$ and reflector $F\colon \Ac\to\Fc$:
\begin{equation} \label{GaloisSTR}
\xymatrix{
{\Ac}\,\, \ar@<1ex>[r]^-{F} & {\Fc},
\ar@<1ex>[l]^-{U}_-{_{\perp}}}
\end{equation}
together with a class  $\Ec$ of morphisms in $\Ac$, such that: 
\begin{enumerate}
\item $\Ac$ admits pullbacks along morphisms in $\Ec$;
\item $\Ec$ contains all isomorphisms, is stable under composition and under pullbacks;
\item $UF(\Ec) \subset \Ec$.
\end{enumerate}
\end{definition}
We shall usually drop the functor $U$ from the notations, since it is a full inclusion. 

Often, $\Ec$ is the class of \emph{all} morphisms in $\Ac$, in which case the Galois structure $\Gamma = ( \Ac,\Fc, F,{\mathcal{E}})$  is called \emph{absolute}. However, in many of the examples we consider, $\Ec$ will be a class of \emph{effective descent morphisms}, whose definition we now recall. (See \cite{JST} for a beautiful introduction to descent theory.)

For an object $B\in\Ac$, we write $(\Ac\downarrow_{\Ec} B)$ for the full subcategory of the comma category ($\Ac \downarrow B$) of objects over $B$, determined by the morphisms in $\Ec$ with codomain $B$. Similarly, we write $(\Fc\downarrow_{\Ec}F(B))$ for the full subcategory of ($\Fc \downarrow F(B)$) whose objects are in $\Ec$. For a morphism $p \colon E \rightarrow B$ in $\Ac$, we denote by 
\[
p^* \colon (\Ac\downarrow_{\Ec} B) \rightarrow (\Ac\downarrow_{\Ec} E)
\]
the ``change of base'' functor sending a morphism $f\colon A \rightarrow B$ in $\Ec$ to its pullback $p^* (f) \colon E\times_B A \rightarrow E$ along $p$.  

\begin{definition}
A morphism $p\colon E \rightarrow B\in \Ec$ is a \emph{monadic extension} when the functor $p^*$ is monadic. When $\Ec$ is the class of all morphisms, a monadic extension will be called an \emph{effective descent morphism}. 
\end{definition}

In a variety of universal algebras, an effective descent morphism is the same as a surjective homomorphism. More generaly, in an exact  \cite{Barr} category, the effective descent morphisms are precisely the regular epimorphisms. However, this need no longer be the case in an arbitrary regular \cite{Barr} category.

Now, let $B$ be an object of $\Ac$. The reflection \eqref{GaloisSTR} induces an adjunction 
\begin{equation} \label{GaloisInduced}
\xymatrix{
 {(\Ac\downarrow_{\Ec} B)}\,\, \ar@<1ex>[r]^-{F^B} & {(\Fc\downarrow_{\Ec}F(B))},
\ar@<1ex>[l]^-{U^B}_-{_{\perp}}}
\end{equation}
where $F^B$ is defined by $F^B (f)= F(f)$ for any $f \in (\Ac\downarrow_{\Ec}B)$, and $U^B (\phi)= \eta_B^* (U(\phi))$ on any $\phi \in (\Fc\downarrow_{\Ec} F(B))$. This adjunction need not, in general, be a full reflection, but those Galois structures for which this \emph{is} the case for every $B\in\Ac$, play a fundamental role:

\begin{definition}
A Galois structure $\Gamma = ( \Ac,\Fc,F ,U,\Ec)$ is \emph{admissible} when the functor $U^B\colon {(\Fc\downarrow_{\Ec} F(B))}  \rightarrow {(\Ac\downarrow_{\Ec} B)}$ is fully faithful for every $B \in \Ac$.
\end{definition}

We shall sometimes say that the reflection $F\colon \Ac\to \Fc$ is ``admissible with respect to $\Ec$'', when we mean that $\Gamma$ is admissible.
 
With respect to a given admissible Galois structure, one studies the following types of morphisms:

\begin{definition}
Let $\Gamma = ( \Ac,\Fc,F ,U,\Ec)$ be an admissible Galois structure. A morphism $f\colon A\to B$ in $\Ec$ is called
\begin{enumerate}
\item
a \emph{trivial extension} (or \emph{trivial covering}) when the canonical commutative square 
\[
\xymatrix{A \ar[r]^-{\eta_A} \ar[d]_{f} & F(A)  \ar[d]^{F(f)} \\
B \ar[r]_-{\eta_B} & F(B)}
\]
is a pullback;
\item
a \emph{central extension} (or \emph{covering}) when it is ``locally trivial'': there exists a monadic extension $p\colon E \rightarrow B$ with the property that $p^* (f)$ is a trivial extension;
\item
a \emph{normal extension} if it is a monadic extension and $f^* (f)$ is a trivial extension. 
\end{enumerate}
\end{definition}
Note that, by admissibility, $f$ is a trivial extension if and only if it lies in the (essential) image of the functor $U^B$. 

If we choose $\Ac=\Gp$ the variety of groups, $\Fc=\Ab$ the subvariety of abelian groups and $F=\ab$ the abelianisation functor, then $(\Gp,\Ab,\ab,\Ec)$ is an admissible Galois structure for $\Ec$ the class of surjective homomorphisms \cite{J}. Here, the trivial extensions are precisely the surjective homomorphisms $f\colon A\to B$ whose restriction $[A,A]\to [B,B]$ to the commutator subgroups is an isomorphism. Central and normal extensions coincide and are precisely the central extensions in the usual sense: surjective homomorphisms $f\colon A\to B$ whose kernel $K[f]$ lies in the centre of $A$. (See \cite{BoJ}, for instance, for more details.)

Note that the admissibility can also be expressed as an exactness property of the reflector: $\Gamma = ( \Ac,\Fc,F ,U,\Ec)$ is admissible if and only if the reflector $F\colon \Ac\to \Fc$ preserves pullbacks of the form
\begin{equation}\label{sle}\vcenter{
\xymatrix{
P \ar[d] \ar[r] \ar@{}[rd]|<<{\pullback} & X \ar[d]^{x}\\
A \ar[r]_-{\eta_A} & F(A)}}
\end{equation}
where  $X \in \Fc$, $x \colon X\rightarrow F(A) $ lies in $\Ec$  and $\eta_A$ is the reflection unit. In particular, in the absolute case (where $\Ec$ is the class of \emph{all} morphisms) this means that an admissible Galois structure is the same as a \emph{semi-left-exact} reflection in the sense of \cite{CHK}: a reflection $F\colon \Ac\to\Fc$ of a category $\Ac$ into a full and replete subcategory $\Fc$ of $\Ac$ preserving all pullbacks \eqref{sle} where $X \in \Fc$.

Semi-left-exact reflections were introduced in the study of reflective factorisation systems. We briefly recall some notions from \cite{CHK}.

\subsection*{Reflective factorisation systems}

For morphisms $e$ and $m$ in a category $\Ac$ we write $e\downarrow m$ if for every pair of morphisms $(a,b)$ such that $b\circ e=m\circ a$,  there exists a unique morphism $d$ such that $d\circ e=a$ and $m\circ d=b$:
\[
\xymatrix{
A \ar[r]^e \ar[d]_a & B \ar@{.>}[ld]|{d}   \ar[d]^b\\
C \ar[r]_m & D.}
\]
For classes $\mathbb{E}$ and $\mathbb{M}$ of morphisms in $\Ac$ we put
\[
\mathbb{E}^{\downarrow}=\{m | e\downarrow m \  \textrm{for all} \ e\in \mathbb{E}\}, \  \ \  \mathbb{M}^{\uparrow}=\{e | e\downarrow m \  \textrm{for all} \ m\in \mathbb{M}\}.
\]  
By a \emph{prefactorisation system} on a category $\Ac$ we mean a pair $(\mathbb{E},\mathbb{M})$ of classes of morphisms in $\Ac$ such that $\mathbb{E}=\mathbb{M}^{\uparrow}$ and $\mathbb{M}=\mathbb{E}^{\downarrow}$. A \emph{factorisation system} is a prefactorisation system $(\mathbb{E},\mathbb{M})$ such that for every morphism $f$ in $\Ac$ there exist morphisms $e\in \mathbb{E}$ and $m\in \mathbb{M}$ such that $f=m\circ e$.

Any full replete reflective subcategory $\Fc$ of a category $\Ac$ determines a prefactorisation system $(\mathbb E, \mathbb M)$ on $\Ac$, where $\Ef$ is the class of morphisms inverted by the reflector $F \colon \Ac \rightarrow \Fc$ and $\Mf=\mathbb{E}^{\downarrow}$. Furthermore, when the reflector $F \colon \Ac \rightarrow \Fc$ is semi-left-exact and $\Ac$ admits pullbacks along every unit $\eta_A\colon A\to F(A)$ ($A\in\Ac$), the prefactorisation system  $(\Ef, \Mf)$ is a factorisation system and $\mathbb M$ consists exactly of the trivial extensions with respect to the corresponding absolute Galois structure. When $\Ac$ admits arbitrary pullbacks, $F \colon \Ac \rightarrow \Fc$ is semi-left-exact if and only if $F$ preserves pullbacks along morphisms in $\Mf$. (See \cite{CHK,CJKP} for more details.)

When the reflector $F\colon \Ac\to \Fc$ preserves pullbacks of the form \eqref{sle} where, however, one no longer assumes that $X$ belongs to $\Fc$, one says that $F$ has \emph{stable units} \cite{CHK}. This property is equivalent to the units $\eta_A\colon A\to F(A)$ ($A\in\Ac$) being \emph{stably in $\Ef$}: the pullback $f^*(\eta_A)$ along any morphism $f\colon B\to F(A)$ lies in $\Ef$. Note that even if $F$ has stable units, the reflective factorisation systems $(\mathbb{E},\mathbb{M})$ need not be \emph{stable} (i.e. the class $\mathbb{E}$ is pullback stable), in general. In fact, it was shown in \cite{CHK} that this only happens when $F$ is a localisation: $F$ preserves arbitrary finite limits. Restricting $\mathbb{E}$ to the class $\mathbb{E}'$ of morphisms $e\in \mathbb{E}$ that are stably in $\mathbb{E}$ and enlarging $\mathbb{M}$ to the class $\mathbb{M}^*$ of central extensions, sometimes (but certainly not always) yields a new factorisation system $(\mathbb{E}',\mathbb{M}^*)$ which is stable by definition. We shall consider examples of where this is ``partially'' true in Section \ref{coveringmorphisms}.

\vspace{3mm} 

We conclude this section by listing several characterisations of torsion-free subcategories in terms of some of the notions recalled above. Most of these are known, but the equivalence between the semi-left-exactness and the stability of units (under the given conditions) is new, as far as we know. 

\begin{theorem}\label{torsiontheorem}
For a full replete subcategory $\Fc$ of a finitely complete pointed category $\Ac$ with pullback-stable normal epimorphisms, the following conditions are equivalent:
\begin{enumerate}
\item
$\Fc$ is a torsion-free subcategory of $\Ac$;
\item
$\Fc$ is a (normal epi)-reflective subcategory of $\Ac$ and the induced radical is idempotent;
\item
$\Fc$ is a (normal epi)-reflective subcategory of $\Ac$ and the reflector $F\colon \Ac\to\Fc$ has stable units;
\item
$\Fc$ is a (normal epi)-reflective subcategory of $\Ac$ and the reflector $F\colon \Ac\to\Fc$ is semi-left-exact (=admissible);
\end{enumerate}
\end{theorem}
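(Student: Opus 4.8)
The plan is to prove the implications in a cycle, together with a few cross-links, exploiting the characterisations of torsion theories via idempotent radicals and the characterisation of semi-left-exactness via preservation of the pullbacks \eqref{sle}. I would first dispose of the equivalence $(1)\Leftrightarrow(2)$, which is essentially classical (going back to Dickson, adapted to the pointed setting): a torsion theory $(\Tc,\Fc)$ yields, for every $A$, the short exact sequence $0\to T(A)\to A\to F(A)\to 0$, and one checks that $T$ is a subfunctor of the identity, that each $t_A$ is a normal monomorphism, and that $T(F(A))=0$ using that $\hom_{\Ac}(\Tc,\Fc)=\{0\}$; idempotency of $T$ follows because $T(A)\in\Tc$ forces its torsion-free quotient $F(T(A))$ to be zero, so $T(T(A))\to T(A)$ is an isomorphism. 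Conversely, from an idempotent radical one sets $\Tc$ to be the objects with $T(A)\cong A$ and $\Fc$ the objects with $T(A)=0$; the Hom-orthogonality $\hom_{\Ac}(\Tc,\Fc)=\{0\}$ is the routine part, and the short exact sequence condition is immediate.

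Next I would prove $(2)\Rightarrow(3)$. Assume the induced radical $T$ is idempotent; I must show $F$ has stable units, i.e. preserves every pullback of the shape \eqref{sle} with $X$ arbitrary, equivalently that each unit $\eta_A$ is stably in $\Ef$. Given $f\colon B\to F(A)$, form the pullback $P=B\times_{F(A)}A$; its kernel (the pullback of $t_A\colon T(A)\to A$ along the projection $P\to A$, which is a pullback of $\eta_A$'s kernel) is isomorphic to $T(A)$, hence lies in $\Tc$, and I want to identify it with $T(P)$. Since $\Tc$ is closed under the relevant operations (it is a torsion subcategory, hence coreflective and closed under quotients and extensions), the kernel of $f^*(\eta_A)\colon P\to B$ is $T(A)\in\Tc$ and its cokernel $B$ satisfies $T(B)=0$ because $B\cong F(P)$ lies in... here one must argue that $B\in\Fc$: in fact $B$ is the pullback target and $f^*(\eta_A)$ being a normal epi with torsion kernel and torsion-free codomain exhibits it as the torsion reflection, so $f^*(\eta_A)\in\Ef$. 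The key categorical input here is that pullback-stability of normal epimorphisms guarantees $f^*(\eta_A)$ is again a normal epimorphism with kernel $T(A)$, so the idempotency/closure properties of $\Tc$ do the rest.

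The implication $(3)\Rightarrow(4)$ is immediate: stable units is a strictly stronger condition than semi-left-exactness (one simply drops the hypothesis $X\in\Fc$), and semi-left-exactness is equivalent to admissibility of the absolute Galois structure as recalled in the text. Finally, for $(4)\Rightarrow(1)$ — the step I expect to be the main obstacle, and the one the authors flag as new — I would start from a $(\text{normal epi})$-reflective $\Fc$ with $F$ semi-left-exact and must produce a torsion theory, i.e. show the induced radical $T$ is idempotent, equivalently (by $(2)\Leftrightarrow(1)$) that $\Fc$ is the torsion-free part of some torsion theory. The natural attack: apply semi-left-exactness to the pullback of $\eta_A\colon A\to F(A)$ along $\eta_{F(A)}$, which is an isomorphism, and more usefully to pullbacks built from $t_A$ to show $T(T(A))\to T(A)$ is invertible. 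Concretely, consider the pullback defining $K[\eta_{T(A)}]=T(T(A))$ as a subobject of $T(A)\subseteq A$; using that $F$ preserves the pullback \eqref{sle} with $X=F(T(A))$ (which is in $\Fc$, so semi-left-exactness applies) one gets that the comparison $P\to T(A)$ is sent by $F$ to an isomorphism, and then a diagram chase with the two nested kernels, together with pullback-stability of normal epimorphisms to ensure all the relevant squares are exact, forces $T(T(A))=T(A)$. The delicate point is to set up exactly the right instance of \eqref{sle} — with $X\in\Fc$ so that only semi-left-exactness (not full stability of units) is needed — and to verify that the resulting $F$-isomorphism, combined with $\eta$ being a normal epi, upgrades to an honest isomorphism of the nested torsion subobjects; this is where the hypothesis on pullback-stable normal epimorphisms is genuinely used.
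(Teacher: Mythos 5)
Your overall decomposition --- $(1)\Leftrightarrow(2)$ classical, then $(2)\Rightarrow(3)$, $(3)\Rightarrow(4)$ trivial, $(4)\Rightarrow(1)$ --- closes the cycle and in fact mirrors the paper, which cites \cite{BG,JT} for $(1)\Leftrightarrow(2)\Leftrightarrow(4)$ and only writes out $(2)\Rightarrow(3)$; note that it is this implication (the equivalence with stable units), not $(4)\Rightarrow(1)$, that is the new content. The problem is that your argument for $(2)\Rightarrow(3)$, the one step that genuinely has to be carried out, contains an error. You want $f^*(\eta_A)\colon P\to B$ to lie in $\Ef$ and you try to get this by arguing that $B\in\Fc$, so that $f^*(\eta_A)$ ``is the torsion reflection'' of $P$. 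Both claims are false in general: for stable units the base $B$ is an \emph{arbitrary} object over $F(A)$ (that is precisely what distinguishes stable units from semi-left-exactness), and the kernel of $f^*(\eta_A)$ is $T(A)$, not $T(P)$. Concretely, take $\Ac=\Ab$, $\Fc=\Ab_{t.f.}$, $A=\Z\oplus\Z/3$, $B=\Z\oplus\Z/2$ and $f\colon B\to F(A)=\Z$ the projection: then $P\cong\Z\oplus\Z/2\oplus\Z/3$, $K[f^*(\eta_A)]\cong\Z/3\neq T(P)$, and $B\notin\Fc$. The conclusion $f^*(\eta_A)\in\Ef$ is nonetheless true, but for a different reason, which is the paper's argument: by pullback-stability $f^*(\eta_A)$ is a normal epimorphism, hence the cokernel of its kernel $t\colon T(A)\to P$; the reflector $F$, being a left adjoint, preserves cokernels, so $F(f^*(\eta_A))=\coker(F(t))$; and $F(t)$ factors through $F(T(A))=0$ (this is where idempotency enters), so $F(f^*(\eta_A))$ is an isomorphism. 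No torsion-freeness of $B$ is needed anywhere.

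Your sketch of $(4)\Rightarrow(1)$ is also not yet a proof: the instance of \eqref{sle} you propose (with $X=F(T(A))$) does not visibly produce $T(T(A))$, and the concluding ``diagram chase with nested kernels'' is left entirely open. If you prefer not to simply cite \cite{BG,JT} for $(4)\Leftrightarrow(1)$, the efficient route is $(4)\Rightarrow(2)$: apply semi-left-exactness to the pullback of $\eta_A$ along $0\to F(A)$ (here $X=0$, which lies in $\Fc$); the vertex of that pullback is $T(A)$, and preservation by $F$ gives $F(T(A))\cong 0\times_{F(A)}F(A)=0$, i.e.\ the induced radical is idempotent, after which $(2)\Rightarrow(1)$ finishes the cycle.
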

\begin{proof}
For the equivalences $(1) \Leftrightarrow (2) \Leftrightarrow (4)$ see \cite{BG,JT}.  $(3)\Rightarrow (4)$ is true by definition.

We prove that $(2)$ implies $(3)$. Consider, for this, a commutative diagram
$$\xymatrix@=35pt{&P \ar[r]^{p_2} \ar@{}[rd]|<<<{\pullback}\ar[d]_{p_1} & X \ar[d]^{x}  \\
T(A) \ar[r]_-{t_A} \ar[ur]^{t} & A \ar[r]_{\eta_A } & F(A)
}
$$
where the square is a pullback and $t= \ker (p_2)$. Observe that $p_2$ is necessarily the cokernel of $t$. Moreover, since $T$ is idempotent, we have that $F(T(A))=0$. Hence, by applying the left adjoint $F$, one gets the commutative diagram
$$\xymatrix@=35pt{&F(P) \ar[r]^{F(p_2)} \ar[d]_(.6){F(p_1)} & F(X) \ar[d]^{F(x)}  \\
0 \ar[r]_-{t_A} \ar[ur]^{F(t)} & F(A) \ar@{=}[r]_{} & F(A)
}
$$
where $F(p_2)= \coker(F(t))$, since obviously $F$ preserves cokernels, so that the square is a pullback, as desired.
\end{proof}

The above theorem asserts, in particular, that any torsion-free reflection gives rise to an admissible Galois structure, and suggests to study the central extensions with respect to a torsion theory. We shall do this in sections \ref{coveringmorphisms} and \ref{sectionderived}  (see also \cite{CJKP,GR,GJ}), and we shall be particularly interested in torsion theories with a \emph{protoadditive} reflector.

\section{Protoadditive functors}\label{protoadditivesection}
Let $\Ac$ be a pointed category with pullbacks along split epimorphisms. By a \emph{split short exact sequence} in $\Ac$ we mean a triple $(k,f,s)$ of morphisms in $\Ac$, as in the diagram
\begin{equation}\label{sses}
\xymatrix{0 \ar[r]& K \ar[r]^k & A \ar@<-.8 ex>[r]_f & B \ar@<-.8ex>[l]_s \ar[r] &0, }
\end{equation}
such that $k=\ker(f)$ and $f\circ s=1_B$ (i.e. $f$ is a split epimorphism with splitting~$s$). $\Ac$ is a \emph{protomodular} category in the sense of Bourn \cite{Bourn0} precisely when the split short five lemma holds true in $\Ac$: given a morphism 
\[
\xymatrix{
0 \ar[r] & K  \ar[r] \ar[d]_-{\kappa} & A \ar[d]_{\alpha} \ar@<-.8 ex>[r]_f & B \ar@<-.8ex>[l]_{s} \ar[d]^{\beta} \ar[r] & 0\\
0 \ar[r] &  K'  \ar[r]  & A'  \ar@<-.8 ex>[r]_{f'} & B' \ar@<-.8ex>[l]_{s' } \ar[r] & 0}
\]
of split short exact sequences, if both $\kappa$ and $\beta$ are isomorphisms, then so is $\alpha$. Note that the protomodularity can be equivalently expressed as the property that the right-hand square $\beta \circ f = f' \circ \alpha$ is a pullback if and only if $\kappa$ is an isomorphism, for any morphism of split short exact sequences as above.

The prototypical example of a pointed protomodular category is the variety of groups. In fact, \emph{any} pointed variety whose theory contains the group operations and identities (such as the varieties of rings and of Lie algebras) is protomodular, and more examples will be considered in what follows.

If a pointed protomodular category $\Ac$ is, moreover, finitely complete, then any regular epimorphism (=the coequaliser of some pair of morphisms), and in particular any split epimorphism, is normal \cite{Bourn0}. Thus, in particular, any split short exact sequence is a short exact sequence. Of course, if $\Ac$ is an additive category, then any split short exact sequence in $\Ac$ is, up to isomorphism, of the form
\[
\xymatrix{0 \ar[r]& K \ar[r]^-{i_K} & K\oplus B \ar@<-.8 ex>[r]_-{\pi_B} & B  \ar@<-.8ex>[l]_-{i_B} \ar[r] &0 }
\]
where $K\oplus B$ is the biproduct of $K$ and $B$, $i_K$ and $i_B$ are the canonical injections and $\pi_B$ the canonical projection, and the split short five lemma becomes a triviality. Hence, any additive category is pointed protomodular. Moreover, a functor between additive categories is additive (that is, it preserves binary biproducts) if and only if it preserves split short exact sequences. We claim that the latter property is still meaningful in a non-additive context. This brings us to the central notion of this article:

\begin{definition} \cite{EG}
A functor $F\colon \Ac\to\Fc$ between pointed protomodular categories $\Ac$ and $\Fc$ is \emph{protoadditive} if it preserves split short exact sequences:  for any split short exact sequence \eqref{sses} in $\Ac$, the image 
$$\xymatrix{0 \ar[r]& F(K) \ar[r]^{F(k)} & F(A) \ar@<-.8 ex>[r]_{F(f)} & F(B)  \ar@<-.8ex>[l]_{F(s)} \ar[r] &0 }$$
by $F$ is a split short exact sequence in $\Fc$.
\end{definition}

Note that a protoadditive functor necessarily preserves the zero object. Moreover, the preservation of split short exact sequences implies at once the preservation of arbitrary pullbacks along split epimorphisms:

\begin{proposition}\label{protoadditive-pullback}
A zero-preserving functor $F\colon \Ac\to\Fc$ between pointed protomodular categories $\Ac$ and $\Fc$ is protoadditive if and only if it preserves pullbacks along split epimorphisms.
\end{proposition}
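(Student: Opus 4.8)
The plan is to prove both implications directly, using the defining property of protomodularity that identifies pullbacks of split epimorphisms via the kernels. One direction is essentially trivial; the substance is in the converse.

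For the ``only if'' direction, suppose $F$ is protoadditive and consider a pullback square
\[
\xymatrix{
P \ar[r]^-{\pi_2} \ar[d]_-{\pi_1} & A' \ar[d]^-{f'}\\
A \ar[r]_-{g} & B}
\]
where $f'$ is a split epimorphism with splitting $s'$. First I would observe that $P$ is again a split epimorphism over $A$: the map $\pi_1$ is split by $\langle 1_A, s'\circ g\rangle$, and since the square is a pullback, $\ker(\pi_1)$ is (isomorphic to) $\ker(f')$, i.e. there is a morphism of split short exact sequences from $(\ker(\pi_1),\pi_1,\langle 1_A,s'g\rangle)$ to $(\ker(f'),f',s')$ whose component on the kernels is an isomorphism. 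Applying $F$ and using protoadditivity, $F$ sends both rows to split short exact sequences and the kernel component remains an isomorphism; by protomodularity of $\Fc$ (in the ``pullback iff kernel iso'' formulation recalled just before the definition of protoadditive functor), the image square $F(\pi_1),F(\pi_2),F(f'),F(g)$ is a pullback. This is exactly the statement that $F$ preserves the pullback.

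For the ``if'' direction, suppose $F$ is zero-preserving and preserves pullbacks along split epimorphisms. Take a split short exact sequence $(k,f,s)$ as in \eqref{sses}. Applying $F$ to the pullback square expressing $K=K[f]$ as the pullback of $0\to B$ along $f$ — which is a pullback along the split epimorphism $f$ — yields that $F(k)$ is the kernel of $F(f)$, using $F(0)=0$. Also $F(f)\circ F(s)=F(fs)=F(1_B)=1_{F(B)}$, so $F(f)$ is a split epimorphism. Hence $(F(k),F(f),F(s))$ is a split short exact sequence in $\Fc$, which is what protoadditivity demands.

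I do not expect a serious obstacle here; the only point requiring a little care is the ``only if'' direction, where one must correctly set up the morphism of split short exact sequences and invoke the protomodularity of the \emph{codomain} category $\Fc$ in its ``a commutative square of split exact sequences is a pullback precisely when the induced map on kernels is an isomorphism'' form. One should also double check that the splitting $\langle 1_A, s'\circ g\rangle$ of $\pi_1$ is well defined, i.e. that $f'\circ(s'\circ g)=g\circ 1_A$, which is immediate, so that it factors through the pullback $P$. Everything else is formal manipulation with kernels and the universal property of pullbacks.
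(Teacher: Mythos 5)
Your proof is correct and follows essentially the same route as the paper: the forward direction reduces the pullback to a morphism of split short exact sequences with an isomorphism on kernels and invokes protomodularity of $\Fc$, while the converse recovers the kernel as a pullback along the split epimorphism $f$ of $0\to B$. No gaps.
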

\begin{proof}
Clearly, $F$ is protoadditive as soon as it preserves pullbacks along split epimorphisms as well as the zero object. 

Now assume that $F$ is protoadditive. Given a pullback 
\[
\xymatrix{A \times_B E \ar@{}[rd]|<<{\pullback} \ar@<-.8ex>[r]_-{\pi_E} \ar[d]_{\pi_A} & E \ar[d]^p  \ar@<-.8ex>[l]  \\
A  \ar@<-.8 ex>[r]_f & B \ar@<-.8ex>[l] }
\]
along a split epimorphism $f$, the restriction $\overline{\pi}_A \colon K[\pi_E] \rightarrow K[f]$ of $\pi_A$ is an isomorphism. By applying the functor $F$,  one gets a morphism
$$
\xymatrix@=40pt{
0 \ar[r] & F(K[\pi_E])   \ar[d]_{F(\overline{\pi}_A)} \ar[r]^-{F(\ker (\pi_E))} & F(E \times_B A)   \ar@<-.8ex>[r]_-{F(\pi_E)}  \ar[d]_-{F(\pi_A)} &F( E ) \ar@<-.8ex>[l] \ar[d]^{F(p)} \ar[r] & 0 \\
0 \ar[r] & F(K[f]) \ar[r]_-{F(\ker (f))}  & F(A) \ar@<-.8ex>[r]_{F(f)}  & F(B) \ar@<-.8ex>[l]  \ar[r] & 0}
$$
of split short exact sequences in $\Fc$, where the left hand vertical arrow $F(\overline{\pi}_A) $ is an isomorphism. The right hand square is then a pullback by protomodularity.
\end{proof}

A pointed protomodular category is called \emph{homological} \cite{BB} if it is also regular \cite{Barr}: finitely complete with stable regular epi-mono factorisations. A \emph{semi-abelian} category \cite{JMT} is a pointed protomodular category $\Ac$ with binary coproducts which is, moreover, exact \cite{Barr}: regular and every equivalence relation in $\Ac$ is effective (=the kernel pair of some morphism).

Since any variety of universal algebras is exact, any pointed protomodular variety is semi-abelian. The category of topological groups provides an example of a homological category which is not semi-abelian, as opposed to its full subcategory of compact Hausdorff groups, which  \emph{is} semi-abelian. In fact, in the latter two examples, we could replace the theory of groups with any semi-abelian algebraic theory, i.e.~a Lawvere theory $\mathbb{T}$ such that the category $\Set^{\mathbb{T}}$ of $\mathbb{T}$-models in the category $\Set$ of sets is a semi-abelian category. It was shown in \cite{BouJ} that a theory is semi-abelian precisely when it contains a unique constant, written $0$, binary terms $\alpha_i (x,y)$ (for $i\in \{1, \dots, n\}$ and a natural number $n\geq 1$) and an $(n+1)$-ary term $\beta$ subject to the identities $$\alpha_i(x,x)=0 \quad {\rm and}\quad \beta(\alpha_1(x,y), \dots , \alpha_n (x,y),y)=x.$$  
In \cite{BC} it was proved, for any semi-abelian theory $\mathbb{T}$, that the category $\Top^{\mathbb{T}}$ of topological $\mathbb{T}$-algebras (=$\mathbb{T}$-models in the category $\Top$ of topological spaces) is homological, and that the full subcategory $\TCom$  of compact Hausdorff topological $\mathbb{T}$-algebras is semi-abelian (in fact, a semi-abelian category monadic over $\Set$).

Diagram lemmas such as the (short) five lemma, the $3\times 3$ lemma and the snake lemma, which are well known to hold in the abelian context, are also valid in any homological category \cite{B2, BB}. The $3\times 3$ lemma  immediately gives us the following:

\begin{proposition}\label{reflector=radical}
Let $\Ac$ be a homological category and $F\colon \Ac\to \Fc$ the reflector into a full replete (normal epi)-reflective subcategory $\Fc$ of $\Ac$. Then $F$ is protoadditive if and only if the corresponding radical $T\colon \Ac\to\Ac$ is protoadditive.
\end{proposition}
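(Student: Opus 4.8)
The plan is to exploit the short exact sequence relating $T$ and $F$. For any object $A$, write $\eta_A\colon A\to F(A)$ for the reflection unit and $t_A=\ker(\eta_A)\colon T(A)\to A$, so that
\[
\xymatrix{0 \ar[r] & T(A) \ar[r]^-{t_A} & A \ar[r]^-{\eta_A} & F(A) \ar[r] & 0}
\]
is a short exact sequence (using that the reflection is normal-epi, so $\eta_A=\coker(t_A)$). Given a split short exact sequence $(k,f,s)$ as in \eqref{sses}, the naturality of $\eta$ and the construction of $t$ as a kernel produce a commutative diagram with three rows: the top row is the split short exact sequence $(T(k),T(f),T(s))$ (or rather its image under $T$, which is what we want to prove exact), the middle row is $(k,f,s)$ itself, and the bottom row is $(F(k),F(f),F(s))$. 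The columns are the sequences above, for $K$, $A$ and $B$ respectively. The middle row is short exact, and the three columns are short exact.

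The argument then splits according to which implication we prove. For ``$T$ protoadditive $\Rightarrow$ $F$ protoadditive'': assume the top row is a split short exact sequence. Then all three columns and the top and middle rows are short exact; the $3\times3$ lemma in the homological category $\Ac$ forces the bottom row $(F(k),F(f),F(s))$ to be short exact as well, and it is clearly split since $F(f)\circ F(s)=1_{F(B)}$. Hence $F$ is protoadditive. For the converse, ``$F$ protoadditive $\Rightarrow$ $T$ protoadditive'': assume the bottom row is short exact; then the middle and bottom rows together with the three columns are short exact, and again the $3\times3$ lemma yields that the top row is short exact, hence (being split by $T(f)\circ T(s)=1_{T(B)}$) a split short exact sequence. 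So $T$ is protoadditive.

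One subtlety must be handled before invoking the $3\times3$ lemma: one needs $T(k)\colon T(K)\to T(A)$ to actually be a \emph{monomorphism} (equivalently the restriction $\overline{k}\colon T(K)\to T(A)$ of $k$), and one needs the relevant squares in the diagram to commute and be, where required, pullbacks — in particular the top-left square $t_A\circ T(k)=k\circ t_K$ should be a pullback so that $T(K)=K[\eta_A]\cap K$ computed inside $A$ via $k$. This follows because $T$ is a subfunctor of the identity: $T(K)$ is by definition the kernel of $\eta_K$, and since $f\circ k=0$ gives $\eta_B\circ f\circ k = 0$, one checks that $k$ restricts to the kernels, and the resulting square of kernels is a pullback because kernels are computed as pullbacks along $0$ and pullback squares paste. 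Thus all the monomorphisms and pullback conditions required as hypotheses of the $3\times3$ lemma in $\Ac$ are available.

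The main obstacle is simply assembling the $3\times3$ diagram correctly and verifying the exactness of the rows and columns that serve as hypotheses — in particular checking that $T(k)$ is a kernel (of the appropriate morphism) so that the top row genuinely is a short exact sequence in the sense required, and that the relevant square is a pullback. Once the diagram is in place with the correct exactness data, both implications are immediate applications of the $3\times3$ lemma, which is valid in any homological category.
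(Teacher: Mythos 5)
Your proof is correct and is exactly the argument the paper intends: the paper deduces this proposition directly from the $3\times 3$ lemma applied to the diagram whose three exact columns are the sequences $0\to T(X)\to X\to F(X)\to 0$ for $X=K,A,B$, with the given split short exact sequence as middle row. One small remark: the pullback property of the top-left square is not actually among the hypotheses of the upper or lower $3\times3$ lemma (for the upper version one only needs the top row to compose to zero, which is automatic by functoriality of $T$), so your justification of that square being a pullback can simply be dropped --- which is just as well, since as stated it only yields $T(K)\leq k^{-1}(T(A))$ rather than equality, the latter being equivalent to $F(k)$ being a monomorphism and hence part of what is being proved.
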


For a homological category $\Ac$, and a full replete (normal epi)-reflective subcategory $\Fc$ of $\Ac$, the protoadditivity of the reflector $F\colon \Ac\to\Fc$ can also be formulated as the preservation of a certain class of monomorphisms. 

\begin{definition}\cite{BJK}
A \emph{protosplit monomorphism} in a pointed protomodular category $\Ac$ is a normal monomorphism $k \colon K \rightarrow A$ that is the kernel of a split epimorphism. \end{definition}
In other words, protosplit monomorphisms are the monomorphisms $k$ appearing in split short exact sequences of the form \eqref{sses}.

\begin{proposition}\label{caracterisationproto}
Let $\Ac$ be a homological category and $F \colon \Ac \rightarrow \Fc$ the reflector into a full replete (normal epi)-reflective subcategory $\Fc$ of $\Ac$. Then the following conditions are equivalent:
\begin{enumerate}
\item $F \colon \Ac \rightarrow \Fc$ is a protoadditive functor;
\item $F \colon \Ac \rightarrow \Fc$ sends protosplit monomorphisms to normal monomorphisms;
\item $F \colon \Ac \rightarrow \Fc$ sends protosplit monomorphisms to monomorphisms.
\end{enumerate}
\end{proposition}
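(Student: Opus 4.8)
The plan is to prove the cycle of implications $(1)\Rightarrow(2)\Rightarrow(3)\Rightarrow(1)$, where $(1)\Rightarrow(2)$ is immediate and $(2)\Rightarrow(3)$ is trivial, so that the real content lies in $(3)\Rightarrow(1)$.

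First, $(1)\Rightarrow(2)$: if $k\colon K\to A$ is protosplit, it fits into a split short exact sequence \eqref{sses}; applying the protoadditive $F$ yields a split short exact sequence in $\Fc$, in which $F(k)$ is a kernel, hence a normal monomorphism. The implication $(2)\Rightarrow(3)$ is clear since normal monomorphisms are monomorphisms.

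For $(3)\Rightarrow(1)$, start from an arbitrary split short exact sequence \eqref{sses} $(k,f,s)$ in $\Ac$ and apply $F$. Since $F$ is a left adjoint it preserves the splitting $f\circ s=1_B$, so $F(f)\circ F(s)=1_{F(B)}$; in particular $F(f)$ is a (split) epimorphism and, $\Fc$ being homological, it is a normal epimorphism with kernel $\ker(F(f))\colon K[F(f)]\to F(A)$. The composite $F(f)\circ F(k)=F(f\circ k)=F(0)=0$ (note $F$ preserves $0$ because it sends the protosplit mono $0\to B$ to a mono $F(0)\to F(B)$, forcing $F(0)=0$), so $F(k)$ factors as $F(k)=\ker(F(f))\circ \overline{k}$ for a unique $\overline{k}\colon F(K)\to K[F(f)]$. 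By hypothesis $(3)$, $F(k)$ is a monomorphism, hence so is $\overline{k}$. It remains to show $\overline{k}$ is an isomorphism; once this is established, $(F(k),F(f),F(s))$ is a split short exact sequence and protoadditivity follows.

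To prove $\overline{k}$ is an isomorphism I would use protomodularity together with the $3\times 3$ lemma. Consider the reflection unit $\eta_K\colon K\to F(K)$, which is a normal epimorphism with kernel $T(K)\to K$ (the radical), and similarly $\eta_A$, $\eta_B$. The naturality squares for $k$, $f$, $s$ give a morphism of split short exact sequences from $(k,f,s)$ to $(F(k),F(f),F(s))$ — more precisely to the split short exact sequence $(\ker(F(f)),F(f),F(s))$ once we know $\overline{k}$ is iso, which is what we are trying to prove, so instead I compare $(k,f,s)$ with the sequence $0\to K[F(f)]\to F(A)\to F(B)\to 0$ via the maps $\overline{k}\circ\eta_K$, $\eta_A$, $\eta_B$. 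Applying the $3\times 3$ lemma (all rows short exact, all columns regular epimorphisms since $\eta_A,\eta_B$ and the induced map $K\to K[F(f)]$ are regular epis — the last because $\overline{k}\circ\eta_K$ is the composite $F(k)$-factorisation of a regular epi followed by a mono, hence regular epi) identifies $K[F(f)]$ as the reflection of $K$ along $F$, i.e. forces $\overline{k}$ to be an isomorphism. The main obstacle is precisely checking that the relevant induced vertical map $K\to K[F(f)]$ is a regular (normal) epimorphism so that the $3\times 3$ lemma applies; this uses that regular epis in a homological category are pullback-stable and that $F$, as a left adjoint, sends regular epis to regular epis, combined with the monomorphy hypothesis on $F(k)$ to squeeze the factorisation. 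Alternatively, and perhaps more cleanly, I would invoke Proposition \ref{reflector=radical} and argue at the level of the radical $T$, showing $T$ preserves protosplit monos under hypothesis $(3)$ and then that a zero-preserving functor preserving protosplit monos as monos must preserve the kernels in split short exact sequences by the pullback characterisation of Proposition \ref{protoadditive-pullback}.
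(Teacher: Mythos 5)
Your implications $(1)\Rightarrow(2)\Rightarrow(3)$ are fine and match the paper, and your reduction of $(3)\Rightarrow(1)$ to showing that the comparison $\overline{k}\colon F(K)\to K[F(f)]$ is an isomorphism (monomorphy being clear from hypothesis $(3)$) is exactly the right strategy. But the epimorphy half has a genuine gap. The map $K\to K[F(f)]$ is the composite $\overline{k}\circ\eta_K$ of a regular epimorphism followed by a monomorphism, and such a composite is a regular epimorphism only when the mono part is already an isomorphism --- which is precisely what you are trying to prove. So your parenthetical justification is circular, and the closing appeal to ``pullback-stability of regular epis, $F$ preserves regular epis, squeeze the factorisation'' does not supply the missing step: pullback-stability gives you that $\eta_A^{-1}(K[F(f)])=K[\eta_B\circ f]\to K[F(f)]$ is a regular epi, but $K=K[f]$ is in general a \emph{proper} subobject of $K[\eta_B\circ f]$, so nothing yet forces the restriction of $\eta_A$ to $K$ to be a regular epi. (The $3\times 3$ lemma cannot rescue this either, since the exactness of the top kernel row is equivalent to the very surjectivity you need.)

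The missing ingredient is the Mal'tsev property of homological categories: by Proposition $3.2$ of \cite{B3}, the naturality square $\eta_B\circ f=F(f)\circ\eta_A$, being a commutative square of regular epimorphisms with $f$ split, is a \emph{regular pushout} (double extension), i.e.\ the induced morphism $(\eta_A,f)\colon A\to F(A)\times_{F(B)}B$ is a regular epimorphism. The restriction $K\to K[F(f)]$ of $\eta_A$ to kernels is a pullback of $(\eta_A,f)$, hence a regular epimorphism by the regularity of $\Ac$; composed with the fact that $\overline{k}$ is a mono, this makes $\overline{k}$ an iso, and one concludes as you indicate (noting that the inclusion $\Fc\to\Ac$ reflects limits). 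This is the route the paper takes. A small side remark: $F(0)=0$ simply because $F$ is a left adjoint and $0$ is initial; your argument that ``$F(0)\to F(B)$ mono forces $F(0)=0$'' only shows $F(0)$ is a subobject of $F(B)$.
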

\begin{proof}
The implications $(1) \Rightarrow (2) \Rightarrow (3)$ are trivial, and we have that $(2)$ implies $(1)$ since, for a split short exact sequence \eqref{sses}, if $F(k)$ is a normal monomorphism, it is necessarily the kernel of its cokernel, and the latter is $F(f)$, since $F$ preserves colimits. Note that for these implications the regularity of $\Ac$ is irrelevant, as is the assumption that the reflection units $\eta_A\colon A\to F(A)$ are normal epimorphisms.

Let us then prove the implication $(3) \Rightarrow (1)$. For this, we consider a split short exact sequence \eqref{sses}. It induces the diagram
\[
\xymatrix{0 \ar[r] &K \ar[r]^k \ar[d]_{\eta_K} & A \ar[d]^{\eta_A} \ar@<-.8 ex> [r]_-f & B \ar[d]^{\eta_{B}} \ar@<-.8ex>[l]_-s \ar[r] \ar[d]
&0  \\
 &F(K) \ar[r]_{F(k)} & F(A) \ar@<-.8 ex> [r]_-{F(f)} & F(B) \ar@<-.8ex>[l]_-{F(s)} \ar[r] 
&0 
}
\]
in $\Ac$ where the vertical morphisms are the reflection units. By assumption, $F(k)$ is a monomorphism. Moreover, since any homological category is regular Mal'tsev (see \cite{Bourn1996}), the right hand square of regular epimorphisms is a \emph{regular pushout} or \emph{double extension} (by Proposition $3.2$ in \cite{B3}), which means that also the induced morphism $(\eta_A,f)\colon A\to F(A)\times_{F(B)}B$ to the pullback of $F(f)$ along $\eta_B$ is a regular epimorphism. Hence, by regularity of $\Ac$, so is the restriction of $\eta_A$ to the kernels $K\to K[F(f)]$, since this is a pullback of $(\eta_A,f)$. It follows that also the induced morphism $F(K)\to K[F(f)]$ is a regular epimorphism. As it is also a monomorphism---since $F(k)$ is a monomorphism---it is then an isomorphism. Hence $F(k)$ is the kernel of $F(f)$ in the category $\Ac$. Since the inclusion $\Fc \rightarrow \Ac$ reflects limits, $F(k)$ is the kernel of $F(f)$ in $\Fc$, and we can conclude that the functor $F \colon \Ac \rightarrow \Fc$ is indeed protoadditive.
\end{proof}

Let us now consider torsion theories $(\Tc,\Fc)$ whose reflector $F\colon \Ac\to\Fc$ is protoadditive.  First of all, we show how the protoadditivity of $F$ can be detected from the torsion subcategory $\Tc$, when $\Ac$ is homological.

A subcategory $\Tc$ of a category $\Ac$ is called \emph{$\MC$-hereditary}, for $\MC$ a class of monomorphisms in $\Ac$, if for any $m\colon A\to B$ in $\MC$, $B\in\Tc$ implies that $A\in\Tc$. When $\MC$ is the class of all monomorphisms,  $\Tc$ is simply  called \emph{hereditary}. A torsion theory $(\Tc,\Fc)$ is ($\MC$-)hereditary if its torsion part $\Tc$ is so.


\begin{theorem}\label{protoM}
For a torsion theory $(\Tc,\Fc)$ in a homological category $\Ac$, the following conditions are equivalent:
\begin{enumerate}
\item
the torsion subcategory $\Tc$ is $\MC$-hereditary, for $\MC$ the class of protosplit monomorphisms;
\item
the reflector $F\colon \Ac\to\Fc$ is protoadditive.
\end{enumerate}
\end{theorem}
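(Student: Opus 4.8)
The plan is to show the two implications separately, using the characterisation of protoadditivity via protosplit monomorphisms from Proposition \ref{caracterisationproto}.

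\textbf{From $(2)$ to $(1)$.} Assume $F$ is protoadditive, and let $k\colon A\to B$ be a protosplit monomorphism with $B\in\Tc$. Since $B$ is torsion, $F(B)=0$. By protoadditivity applied to the split short exact sequence of which $k$ is a part, the sequence $0\to F(A)\to F(B)\to F(B/A)\to 0$ is short exact; but $F(B)=0$ forces $F(A)=0$, so $A\in\Fc^{\perp}$... more precisely, $A$ is sent to $0$ by $F$, hence the torsion part $T(A)$ satisfies $A/T(A)=F(A)=0$, i.e.\ $A=T(A)\in\Tc$. So $\Tc$ is $\MC$-hereditary.

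\textbf{From $(1)$ to $(2)$.} This is the main direction. Assume $\Tc$ is $\MC$-hereditary. By Proposition \ref{caracterisationproto}, it suffices to prove that $F$ sends protosplit monomorphisms to monomorphisms. So let $(k,f,s)$ be a split short exact sequence as in \eqref{sses}. Apply the radical $T\colon\Ac\to\Ac$, which is protoadditive if and only if $F$ is (Proposition \ref{reflector=radical}); rather, we argue directly. Consider the commutative diagram with rows the torsion short exact sequences $0\to T(X)\to X\to F(X)\to 0$ for $X=K,A,B$, connected by $k,f,s$. The key point is that the splitting $s$ lets us analyse the situation via pullbacks along split epimorphisms: the restriction $\bar f\colon T(A)\to T(B)$ of $f$ and its restriction $\overline{k}\colon T(K)\to K[\bar f]$ should be controlled. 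I expect the heart of the argument to be the following: pull back $t_B\colon T(B)\to B$ along $f$ to obtain a subobject $P\hookrightarrow A$; since $f$ is a split epimorphism, $P\to T(B)$ is also a split epimorphism, and $K\to P$ is its kernel — hence $K\to P$ is a \emph{protosplit} monomorphism. Now $P$ sits in a split short exact sequence $0\to K\to P\to T(B)\to 0$ with $T(B)\in\Tc$ and, one checks, $P$ itself is an extension of the torsion object $T(B)$ by... one needs $P\in\Tc$. Here is where $\MC$-heredity enters together with the fact that $\Tc$ is closed under extensions by torsion objects (true for the torsion part of any torsion theory): since $T(B)\in\Tc$ and the ``torsion part of $P$'' surjects appropriately, one deduces $P\in\Tc$, whence, because $T$ is a radical, $P\subseteq T(A)$ as subobjects of $A$. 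Combined with the reverse inclusion coming from $f(T(A))\subseteq T(B)$, this gives $P=T(A)$, i.e.\ the square
\[
\xymatrix{
T(A)\ar[r]\ar[d] & T(B)\ar[d]\\
A\ar[r]_f & B}
\]
is a pullback. A pullback of split epimorphisms along which the comparison of kernels is considered: the restriction $T(K)\to K[\bar f]$ is then an isomorphism, $\bar f=\coker(T(k))$, and passing to cokernels of the vertical maps (using the $3\times 3$ lemma, valid in the homological category $\Ac$) yields that $0\to F(K)\to F(A)\to F(B)\to 0$ is short exact. In particular $F(k)$ is a monomorphism, and we are done by Proposition \ref{caracterisationproto}.

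\textbf{Main obstacle.} The delicate step is establishing that the pullback $P$ of $t_B$ along $f$ actually lies in $\Tc$, using only that $\Tc$ is hereditary along \emph{protosplit} monomorphisms (not all monomorphisms). The idea is that $P$ is an extension $0\to K\to P\to T(B)\to 0$; one cannot directly conclude $P\in\Tc$ from closure of $\Tc$ under extensions unless $K\in\Tc$, which is not given. Instead one must argue the other way: show $T(P)\to P$ is an isomorphism by observing that $P/T(P)\in\Fc$ receives a map from the torsion object $T(B)$ (the composite $P\to T(B)$ need not descend, so care is needed) — the cleanest route is to note $T(B)\to F(P)$ and the quotient $T(B)\to \coker(K\to T(B))$... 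Actually the correct mechanism: since $K\to P$ is protosplit and $\Tc$ is $\MC$-hereditary, it suffices to find \emph{some} protosplit mono from $P$ into a torsion object; failing that, one shows $P\in\Tc$ by verifying $F(P)=0$, which follows once one knows the composite $P\to A\xrightarrow{\eta_A}F(A)$ is zero — and that in turn is exactly what must be bootstrapped. So the argument likely proceeds by first treating the case $K\in\Tc$ (where everything is formal) and then reducing the general case to it by replacing $A$ with $A/T(K)$ and checking the hypotheses are inherited. Organising this reduction cleanly, so that the $\MC$-heredity hypothesis is invoked exactly where needed, is the crux of the proof.
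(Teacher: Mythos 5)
Your implication $(2)\Rightarrow(1)$ is correct and is essentially the paper's argument: protoadditivity makes $F(k)$ a monomorphism, so $F(A)=0$ forces $F(K)=0$. The problem is in $(1)\Rightarrow(2)$, where your central claim is false. You assert that the pullback $P=A\times_B T(B)$ of $t_B$ along $f$ lies in $\Tc$ and hence equals $T(A)$, i.e.\ that the square with vertical arrows $t_A,t_B$ and horizontal arrows $T(f),f$ is a pullback. Already in $\Ab$ with the classical torsion theory this fails: for $A=K\oplus B$ one gets $P=K\oplus T(B)$, whereas $T(A)=T(K)\oplus T(B)$, and these differ whenever $K$ is not torsion. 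Indeed $P$ contains $K$ as a protosplit subobject (the kernel of the split epimorphism $P\to T(B)$), so $P\in\Tc$ would force $K\in\Tc$ by the very hypothesis $(1)$ --- which is absurd for general $K$. You correctly sense that this is the ``delicate step,'' but the proposed repairs (reducing to the case $K\in\Tc$, replacing $A$ by $A/T(K)$) are left unexecuted, so the proof does not close.

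The fix is both simpler and goes through a different object: do not pull back $t_B$ along $f$; instead apply the radical $T$ directly to the split short exact sequence $(k,f,s)$. The restriction $T(f)\colon T(A)\to T(B)$ is a split epimorphism, so its kernel $K[T(f)]$ (computed in $\Ac$) is a \emph{protosplit} subobject of the torsion object $T(A)$; by hypothesis $(1)$ it is therefore torsion. Since it is also a subobject of $K$, it factors through $T(K)$, and the reverse inclusion $T(K)\subseteq K[T(f)]$ holds because $T$ is a subfunctor of the identity. Hence $T(k)=\ker(T(f))$ and $T$ preserves split short exact sequences; Proposition~\ref{reflector=radical} (equivalently, your $3\times 3$ argument on the cokernels) then gives that $F$ is protoadditive. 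This is exactly where $\MC$-hereditariness is used --- on a subobject of $T(A)$, which \emph{is} torsion --- rather than on the pullback $P$, which is not.
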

\begin{proof}
The implication $(1)\Rightarrow (2)$ follows from Proposition \ref{reflector=radical}, since for any $\MC$-hereditary torsion theory $(\Tc,\Fc)$ in $\Ac$, the image 
$$
\xymatrix{0 \ar[r]& T(K) \ar[r]^{T(k)} & T(A) \ar@<-.8 ex> [r]_-{T(f)} & T(B) \ar@<-.8ex>[l]_-{T(s)} \ar[r] &0.}
$$
by the corresponding radical $T\colon \Ac\to\Ac$ of any split short exact sequence \eqref{sses} in $\Ac$ is again a split short exact sequence. Indeed, the coreflector $T \colon \Ac \rightarrow \mathcal T$ certainly preserves kernels (as any right adjoint), and the fact that $\mathcal T$ is closed in $\Ac$ under protosplit monomorphisms implies that $T(k) \colon T(K) \rightarrow T(A)$ is still the kernel of $T(f)$ in the category $\Ac$.

For the implication $(2)\Rightarrow (1)$, assume that $F \colon \Ac \rightarrow \Fc$ is protoadditive. Then, if $k \colon K \rightarrow A$ is a protosplit monomorphism such that $A$ lies in the corresponding torsion subcategory $\mathcal T$, then $K$ lies in $\Tc$ as well: indeed, by applying the functor $F$ to $k$ we obtain the morphism $F(k) \colon F(K) \rightarrow F(A)$ which is a monomorphism since $F$ is protoadditive, so that $F(A)=0$ implies that  $F(K)=0$, hence $K\in\mathcal T$.
\end{proof}

Recall that a subcategory $\Fc$ of a pointed category $\Ac$ is \emph{closed under extensions} if for any short exact sequence \eqref{ses} in $\Ac$, the object $A\in\Fc$ as soon as both $K\in\Fc$ and $B\in\Fc$. It is well known that a full replete (normal epi)-reflective subcategory $\Fc$ of an abelian category $\Ac$ is torsion-free if and only if it is closed under extensions. While the ``only if" part is still valid in arbitrary pointed categories $\Ac$, this is no longer the case for the ``if" part (see \cite{JT}).     However, it turns out that both implications hold when the reflector $F\colon \Ac\to\Fc$ is protoadditive and $\Ac$ is either a semi-abelian category or a category of topological semi-abelian algebras, as we shall see below. Moreover, a full replete reflective subcategory $\Fc$ of a pointed protomodular category $\Ac$ with protoadditive reflector $F\colon \Ac\to\Fc$ is always  \emph{closed under split extensions}, even if it is not torsion-free. This means that  for any split short exact sequence \eqref{sses} in $\Ac$, the object $A$ lies in $\Fc$ as soon as $K\in\Fc$ and $B\in\Fc$:

\begin{proposition}\label{splitext}
Any full replete reflective subcategory $\Fc$ of a pointed protomodular category $\Ac$ with protoadditive reflector $F \colon \Ac \rightarrow \Fc$ is closed under split extensions in $\Ac$.
\end{proposition}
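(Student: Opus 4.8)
The plan is to prove that every split short exact sequence $(k,f,s)$ as in \eqref{sses}, with $K\in\Fc$ and $B\in\Fc$, has its middle object $A$ in $\Fc$ as well; equivalently, that the reflection unit $\eta_A\colon A\to F(A)$ is an isomorphism (then $A\cong F(A)\in\Fc$, and since $\Fc$ is replete, $A\in\Fc$). First I would apply the protoadditive reflector $F$ to $(k,f,s)$: by the very definition of protoadditivity, the triple $(F(k),F(f),F(s))$ is a split short exact sequence in $\Fc$, that is, $F(k)=\ker(F(f))$ in $\Fc$ and $F(f)\circ F(s)=1_{F(B)}$.

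The crucial remark is that $(F(k),F(f),F(s))$ is then automatically a split short exact sequence in the \emph{ambient} category $\Ac$, and not merely in $\Fc$. Indeed, the inclusion $\Fc\to\Ac$ is a right adjoint, hence preserves all limits that exist in $\Fc$, in particular kernels; therefore $F(k)=\ker(F(f))$ still holds in $\Ac$, and the identity $F(f)\circ F(s)=1_{F(B)}$ is unaffected. Now naturality of the unit $\eta$ with respect to $k$, $f$ and $s$ gives a morphism of split short exact sequences in $\Ac$:
\[
\xymatrix{
0 \ar[r] & K \ar[r]^k \ar[d]_{\eta_K} & A \ar[d]^{\eta_A} \ar@<-.8ex>[r]_f & B \ar@<-.8ex>[l]_s \ar[d]^{\eta_B} \ar[r] & 0\\
0 \ar[r] & F(K) \ar[r]_{F(k)} & F(A) \ar@<-.8ex>[r]_{F(f)} & F(B) \ar@<-.8ex>[l]_{F(s)} \ar[r] & 0}
\]
Since $K$ and $B$ lie in $\Fc$, the units $\eta_K$ and $\eta_B$ are isomorphisms, so the split short five lemma---which holds in $\Ac$ because $\Ac$ is protomodular---forces $\eta_A$ to be an isomorphism. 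This completes the argument.

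I do not anticipate any real difficulty: the proof is a straightforward combination of protoadditivity (which makes the bottom row short exact in $\Fc$), the right-adjointness of the inclusion $\Fc\to\Ac$ (which upgrades this to short exactness in $\Ac$), and the split short five lemma (which turns the isomorphisms $\eta_K$, $\eta_B$ into the desired isomorphism $\eta_A$). The only point worth a word of justification is exactly the transfer of short-exactness of $(F(k),F(f),F(s))$ from $\Fc$ to $\Ac$, and this is settled by the observation that the inclusion of a reflective subcategory preserves (and reflects) limits.
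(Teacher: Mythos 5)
Your proof is correct and follows exactly the paper's argument: apply $F$ to the split short exact sequence, use protoadditivity to get a split short exact sequence on the bottom row, and invoke the split short five lemma with $\eta_K$ and $\eta_B$ isomorphisms to conclude $\eta_A$ is an isomorphism. Your extra remark that the inclusion $\Fc\to\Ac$ preserves kernels is a sensible justification of a step the paper leaves implicit.
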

\begin{proof}
If \eqref{sses} is a split extension in $\Ac$ with $K, B \in \Fc$, then the split short five lemma applied to the commutative diagram 
\[
\xymatrix@=35pt{0   \ar[r]& K \ar@{=}[d]_{\eta_K}   \ar[r]^{k} & A \ar[d]_{\eta_A}  \ar@<-.8 ex> [r]_-{f} & B \ar@{=}[d]^{\eta_B} \ar@<-.8ex>[l]_-{s} \ar[r] &0 \\
0 \ar[r]& F(K) \ar[r]^{F(k)} & F(A) \ar@<-.8 ex> [r]_-{F(f)} & F(B) \ar@<-.8ex>[l]_-{F(s)} \ar[r] &0}
\]
of exact sequences in $\mathcal A$ shows that the reflection unit $\eta_A$ is an isomorphism. Hence, $A$ belongs to $\Fc$. 
\end{proof}

\begin{remark}
Closedness under split extensions is not a sufficient condition for $F\colon \Ac\to \Fc$ to be protoadditive. For instance, the quasivariety $\Gp_{t.f.}$ of torsion-free groups (=groups satisfying, for every $n\geq 1$, the implication $x^n=1\Rightarrow x=1$) is closed under (split) extensions in the variety $\Gp$ of groups, since it is a torsion-free subcategory of $\Gp$, but the reflector $\Gp\to\Gp_{t.f.}$ is not protoadditive  (see Example \ref{counterproto}.\ref{extfree}).
\end{remark}


Now let $\Ac$ be a pointed category and $\Fc$ a full replete (normal epi)-reflective subcategory of $\Ac$. As remarked above, closedness under extensions is necessary but not sufficient for $\Fc$ to be torsion-free. However, by the Corollary in \cite{JT}, when $\Ac$ is homological, the two conditions are equivalent as soon as the composite $t_A\circ t_{T(A)}\colon T(T(A))\to A$ is a normal monomorphism, for any $A\in\Ac$ (here, as before, $t_A\colon T(A)\to A$ denotes the coreflection counit). It turns out that the latter property is always satisfied if $\Ac$ is semi-abelian and $F\colon \Ac\to\Fc$ is protoadditive:

\begin{lemma} \label{compositeisnormal } 
Let  $\Fc$ be a full replete (normal epi)-reflective subcategory  of a semi-abelian category $\Ac$ with protoadditive reflector $F \colon \Ac \rightarrow \Fc$. Then, for any normal monomorphism $k \colon K \rightarrow A$, the monomorphism $k \circ t_K \colon T(K) \rightarrow A$ is normal.
\end{lemma}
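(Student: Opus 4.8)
The plan is to deduce the normality of $k\circ t_K$ from the standard description of normal monomorphisms in a semi-abelian category by means of internal object actions (see, e.g., \cite{BB}). Writing $A\flat X:=\ker(A+X\to A)$ for the kernel functor and $\chi_A\colon A\flat A\to A$ for the canonical conjugation action of $A$ on itself, a monomorphism $n\colon N\to A$ is normal if and only if the morphism $\chi_A\circ(A\flat n)\colon A\flat N\to A$ factors through $n$, and in that case the factorisation is the induced action of $A$ on $N$. As $k$ is normal, we thus have the conjugation action $\xi\colon A\flat K\to K$ of $A$ on $K$, satisfying $k\circ\xi=\chi_A\circ(A\flat k)$, and the assertion reduces to showing that $\xi$ restricts along $t_K\colon T(K)\to K$, i.e. that $\xi\circ(A\flat t_K)\colon A\flat T(K)\to K$ factors through $t_K$. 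Indeed, granting this, bifunctoriality of $\flat$ gives $\chi_A\circ\bigl(A\flat(k\circ t_K)\bigr)=\bigl(\chi_A\circ(A\flat k)\bigr)\circ(A\flat t_K)=k\circ\xi\circ(A\flat t_K)$, which then factors through $k\circ t_K$, and the characterisation above lets us conclude that $k\circ t_K$ is normal.

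It is precisely in the restriction of $\xi$ along $t_K$ that protoadditivity enters, via the equivariance of the reflection unit $\eta_K\colon K\to F(K)$ (recall that $t_K=\ker\eta_K$). Since $F$ is a reflector it preserves binary coproducts, and since it is moreover protoadditive, applying it to the split short exact sequence $0\to A\flat K\to A+K\to A\to 0$ --- split by the coproduct injection $A\to A+K$ --- yields the split short exact sequence $0\to F(A)\flat F(K)\to F(A)+F(K)\to F(A)\to 0$. This identifies $F(A\flat K)$ with $F(A)\flat F(K)$, the unit $\eta_{A\flat K}$ with $\eta_A\flat\eta_K$, and $F(\xi)$ with a morphism $\xi'\colon F(A)\flat F(K)\to F(K)$ for which naturality of $\eta$ at $\xi$ becomes $\eta_K\circ\xi=\xi'\circ(\eta_A\flat\eta_K)$. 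Precomposing with $A\flat t_K$ and using bifunctoriality of $\flat$, the identity $\eta_K\circ t_K=0$, and the fact that $A\flat(-)$ preserves the zero object, one gets
\[
\eta_K\circ\xi\circ(A\flat t_K)=\xi'\circ\bigl(\eta_A\flat(\eta_K\circ t_K)\bigr)=\xi'\circ(\eta_A\flat 0)=0.
\]
Hence $\xi\circ(A\flat t_K)$ factors through $t_K=\ker(\eta_K)$, which is what was needed, and the proof finishes as in the first paragraph.

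The point requiring care is the middle step: one must verify that protoadditivity of $F$, together with its preservation of coproducts, not merely supplies the isomorphism $F(A\flat K)\cong F(A)\flat F(K)$ but carries the unit and the action $\xi$ along it coherently --- in other words, that a protoadditive functor preserves internal object actions. This is essentially the reformulation, recalled in the Introduction, of protoadditivity as the condition that the induced functor $\mathsf{Pt}(\Ac)\to\mathsf{Pt}(\Fc)$ be a morphism of fibrations, so it should present no real difficulty. Finally, should one wish to avoid invoking the action-theoretic characterisation of normal monomorphisms at the last step, one may instead observe that, $\xi$ restricting to an action $\bar\xi$ of $A$ on $T(K)$, the associated semidirect product $T(K)\rtimes_{\bar\xi}A$ maps canonically into $A\times A$ as a reflexive relation on $A$; since $\Ac$ is a Mal'tsev category this relation is an internal equivalence relation, and its normalisation is exactly $k\circ t_K$, which is therefore normal.
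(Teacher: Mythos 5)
Your proof is correct, but it takes a genuinely different route from the paper's. The paper works directly with the equivalence relation $(R,\pi_1,\pi_2)$ on $A$ whose normalisation is $k$: since the radical $T$ is protoadditive (Proposition \ref{reflector=radical}), $T(\ker(\pi_1))$ is the kernel of $T(\pi_1)$, so the square comparing $T(K)\to T(R)$ with $K\to R$ is a pullback; hence $T(K)\rightarrowtail R$ is an intersection of the normal subobjects $K$ and $T(R)$ of $R$ and is therefore normal, and pushing forward along the regular epimorphism $\pi_2$ gives the normality of $k\circ t_K=\pi_2\circ\ker(\pi_1)\circ t_K$. Your argument instead shows that $T(K)$ is closed under the conjugation action of $A$, which is conceptually appealing (it exhibits $T(K)$ as an $A$-equivariant subobject of $K$ and pinpoints protoadditivity as the preservation of $A\flat(-)$), but it rests on the characterisation of normal monomorphisms as the subobjects closed under conjugation. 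That equivalence does hold in a semi-abelian category --- clots coincide with normal subobjects there because the category is Mal'tsev and morphisms with trivial kernel are monomorphisms --- but it is a nontrivial external input that you should cite (it is not in the paper's toolkit), whereas the paper's proof uses only that intersections of kernels are kernels and that regular images of normal monomorphisms are normal. Two smaller remarks: the action axioms for $\xi'$ are never actually needed in your computation, only the naturality square for $\eta$ at $\xi$ together with the identification of $\eta_{A\flat K}$ with $\eta_A\flat\eta_K$ under $F(A\flat K)\cong F(A)\flat F(K)$, which follows from $F$ preserving binary coproducts and split short exact sequences; and the paper's more elementary argument is the one that transfers to the merely homological setting of Remark \ref{idealtopological}, where your route would break down because ``trivial kernel implies monomorphism'' fails outside the exact context.
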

\begin{proof}
Let $(R, \pi_1, \pi_2)$ be the equivalence relation on $A$ corresponding to the normal subobject $k \colon K \rightarrow A$, so that $k = \pi_2 \circ \ker (\pi_1)$. One then forms the diagram
$$\xymatrix@=35pt{T(K )  \ar[d]_{t_K} \ar[r]^{T ( \ker (\pi_1))} & T(R) \ar[d]_{t_R}  \ar@<.8 ex>[r]^{T(\pi_1)} \ar@<-.8 ex>[r]_{T (\pi_2)} & T(A)\ar[d]^{t_A} \\
K \ar[r]_{\ker (\pi_1)} & R \ar@<.8 ex>[r]^{\pi_1} \ar@<-.8 ex>[r]_{\pi_2} & A}$$
which is obtained by applying the radical $T$ corresponding to the reflector $F$ to the lower row. One observes that the left-hand square is a pullback, due to the fact that $T(\ker (\pi_1))$ is the kernel of $T(\pi_1)$ (by Proposition \ref{reflector=radical})
and $t_A$ is a monomorphism. 
It follows that the composite $\ker (\pi_1) \circ t_K$ is a normal monomorphism, as an intersection of normal monomorphisms. Finally, the arrow  $\pi_2 \circ \ker (\pi_1) \circ t_K = k \circ t_K$ is a normal monomorphism, as it is the regular image along the regular epimorphism $\pi_2$ of the normal monomorphism $\ker (\pi_1) \circ t_K$.
\end{proof}

Hence, the Corollary in \cite{JT} gives us:

\begin{proposition}\label{torsion=closed}
Let $\Ac$ be a semi-abelian category and $F \colon \Ac \rightarrow \Fc$ a protoadditive reflector into a full replete (normal epi)-reflective subcategory $\Fc$ of $\Ac$. Then $\Fc$ is a torsion-free subcategory of $\Ac$ if and only if $\Fc$ is closed in $\Ac$ under extensions.
\end{proposition}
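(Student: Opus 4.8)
The plan is to deduce Proposition \ref{torsion=closed} directly from the Corollary in \cite{JT}, which states that for a homological category $\Ac$ and a full replete (normal epi)-reflective subcategory $\Fc$, the subcategory $\Fc$ is torsion-free if and only if it is closed under extensions, \emph{provided} that the composite $t_A \circ t_{T(A)} \colon T(T(A)) \to A$ is a normal monomorphism for every $A \in \Ac$. Since any semi-abelian category is in particular homological, it therefore suffices to verify that this normality hypothesis is automatically satisfied under our standing assumptions, namely that $\Ac$ is semi-abelian and $F$ protoadditive. But this is exactly what Lemma \ref{compositeisnormal } delivers: applying it to the normal monomorphism $k = t_A \colon T(A) \to A$, we obtain that $t_A \circ t_{T(A)} \colon T(T(A)) \to A$ is a normal monomorphism.

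Thus the whole argument reduces to a single invocation of the lemma followed by a citation. Concretely, the steps are: first, recall that semi-abelian implies homological, so the framework of the Corollary in \cite{JT} applies; second, observe that $t_A \colon T(A) \to A$ is a normal monomorphism (this is part of the definition of a radical, which is how $T$ arises from the (normal epi)-reflector $F$, as recalled in the Preliminaries); third, feed $k := t_A$ into Lemma \ref{compositeisnormal } to conclude that $t_A \circ t_{T(A)} \colon T(T(A)) \to A$ is normal; fourth, quote the Corollary in \cite{JT} to get the stated equivalence. The forward direction (torsion-free implies closed under extensions) actually holds in any pointed category and does not even require protoadditivity, so the content is entirely in the converse implication, which is precisely what the Corollary supplies once the normality condition is in place.

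The main obstacle here is not in the proof of the proposition itself — which is a two-line corollary — but rather lies upstream, in Lemma \ref{compositeisnormal }, whose proof has already been given. In other words, the real work has been front-loaded into establishing that $k \circ t_K$ is normal for arbitrary normal $k$, using the semi-abelian machinery: passing to the equivalence relation $(R,\pi_1,\pi_2)$ associated to $k$, using protoadditivity (via Proposition \ref{reflector=radical}) to see that $T(\ker(\pi_1))$ is the kernel of $T(\pi_1)$ so that the relevant square is a pullback, concluding that $\ker(\pi_1)\circ t_K$ is normal as an intersection of normal monomorphisms, and finally taking the regular image along $\pi_2$. Given that lemma, there is no remaining difficulty.

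\begin{proof}
Since $\Ac$ is semi-abelian, it is in particular homological. The (normal epi)-reflector $F\colon \Ac\to\Fc$ corresponds to a radical $T\colon \Ac\to\Ac$, so that for every $A\in\Ac$ the coreflection counit $t_A\colon T(A)\to A$ is a normal monomorphism. Applying Lemma \ref{compositeisnormal } to the normal monomorphism $k=t_A\colon T(A)\to A$, we obtain that the composite $t_A\circ t_{T(A)}\colon T(T(A))\to A$ is a normal monomorphism. The result now follows from the Corollary in \cite{JT}, which asserts that, under this normality condition, $\Fc$ is torsion-free if and only if it is closed under extensions in $\Ac$.
\end{proof}
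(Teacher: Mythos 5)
Your proof is correct and is essentially identical to the paper's argument: the paper likewise obtains the normality of $t_A\circ t_{T(A)}\colon T(T(A))\to A$ by applying Lemma \ref{compositeisnormal } to the normal monomorphism $k=t_A$ and then concludes by the Corollary in \cite{JT}. Nothing is missing.
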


\begin{remark}\label{idealtopological}
Lemma \ref{compositeisnormal }, and, consequently, also Proposition \ref{torsion=closed} remain valid if $\Ac=\Top^{\mathbb{T}}$ is a category of topological semi-abelian algebras. Indeed, to adapt the proof of Lemma \ref{compositeisnormal } to this situation, it suffices to verify that the monomorphism $\pi_2\circ \ker (\pi_1) \circ t_K$ is normal. For this, first notice that the underlying morphism of semi-abelian algebras is normal. To check that it is also normal in the category of \emph{topological} semi-abelian algebras, we observe that $T(K)$ carries the induced topology for the inclusion $\pi_2\circ \ker (\pi_1) \circ t_K$ into $A$. This follows from the fact that $0\times T(K)$ has the topology induced by $R$, while $R$ has the topology induced by $A\times A$.   
\end{remark}
 
Before considering some examples, we investigate the influence of a protoadditive reflector on the associated (pre)factorisation system. As recalled above, a full replete reflection $F\colon \Ac\to \Fc$ is a localisation if and only if the class $\Ef$ of morphisms inverted by $F$ is stable under pullback. In the case of a protoadditive $F$, we still have that $\Ef$ is stable under pullback along split epimorphisms, and also the converse is true if $F$ is semi-left-exact:

\begin{theorem}
Let $\Ac$ be a finitely complete pointed protomodular category, $\Fc$ a full replete reflective subcategory of $\Ac$ and $(\Ef,\Mf)$ the induced (pre)factorisation system. Then $(1)$ implies $(2)$:
\begin{enumerate}
\item $F : \Ac \rightarrow \Fc$ is protoadditive;
\item the class $\Ef$ is stable under pullback along split epimorphisms.
\end{enumerate}
If, moreover, the reflector $F\colon \Ac\to \Fc$ is semi-left-exact, then the two conditions are equivalent.
\end{theorem}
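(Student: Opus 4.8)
For the implication $(1)\Rightarrow(2)$ I would argue directly. Suppose $F$ is protoadditive; by Proposition~\ref{protoadditive-pullback} it preserves pullbacks along split epimorphisms. Let $e\in\Ef$ and let $p$ be a split epimorphism along which $e$ can be pulled back. The pullback square exhibiting $p^{*}(e)$ is then a pullback along the split epimorphism $p$, hence preserved by $F$, so $F(p^{*}(e))$ is the pullback of $F(e)$ along $F(p)$. Since $F(e)$ is an isomorphism and isomorphisms are stable under pullback, $F(p^{*}(e))$ is an isomorphism, i.e.\ $p^{*}(e)\in\Ef$; thus $\Ef$ is stable under pullback along split epimorphisms. (For this statement to make sense one uses that $\Fc$ is pointed protomodular: being reflective it is finitely complete, being closed under limits it contains the zero object of $\Ac$, and protomodularity is inherited by full limit-closed subcategories.)

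Now assume in addition that $F$ is semi-left-exact, and let us prove $(2)\Rightarrow(1)$. Since $F$ always preserves the splitting of a split short exact sequence ($F(f)F(s)=F(fs)=1$), $F$ is protoadditive precisely when $F(0)=0$ and $F$ sends the kernel of every split epimorphism to a kernel. The first holds because $0\in\Fc$. For the second, fix a split epimorphism $f\colon A\to B$ with kernel $\ker(f)\colon K[f]\to A$, and introduce the comparison object $W:=A\times_{F(A)}K[F(f)]$, the pullback of the reflection unit $\eta_{A}$ along the inclusion $\ker(F(f))\colon K[F(f)]\to F(A)$ (here $K[F(f)]\in\Fc$ because $\Fc$ is closed under limits); write $w\colon W\to K[F(f)]$ and $w'\colon W\to A$ for the two projections.

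Three observations then finish the argument. (a) Since $K[F(f)]\in\Fc$, the pullback defining $W$ is of the form~\eqref{sle}, so $F$ preserves it; hence $w\in\Ef$, $F(W)\cong K[F(f)]$, and, modulo this isomorphism, $F(w')$ is the kernel $\ker(F(f))$ — in particular a monomorphism. (b) The object $W$ is equally the pullback $A\times_{B}K[\eta_{B}]$ of $f$ along the inclusion $K[\eta_{B}]\to B$ (both compute $f^{-1}(K[\eta_{B}])$), so the projection $q\colon W\to K[\eta_{B}]$ is a split epimorphism, being a pullback of $f$, and its kernel is a morphism $\kappa\colon K[f]\to W$ with $w'\circ\kappa=\ker(f)$. (c) $F(K[\eta_{B}])=0$, because $K[\eta_{B}]=B\times_{F(B)}0$ is again a pullback of the form~\eqref{sle} (with the upper object the zero object), so $F$ preserves it and $F(K[\eta_{B}])\cong F(B)\times_{F(B)}0=0$; in particular the morphism $0\to K[\eta_{B}]$ lies in $\Ef$. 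Now $\kappa$ is exactly the pullback of the $\Ef$-morphism $0\to K[\eta_{B}]$ along the split epimorphism $q$, so hypothesis~$(2)$ yields $\kappa\in\Ef$, i.e.\ $F(\kappa)$ is an isomorphism. By (a) and (b), $F(\ker(f))=F(w')\circ F(\kappa)$ is then a kernel of $F(f)$, which is what we needed; hence $F$ is protoadditive.

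The substantive part is the reverse implication, and its crux is the choice of $W$: one wants to realise $F(\ker(f))$ as an $\Ef$-morphism followed by the $F$-image of the kernel inclusion of $F(f)$, and what makes hypothesis~$(2)$ applicable is the (perhaps unexpected) fact that a semi-left-exact reflector annihilates the kernel of every unit $\eta_{B}$, so that $0\to K[\eta_{B}]$ belongs to $\Ef$ and may legitimately be pulled back along the split epimorphism $q\colon W\to K[\eta_{B}]$. The remaining points — that the two descriptions of $W$ coincide, that $\ker(q)$ is $\kappa$ with $w'\circ\kappa=\ker(f)$, and that $F(w')$ represents $\ker(F(f))$ — are routine pullback-pasting verifications that I would not belabour.
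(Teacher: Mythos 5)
Your argument is correct. The implication $(1)\Rightarrow(2)$ is exactly the paper's: invoke Proposition~\ref{protoadditive-pullback} and the pullback-stability of isomorphisms. For the converse the paper proceeds differently in form, though the underlying computation is the same. It uses that semi-left-exactness makes $(\Ef,\Mf)$ a factorisation system, writes an \emph{arbitrary} morphism $p\colon E\to B$ as $p=m\circ e$ with $e\in\Ef$, $m\in\Mf$, pastes the pullback of $p$ along a split epimorphism $f$ as a pullback along $e$ (preserved because $e$ and its pullback both lie in $\Ef$ by hypothesis $(2)$) followed by a pullback along $m$ (preserved by semi-left-exactness), and concludes that $F$ preserves all pullbacks along split epimorphisms, whence protoadditivity by Proposition~\ref{protoadditive-pullback}. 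Your proof is precisely this argument instantiated at $E=0$: the $(\Ef,\Mf)$-factorisation of $0\to B$ is $0\to K[\eta_B]\to B$, your $W=A\times_B K[\eta_B]$ is the middle object $I\times_B A$ of the pasted diagram, and your $\kappa=\ker(q)$ is the pullback of the $\Ef$-part $0\to K[\eta_B]$ along the split epimorphism $q$. What the paper's formulation buys is the stronger conclusion that $F$ preserves \emph{all} pullbacks along split epimorphisms in one stroke; what yours buys is that it visibly uses hypothesis $(2)$ only for the single pullback of $0\to K[\eta_B]$ along $q$, so it directly establishes the paper's subsequent Remark that the apparently weaker condition $(2')$ (kernels of split epimorphisms with codomain $B$ lie in $\Ef$ whenever $0\to B$ does) already suffices. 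Both routes are complete; the pullback-pasting identifications you leave as routine ($A\times_{F(A)}K[F(f)]\cong A\times_B K[\eta_B]\cong A\times_{F(B)}0$, and $\ker(q)\cong\ker(f)$) are indeed straightforward.
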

\begin{proof}
First of all note that $\Fc$ is pointed and protomodular as a full reflective subcategory of $\Ac$. Since a protoadditive functor between pointed protomodular categories preserves pullbacks along split epimorphisms (by Proposition \ref{protoadditive-pullback}), $(1)$ implies $(2)$. 

Conversely, assume that $\Ef$ is stable under pullback along split epimorphisms and $F$ is semi-left-exact. Consider morphisms $f\colon A\to B$ and $p\colon E\to B$ in $\Ac$, with $f$ a split epimorphism. Let $e\in \Ef$ and $m\in \Mf$ be morphisms such that $p=m\circ e$. Then in the diagram
\[
\xymatrix{
E\times_BA \ar[r] \ar@<-.8 ex>[d] & I\times_BA \ar[r] \ar@<-.8 ex>[d] & A \ar@<-.8 ex>[d]_f\\
E \ar@<-.8 ex>[u] \ar[r]_e & I \ar@<-.8 ex>[u] \ar[r]_m & B \ar@<-.8 ex>[u]}
\]
the left hand pullback is preserved by $F$ by assumption, and the right hand pullback because $F$ is semi-left-exact (which implies that $F$ preserves pullbacks along morphisms in $\Mf$). Thus $F$ is protoadditive.
\end{proof}

\begin{remark}
Notice that we could have taken the object $E$ in the above proof to be zero. Hence, one could replace the condition $(2)$ with the apparently weaker condition: $(2')$ if $0 \rightarrow B$ is in $\Ef$, any kernel of a split epimorphism with codomain $B$ is in $\Ef$.
\end{remark} 
 
 \begin{examples}\label{exproto}
\begin{enumerate}

\item
Any reflector into a full reflective subcategory of an additive category is additive, hence protoadditive.

\item\label{rings}
Let $\mathsf{CRng}$ be the semi-abelian variety of commutative but not necessarily unitary rings. Write $\mathsf{RedCRng}$ for the quasivariety of reduced commutative rings (namely those ones satisfying, for every $n\geq 1$, the implication $x^n=0 \Rightarrow x=0$) and 
$\mathsf{NilCRng}$ for the full subcategory of $\mathsf{CRng}$ consisting of nilpotent commutative rings. Then ($\mathsf{NilCRng},\mathsf{RedCRng}$) is a hereditary torsion-theory in $\mathsf{CRng}$, so that, by Theorem \ref{protoM}, the reflector $F$
\[
\xymatrix{
{ \mathsf{CRng} }\,\, \ar@<1ex>[r]^-{F} & {\mathsf{RedCRng} }
\ar@<1ex>[l]^-{U}_-{_{\perp}}}
\]
is protoadditive.

\item\label{exgroupoids}
Let $\Ac$ be an arbitrary semi-abelian category and $\Gpd(\Ac)$ the category of (internal) groupoids in $\Ac$, which is again semi-abelian.  Recall (e.g. from \cite{ML}) that an (internal) groupoid $A=(A_1,A_0,m, d,c,i)$ in $\Ac$ is a diagram in $\Ac$ of the form
\[
\xymatrix{
A_1\times_{A_0}A_1 \ar[r]^-{m} & A_1 \ar@<1.8 ex>[rr]^{d} \ar@<-1.8 ex>[rr]_{c} && A_0, \ar@<0.7 ex>[ll]_{i}}
\]
where $A_0$ represents the ``object of objects'', $A_1$ the ``object of arrows'', $A_1\times_{A_0}A_1$ the ``object of composable arrows'', $d$ the ``domain'', $c$ the ``codomain'', $i$ the ``identity'', and $m$ the ``composition''. Of course, these morphisms have to satisfy the usual commutativity conditions expressing, internally, the fact that $A$ is a groupoid. 

There is an adjunction 
\[
\xymatrix{
{\Gpd(\Ac) }\,\, \ar@<1ex>[r]^-{\pi_0} & {\Ac}
\ar@<1ex>[l]^-{D}_-{_{\perp}}}
\]
where $D$ is the functor associating, with any object $A_0\in\Ac$, the discrete equivalence relation on $A_0$, 
and $\pi_0$ is the connected component functor. This functor $\pi_0$ sends a groupoid $A$ as above to the object $\pi_0 (A)$ in $\Ac$ given by the coequalizer of $d$ and $c$ or, equivalently, by the quotient $A_0/\Gamma_0(A)$, where $\Gamma_0(A)$ is the connected component of $0$ in $A$. It was proved in \cite{EG} that the functor $\pi_0$ is protoadditive. 

\item\label{exdisc} 
Let $\mathbb T$ be a semi-abelian algebraic theory. As mentioned above, $\mathbb T$ contains a unique constant $0$, binary terms $\alpha_i (x,y)$ (for $i\in \{1, \dots, n\}$ and some natural number $n\geq 1$) and an $(n+1)$-ary term $\beta$ subject to the identities 
\[
\alpha_i(x,x)=0 \quad {\rm and}\quad \beta(\alpha_1(x,y), \dots , \alpha_n (x,y),y)=x.
\]
Consider the semi-abelian category $\TCom$ of compact Hausdorff topolo-~gical $\mathbb{T}$-algebras (\emph{compact $\mathbb{T}$-algebras} for short) and $\TPro$ its full subcategory of compact and totally disconnected $\mathbb{T}$-algebras. It was shown in \cite{BC} that $\TPro$ is a (normal epi)-reflective subcategory of $\TCom$, where the reflector $I$
\[
\xymatrix{
{\TCom }\,\, \ar@<1ex>[r]^-{I} & {\TPro}
\ar@<1ex>[l]^-{}_-{_{\perp}}}
\]
sends a compact algebra $A$ to the quotient $A/\Gamma_0(A)$ of $A$ by the connected component $\Gamma_0(A)$ of $0$ in $A$. From  \cite{BG}  we know that $\TPro$ is, moreover, a torsion-free subcategory of $\TCom$ with corresponding torsion subcategory the category $\TConn$ of connected compact $\mathbb{T}$-algebras. 

We claim that $I$ is protoadditive. By Theorem \ref{protoM}, it suffices to prove that $\TConn$ is $\MC$-hereditary, for $\MC$ the class of protosplit monomorphisms. For this purpose we consider a split short exact sequence
\[
\xymatrix{0 \ar[r]& K \ar[r]^k & A \ar@<-.8 ex> [r]_f & B \ar@<-.8ex>[l]_s \ar[r] &0} 
\]
in $\TCom$ and suppose that $A$ is connected. Notice that the binary term
\[
\sigma(x,y)= \beta(\alpha_1(x,y), \dots , \alpha_n (x,y),0)
\]
is a \emph{subtraction}  \cite{U}, i.e.~ we have that $\sigma(x,x)=0$ and $\sigma(x,0)=x$. It follows that sending an element $a\in A$ to the element $\sigma(a,s(f(a)))$ defines a continuous map $g\colon A\to K$ such that $g\circ k=1_{K}$. In particular, $g$ is surjective, and $K$ is connected as a continuous image of the connected space $A$. 

\item 
Now consider the category $\mathsf{Top}^{\mathbb T}$ of topological $\mathbb{T}$-algebras and its full subcategory $\mathsf{Haus}^{\mathbb T}$ of Hausdorff $\mathbb{T}$-algebras, still for a semi-abelian theory $\mathbb{T}$. It was shown in \cite{BC} that $\mathsf{Haus}^{\mathbb T}$ is a (normal epi)-reflective subcategory of $\mathsf{Top}^{\mathbb T}$ where the reflector $I$ 
\[
\xymatrix{
{\mathsf{Top}^{\mathbb T} }\,\, \ar@<1ex>[r]^-{I} & {\mathsf{Haus}^{\mathbb T}}
\ar@<1ex>[l]_-{_{\perp}}}
\]
sends a topological semi-abelian algebra $A$ to the quotient $A /\overline{\{0\} }$ of $A$ by the closure $\overline{\{0\}}$ in $A$ of the trivial subalgebra $\{ 0\}$. From  \cite{BG}  we know that $\mathsf{Haus}^{\mathbb T}$ is, moreover, a torsion-free subcategory of $\mathsf{Top}^{\mathbb T}$, with corresponding torsion subcategory the category $\mathsf{Ind}^{\mathbb{T}}$  of indiscrete $\mathbb{T}$-algebras, and that $(\mathsf{Ind}^{\mathbb{T}},\mathsf{Haus}^{\mathbb T})$ is \emph{quasi-hereditary} \cite{GR}: $\MC$-hereditary for $\MC$ the class of regular monomorphisms. Since any protosplit monomorphism is regular, we conclude via Theorem \ref{protoM} that $I$ is protoadditive.

\item 
Abelianisation functors are usually \emph{not} protoadditive (see the last paragraph in Example \ref{counterproto}.\ref{extfree}, for instance). However, here is a nontrivial example of one that is protoadditive. Let $\mathsf{Rng^*}$ be the semi-abelian variety of (not necessarily unital) rings satisfying the identity $xyxy=xy$. The abelian objects in $\mathsf{Rng^*}$ are the $0$-rings: rings satisfying the identity $xy=0$. The reflector $\mathsf{ab} \colon \mathsf{Rng^*} \rightarrow{\mathsf{0\textrm{-}Rng} }$ in the adjunction
\[
\xymatrix{
{ \mathsf{Rng^*} }\,\, \ar@<1ex>[r]^-{\mathsf{ab}} & {\mathsf{0\textrm{-}Rng} }
\ar@<1ex>[l]^-{}_-{_{\perp}}}
\]
sends a ring $A$ in $\mathsf{Rng^*}$ to the quotient $\mathsf{ab}(A) = A/[A,A]$ of $A$ by the ideal $[A,A] = \{\sum_i a_i{a}_i' \mid a _i \in A, {a}_i' \in A  \} $ consisting of all (finite) sums of products of elements in $A$. We claim that the functor $\mathsf{ab}$ is protoadditive. By Proposition \ref{reflector=radical}, it suffices to prove that the corresponding radical $T=[\cdot, \cdot ]\colon \mathsf{Rng^*}\to\mathsf{Rng^*}$ is protoadditive. To this end, we consider a  split short exact sequence
\[
\xymatrix{0 \ar[r]& K \ar[r] & A \ar@<-.8 ex> [r]_-{f} & B \ar@<-.8 ex>[l]_-{s} \ar[r] &0}
\]
in $\mathsf{Rng^*}$ and the restriction induced by the radical $T$ in $\mathsf{Rng^*}$:
\[
\xymatrix{ T(K) \ar[r] & T(A) \ar@<-.8 ex> [r]_-{T(f)} &T(B) \ar@<-.8 ex>[l]_-{T(s)}.}
\]
We shall prove that $T(K) = K[T(f)]$, which will imply that the lower sequence is exact. Let $a=\sum_i a_i{a}_i' $ be an element of $T(A)$ such that $T(f(a)) = f(a)=0$. We have to prove that  $a\in T(K)$. But any element $a_i \in A$ can be written as $a_i=k_i + s(b_i)$ for some $k_i \in K$ and $b_i \in B$ and, similarly, ${a}_i' = k_i' + s(b_i')$. Notice that $f(a)=0$ implies that $\sum_i b_i b_i' =0$. Hence, using the identity $xyxy=xy$ we find that  
\begin{eqnarray*}
a &=& \sum_i k_ik_i' + s(b_i)k_i' + s(b_i')k_i + b_i b_i' \\
 &=& \sum_i k_ik_i' + s(b_i)k_i' + s(b_i')k_i \\
&=&  \sum_i k_ik_i' + (s(b_i)k_i')(s(b_i)k_i') + (s(b_i')k_i)(s(b_i')k_i). 
\end{eqnarray*}
Since $K$ is a two-sided ideal of $A$, this shows that $a\in T(K)$.

Notice also that the identity $xyxy=xy$ implies that the radical $T$ is idempotent so that $0\textrm{-}\Rng$ is a torsion-free subcategory of $\mathsf{Rng^*}$ by Theorem~\ref{torsiontheorem}. 
\end{enumerate}
 \end{examples}

We conclude this section with some (counter)examples, to show the independence of the notions of protoadditivity, admissibility, semi-left-exactness and Barr-exactness (=the preservation of kernel pairs of regular epimorphisms), for a (normal epi)-reflector to a full subcategory.

\begin{examples}\label{counterproto}
\begin{enumerate}
\item \emph{A torsion-free reflector which is not protoadditive.}\label{extfree} Consider the category $\Gp$ of groups and the subquasivariety ${\Gp}_{t.f.}$ of torsion-free groups (=groups satisfying, for all $n\geq 1$, the implication $x^n=1\Rightarrow x=1$). ${\Gp}_{t.f.}$ is easily seen to be a torsion-free subcategory of $\Gp$ with corresponding torsion subcategory consisting of all groups generated by elements of finite order. However, the reflector $F \colon \Gp \rightarrow  {\Gp}_{t.f.} $ is not protoadditive. To see this, we shall give an example already considered in \cite{GJ} for a different purpose. Consider the infinite dihedral group $C_2  \ltimes  \mathbb Z$, where the action of $C_2=\{1, c\}$ on the group of integers $\mathbb Z$ is given by
$c \cdot z = -z$ and $1\cdot z=z, \, \forall z \in \mathbb Z$. The canonical injections of $\mathbb{Z}$ and $C_2$ and the projection on $C_2$ determine a split short exact sequence 
$$
\xymatrix{0 \ar[r] & {\mathbb Z} \ar[r] & C_2  \ltimes  \mathbb Z  \ar@<-.8 ex>[r]_-{} & C_2\ar@<-.8 ex>[l]_-{}  \ar[r] & 0}
$$
which is not preserved by $F$, since its image by $F$ is
\[
 \xymatrix{  {\mathbb Z} \ar[r] & 0   \ar@<-.8 ex>[r]_-{} & 0 \ar@<-.8 ex>[l]_-{}  \ar[r] & 0.}
\]
Observe that this same split short exact sequence can be used to show that the abelianisation functor $\mathsf{ab}\colon \Gp \rightarrow \Ab$ is not protoadditive: while both $\mathbb Z$ and $C_2$ are abelian groups, $C_2  \ltimes  \mathbb Z$ is not, and one concludes via Proposition~\ref{splitext}.

\item \emph{A protoadditive reflector which is not admissible.}
Consider the variety $\Ab$ of abelian groups and the quasivariety $\Fc$ of abelian groups determined by the implication ($4x=0 \Rightarrow 2x=0$). The reflector $F \colon \Ab \rightarrow \Fc$ is additive, thus in particular protoadditive. However,  $F$ is not admissible (with respect to surjective homomorphisms). Let $C_n$ denote the cyclic group of order $n$ ($n\geq 1$) and $\Z$ the group of integers. Then consider the reflection unit $\eta_{C_4}\colon C_4\to F(C_4)=C_2$ and the surjective homomorphism $\Z\to C_2$ in $\Fc$, and note that their pullback (the left hand square below) is sent to the right hand square below, which is not a pullback:
\[
\xymatrix{
C_4\times_{C_2}\Z \ar[r] \ar[d] & C_4 \ar[d] & C_4\times_{C_2}\Z \ar[r] \ar[d] & C_2 \ar@{=}[d] \\
\Z \ar[r] & C_2 & \Z \ar[r] & C_2}
\]

\item\label{Boolean} \emph{A Barr-exact admissible reflector which is not protoadditive.} 
Consider the variety $\mathsf{Rng}$ of nonassociative nonunital rings and its subvariety $\mathsf{Boole}$ of nonassociative Boolean rings, determined by the identity $x^2=x$. Since the reflector $I \colon \mathsf{Rng} \rightarrow \mathsf{Boole}$ sends groupoids to groupoids (by Lemma \ref{Marino}) and $\mathsf{Boole}$ is an arithmetical category (which means that every internal groupoid in $\mathsf{Boole}$ is an equivalence relation---see Example $2.9.13$ in \cite{BB}), $I$ is Barr-exact. However, $I$ is not protoadditive. To see this, consider the split short exact sequence
\[
\xymatrix{
0 \ar[r] &  C_2 \ar[r]^-{i_2} & C_2 \ltimes C_2  \ar@<-.8 ex>[r]_-{p_1} & C_2 \ar[r] \ar@<-.8 ex>[l]_-{i_1}& 0
}
\]
in $\mathsf{Rng}$, where $C_2= {\mathbb Z} / 2 {\mathbb Z}$, the addition in the ring $C_2 \ltimes C_2$ is defined by $(a,b) +(c,d)= (a+c, b+d)$, the multiplication by $(a,b) \cdot (c,d) = (ac, bc+bd)$, and the morphisms $i_1$ and $i_2$ are the canonical injections and $p_1$ the canonical projection on the first component. While $C_2$ is Boolean,  $C_2 \ltimes C_2$ is not, since $(1,1)\cdot (1,1)= (1,1+1)= (1,0)$. Hence, $\mathsf{Boole}$ is not closed in $\mathsf{Rng}$ under split extensions, and we conclude via Proposition \ref{splitext} that $I$ is not protoadditive.

\item\emph{A protoadditive torsion-free reflection which is not Barr-exact.} 
The reflector $\pi_0 \colon \Gpd(\Ac) \rightarrow \Ac$ from Example \ref{exproto}.\ref{exgroupoids} is not Barr-exact, in general.  For instance, if $\Ac=\Gp$ and $A$ is an abelian group, then the diagram
\[
\xymatrix{
A\times A \ar[r]^-{-} \ar@<1.2 ex>[d]^{\pi_2}\ar@<-1.2 ex>[d]_{\pi_1} & A  \ar@<1.2 ex>[d] \ar@<-1.2 ex>[d] \\
A \ar[u]|{\delta} \ar[r] & 0,\ar[u] }
\]
where the the upper horizontal morphism $- \colon A \times A \rightarrow A$ sends a pair $(a_1,a_2)$ to the difference $a_1-a_2$, $\pi_1$ and $\pi_2$ are the product projections and $\delta$ the diagonal morphism, is a regular epimorphism of internal groupoids in $\Gp$ whose kernel pair is not preserved by the functor $\pi_0$.

\item
Another example of the same kind is the reflector $I \colon {\TCom }\to {\TPro}$ from Example \ref{exproto}.\ref{exdisc}: $\TPro$ is a torsion-free subcategory of $\TCom$ and $I$ is protoadditive, but not Barr-exact.  Indeed, if $I$ were Barr-exact, then it would preserve short exact sequences (i.e. it would be a protolocalisation in the sense of \cite{BCGS}) since the kernel of any morphism can be obtained via the kernel of one of the kernel pair projections (as in the proof of Lemma \ref{compositeisnormal }), and this latter kernel is preserved because $I$ is protoadditive. However, the short exact sequence 
\[
\xymatrix{
0 \ar[r] & \{-1,1\}  \ar[r] & S^1  \ar[r] & S^1/\{-1,1\} \ar[r] & 0
}
\]
in the category of compact Hausdorff groups,
where $S^1$ is the unit circle group equipped with the topology induced by the Euclidean topology on ${\mathbb R}^2$, is not preserved, since both $S^1$ and $S^1/\{-1,1\}$ are connected while $\{-1,1\}$ is not.

\item
Other examples of this kind are provided by cohereditary (=the torsion-free part is closed under quotients) torsion theories in the abelian context whose corresponding reflector is not a localisation.

\item\emph{An additive admissible reflector which is not torsion-free.}
Consider the variety $\Ab$ of abelian groups, and the Burnside variety $\mathsf{B}_2$ of exponent $2$: $\mathsf{B}_2$ consists of all abelian groups $A$ such that $a+a=0$ for any $a\in A$. Then the reflector $\Ab\to\mathsf{B}_2$ is additive and admissible (with respect to regular epimorphisms) \cite{JK}, but $\mathsf{B}_2$ is not a torsion-free subcategory of $\Ab$, since the induced radical $T\colon \Ab\to\Ab$ is not idempotent: for instance, by considering the cyclic group $C_4$ we see that $T(C_4)=C_2$ while $T(T(C_4))=T(C_2)=0$.
\end{enumerate}
\end{examples}

\section{Torsion-free subcategories with a protoadditive reflector}\label{coveringmorphisms}
In \cite{CJKP} Carboni, Janelidze, Kelly and Par\'e considered for every factorisation system $(\Ef,\Mf)$ on a finitely complete category $\Ac$ classes $\Ef'$ and $\Mf^*$ of morphisms in $\Ac$ defined as follows: $\Ef'$ consists of all morphisms $f$ that are \emph{stably in $\Ef$}, i.e.~ every pullback of $f$ is in $\Ef$; while $\Mf^*$ consists of all morphisms $f\colon A\to B$ that are \emph{locally in $\Mf$}, this meaning that there exists an effective descent morphism $p\colon E\to B$ for which the pullback $p^*(f)$ is in $\Mf$. Thus, if $(\Ef,\Mf)$ is the factorisation system associated with an admissible (semi-left-exact) reflection, then $\Mf^*$ consists of all central extensions with respect to the corresponding absolute Galois structure. While it is always true that $\Ef'\subseteq (\Mf^*)^{\uparrow}$, one does not necessarily have that $(\Ef',\Mf^*)$ is a factorisation system. However, this does happen to be the case in a number of important examples. For instance, when $(\Ef,\Mf)$ is the factorisation system on the category of compact Hausdorff spaces associated with the reflective subcategory of totally disconnected spaces: in this case, stabilising $\Ef$ and localising $\Mf$ yields the Eilenberg and Whyburn monotone-light factorisation for maps of compact Hausdorff spaces \cite{Eilenberg,Whyburn}. Another example given in \cite{CJKP} is that of a hereditary torsion theory $(\Tc,\Fc)$ in an abelian category $\Ac$ with the property that every object of $\Ac$ is the quotient of an object of $\Fc$. Also in this case the associated factorisation system $(\Ef,\Mf)$ induces a factorisation system  $(\Ef',\Mf^*)$. In fact, as we shall explain in \cite{EG9}, this remains true when the category $\Ac$ is merely homological and  $\Tc$ is not asked to be hereditary. 

Even if $(\Ef',\Mf^*)$ fails to be a factorisation system, it might still be ``partially'' so, in the sense that $\Ac$ admits ``monotone-light'' factorisations, but only for morphisms of a particular class. We shall prove in this section that this is the case for the class of effective descent morphisms in a homological category $\Ac$, if $(\Ef,\Mf)$ is the factorisation system associated with a torsion-free subcategory $\Fc$ whose reflector $F\colon \Ac\to \Fc$ is protoadditive (and such that condition $(N)$ below is satisfied). 

Let $\Ac$ be a homological category, $(\Tc,\Fc)$ a torsion theory in $\Ac$ with reflector $F\colon \Ac\to\Fc$ and coreflector $T\colon \Ac\to\Tc$, $(\Ef,\Mf)$ the associated reflective factorisation system and $(\Ef',\Mf^*)$ as defined above. We shall also consider the classes $\overline{\Ef}$ and $\overline{\Mf}$ defined as follows: $\overline{\Ef}$ is the class of all normal epimorphisms $f$ in $\Ac$ such that $K[f]\in\Tc$; $\overline{\Mf}$ is the class of all morphisms $f$ in $\Ac$ such that $K[f]\in\Fc$. As we shall see, it is always true that $\overline{\Ef}\subseteq \overline{\Mf}^{\uparrow}$, and it is ``often'' the case that $(\overline{\Ef},\overline{\Mf})$ is a factorisation system. In the present section, we shall mostly be concerned with comparing $(\Ef',\Mf^*)$ with $(\overline{\Ef},\overline{\Mf})$. In particular, we shall be interested in conditions on the torsion theory $(\Tc,\Fc)$ under which one has for any effective descent morphism $f$ in $\Ac$ that $f\in\Mf^*$ if and only if $f\in\overline{\Mf}$.

We shall consider the conditions
\begin{enumerate}
\item[(P)] the reflection $F\colon \Ac\to\Fc$ is protoadditive;
\item[(N)]\label{conditionPageN} for any morphism $f\colon A\to B$ in $\Ac$, $\ker (f)\circ t_{K[f]}\colon T(K[f])\to A$ is a normal monomorphism.
\end{enumerate}

\begin{remark}\label{conditionN} 
Recall from Section \ref{protoadditivesection} that condition $(P)$ is satisfied if and only if $\Tc$ is $\MC$-hereditary, for $\MC$ the class of protosplit monomorphisms in $\Ac$.

Condition $(N)$ is trivially satisfied if $\Ac$ is an abelian category, and this is also the case if $\Ac$ is the category of groups: it suffices to observe that the inner automorphisms on $A$ restrict to $T(K[f])$, for any $f\colon A\to B$. 

If $\Ac$ is homological and $\Tc$ is  \emph{quasi-hereditary} in the sense of \cite{GR}, then $(N)$ is satisfied, since in this case we have that $T(K[f])=K[f]\cap T(A)$ for any morphism $f\colon A\to B$. From Lemma \ref{compositeisnormal } and Remark \ref{idealtopological} we know that if $\Ac$ is either semi-abelian or a category of topological semi-abelian algebras then $(P)$ implies $(N)$. 
\end{remark}

\begin{remark}
When studying the factorisation system associated with an absolute Galois structure it is natural to consider also the condition

(C) ``$\Fc$ covers $\Ac$'': for any object $B\in\Ac$, there exists an effective descent morphism $E\to B$ such that $E\in\Fc$.

This condition has been considered before by several authors, for instance, in \cite{CJKP,JMT1, GJ,GR}. 
Several of the results in this section established under conditions $(N)$ and $(P)$ have a corresponding ``absolute'' formulation where condition $(C)$ replaces $(P)$: since this article is mainly concerned with condition $(P)$ we decided to leave these developments for another article \cite{EG9}.
\end{remark}

We begin with a characterisation of the normal extensions associated with a torsion theory $(\Tc,\Fc)$ in a homological category $\Ac$ satisfying condition $(P)$ (see also \cite{GJ, GR}). Let us write $\Gamma_{\Fc}$ for the induced absolute Galois structure on $\Ac$.
\begin{proposition}\label{protocentral}
Assume that $(\Tc,\Fc)$ satisfies condition $(P)$. Then for any effective descent morphism $f\colon A\to B$ in $\Ac$ the following conditions are equivalent:
\begin{enumerate}
\item $f \colon A \rightarrow B$ is a normal extension with respect to $\Gamma_{\Fc}$;
\item $f \colon A \rightarrow B$ is a central extension with respect to $\Gamma_{\Fc}$;
\item $K[f] \in \Fc$.
\end{enumerate}
\end{proposition}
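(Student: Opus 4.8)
The plan is to prove the cycle of implications $(1)\Rightarrow(2)\Rightarrow(3)\Rightarrow(1)$, using the descent-theoretic description of central extensions together with condition $(P)$, i.e. the protoadditivity of $F$ (equivalently, the $\MC$-hereditariness of $\Tc$ for $\MC$ the class of protosplit monomorphisms).

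\smallskip

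The implication $(1)\Rightarrow(2)$ is immediate from the definitions: a normal extension is in particular a central extension (take $p=f$ in the definition of central extension). For $(2)\Rightarrow(3)$, suppose $f\colon A\to B$ is central, so there is a monadic (effective descent) extension $p\colon E\to B$ such that $p^*(f)$ is a trivial extension. Pulling back along $p$, we get $p^*(f)\colon E\times_B A\to E$ whose kernel is isomorphic to $K[f]$ (kernels are stable under pullback). Since $p^*(f)$ is a trivial extension, the naturality square for the unit relating $p^*(f)$ and $F(p^*(f))$ is a pullback; restricting this pullback to kernels shows that $K[f]\to F(E\times_B A)$ factors as the kernel of $F(p^*(f))$, which lands in $\Fc$, so $K[f]\in\Fc$ because $\Fc$ is closed under subobjects that arise this way — more precisely, the restriction of the unit $\eta_{E\times_B A}$ to $K[p^*(f)]$ is an isomorphism onto $K[F(p^*(f))]$, hence $K[f]\cong K[p^*(f)]\in\Fc$. (Here I use that $\Fc$, being the torsion-free part of a torsion theory, is replete and closed under isomorphism, and that the kernel of a morphism in $\Fc$ computed in $\Ac$ lies in $\Fc$ since $\Fc$ is reflective with the inclusion reflecting limits.)

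\smallskip

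The crux is $(3)\Rightarrow(1)$. Assume $K[f]\in\Fc$ and $f$ is an effective descent morphism. I want to show $f$ is a normal extension, i.e. that $f^*(f)\colon A\times_B A\to A$ is a trivial extension. Consider the kernel pair $(R[f],\pi_1,\pi_2)$ of $f$; then $\pi_1\colon R[f]\to A$ is a split epimorphism (split by the diagonal) whose kernel is $\ker(\pi_1)\colon K[f]\to R[f]$, so $\ker(\pi_1)$ is a \emph{protosplit} monomorphism. Applying the radical $T$ (equivalently, using $(P)$ via Theorem~\ref{protoM}/Proposition~\ref{reflector=radical}) to the split short exact sequence
\[
\xymatrix{0\ar[r] & K[f]\ar[r]^-{\ker(\pi_1)} & R[f]\ar@<-.8ex>[r]_-{\pi_1} & A\ar@<-.8ex>[l]_-{\delta}\ar[r] & 0}
\]
keeps it exact, so $T(\ker(\pi_1))$ is the kernel of $T(\pi_1)$; but $K[f]\in\Fc$ means $T(K[f])=0$, whence $K[T(\pi_1)]=0$. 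Since $\pi_1$ is a split epimorphism with $T(K[\pi_1])=0$, a diagram chase (using that $T$ is a radical and that $t$ is a natural transformation) shows $T(R[f])\to T(A)$ is an isomorphism, equivalently $\eta_{R[f]}$ is the pullback of $\eta_A$ along $\pi_1$. This says precisely that $\pi_1=f^*(f)$ is a trivial extension, so $f$ is a normal extension.

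\smallskip

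The main obstacle I expect is making the last step fully rigorous: deducing from ``$\pi_1$ is a split epimorphism with $T(K[\pi_1])=0$'' that the naturality square of $\eta$ at $\pi_1$ is a pullback, i.e. that $\pi_1$ is trivial. This is where protoadditivity does the real work — by Proposition~\ref{protoadditive-pullback}, $F$ preserves pullbacks along split epimorphisms, and one should apply this to a suitable pullback expressing $K[\pi_1]$ or use the split short five lemma on the comparison of split short exact sequences for $\pi_1$ and $F(\pi_1)$, exactly as in the proof of Proposition~\ref{splitext}, to conclude $\eta_{R[f]}$ is a pullback of $\eta_A$. One must be careful that $\Fc$ here is only reflective (not Birkhoff), so one cannot invoke regular-epi arguments; the split short five lemma in the homological category $\Ac$ is the right tool. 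I would also double-check that effective descent of $f$ is genuinely needed only for the ``normal extension'' terminology (it requires $f$ to be a monadic extension) and not for the trivial-extension computation itself.
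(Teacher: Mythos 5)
Your proposal is correct and follows essentially the same route as the paper: $(1)\Rightarrow(2)$ by definition, $(2)\Rightarrow(3)$ by comparing kernels across the trivialising pullback, and $(3)\Rightarrow(1)$ by applying the protoadditive reflector (equivalently the radical $T$) to the split short exact sequence $0\to K[f]\to R[f]\to A\to 0$ and invoking protomodularity to conclude that the naturality square at $\pi_1$ is a pullback. The "diagram chase" you flag as the delicate point is resolved exactly as you suggest in your final paragraph — the paper applies $F$ directly to get a morphism of split short exact sequences whose kernel component $\eta_{K[f]}$ is an isomorphism, whence the right-hand square is a pullback by protomodularity.
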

\begin{proof}
$(1)\Rightarrow (2)$ is true by definition.

$(2)\Rightarrow (3)$ is well known, but let us recall the argument. Consider the following diagram \eqref{coveringdiagram} in $\Ac$, where the right hand square is a pullback and $p$ is an effective descent morphism:
\[\vcenter{\begin{equation}\label{coveringdiagram}
\xymatrix{
F(P) \ar[d]_{F(p^*(f))} & P \ar@{}[rd]|<<{\pullback} \ar[d]_{p^*(f)}\ar[l]_-{\eta_P} \ar[r] & A \ar[d]^f\\
F(E) & E \ar[r]_p \ar[l]^-{\eta_E} & B}
\end{equation}}
\]
If the left hand square is a pullback as well, then
\[
K[F(p^*(f))] \cong K[p^*(f)] \cong K[f]
\]
and it follows that $K[f]\in\Fc$, since $K[F(p^*(f))]\in\Fc$ as the kernel of the morphism $F(p^*(f))$ in $\Fc$. 

$(3)\Rightarrow (1)$ Consider an effective descent morphism $f\colon A\to B$ with $K[f]\in \Fc$ and its kernel pair $(\pi_1,\pi_2)\colon R[f]\to A$. By applying the reflector $F\colon \Ac\to\Fc$, we obtain the following commutative diagram of split short exact sequences in $\Ac$:
\[
\xymatrix@=35pt{0   \ar[r]& K[f] \ar@{=}[d]_{\eta_{K[f]}}   \ar[r] & R[f] \ar[d]_{\eta_{R[f]}}  \ar@<-.8 ex> [r]_-{\pi_1} & A \ar[d]^{\eta_A} \ar[r]  \ar@<-.8ex>[l] &0 \\
0 \ar[r]& F(K[f]) \ar[r]_{F(k)} & F(R[f]) \ar@<-.8 ex> [r]_-{F(\pi_1)} & F(A) \ar@<-.8ex>[l] \ar[r] &0.}
\]
Since  $\eta_{K[f]}$ is an isomorphism, the right hand square is a pullback, and we conclude that $f$ is a normal extension. 
\end{proof}

Next we show that, for \emph{every} torsion theory $(\Tc,\Fc)$, one has that $\overline{\Ef}\subseteq\overline{\Mf}^{\uparrow}$ -- or equivalently $\overline{\Mf}\subseteq \overline{\Ef}^{\downarrow}$:
\begin{lemma}\label{orthogonal}
For any pair of morphisms $e\in\overline{\Ef}$ and $m\in\overline{\Mf}$ we have that $e\downarrow m$.
\end{lemma}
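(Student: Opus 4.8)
The plan is to verify the lifting property $e\downarrow m$ directly from the definitions of $\overline{\Ef}$ and $\overline{\Mf}$, exploiting the protomodularity of $\Ac$ in the guise of the split short five lemma. Given $e\colon A\to B$ in $\overline{\Ef}$ (so $e$ is a normal epimorphism with $K[e]\in\Tc$) and $m\colon C\to D$ in $\overline{\Mf}$ (so $K[m]\in\Fc$), and a commutative square $b\circ e=m\circ a$, I need to produce a unique diagonal $d\colon B\to C$ with $d\circ e=a$ and $m\circ d=b$. First I would observe that uniqueness is automatic, since $e$ is a (regular, hence) strong epimorphism: any two such diagonals agree after precomposition with $e$, hence are equal.

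For existence, the key step is to restrict the square to kernels. The morphism $a\colon A\to C$ carries $K[e]$ into $K[m]$: indeed $m\circ a\circ\ker(e)=b\circ e\circ\ker(e)=0$, so $a\circ\ker(e)$ factors through $\ker(m)$, giving a morphism $\overline{a}\colon K[e]\to K[m]$. But $K[e]\in\Tc$ and $K[m]\in\Fc$, so by the torsion-theory orthogonality condition $\hom_\Ac(T,F)=\{0\}$ we get $\overline{a}=0$, i.e. $a\circ\ker(e)=0$. Since $e$ is a normal epimorphism, it is the cokernel of its kernel, $e=\coker(\ker(e))$; hence $a$ factors (uniquely) through $e$ as $a=d\circ e$ for a unique $d\colon B\to C$. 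It then remains to check $m\circ d=b$: precomposing with the epimorphism $e$ gives $m\circ d\circ e=m\circ a=b\circ e$, so $m\circ d=b$, as required.

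I expect the main (and only real) obstacle to be making sure every invocation is legitimate in the homological setting: that a normal epimorphism in $\Ac$ is indeed the cokernel of its kernel (true in any pointed protomodular category with cokernels, since a normal epi is the coequaliser of its kernel pair and one identifies this with the cokernel of the kernel in the regular Mal'tsev context), and that normal epimorphisms are strong epimorphisms so that uniqueness of the factorisation through $e$ holds. Both facts are standard in homological categories and were recalled (or are immediate from the material recalled) earlier in the excerpt. Everything else is a routine diagram chase; no limits beyond kernels and the defining torsion-theory condition are needed, which is why the lemma holds for \emph{every} torsion theory, with no appeal to conditions $(P)$ or $(N)$.
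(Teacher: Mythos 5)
Your proof is correct and follows essentially the same route as the paper: restrict the square to kernels, use $\hom_\Ac(T,F)=\{0\}$ to see the induced map $K[e]\to K[m]$ is zero, and then factor $a$ through $e=\coker(\ker(e))$. The extra remarks on uniqueness and on normal epimorphisms being cokernels of their kernels are fine but not part of the paper's (more terse) argument.
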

\begin{proof}
Consider a commutative square as in the right hand side of the diagram
\[
\xymatrix{
K[e] \ar[r]^-{\ker (e)} \ar[d]_k & A \ar[r]^e \ar[d]_a & B \ar@{.>}[ld]\ar[d]^b\\
K[m] \ar[r]_-{\ker (m)} & C \ar[r]_m & D}
\]
and assume that $e\in\overline{\Ef}$ and $m\in\overline{\Mf}$. Since, by assumption, $K[e]\in\Tc$ and $K[m]\in\Fc$, we see that $k$ is the zero morphism. It follows that also $a\circ \ker (e)=\ker (m)\circ k$ is zero. Since $e$ was assumed to be a normal epimorphism, it is the cokernel of its kernel $\ker (e)$, and there exists a unique dotted arrow making the diagram commute. This shows that $e\downarrow m$.
\end{proof}
By further assuming that condition $(N)$ is satisfied, we get a stable factorisation system:
\begin{proposition}\label{inducedfactorisation}
If $(\Tc,\Fc)$ satisfies condition $(N)$, then $(\overline{\Ef},\overline{\Mf})$ is a stable factorisation system on $\Ac$, and $(\Tc,\Fc)$ is uniquely determined by $(\overline{\Ef},\overline{\Mf})$. In fact, there is a bijective correspondence between
\begin{enumerate}
\item
the torsion theories in $\Ac$ satisfying condition $(N)$;
\item
the stable factorisation systems $(\Ef,\Mf)$ on $\Ac$ such that every $e\in\Ef$ is a normal epimorphism.
\end{enumerate}
\end{proposition}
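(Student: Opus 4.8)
The plan is to show that $(\overline{\Ef},\overline{\Mf})$ is a factorisation system by checking the three clauses that, in combination with Lemma~\ref{orthogonal}, force it to be one: both classes contain the isomorphisms and are closed under composition; every morphism of $\Ac$ admits an $(\overline{\Ef},\overline{\Mf})$-factorisation; and $\overline{\Ef}$ is stable under pullback. Once these hold, the standard argument for prefactorisation systems (which uses only $\overline{\Ef}\downarrow\overline{\Mf}$, the existence of factorisations, and the fact that each class contains the isomorphisms and is closed under composition) yields $\overline{\Ef}=\overline{\Mf}^{\uparrow}$ and $\overline{\Mf}=\overline{\Ef}^{\downarrow}$, hence that $(\overline{\Ef},\overline{\Mf})$ is a stable factorisation system. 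The closure and stability clauses are routine: in a homological category a composite of normal epimorphisms is again a normal epimorphism, and for $e_{2}\circ e_{1}$ the short exact sequence $0\to K[e_{1}]\to K[e_{2}\circ e_{1}]\to K[e_{2}]\to 0$ together with closedness of $\Tc$ under extensions gives $K[e_{2}\circ e_{1}]\in\Tc$; dually, for a composite of morphisms with torsion-free kernels one uses that $\Fc$ is closed under subobjects and under extensions; and a pullback of a normal epimorphism is again a normal epimorphism (by regularity of $\Ac$) with isomorphic kernel, giving pullback-stability of $\overline{\Ef}$.

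The only place where condition $(N)$ is genuinely used is the construction of the factorisation. Given $f\colon A\to B$, condition $(N)$ asserts precisely that $\ker(f)\circ t_{K[f]}\colon T(K[f])\to A$ is a normal monomorphism, so one may form $e:=\coker\bigl(\ker(f)\circ t_{K[f]}\bigr)\colon A\to A/T(K[f])$. This $e$ is a normal epimorphism with $K[e]=T(K[f])\in\Tc$, hence $e\in\overline{\Ef}$; and since $\ker(f)\circ t_{K[f]}$ factors through $\ker(f)$, the map $f$ factors through $e$, say $f=m\circ e$. To see $m\in\overline{\Mf}$, observe that the comparison morphism $K[f]\to K[m]$ induced by $e$ is the pullback of $e$ along the monomorphism $K[m]\to A/T(K[f])$, hence a regular epimorphism, and its kernel is $T(K[f])$; thus $K[m]\cong K[f]/T(K[f])=F(K[f])\in\Fc$. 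This establishes the first assertion of the proposition.

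For the bijective correspondence, the assignment $(\Tc,\Fc)\mapsto(\overline{\Ef},\overline{\Mf})$ is injective because $\Tc$ and $\Fc$ are recovered as $\{X\mid (X\to 0)\in\overline{\Ef}\}$ and $\{X\mid (X\to 0)\in\overline{\Mf}\}$ respectively, using that $X\to 0$ is always a normal epimorphism with kernel $X$. Conversely, given a stable factorisation system $(\Ef,\Mf)$ in which every member of $\Ef$ is a normal epimorphism, I would define $\Tc=\{X\mid (X\to 0)\in\Ef\}$ and $\Fc=\{X\mid (X\to 0)\in\Mf\}$. The $(\Ef,\Mf)$-factorisation of each $X\to 0$ has its intermediate object in $\Fc$, and these factorisations supply the units of a (normal epi)-reflection of $\Ac$ onto $\Fc$, whose radical $T$ sends $X$ to the kernel of its unit; pulling that unit back along the zero morphism and invoking pullback-stability of $\Ef$ shows $T(X)\to 0\in\Ef$, so $T$ is idempotent and $(\Tc,\Fc)$ is a torsion theory by Theorem~\ref{torsiontheorem}. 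To check that it satisfies $(N)$ and reproduces $(\Ef,\Mf)$, factor an arbitrary morphism $f$ as $f=m\circ e$ with $e\in\Ef$, $m\in\Mf$: by pullback-stability of $\Ef$, and the automatic pullback-stability of $\Mf$, one has $K[e]\in\Tc$ and $K[m]\in\Fc$, so $0\to K[e]\to K[f]\to K[m]\to 0$ is the torsion decomposition of $K[f]$, whence $T(K[f])=K[e]$ is a normal subobject of $A$, which is exactly condition $(N)$. Finally $\Ef=\overline{\Ef}$: if $f\in\Ef$ then $f$ is a normal epimorphism and, taking $e=f$ above, $K[f]=K[e]\in\Tc$, so $f\in\overline{\Ef}$; if instead $f\in\overline{\Ef}$ then $K[f]\in\Tc$ forces $K[m]=0$ and $K[e]=K[f]$ in the decomposition above, so $e$ and $f$ are normal epimorphisms --- thus cokernels of their common kernel --- and $f$ differs from $e\in\Ef$ by an isomorphism, hence lies in $\Ef$. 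Consequently $\overline{\Mf}=\overline{\Ef}^{\downarrow}=\Ef^{\downarrow}=\Mf$, and the two assignments are mutually inverse.

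The step I expect to be the main obstacle is the correct handling of condition $(N)$ on both sides. In the forward direction it is exactly what makes $T(K[f])$ a genuine normal subobject of $A$, so that $A\to A/T(K[f])$ is a normal epimorphism with the expected kernel and the $(\overline{\Ef},\overline{\Mf})$-factorisation exists; in the backward direction, verifying $(N)$ for the torsion theory attached to a stable factorisation system rests on identifying $K[e]$ with $T(K[f])$ through the uniqueness of the torsion decomposition, which in turn relies on pullback-stability of $\Ef$ to place $K[e]$ in $\Tc$. Keeping straight which closure and stability properties are automatic --- notably that $\Mf$ is always stable under pullback --- and which genuinely depend on the stability hypothesis on $\Ef$, is the bookkeeping that has to be got right.
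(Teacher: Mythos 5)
Your proposal is correct and follows essentially the same route as the paper: the same construction $A\to A/T(K[f])\to B$ for the factorisation (with the same identification $K[m]\cong F(K[f])$), the same definition of $\Tc$ and $\Fc$ from a stable factorisation system, and the same final identification $\Ef=\overline{\Ef}$. The only local differences --- verifying the prefactorisation axioms via the standard closure argument, obtaining the torsion theory in the converse direction through the idempotent-radical criterion of Theorem~\ref{torsiontheorem}, and deducing condition $(N)$ from uniqueness of the torsion decomposition of $K[f]$ rather than of the $(\Ef,\Mf)$-factorisation of $K[f]\to 0$ --- are all sound and do not change the substance of the argument.
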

\begin{proof}
Suppose that $(\Tc,\Fc)$ is a torsion theory in $\Ac$ satisfying condition $(N)$. Then any morphism $f\colon A\to B$ in $\Ac$ with kernel $K$ factorises as
\[
\xymatrix{
A \ar[r]^-{q_{T(K)}} & A/T(K) \ar[r]^-m &B,} 
\]
where $q_{T(K)}$ is the cokernel of the composite $\ker(f)\circ t_K\colon T(K)\to A$, and $t_K\colon T(K)\to K$ is the coreflection counit. Clearly, $q_{T(K)}$ lies in $\overline{\Ef}$, and we also have that $m\in\overline{\Mf}$ since from the ``double quotient'' isomorphism theorem (see Theorem $4.3.10$ in \cite{BB}) it follows that
\[
K[m]=K[\coim(m)\colon A/T(K)\to A/K]=K/T(K)=F(K)\in\Fc,
\]
where $\coim(m)$ denotes the normal epi part of the (normal epi)-mono factorisation of $m$. Here we used that $I[m]=I[f]=A/K$ by the uniqueness of the (normal epi)-mono factorisation of $f$. Thus we see that $(\overline{\Ef},\overline{\Mf})$ is a factorisation system since any morphism of $\Ac$ admits an $(\overline{\Ef},\overline{\Mf})$-factorisation and  $\overline{\Ef}\subseteq\overline{\Mf}^{\uparrow}$ by  Lemma~\ref{orthogonal}.

To see that  the class $\overline{\Ef}$ is pullback-stable, it suffices to observe that normal epimorphisms are pullback-stable in the homological category $\Ac$, and that pulling back induces an isomorphism between kernels.

Conversely, given a stable factorisation system $(\Ef,\Mf)$ on $\Ac$ such that every $e\in\Ef$ is a normal epimorphism, we consider the full subcategories $\Tc$ and $\Fc$ of $\Ac$ defined on objects via
\[
\Tc=\{T\in\Ac \ | \ T\to 0\in\Ef \}; \ \ \Fc=\{F\in\Ac \ | \ F\to 0\in\Mf \}.
\]
Then $\hom_{\Ac}(T,F)=\{0\}$ for any $T\in\Tc$ and $F\in\Fc$ since the assumption that $(\Ef,\Mf)$ is factorisation system implies, for any morphism $T\to F$, the existence of the dotted arrow making the following diagram commute:
\[
\xymatrix{
T \ar[r] \ar[d] & 0\ar[d] \ar@{.>}[ld]\\
F \ar[r] & 0.}
\]
Moreover, if for an object $A\in\Ac$,  $m\circ e\colon A\to I\to 0$ is the $(\Ef,\Mf)$-factorisation of the unique morphism $A\to 0$, then 
\[
\xymatrix{
0 \ar[r] & K[e] \ar[r] & A \ar[r]^e & I\ar[r] & 0}
\]
is a short exact sequence with $I\in\Fc$ and also $K[e]\in\Tc$, since $K[e]\to 0\in\overline{\Ef}$ as the pullback of $e$ along the unique morphism $0\to I$. We conclude that $(\Tc,\Fc)$ is a torsion theory. Note that the radical $T\colon \Ac\to\Ac$ is defined on objects $A\in\Ac$ as $T(A)=K[e]$, where $e$ is the ``$\Ef$-part'' of the $(\Ef,\Mf)$-factorisation of the morphism $A\to 0$.

To see that  $(\Tc,\Fc)$ satisfies condition $(N)$, consider a morphism $f\colon A\to B$ with $(\Ef,\Mf)$-factorisation $f=m\circ e$ and kernel $K$. Let $m'\circ e'$ be the $(\Ef,\Mf)$-factorisation of the unique morphism $K\to 0$. Then there is a unique morphism $I'\to I$ such that the diagram below---in which the outer rectangle is a pullback---commutes:
\[
\xymatrix{
K \ar[d] \ar[r]^{e'} & I' \ar[r]^{m'} \ar@{.>}[d] & 0 \ar[d] \\
A \ar[r]_e & I \ar[r]_m & B}
\]
The uniqueness of the $(\Ef,\Mf)$-factorisation of $K\to 0$, together with the pullback-stability of both classes $\Ef$ and $\Mf$, imply that the two squares are pullbacks. Consequently, we have that $\ker (f)\circ \ker (e')=\ker (e)$ and this is a normal monomorphism, as desired.

Clearly, if $(\Tc,\Fc)$ is a torsion theory satisfying $(N)$, then $(\Tc,\Fc)$ coincides with the torsion theory induced by $(\overline{\Ef},\overline{\Mf})$. On the other hand, consider a stable factorisation system $(\Ef,\Mf)$ on $\Ac$ such that every $e\in\Ef$ is a normal epimorphism. Let $(\Tc,\Fc)$ be the induced torsion theory and $(\overline{\Ef},\overline{\Mf})$ the associated factorisation system, and consider a normal epimorphism $e$. Then $e$ is the cokernel of its kernel, i.e.~the following square is both a pullback and a pushout: 
\[
\xymatrix{
K[e] \ar[r] \ar[d] & 0 \ar[d]\\
A \ar[r]_e & B}
\]
Using that the class $\Ef$ is pullback-stable (by assumption) as well as pushout-stable (since $(\Ef,\Mf)$ is a factorisation system), we see that 
\[
e\in\Ef \Leftrightarrow K[e]\to 0\in\Ef \Leftrightarrow K[e]\in\Tc\Leftrightarrow e\in\overline{\Ef}
\]
and it follows that $\Ef=\overline{\Ef}$. Since both $(\Ef,\Mf)$ and $(\overline{\Ef},\overline{\Mf})$ are (pre)factorisation systems, this implies that $(\Ef,\Mf)=(\overline{\Ef},\overline{\Mf})$.
\end{proof}
The following observation will be needed:

\begin{lemma}\label{stablekernel}
Given a torsion theory $(\Tc,\Fc)$ one always has that $\overline{\Ef}\subseteq \Ef'$. 
\end{lemma}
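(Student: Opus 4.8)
The plan is to show directly that any morphism in $\overline{\Ef}$ remains in $\Ef$ after pulling back along an arbitrary morphism, which will give $\overline{\Ef}\subseteq\Ef'$ once we know $\overline{\Ef}\subseteq\Ef$. First I would record that $\overline{\Ef}\subseteq\Ef$: if $e\colon A\to B$ is a normal epimorphism with $K[e]\in\Tc$, then applying the torsion-free reflector $F$ to the short exact sequence $0\to K[e]\to A\to B\to 0$ kills $K[e]$ (since $F(T)=0$ for $T\in\Tc$, as $F$ is the reflector onto $\Fc$ and $\hom_\Ac(T,F)=\{0\}$ forces $F(T)$ to be a zero object), and since $F$ preserves cokernels, $F(e)=\coker(F(\ker e))=\coker(0\to F(A))$ is an isomorphism. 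Hence $e\in\Ef$.

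Next, the key point: the class $\overline{\Ef}$ is pullback-stable. This is exactly the observation already used in the proof of Proposition \ref{inducedfactorisation}: in the homological category $\Ac$, normal epimorphisms are pullback-stable, and pulling back a morphism $e\colon A\to B$ along any $b\colon B'\to B$ induces an isomorphism between the kernel of $e$ and the kernel of its pullback $b^*(e)$. So if $e\in\overline{\Ef}$, then $b^*(e)$ is again a normal epimorphism with $K[b^*(e)]\cong K[e]\in\Tc$, i.e. $b^*(e)\in\overline{\Ef}$. Combining this with the previous paragraph, every pullback of $e\in\overline{\Ef}$ lies in $\overline{\Ef}\subseteq\Ef$, which is precisely the statement that $e$ is stably in $\Ef$, i.e. $e\in\Ef'$.

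There is essentially no obstacle here; the only mildly delicate points are the two facts borrowed from the structure of homological categories (stability of normal epimorphisms under pullback, and the kernel isomorphism under change of base), both of which are standard and already invoked earlier in the section, and the elementary observation that $F$ sends torsion objects to $0$. I would simply assemble these. A one-line proof suffices: for $e\in\overline{\Ef}$ and any morphism with codomain the codomain of $e$, the pullback of $e$ is a normal epimorphism with isomorphic kernel, hence again in $\overline{\Ef}$, and $\overline{\Ef}\subseteq\Ef$ since $F$ inverts such morphisms; therefore $e\in\Ef'$.
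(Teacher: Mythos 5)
Your proof is correct and follows essentially the same route as the paper: both arguments reduce the claim to $\overline{\Ef}\subseteq\Ef$ via the pullback-stability of $\overline{\Ef}$ already established in the proof of Proposition \ref{inducedfactorisation}, and then invert $e$ by applying $F$ to the short exact sequence and using that $F$ annihilates torsion objects and preserves cokernels. The only cosmetic difference is the order of the two steps and your slightly more explicit justification that $F(T)=0$ for $T\in\Tc$, which the paper takes for granted.
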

\begin{proof}
It has already been explained in the proof of Proposition \ref{inducedfactorisation} that the class $\overline{\Ef}$ is pullback-stable. Hence, to prove that $\overline{\Ef}\subseteq \Ef'$, it suffices to show that $\overline{\Ef}\subseteq \Ef$. Consider, therefore, a normal epimorphism $f$ in $\Ac$ with $K[f]\in\Tc$. Then $f$ is the cokernel of its kernel $\ker (f)$ and, consequently, $F(f)$ the cokernel (in $\Fc$) of $F(\ker (f))$. Since $F(K[f])=0$ by assumption, this implies that $F(f)$ is an isomorphism.

\end{proof}


We are now ready to prove the what we announced at the beginning of this section concerning the existence of ``monotone-light'' factorisations. We write $\EDM(\Ac)$ (resp. $\NExt_{\Fc}(\Ac)$) for the full subcategory of the arrow category $\Arr(\Ac)$ determined by all effective descent morphisms in $\Ac$ (resp.~all normal extensions with respect to $\Gamma_{\Fc}$).

\begin{theorem}\label{protofactorisation}
If $(\Tc,\Fc)$ is a torsion theory in $\Ac$ satisfying conditions $(P)$ and $(N)$, then the following properties hold:
\begin{enumerate}
\item
$\NExt_{\Fc}(\Ac)$ is a reflective subcategory of $\EDM(\Ac)$;
\item
normal extensions are stable under composition;
\item
any effective descent morphism $f\colon A\to B$ factors uniquely (up to isomorphism) as a composite $f=m\circ e$, where $e$ is stably in $\Ef$ and $m$ is a normal extension; moreover, this factorisation coincides with the $(\overline{\Ef},\overline{\Mf})$-factorisation of $f$.
\end{enumerate}
\end{theorem}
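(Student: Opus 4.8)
The plan is to leverage the machinery already assembled in Propositions \ref{protocentral} and \ref{inducedfactorisation}, together with Lemmas \ref{orthogonal} and \ref{stablekernel}, so that the three items fall out of a single factorisation statement. First I would establish the key identification: under conditions $(P)$ and $(N)$, an effective descent morphism $f\colon A\to B$ lies in $\overline{\Mf}$ (i.e.\ $K[f]\in\Fc$) if and only if it is a normal extension with respect to $\Gamma_{\Fc}$. One direction is exactly Proposition \ref{protocentral} (equivalence of $(1)$ and $(3)$); the content here is that \emph{every} morphism in $\overline{\Mf}$ with effective descent domain-to-codomain map is automatically a monadic extension, which I would get by noting that effective descent morphisms are stable under the relevant pullbacks and that $\overline{\Mf}$-morphisms among them are detected on kernels. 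So the class of normal extensions in $\EDM(\Ac)$ is precisely $\overline{\Mf}\cap\EDM(\Ac)$.

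Next I would prove item $(3)$. Take an effective descent morphism $f\colon A\to B$ and form its $(\overline{\Ef},\overline{\Mf})$-factorisation $f=m\circ e$, which exists by Proposition \ref{inducedfactorisation} since $(N)$ holds. By Lemma \ref{stablekernel}, $e\in\overline{\Ef}\subseteq \Ef'$, so $e$ is stably in $\Ef$. I must check that $m$ is itself an effective descent morphism: this follows because $e$, being a normal epimorphism (it is a cokernel, as in the proof of Proposition \ref{inducedfactorisation}) and in particular a regular epimorphism, is an effective descent morphism, and effective descent morphisms satisfy the cancellation property that if $m\circ e$ and $e$ are effective descent then so is $m$ (the ``right cancellation'' for effective descent morphisms, valid in any homological category). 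Hence $m\in\overline{\Mf}\cap\EDM(\Ac)$ is a normal extension by the identification above. Uniqueness up to isomorphism is the uniqueness of the $(\overline{\Ef},\overline{\Mf})$-factorisation together with the fact that any factorisation $f=m'\circ e'$ with $e'$ stably in $\Ef$ (hence in particular $F$-inverting, so $e'\in\Ef\subseteq\overline{\Mf}^{\uparrow}$ reading off kernels) and $m'$ a normal extension (hence in $\overline{\Mf}$) is automatically an $(\overline{\Ef},\overline{\Mf})$-factorisation once one observes $e'\in\overline{\Ef}$; here I would use that $e'$ stably in $\Ef$ forces $K[e']\in\Tc$ by pulling back $e'$ along $0\to I'$, which is the same argument as in Lemma \ref{stablekernel} run backwards.

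Item $(2)$, stability of normal extensions under composition, then follows immediately: normal extensions in $\EDM(\Ac)$ coincide with $\overline{\Mf}\cap\EDM(\Ac)$, the class $\overline{\Mf}$ is the right-hand class of a factorisation system and hence closed under composition, and $\EDM(\Ac)$ is closed under composition since effective descent morphisms are. For item $(1)$, the reflection of an object $f\in\EDM(\Ac)$ onto $\NExt_{\Fc}(\Ac)$ is the morphism $m$ in its $(\overline{\Ef},\overline{\Mf})$-factorisation $f=m\circ e$, with reflection unit the square $(e,1_B)\colon f\to m$ in $\Arr(\Ac)$; the universal property is precisely the orthogonality $\overline{\Ef}\downarrow\overline{\Mf}$ from Lemma \ref{orthogonal}, applied in the arrow category. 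One should check that the square $(e,1_B)$ really is a morphism in $\EDM(\Ac)$, i.e.\ that its components are compatible, which is automatic, and that $e$ lands us back inside $\EDM(\Ac)$ at the level of the domain map—again because $e$ is a normal, hence effective descent, epimorphism.

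The main obstacle I anticipate is the careful handling of effective descent morphisms: unlike regular epimorphisms in an exact category, in a merely homological category the class $\Ec$ of effective descent morphisms need not coincide with regular epimorphisms and one must be attentive to which closure properties are actually available—specifically the right-cancellation property used to conclude $m$ is an effective descent morphism, and the stability of effective descent morphisms under pullback along arbitrary morphisms (needed to make sense of monadic/central/normal extensions in \eqref{coveringdiagram}). These are standard facts about effective descent morphisms, but they are the linchpin ensuring that the purely formal factorisation-system argument, which lives in $\Ac$, transports correctly to the subcategory $\EDM(\Ac)$ of $\Arr(\Ac)$ and identifies $\NExt_{\Fc}(\Ac)$ with $\overline{\Mf}\cap\EDM(\Ac)$.
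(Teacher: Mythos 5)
Your overall strategy is the paper's own: identify $\NExt_{\Fc}(\Ac)$ with $\overline{\Mf}\cap\EDM(\Ac)$ via Proposition \ref{protocentral}, run everything through the $(\overline{\Ef},\overline{\Mf})$-factorisation system of Proposition \ref{inducedfactorisation}, and read off the three items. But there is a genuine gap at the point where you check that the middle object $m$ of the factorisation $f=m\circ e$ is an effective descent morphism. You argue that $e$, being a normal (hence regular) epimorphism, is itself an effective descent morphism, and then cancel. The implication ``regular epimorphism $\Rightarrow$ effective descent morphism'' holds in exact categories but fails in general regular, hence in general homological, categories --- this is precisely why the theorem is phrased in terms of effective descent morphisms in the first place, and you even flag the danger yourself before relying on the false containment (in $\Top^{\mathbb{T}}$, for instance, the effective descent morphisms are only the \emph{open} surjective homomorphisms). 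The correct repair, and what the paper uses, is the \emph{strong} right cancellation property of effective descent morphisms: if a composite $m\circ e$ is an effective descent morphism then so is $m$, with no hypothesis on $e$ whatsoever. With that substitution your arguments for (1) and (3) go through, and your worry about the unit $(e,1_B)$ ``landing in $\EDM(\Ac)$'' evaporates, since $\EDM(\Ac)$ is a full subcategory of $\Arr(\Ac)$ and only its objects, not the components of its morphisms, need to be effective descent morphisms.

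A second, smaller issue concerns uniqueness in (3). You reduce it to uniqueness of the $(\overline{\Ef},\overline{\Mf})$-factorisation by claiming that any factorisation $f=m'\circ e'$ with $e'$ stably in $\Ef$ and $m'$ normal has $e'\in\overline{\Ef}$; but membership in $\overline{\Ef}$ requires $e'$ to be a \emph{normal epimorphism}, which is not part of the hypothesis that $e'$ is stably in $\Ef$ (and your parenthetical $\Ef\subseteq\overline{\Mf}^{\uparrow}$ is not justified). The clean argument, and the paper's, is simply that $\Ef'\subseteq(\Mf^*)^{\uparrow}$, so any two such factorisations are compared by the usual orthogonality argument. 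Your treatment of (2) is fine and is in fact a legitimate alternative to the paper's: you get closure under composition from the right class $\overline{\Mf}$ of the factorisation system, whereas the paper argues directly with the short exact sequence $0\to K[f]\to K[g\circ f]\to K[g]\to 0$ and closure of $\Fc$ under extensions; note, though, that the latter uses only condition $(P)$, while your route also needs $(N)$.
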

\begin{proof}
$(1)$ By Propostion \ref{inducedfactorisation}, the full subcategory of $\Arr(\Ac)$ determined by the class $\overline{\Ef}$ is reflective in $\Arr(\Ac)$: the reflection of a morphism $f$ with $(\overline{\Ef},\overline{\Mf})$-factorisation $f=m\circ e$ is given by $m$, with unit $e$. To see that this reflection restricts to a reflection $\EDM(\Ac)\to\NExt_{\Fc}(\Ac)$, it suffices to consider Proposition \ref{protocentral} and to note that effective descent morphisms satisfy the strong right cancellation property.

$(2)$ follows from condition $(P)$ only: if $f\colon A\to B$ and $g\colon B\to C$ are normal extensions then $g\circ f$ is still an effective descent morphism, and there is a short exact sequence 
\[
\xymatrix{
0\ar[r] & K[f] \ar[r] & K[g\circ f] \ar[r] & K[g] \ar[r] & 0}
\]
with $K[f]$ and $K[g]$ torsion-free by Proposition \ref{protocentral}. Since $\Fc$ is closed under extensions, $K[g\circ f]$ is torsion-free as well, so that $g\circ f$ is a normal extension, again by Proposition \ref{protocentral}.


$(3)$ Let $f\colon A\to B$ be an effective descent morphism with $(\overline{\Ef},\overline{\Mf})$-factorisation $f=m\circ e$. Then $e$ is stably in $\Ef$ by Lemma \ref{stablekernel}, and it has already been remarked above in $(1)$ that $m$ is a normal extension. The uniqueness of this factorisation follows from the fact that $\Ef'\subseteq (\Mf^*)^{\uparrow}$.
\end{proof}

Before considering some examples, let us show that the assumption that the (normal epi)-reflection $F\colon \Ac\to \Fc$ is torsion-free in the above was crucial. Note that any split epimorphism, and in particular, any morphism $A\to 0$ is an effective descent morphism.

\begin{proposition}
Let $\Ac$ be a pointed category and $(\Ef,\Mf)$ the (pre)factorisation system on $\Ac$ associated with a given reflection $F\colon \Ac\to \Fc$ to a full replete subcategory $\Fc$. If, for every object $A\in\Ac$, the morphism $\tau\colon A\to 0$ admits a factorisation $\tau=m\circ e$ with $e\in \Ef'$ and $m\in \Mf^*$, then $F$ has stable units. 

Thus, in particular, $F$ has stable units whenever $(\Ef',\Mf^*)$ is a factorisation system. 
\end{proposition}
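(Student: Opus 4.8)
The plan is to prove that every reflection unit $\eta_A\colon A\to F(A)$ is stably in $\Ef$, which --- by the characterisation of stable units recalled in the preliminary section --- is exactly what it means for $F$ to have stable units. The strategy is to match the hypothesised factorisation $\tau=m\circ e$ of the terminal morphism $\tau\colon A\to 0$ against the canonical factorisation $\tau=\tau_{F(A)}\circ\eta_A$ through the unit, and to invoke the uniqueness of factorisations in the prefactorisation system $(\Ef,\Mf)$.

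First I would record two facts. \emph{Fact 1}: for every object $X\in\Fc$, the morphism $X\to 0$ lies in $\Mf$. This holds because, $X$ lying in $\Fc$, precomposition with any $F$-inverted morphism $e\colon U\to V$ induces a bijection $\hom_{\Ac}(V,X)\to\hom_{\Ac}(U,X)$ --- under the reflection isomorphisms it corresponds to precomposition with the isomorphism $F(e)$ --- and this bijectivity is precisely the orthogonality $e\downarrow(X\to 0)$ defining membership in $\Mf=\Ef^{\downarrow}$. \emph{Fact 2}: any $m\colon Y\to 0$ that lies in $\Mf^{*}$ already lies in $\Mf$. Indeed, by definition of $\Mf^{*}$ there is an effective descent morphism $p\colon E\to 0$ with $p^{*}(m)\in\Mf$; but since $0$ is terminal the unique morphism $s\colon 0\to E$ satisfies $p\circ s=1_{0}$, so $s$ is a section of $p$, whence $m$ is a pullback of $p^{*}(m)$ along $s$ and therefore lies in $\Mf$, the class $\Mf$ being pullback-stable.

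Then I would fix $A\in\Ac$ and take the given factorisation $\tau=m\circ e$, with $e\colon A\to I$ in $\Ef'$ and $m\colon I\to 0$ in $\Mf^{*}$. By Fact 2, $m\in\Mf$; and since $\Ef'\subseteq\Ef$, the pair $(e,m)$ is an $(\Ef,\Mf)$-factorisation of $\tau$. On the other hand $\tau=\tau_{F(A)}\circ\eta_A$, where $\eta_A\in\Ef$ (a reflector inverts its own units) and $\tau_{F(A)}\colon F(A)\to 0$ lies in $\Mf$ by Fact 1; so this too is an $(\Ef,\Mf)$-factorisation of $\tau$. By the uniqueness of factorisations in a prefactorisation system there is an isomorphism $\theta\colon I\to F(A)$ with $\theta\circ e=\eta_A$. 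Hence $\eta_A\in\Ef'$: any pullback of $\eta_A$ along a morphism $g\colon B\to F(A)$ coincides, via $\theta$, with the pullback of $e$ along $\theta^{-1}\circ g$, which lies in $\Ef$ because $e\in\Ef'$. As $A$ was arbitrary, $F$ has stable units. The last assertion is then immediate: if $(\Ef',\Mf^{*})$ is a factorisation system, then in particular every $\tau\colon A\to 0$ admits an $(\Ef',\Mf^{*})$-factorisation, so the hypothesis is met.

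I expect the only point requiring genuine attention to be Fact 2, and within it the routine check that a pullback of $p^{*}(m)$ along the section $s$ can be taken to be $m$ itself: writing $p^{*}(m)$ as the product projection $E\times Y\to E$ (this pullback being over the terminal object $0$), its pullback along any morphism into $E$ is again a product projection, so no limits are used beyond those already implicit in the definitions of $\Ef'$ and $\Mf^{*}$. Everything else is formal manipulation of orthogonality.
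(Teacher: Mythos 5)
Your proposal is correct and follows essentially the same route as the paper: compare the hypothesised $(\Ef',\Mf^*)$-factorisation of $A\to 0$ with the canonical factorisation through $\eta_A$, reduce to showing $m\in\Mf$, and obtain this by exhibiting $m$ as a pullback of $p^*(m)=E\times I\to E$ along the morphism $0\to E$. Your explicit verification that $F(A)\to 0\in\Mf$ (your Fact 1) is a small addition the paper leaves implicit, but the argument is otherwise identical.
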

\begin{proof}
Let $A$ be an object of $\Ac$ and $\tau=m\circ e$ the $(\Ef',\Mf^*)$-factorisation of the morphism $\tau\colon A\to 0$, i.e.~$e\in \Ef'$ and $m\in \Mf^*$. Remark that $\tau$ factorises, alternatively, as in the right hand triangle
\[
\xymatrix@=10pt{
 A \ar[rr]^{\tau} \ar[rd]_e && 0 &&&  A \ar[rr]^{\tau} \ar[rd]_{\eta_A} && 0  \\
& I \ar[ru]_{m}&&&&& F(A) \ar[ru] &}
\]
 (where $\eta_A\colon A\to F(A)$ is the reflection unit) and notice that $\eta_A\in \Ef$ and $F(A)\to 0\in \Mf$. If we can prove $\tau=e\circ m$ too is an $(\Ef,\Mf)$-factorisation, then both factorisations coincide (up to isomorphism) and it will follow that $\eta_A\in\Ef'$, as desired. Since $\Ef'\subseteq \Ef$ by definition, it will suffice to show that $m\in\Mf$. 
 
 Since $m\in \Mf^*$, there exists in $\Ac$ an effective descent morphism $E\to 0$ such that the product projection $\pi_E\colon E\times I\to E$ lies in $\Mf$. But this implies that also $m\in \Mf$, since $m$ appears as a pullback of $\pi_E$ in the diagram
\[
\xymatrix{
I \ar[d]_m \ar[r] \ar@{}[rd]|<<{\pullback} & E\times I \ar[d]_{\pi_E} \ar[r] \ar@{}[rd]|<<{\pullback} & I \ar[d]^m\\
0 \ar[r] & E \ar[r] & 0.}
\]
\end{proof}

\begin{examples}
\begin{enumerate}

\item
Recall from Example \ref{exproto}.\ref{exgroupoids} that any semi-abelian category appears, via the discrete equivalence relation functor $D\colon \Ac\to \Gpd(\Ac)$, as a torsion-free subcategory of the category $\Gpd(\Ac)$ of internal groupoids in $\Ac$. The corresponding torsion subcategory is the category $\Conn$ of connected groupoids (see \cite{EG}). The connected components functor $\pi_0\colon \Gpd(\Ac)\to\Ac$ is protoadditive, hence condition $(N)$ is satisfied by Lemma \ref{compositeisnormal }, since $\Gpd(\Ac)$ is semi-abelian.  

We already know from \cite{G,EG} that the normal extensions are precisely the regular epic discrete fibrations. Here we add that every regular epimorphism $f\colon A\to B$ in $\Gpd(\Ac)$ factorises, essentially uniquely, as $f=m\circ e$, where $e$ is a regular epimorphism with connected kernel and $m$ is a discrete fibration. Note that since $e\downarrow n$ for any $n\in \overline{\Mf}$, this is in particular true for any discrete fibration $n$. One says in this case that $e$ is \emph{final}, and the factorisation $f=m\circ e$ is the so-called \emph{comprehensive} factorisation of $f$ (see \cite{B}). 

\item
The torsion theory $(\TConn,\TPro)$ in the category $\TCom$ of compact $\mathbb T$-algebras, for $\mathbb T$ a semi-abelian theory, considered in Example \ref{exproto}.\ref{exdisc}, satisfies condition $(P)$, and therefore also $(N)$, since $\TCom$ is semi-abelian. Hence we obtain that the normal extensions are precisely the regular epimorphisms (=open surjective homomorphism) with a totally disconnected kernel, and any regular epimorphism $f$ of compact $\mathbb T$-algebras factorises as $f=m\circ e$, where $e$ is a regular epimorphism with a connected kernel, and $m$ a normal extension. 

If $\mathbb T$ is the theory of groups, then it is well known that as soon as the kernel of a continuous homomorphism $f\colon A\to B$ is connected (respectively, totally disconnected), then for \emph{any} element $b\in B$ the fibre $f^{-1}(b)$ over $b$ is connected (respectively, totally disconnected). This remains true for  $\mathbb T$ an arbitrary semi-abelian theory (see \cite{BC2}). Consequently, the factorisation $f=m\circ e$ of a regular epimorphism of compact $\mathbb T$-algebras obtained above is just the
classical monotone-light factorisation of the continuous map $f$.

\item  Recall from Example \ref{exproto}.\ref{exdisc} that the pair of categories $({\mathsf{Ind}}^{\mathbb T}, {\mathsf{Haus}}^{\mathbb T})$ of indiscrete semi-abelian algebras and of Hausdorff semi-abelian algebras forms an $\mathcal M$-hereditary  torsion theory in the category $\mathsf{Top}^{\mathbb T}$ of topological semi-abelian algebras, where $\mathcal M$ is the
class of \emph{regular} monomorphisms. 
By Theorem \ref{protoM} it follows that condition $(P)$ is satisfied, and then also $(N)$ is satisfied, as observed in Remark \ref{idealtopological}. 
The effective descent morphisms in ${\mathsf{Top}}^{\mathbb T}$ are the open surjective homomorphisms.
Accordingly, any open surjective homomorphism $f\colon A\to B$  factors as $f=m\circ e$, where $e$ is an open surjective homomorphism with an \emph{indiscrete} kernel, and $m$ is an open surjective homomorphism with a \emph{Hausdorff} kernel. 

\item The hereditary torsion theory ($\mathsf{NilCRng},\mathsf{RedCRng}$) in the semi-abelian category $\mathsf{CRng}$ of commutative rings (Example \ref{exproto}.\ref{rings}) satisfies both conditions $(P)$ and $(N)$. Consequently, any surjective homomorphism $f\colon A\to B$  in $\mathsf{CRng}$ factors as $f=m\circ e$, where $e$ is a surjective homomorphism with a \emph{nilpotent} kernel, and $m$ is a normal extension, namely a surjective homomorphism with a \emph{reduced} kernel.
\end{enumerate}
\end{examples}

\section{Derived torsion theories}\label{sectionderived}
In the previous section, a torsion theory $(\Tc,\Fc)$ in a homological category $\Ac$ satisfying conditions $(P)$ and $(N)$ was shown to induce a reflective subcategory $\NExt_{\Fc}(\Ac)$ of normal extensions (with respect to the corresponding absolute Galois structure $\Gamma_{\Fc}$) in the category  of effective descent morphisms $\EDM(\Ac)$.  
We shall prove in the present section that $\NExt_{\Fc}(\Ac)$ is, in fact, a torsion-free subcategory of $\EDM(\Ac)$. This implies that $\NExt_{\Fc}(\Ac)$ itself determines an admissible Galois structure $\Gamma_{\NExt_{\Fc}(\Ac)}$. As we shall see, $\Gamma_{\NExt_{\Fc}(\Ac)}$ in its turn gives rise to a torsion theory in the category of \emph{double extensions}, as defined below, whose torsion-free part consists of those double extensions that are normal with respect to it. Continuing, we shall obtain a chain of torsion theories in the categories of \emph{$n$-fold extensions} ($n\geq 1$)---and, accordingly, also a chain of admissible Galois structures---whose torsion-free part consists of the $n$-fold extensions that are normal with respect to the previous Galois structure in the chain. We shall call these induced torsion theories \emph{derived torsion theories} of $(\Tc,\Fc)$.

The thus obtained chain of Galois structures should be compared to the Galois structures of so-called ``higher central extensions''  which have played a central role in recent developments in non-abelian homological algebra (see, in particular \cite{EGV}), and which will be considered in the next sections. It will become clear in what follows that if the category $\Ac$ is semi-abelian, and the torsion-free subcategory $\Fc$ is closed in $\Ac$ under regular quotients (in this case one speaks of a \emph{cohereditary} torsion theory), then the torsion-free parts of the derived torsion theories are exacty the categories of higher central extensions.



We begin by proving that \emph{any} torsion theory $(\Tc,\Fc)$ in a homological category $\Ac$ satisfying condition $(N)$ (but not necessarily $(P)$) induces a chain of torsion theories $(\Tc_n,\Fc_n)$ in the categories $\Arrn(\Ac)$ ($n\geq 1$). These will then be shown to restrict to the derived torsion theories in the categories of $n$-fold extensions mentioned above, when $(\Tc,\Fc)$ moreover satisfies condition $(P)$, and every regular epimorphism in $\Ac$ is an effective descent morphism. 

Since, by Proposition \ref{inducedfactorisation}, any torsion theory satisfying condition $(N)$ induces a stable factorisation system $(\overline{\Ef},\overline{\Mf})$ such that every $e\in\overline{\Ef}$ is a normal epimorphism,  it is natural to consider the following lemma:

\begin{lemma}\label{inducedtt}
Let $\Ac$ be a pointed category with kernels of normal epimorphisms. Any stable factorisation system $(\Ef,\Mf)$ on $\Ac$ for which $\Ef$ is contained in the class of normal epimorphisms induces a torsion theory $(\Tc_{\Ef},\Fc_{\Mf})$ in $\Arr(\Ac)$.  Here $\Fc_{\Mf}$ is the full subcategory of $\Arr(\Ac)$ determined by $\Mf$, and  $\Tc_{\Ef}$ the full subcategory of $\Arr(\Ac)$ consisting of all $e\in \Ef$ of the form $e\colon T\to 0$.
\end{lemma}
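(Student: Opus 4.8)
The plan is to verify directly the two defining axioms of a torsion theory for the pair $(\Tc_{\Ef},\Fc_{\Mf})$ inside the pointed category $\Arr(\Ac)$, whose zero object is the identity on the zero object of $\Ac$. Fullness of the two subcategories is part of their definition, and repleteness holds because both $\Ef$ and $\Mf$, being the two classes of a (pre)factorisation system, are closed under isomorphisms of arrows. Note also that both $\Tc_{\Ef}$ and $\Fc_{\Mf}$ contain the zero object, as it is an isomorphism and hence lies in $\Ef\cap\Mf$.

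First I would treat the orthogonality axiom $\hom_{\Arr(\Ac)}(S,M)=\{0\}$ for $S\in\Tc_{\Ef}$ and $M\in\Fc_{\Mf}$. Writing $S=(T\to 0)$ with $T\to 0$ in $\Ef$ and $M=(m\colon M_1\to M_0)$ with $m$ in $\Mf$, a morphism $S\to M$ in $\Arr(\Ac)$ is a commutative square with left edge $T\to 0$, right edge $m$, top edge some $t\colon T\to M_1$ and bottom edge the zero morphism $0\to M_0$. Since $(T\to 0)\downarrow m$, there is a diagonal $d\colon 0\to M_1$ with $d\circ(T\to 0)=t$; but $0$ is initial, so $d=0$ and hence $t=0$. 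Thus the only morphism $S\to M$ is the zero morphism.

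Next I would produce, for an arbitrary object $f\colon A\to B$ of $\Arr(\Ac)$, a short exact sequence $0\to S\to f\to M\to 0$ with $S\in\Tc_{\Ef}$ and $M\in\Fc_{\Mf}$. Take the $(\Ef,\Mf)$-factorisation $f=m\circ e$, with $e\colon A\to I$ in $\Ef$ and $m\colon I\to B$ in $\Mf$. Since $\Ef$ consists of normal epimorphisms, $e$ admits a kernel $t\colon T\to A$ — this is the only place the standing hypothesis that $\Ac$ has kernels of normal epimorphisms is used — and $e=\coker(t)$, because a normal epimorphism is always the cokernel of its kernel whenever that kernel exists. Hence $0\to T\stackrel{t}{\longrightarrow}A\stackrel{e}{\longrightarrow}I\to 0$ is a short exact sequence in $\Ac$, and I claim that
\[
0\longrightarrow (T\to 0)\stackrel{(t,0)}{\longrightarrow} (f\colon A\to B)\stackrel{(e,1_B)}{\longrightarrow}(m\colon I\to B)\longrightarrow 0
\]
is a short exact sequence in $\Arr(\Ac)$: since kernels and cokernels in $\Arr(\Ac)$ are computed componentwise (whenever they exist), this sequence restricts on domains to the short exact sequence just exhibited and on codomains to the trivial short exact sequence $0\to 0\to B\stackrel{1_B}{\longrightarrow}B\to 0$. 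Finally $(m\colon I\to B)\in\Fc_{\Mf}$ because $m\in\Mf$, and $(T\to 0)\in\Tc_{\Ef}$ because $T\to 0$ is the pullback of $e\in\Ef$ along $0\to I$ and $\Ef$ is pullback-stable, the factorisation system being \emph{stable}; this is the only essential use of stability.

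I do not expect a genuine obstacle: the argument is in the end careful bookkeeping. The one point deserving attention is the computation of (co)kernels in $\Arr(\Ac)$ and the verification that all those needed actually exist — the cokernel of $t$ exists because $e$ is already a normal epimorphism realising it, the kernel of $e$ exists by the hypothesis on $\Ac$, and everything else is a (co)kernel of an identity or of a zero morphism. One can moreover observe that the factorisation $f=m\circ e$, being essentially unique, depends functorially on $f$, so that the displayed short exact sequence is natural in $f$ and the associated radical on $\Arr(\Ac)$ sends $f$ to $(T\to 0)$; this is not needed for the statement, but it is the form in which the lemma will be applied.
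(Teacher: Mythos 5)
Your proof is correct and follows essentially the same route as the paper's: the short exact sequence in $\Arr(\Ac)$ built from the $(\Ef,\Mf)$-factorisation $f=m\circ e$, with $K[e]\to 0$ placed in $\Ef$ by pulling back $e$ along $0\to I$, and orthogonality deduced from $(T\to 0)\downarrow m$ together with the initiality of $0$. The extra remarks on repleteness, the existence of the relevant (co)kernels, and functoriality are harmless elaborations of points the paper leaves implicit.
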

\begin{proof}
Let $f$ be a morphism $f\colon A\to B$ in $\Ac$ with $(\Ef,\Mf)$-factorisation $f=m\circ e$. Since, by assumption, $e$ is a normal epimorphism, the following diagram     
is a short exact sequence in $\Arr(\Ac)$:
\[
\xymatrix{
0 \ar[r] & K[e]  \ar[d] \ar[r] & A \ar[r] \ar[r]^-{e} \ar[d]_f & C\ar[d]^{m} \ar[r] & 0\\
0 \ar[r] & 0 \ar[r] & B \ar@{=}[r] & B \ar[r] & 0.}
\]
 Moreover, the morphism $K[e]\to 0$ lies in $\Ef$ since it is the pullback of $e$ along $0\to C$; and $m$ lies in $\Mf$, by assumption. 

Now consider a morphism $t\to f$ in $\Arr(\Ac)$ with $t\colon T\to 0$ in $\Ef$ and $f\colon A\to B$ in $\Mf$:
\[
\xymatrix{
  T \ar[r] \ar[d]_t & A \ar[d]^f\\
  0  \ar@{.>}[ru] \ar[r] & B.}
\]
Since $t\downarrow f$ there exists the dotted arrow making the diagram commutative, and it follows that $t\to f$ is the zero morphism, as desired. We can conclude that $(\Tc_{\Ef},\Fc_{\Mf})$ is a torsion theory in $\Arr(\Ac)$.
\end{proof}

Combining Proposition \ref{inducedfactorisation} with Lemma \ref{inducedtt}, we obtain the following:
\begin{proposition}\label{firstderivedtt}
Let $\Ac$ be a homological category. Any torsion theory $(\Tc,\Fc)$ in~$\Ac$ satisfying condition $(N)$ induces a torsion theory $(\Tc_1,\Fc_1)$ in $\Arr(\Ac)$ which again satisfies $(N)$. Here $\Fc_1$ is the full subcategory of $\Arr(\Ac)$ consisting of all morphisms $f$ with $K[f]\in\Fc$, and $\Tc_1$ is the full subcategory of $\Arr(\Ac)$ of all morphisms $T\to 0$ with $T\in\Tc$. 

If $(\Tc,\Fc)$ satisfies, besides condition $(N)$, also condition $(P)$, then $(\Tc_1,\Fc_1)$ satisfies $(P)$ as well.
\end{proposition}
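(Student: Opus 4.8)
The first assertion is an immediate combination of earlier results, which I would settle at the outset. By Proposition~\ref{inducedfactorisation}, condition $(N)$ for $(\Tc,\Fc)$ makes $(\overline{\Ef},\overline{\Mf})$ into a stable factorisation system on $\Ac$ whose left class consists of normal epimorphisms, and Lemma~\ref{inducedtt} then produces from it a torsion theory $(\Tc_{\overline{\Ef}},\Fc_{\overline{\Mf}})$ in $\Arr(\Ac)$. Unravelling the definitions identifies this with $(\Tc_1,\Fc_1)$: an object $e\colon T\to 0$ of $\Arr(\Ac)$ lies in $\Tc_{\overline{\Ef}}$ precisely when it is a normal epimorphism with $K[e]=T\in\Tc$, i.e.~precisely when $T\in\Tc$, while an object $f$ lies in $\Fc_{\overline{\Mf}}$ precisely when $K[f]\in\Fc$. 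I would also recall here that $\Arr(\Ac)$ is again homological, so that it is meaningful to ask whether $(\Tc_1,\Fc_1)$ satisfies conditions $(N)$ and $(P)$, and so that Theorem~\ref{protoM} applies to it.

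For condition $(N)$, the essential point is that kernels, cokernels and hence normal monomorphisms, as well as the radical $T_1$ associated with $\Fc_1$ together with its counit, are all computed ``componentwise'' in $\Arr(\Ac)$. Indeed, the description of the $(\overline{\Ef},\overline{\Mf})$-factorisation used in the proof of Proposition~\ref{inducedfactorisation} shows that $T_1(h)=(T(K[h])\to 0)$ for an object $h\colon C_1\to C_0$ of $\Arr(\Ac)$, with counit $T_1(h)\to h$ given in the codomain by $0\to C_0$ and in the domain by $\ker(h)\circ t_{K[h]}\colon T(K[h])\to C_1$. Now let $\phi=(\phi_1,\phi_0)\colon f\to g$ be a morphism of $\Arr(\Ac)$, with $f\colon A_1\to A_0$ and $g\colon B_1\to B_0$. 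Then $K[\phi]$ is the restriction $\overline f\colon K[\phi_1]\to K[\phi_0]$ of $f$ to kernels, and $\ker(\phi)\circ t_{K[\phi]}\colon T_1(K[\phi])\to f$ has codomain-component $0\to A_0$, which is trivially a normal monomorphism, and domain-component the composite of the counit $t_{K[\overline f]}\colon T(K[\overline f])\to K[\overline f]$ with $\ker(\overline f)\colon K[\overline f]\to K[\phi_1]$ and $\ker(\phi_1)\colon K[\phi_1]\to A_1$. The key observation is that $\ker(\phi_1)\circ\ker(\overline f)$ exhibits $K[\overline f]$ as the kernel of the morphism $(\phi_1,f)\colon A_1\to B_1\times A_0$ --- an element of $A_1$ lies in $K[\overline f]$ exactly when it is annihilated by both $\phi_1$ and $f$ --- so that this domain-component coincides with $\ker((\phi_1,f))\circ t_{K[(\phi_1,f)]}$, a normal monomorphism by condition $(N)$ for $(\Tc,\Fc)$ applied to $(\phi_1,f)$. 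Since a morphism of $\Arr(\Ac)$ is a normal monomorphism as soon as both its components are --- normal monomorphisms being kernels of their cokernels, and both kernels and cokernels being computed componentwise --- this yields condition $(N)$ for $(\Tc_1,\Fc_1)$.

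For condition $(P)$, I would appeal to Theorem~\ref{protoM} in the homological category $\Arr(\Ac)$: it suffices to show that $\Tc_1$ is hereditary with respect to the class of protosplit monomorphisms of $\Arr(\Ac)$. Split short exact sequences in $\Arr(\Ac)$ are again computed componentwise, so a protosplit monomorphism $\kappa\colon h\to u$ with $u=(T\to 0)\in\Tc_1$ has codomain-component a monomorphism $H_0\to 0$, forcing $H_0=0$, and domain-component a protosplit monomorphism $\kappa_1\colon H_1\to T$ of $\Ac$ with $T\in\Tc$, where $h\colon H_1\to H_0$. Condition $(P)$ for $(\Tc,\Fc)$ --- equivalently, by Theorem~\ref{protoM}, the closure of $\Tc$ under protosplit subobjects --- then gives $H_1\in\Tc$, whence $h=(H_1\to 0)\in\Tc_1$, as required.

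The only step that is not pure bookkeeping is the identification of the relevant component of $\ker(\phi)\circ t_{K[\phi]}$ with $\ker((\phi_1,f))\circ t_{K[(\phi_1,f)]}$: one has to recognise that condition $(N)$ for $(\Tc,\Fc)$ should be applied not to $f$ or to $\phi_1$ separately, but to the combined morphism $(\phi_1,f)\colon A_1\to B_1\times A_0$. Everything else rests on the standard fact that $\Arr(\Ac)$ inherits all the relevant structure from $\Ac$ componentwise.
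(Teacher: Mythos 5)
Your proposal is correct and follows essentially the same route as the paper: the torsion theory $(\Tc_1,\Fc_1)$ is obtained by combining Proposition~\ref{inducedfactorisation} with Lemma~\ref{inducedtt}, condition $(N)$ is verified via the identification $K[\overline f]=K[\phi_1]\cap K[f]=K[(\phi_1,f)\colon A_1\to B_1\times A_0]$ and an application of $(N)$ for $(\Tc,\Fc)$ to the pairing $(\phi_1,f)$, and condition $(P)$ is checked through the $\MC$-hereditariness characterisation of Theorem~\ref{protoM} applied componentwise. Your write-up merely makes explicit the step the paper dismisses as ``clear'' (closure of $\Tc_1$ under protosplit subobjects), which is a faithful elaboration rather than a different argument.
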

\begin{proof}
By Proposition \ref{inducedfactorisation}, $(\Tc,\Fc)$ induces a factorisation system $(\overline{\Ef},\overline{\Mf})$, which in its turn gives rise to a torsion theory $(\Tc_{\overline{\Ef}},\Fc_{\overline{\Mf}})$, by Lemma \ref{inducedtt}. It follows immediately from the definitions that $(\Tc_1,\Fc_1)=(\Tc_{\overline{\Ef}},\Fc_{\overline{\Mf}})$.

To see that the torsion theory $(\Tc_1,\Fc_1)$ satisfies condition $(N)$, consider a morphism
\[
\xymatrix{
 A \ar[r]^f\ar[d]_a & B \ar[d]^b \\ 
 A' \ar[r]_{f'} & B'.}
\]
in $\Arr(\Ac)$  with kernel $k\colon K[f]\to K[f']$. Then 
\[
K[k]=K[a]\cap K[f]=K\big[(a,f)\colon A\to A'\times B\big]
\]
so that $\ker (k)\circ t_K \colon T(K[k])\to A$ is a normal monomorphism by condition $(N)$ of $(\Tc,\Fc)$. Hence, the induced morphism $T_1(k)\to a$ is a normal monomorphism in $\Arr(\Ac)$, and it follows that $(\Tc_1,\Fc_1)$ satisfies condition $(N)$.

Clearly, if the torsion theory $(\Tc,\Fc)$ is $\MC$-hereditary, for $\MC$ the class of protosplit monomorphisms (i.e.~if it satisfies condition $(P)$), then so is $(\Tc_1,\Fc_1)$. 
\end{proof}

By repeatedly applying the above proposition, one obtains, for every $n\geq 1$, a torsion theory $(\Tc_n,\Fc_n)$ in the (homological) category $\Arrn(\Ac)$ of $n$-fold morphisms in $\Ac$. We shall write $F_n$ for the reflection $\Arrn(\Ac)\to\Fc_n$, $T_n$ for the coreflection $\Arrn(\Ac)\to\Tc_n$, $\eta^{n}$ for the unit $1_{\Arrn(\Ac)}\Rightarrow F_n$ and $t^n$ for the counit  $1_{\Arrn(\Ac)}\Rightarrow T_n$.


Note that an $n$-fold morphism $A$ in $\Ac$ (for $n\geq 1$) determines a commutative $n$-dimensional cube in $\Ac$. We shall sometimes write $a_i$ ($1\leq i\leq n$) for the ``initial'' ribs of this cube. We denote by $\iota$ the functor $\Ac\to \Arr(\Ac)$ that sends an object $A\in\Ac$ to the unique morphism $A\to 0$. For $n\geq 1$, we write $\iota^n$ for the composite functor $\iota\circ \dots \circ\iota\colon \Ac\to\Arrn(\Ac)$. 




As before, we denote by $\EDM(\Ac)$ the category of effective descent morphisms in $\Ac$. If $(\Tc,\Fc)$ satisfies conditions $(P)$ and $(N)$, then by Proposition \ref{protocentral} (and by the strong right cancellation property of effective descent morphisms) the reflection $F_1\colon \Arr(\Ac)\to\Fc_1$ restricts to a reflection $\EDM(\Ac)\to\NExt_{\Fc}(\Ac)$, where $\NExt_{\Fc}(\Ac)$ is the category of normal extensions with respect to $\Gamma_{\Fc}$. We shall prove below that this is still a torsion-free reflection and, moreover, that also for $n\geq 2$, the categories $\Fc_n$ restrict to suitably defined torsion-free subcategories $\NExt^n_{\Fc}(\Ac)$ of the categories of \emph{$n$-fold extensions}, of which we now recall the definition. 

If $\Ec$ is a class of morphisms in $\Ac$, then we shall write $\Ec^1$ for the class of morphisms in $\Arr(\Ac)$ defined as follows: a morphism $(f,f')\colon a\to b$ in $\Arr(\Ac)$ lies in $\Ec^1$ if every morphism in the commutative diagram 
\[
\vcenter{\xymatrix{ 
A  \ar@/^/@{->}[drr]^{f} \ar@/_/@{->}[drd]_{a} \ar@{.>}[rd]^r \\
& P\ar@{}[rd]|<{\pullback} \ar@{.>}[r] \ar@{.>}[d] & B \ar@{.>}@{->}[d]^{b} \\
& A' \ar@{->}[r]_{f'} & B'}}
\]
lies in $\Ec$. Here,  $r$ is the unique factorisation to the pullback $P=A'\times_{B'} B$.

\begin{remark}\label{exactmaltsev}
Note that if $\Ec$ is a class of regular epimorphisms in a regular category $\Ac$, then any commutative square in $\Ac$ of morphisms in $\Ec$ is a pushout as soon as it is a pullback. Consequently, any element of $\Ec^1$ is a pushout square.  If we choose $\Ec$ to be the class of \emph{all} regular epimorphisms in the regular category $\Ac$, then also the converse holds---every pushout square of morphisms in $\Ec$ lies in $\Ec^1$---if and only if $\Ac$ is an exact Mal'tsev category \cite{CKP} (recall that a Mal'tsev category is one where every (internal) reflexive relation is an (internal) equivalence relation). Hence, the converse holds in particular if $\Ac$ is a semi-abelian category.
\end{remark}

Let $\Ec$ be a class of morphisms in $\Ac$. Call \emph{$\Ec$-extensions} the elements $f\in\Ec$, and write $\Ext_{\Ec}(\Ac)$ for the full subcategory of $\Arr(\Ac)$ determined by $\Ec$.  Then $\Ec$ induces a class $\Ec^1$ of double morphisms defined as above, whose elements will be called \emph{double $\Ec$-extensions}. The corresponding full subcategory of $\Arr^2(\Ac)$ will be denoted by $\Ext^2_{\Ec}(\Ac)$. Inductively, $\Ec$ determines, for \emph{any} $n\geq 1$, a class of morphisms $\Ec^n=(\Ec^{n-1})^1$ in $\Arr^n(\Ac)$, the elements of which we call \emph{$(n+1)$-fold $\Ec$-extensions}. We write $\Ext^{n+1}_{\Ec}(\Ac)$ for the corresponding full subcategory of $\Arr^{n+1}(\Ac)$.  

Our main interest is in the situation where $\Ec$ is the class of all normal epimorphisms in a homological category $\Ac$ in which every normal epimorphism is an effective descent morphism. We shall usually denote this class by $\Nc$. In this case, $\Ec=\Nc$ satisfies the list of conditions below (see \cite{Ev}). Here we write  $\Ac_{\Ec}$ for the full subcategory of $\Ac$ determined by the objects $A\in\Ac$ for which there exists in $\Ec$ at least one arrow $f:A\to B$ or one arrow $g:C\to A$. Note that if $\Ec=\Nc$, then we have that $\Ac_{\Ec} =\Ac$.

\begin{conditions}\label{extension}
 On a class $\Ec$ of morphisms in a finitely complete pointed category $\Ac$ we consider the following conditions: 
  \begin{enumerate}
  \item
  every $f\in\Ec$ is a normal epimorphism;
  \item 
  $\Ec$ contains all isomorphisms in $\Ac_{\Ec}$, and $0\in \Ac_{\Ec}$;
  \item
  $\Ec$ is closed under pulling back (in $\Ac$) along morphisms in $\Ac_{\Ec}$;
  \item
  $\Ec$  is closed under composition, and if a composite $g\circ f$ of morphisms in $\Ac_{\Ec}$ is in $\Ec$, then also $g\in\Ec$;
  \item\label{kernelextension}
$\Ec$ is  completely determined by the class of objects $\Ac_{\Ec}$ in the following way: a normal epimorphism $f\colon A\to B$ is in $\Ec$ if and only if both its domain $A$ and its kernel $K[f]$ lie in $\Ac_{\Ec}$;
\item   
For any morphism 
\[
\xymatrix{
0\ar[r] & K \ar[r] \ar[d]_k & A \ar[r] \ar[d]_a & B \ar@{=}[d] \ar[r] & 0\\
0 \ar[r] & L \ar[r] & C \ar[r] & B\ar[r] & 0,}
\]
of short exact sequences in $\Ac$, one has: if $k\in\Ec$ and $a$ lies in $\Ac_{\Ec}$, then $a\in\Ec$.
 \end{enumerate} 
 \end{conditions} 

\begin{remark}\label{remarksplit}
An important consequence of conditions (2) and (4) above is that $\Ec$ contains all split epimorphisms in $\Ac_{\Ec}$.
\end{remark}

 We have the following lemma:
\begin{lemma}\label{up}\cite{Ev}
If $\Ec$ is a class of morphisms in a homological category $\Ac$ satisfying Conditions \ref{extension}, then the class $\Ec^1$ of morphisms in $\Arr(\Ac)$ satisfies Conditions \ref{extension} as well.
\end{lemma}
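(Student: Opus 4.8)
The plan is to verify each of the six Conditions~\ref{extension} for $\Ec^1$ directly, exploiting that $\Ec$ satisfies them and that a morphism $(f,f')\colon a\to b$ in $\Arr(\Ac)$ lies in $\Ec^1$ precisely when the three comparison maps $r\colon A\to P$, $P\to B$ and $A'\to B'$ all lie in $\Ec$, where $P=A'\times_{B'}B$. A recurring tool will be the observation that the outer arrows $f\colon A\to B$ and $f'\colon A'\to B'$ automatically lie in $\Ec$ when $(f,f')\in\Ec^1$, since $f=(P\to B)\circ r$ and $f'=A'\to B'$ are composites/members of $\Ec$ by closure under composition; conversely one controls $r$ via the cancellation property in Condition~\ref{extension}(4). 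Throughout I would use that $(\Arr(\Ac))_{\Ec^1}=\Arr(\Ac_{\Ec})$, which follows from Condition~\ref{extension}(5) for $\Ec$, so that ``lying in $(\Arr(\Ac))_{\Ec^1}$'' just means both source and target objects are arrows between objects of $\Ac_{\Ec}$.

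First I would dispatch the easy conditions. Condition (1): every element of $\Ec^1$ is a normal epimorphism in $\Arr(\Ac)$ because normal epimorphisms in $\Arr(\Ac)$ are computed componentwise in the homological category $\Arr(\Ac)$, and both components $f$, $f'$ of a $(f,f')\in\Ec^1$ lie in $\Ec$, hence are normal epis in $\Ac$. Condition (2) is immediate: an isomorphism in $\Arr(\Ac_{\Ec})$ has all comparison maps isomorphisms, which lie in $\Ec$ by Condition~(2) for $\Ec$. For Condition (3), given a double extension $(f,f')$ and a map into its codomain in $\Arr(\Ac_{\Ec})$, the pullback in $\Arr(\Ac)$ is again computed componentwise; one then checks that the three comparison maps of the pulled-back square are pullbacks (in $\Ac$) of the original three comparison maps along morphisms of $\Ac_{\Ec}$ — this is a routine diagram chase using the pasting lemma for pullbacks — and invokes Condition~(3) for $\Ec$. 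Condition (4) (closure under composition plus the cancellation ``$g\circ f\in\Ec^1$ and $f$ in the subcategory $\Rightarrow g\in\Ec^1$'') is again handled componentwise, together with the analogous statement for the comparison maps; the cancellation for the $r$-components is the one genuinely fiddly point, and it is settled by factoring the comparison map of $g\circ f$ through that of $g$ and applying the cancellation clause of Condition~(4) for $\Ec$.

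The substantive part is Conditions (5) and (6), and I expect Condition (6) to be the main obstacle. For Condition (5) I must show: a componentwise normal epi $(f,f')\colon a\to b$ in $\Arr(\Ac)$ lies in $\Ec^1$ iff both its domain $a$ and its kernel $\ker(f,f')$ lie in $(\Arr(\Ac))_{\Ec^1}=\Arr(\Ac_{\Ec})$. The direction ``$\Rightarrow$'' is clear since $\Ec^1$-extensions have the stated closure built in. For ``$\Leftarrow$'', knowing the domain arrow $a\colon A\to A'$ and the kernel arrow $K[f]\to K[f']$ are arrows in $\Ac_{\Ec}$, I would identify $K[r]$ (the kernel of the comparison map $r\colon A\to P$) with the kernel of the induced map $K[f]\to K[f']$, so $K[r]\in\Ac_{\Ec}$; then $A\in\Ac_{\Ec}$ together with $r$ a normal epi (it is, since it is a pullback-stable construction out of the normal epi data, or directly: $r$ is a regular epi because $\Ac$ is Mal'tsev and the square is a double extension in the weak sense) gives $r\in\Ec$ by Condition~(5) for $\Ec$; the other two comparison maps $P\to B$ and $A'\to B'$ are handled similarly, using that their kernels are $K[f]$ and $K[f']$ respectively. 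For Condition (6), given a morphism of short exact sequences in $\Arr(\Ac)$ with the ``$k$-leg'' a double extension and the middle object-arrow in $\Arr(\Ac_{\Ec})$, I would pass to the underlying $3$-cube of short exact sequences in $\Ac$, read off three morphisms of short exact sequences in $\Ac$ (one for each comparison map), and apply Condition~(6) for $\Ec$ to each; the delicate bookkeeping is checking that the ``$k$-leg'' of each of these three sequences is indeed the corresponding comparison map of the original double-extension leg, for which I would use the $3\times 3$ lemma in the homological category $\Ac$ to organise the nine short exact sequences involved. This last compatibility — matching kernels of comparison maps across a cube — is where the proof is least mechanical, but it is exactly the kind of computation that the axiomatic framework of \cite{Ev} is designed to make go through, so I would expect it to close without surprises. (In fact this is Lemma~\ref{up}, quoted from \cite{Ev}, so one may simply cite it; the sketch above indicates why it is true.)
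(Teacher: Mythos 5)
The paper itself gives no proof of this lemma---it is simply quoted from \cite{Ev}---so there is no argument of the authors to compare yours against line by line; but your sketch contains a genuine error that would derail it. You assert that $(\Arr(\Ac))_{\Ec^1}=\Arr(\Ac_{\Ec})$, i.e.\ that an object of $\Arr(\Ac)$ lies in $(\Arr(\Ac))_{\Ec^1}$ as soon as its domain and codomain lie in $\Ac_{\Ec}$. This is false: by the definition of $\Ec^1$, \emph{every} arrow of the defining diagram---including the two vertical arrows $a$ and $b$---must lie in $\Ec$, so the objects of $\Arr(\Ac)$ admitting a double $\Ec$-extension into or out of them are exactly the $\Ec$-extensions, that is, $(\Arr(\Ac))_{\Ec^1}=\Ext_{\Ec}(\Ac)$ (as the paper records immediately after the lemma). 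For the same reason your reduction of ``$(f,f')\in\Ec^1$'' to the three maps $r$, $P\to B$ and $f'$ lying in $\Ec$ is incomplete: it does not force $a,b\in\Ec$ (take $f=f'=\mathrm{id}_{\Z}$ and $a=b$ multiplication by $2$ in $\Ab$, with $\Ec$ the surjections).

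This misidentification is not cosmetic; it changes the content of Conditions (2) and (5) for $\Ec^1$ and makes your versions of them false. Condition (5) for $\Ec^1$ must read: a normal epimorphism $(f,f')\colon a\to b$ of $\Arr(\Ac)$ lies in $\Ec^1$ if and only if its domain $a$ \emph{is an $\Ec$-extension} and its kernel $K[f]\to K[f']$ \emph{is an $\Ec$-extension}. Merely asking these to be arrows between objects of $\Ac_{\Ec}$, as you do, is not enough: a commutative square of surjections of groups that is not a regular pushout has all four objects in $\Ac_{\Ec}$ and its kernel is an arrow of $\Ac_{\Ec}$, yet the comparison $r$ to the pullback is not even an epimorphism. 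The corrected hypothesis is also exactly what rescues your claim that $r$ is a normal epimorphism---your stated reason (``$\Ac$ is Mal'tsev and the square is a double extension in the weak sense'') is not an argument in a merely homological category. The right argument applies the short five lemma to the morphism of short exact sequences from $K[f]\to A\to B$ to $K[f']\to P\to B$ with identity on $B$: the middle map $r$ is a normal epimorphism precisely because the kernel restriction $K[f]\to K[f']$ is one, which is part of the (corrected) hypothesis. With $(\Arr(\Ac))_{\Ec^1}$ replaced by $\Ext_{\Ec}(\Ac)$ throughout, the rest of your componentwise strategy is essentially the standard argument of \cite{Ev}.
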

Note that we have that $(\Arr(\Ac))_{\Ec^1}=\Ext_{\Ec}(\Ac)$. 

Hence, inductively, for any $n\geq 1$, the class $\Ec^n$ of $n$-fold $\Ec$-extensions satisfies Conditions \ref{extension} as soon as this is the case for $\Ec$, and we have that $(\Arr^n(\Ac))_{\Ec^n}=\Ext^{n}_{\Ec}(\Ac)$ (where $\Ec^0=\Ec$, $\Ext_{\Ec}^1(\Ac)=\Ext_{\Ec}(\Ac)$ and $\Arr^1(\Ac)=\Arr(\Ac)$).
 
 \begin{remark}
 Condition \ref{extension}.6 is of importance for Lemma \ref{up}, but shall otherwise not be needed in what follows.
 \end{remark}
 
Let us then show that the torsion theories $(\Tc_n,\Fc_n)$ restrict to torsion theories in the categories $\Ext^n_{\Nc}(\Ac)$ (for $\Nc$ the class of all normal epimorphisms in $\Ac$), where the torsion-free parts consist of what we shall call \emph{$n$-fold normal extensions}. For this, we consider the following lemmas.

For an (internal) equivalence relation $R$ on an object $A$ in $\Ac$, we write $\DiscFib(R)$ for the category of \emph{discrete fibrations} over $R$, i.e.~of morphisms  
\[
\xymatrix{
R' \ar@<.8 ex>[r]^{\pi_1'}  \ar@<-.8 ex>[r]_{\pi_2'} \ar[d]_r & A' \ar[d]^a \\
R \ar@<.8 ex>[r]^{\pi_1}  \ar@<-.8 ex>[r]_{\pi_2} & A,}
\]
of equivalence relations in $\Ac$ into $R$, such that the commutative square $a\circ \pi_2'=\pi_2\circ r$ is a pullback.  

\begin{lemma}\label{descentlemma}
Let $\Ec$ be a class of morphisms in a homological category $\Ac$, satisfying Conditions \ref{extension}, and let $p\in\Ec$ be an effective descent morphism in $\Ac$. Then $p$ is a monadic extension (with respect to $\Ec$) in $\Ac_{\Ec}$.
\end{lemma}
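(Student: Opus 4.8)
The plan is to deduce monadicity of the restricted functor $p^{*}\colon(\Ac\downarrow_{\Ec}B)\to(\Ac\downarrow_{\Ec}E)$ from the monadicity of $p^{*}\colon(\Ac\downarrow B)\to(\Ac\downarrow E)$, which is exactly the hypothesis that $p$ is an effective descent morphism in $\Ac$. I would begin by noting that $(\Ac\downarrow_{\Ec}B)$ is just the full subcategory of $(\Ac\downarrow B)$ on the morphisms of $\Ec$ with codomain $B$ (no restriction of hom-sets is needed, as $\Ac_{\Ec}$ is full in $\Ac$), and similarly over $E$. By Condition \ref{extension}.3 the functor $p^{*}$, being pullback along $p$ (a morphism of $\Ac_{\Ec}$), carries an $\Ec$-morphism over $B$ to an $\Ec$-morphism over $E$, so it does restrict; and by Condition \ref{extension}.4 its left adjoint $p_{!}$, composition with $p$, carries $\Ec$-morphisms over $E$ to $\Ec$-morphisms over $B$. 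Hence the adjunction $p_{!}\dashv p^{*}$ restricts to these full subcategories.

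The heart of the matter is the claim that, for any $f\colon A\to B$ in $\Ac$, one has $f\in\Ec$ if and only if $p^{*}(f)\in\Ec$; the ``only if'' part is Condition \ref{extension}.3. For the converse, write $g:=p^{*}(f)\colon E\times_{B}A\to E$ and let $q\colon E\times_{B}A\to A$ be the other projection of the defining pullback, so $f\circ q=p\circ g$. Being a pullback of the normal epimorphism $p$ (normal by Condition \ref{extension}.1), $q$ is a normal epimorphism, and an easy computation gives $K[q]\cong K[p]$; since $K[p]\in\Ac_{\Ec}$ by Condition \ref{extension}.5 and $E\times_{B}A\in\Ac_{\Ec}$ (it is the domain of $g\in\Ec$), Condition \ref{extension}.5 forces $q\in\Ec$ and hence $A\in\Ac_{\Ec}$. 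Moreover $f\circ q=p\circ g$ is a composite of normal epimorphisms, so it is a regular epimorphism, whence $f$ is a regular --- that is, normal --- epimorphism; and $K[f]\cong K[g]\in\Ac_{\Ec}$ since $g\in\Ec$. A last application of Condition \ref{extension}.5 yields $f\in\Ec$.

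With the claim available, I would finish via Beck's monadicity theorem for the restricted $p^{*}$: it has a left adjoint $p_{!}$, and it reflects isomorphisms, being the restriction to full subcategories of $p^{*}\colon(\Ac\downarrow B)\to(\Ac\downarrow E)$, which does. Given a $p^{*}$-split pair in $(\Ac\downarrow_{\Ec}B)$, fullness makes it a $p^{*}$-split pair in $(\Ac\downarrow B)$, so by monadicity of the unrestricted $p^{*}$ it has a coequalizer $c$ there, preserved by $p^{*}$; since $p^{*}(c)$ is (isomorphic to) the given split coequalizer, which lies among $\Ec$-morphisms, the codomain of $c$ has its $p^{*}$-image in $\Ec$ and therefore lies in $\Ec$ by the claim. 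Thus $c$ is a coequalizer already in $(\Ac\downarrow_{\Ec}B)$ and is preserved by the restricted $p^{*}$, so the latter is monadic, i.e.\ $p$ is a monadic extension with respect to $\Ec$ in $\Ac_{\Ec}$. I expect the delicate point to be the claim, and within it the step $A\in\Ac_{\Ec}$: the trick there is that the projection $q\colon E\times_{B}A\to A$ is itself forced into $\Ec$ by Condition \ref{extension}.5, so that $A$ appears as the codomain of an $\Ec$-morphism.
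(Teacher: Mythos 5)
Your proof is correct, and it reaches the conclusion by a genuinely different formal route than the paper, although the computational heart is shared. Both arguments hinge on the same kernel observation: the projection $E\times_B A\to A$ is a normal epimorphism whose kernel is isomorphic to $K[p]\in\Ac_{\Ec}$, so Condition \ref{extension}.5 forces it into $\Ec$ and hence forces $A\in\Ac_{\Ec}$. You package this (together with $K[f]\cong K[p^*(f)]$ and right cancellation of regular epimorphisms) into the clean equivalence $f\in\Ec\Leftrightarrow p^*(f)\in\Ec$, and then run Beck's precise monadicity theorem directly on the restricted adjunction $p_!\dashv p^*$ between the full subcategories $(\Ac\downarrow_{\Ec}B)$ and $(\Ac\downarrow_{\Ec}E)$: reflection of isomorphisms and creation of coequalisers of $p^*$-split pairs are inherited from the unrestricted monadic $p^*$, the only point requiring your equivalence being that the created coequaliser object lands back in the $\Ec$-slice. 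The paper instead first upgrades $p$ to an effective descent morphism of the category $\Ac_{\Ec}$ by invoking Corollary $3.9$ of \cite{JST} (for which only the implication $E\times_BA\in\Ac_{\Ec}\Rightarrow A\in\Ac_{\Ec}$ is needed), and then restricts the resulting equivalence $(\Ac_{\Ec}\downarrow B)\simeq\DiscFib(R[p])$ to an equivalence $\Ext_{\Ec}(B)\simeq\DiscFib(R[p])\cap\Ec$ using pullback-stability and the strong right cancellation property of $\Ec$. Your version is more self-contained, avoiding the descent-theoretic criterion and the discrete-fibration description of descent data at the cost of re-running the Beck argument by hand; the paper's version buys the slightly stronger intermediate statement that $p$ is an effective descent morphism of $\Ac_{\Ec}$ itself. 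Both are sound, and you correctly identified the delicate step, namely that $A$ must be shown to lie in $\Ac_{\Ec}$.
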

\begin{proof}
Let $p\colon E\to B$ be an effective descent morphism in $\Ac$ such that $p\in\Ec$. We first prove that $p$ is then also an effective descent morphism in $\Ac_{\Ec}$. Since $\Ac_{\Ec}$ is closed under pullback along $p$ (by Condition \ref{extension}.3), we can apply Corollary $3.9$ from \cite{JST}. Thus it suffices to prove that, for any morphism $f\colon A\to B$ in $\Ac$ such that the pullback $P=E\times_B A$ lies in $\Ac_{\Ec}$, one also has that $A\in\Ac_{\Ec}$:
\[
\xymatrix{
P \ar[r]^{f^*(p)} \ar[d] \ar@{}[rd]|<<{\pullback} & A \ar[d]^f \\
E \ar[r]_p & B.}
\]
Since the category $\Ac$ is homological and $p$ is a normal epimorphism, $f^*(p)$ is a normal epimorphism as well. Hence, if we can prove that $K[f^*(p)]\in\Ac_{\Ec}$, it will follow from Condition \ref{extension}.5 that $f^*(p)\in\Ec$ and, in particular, that $A\in\Ac_{\Ec}$. Since we have that $K[f^*(p)]\cong K[p]$, it suffices for this to note that $K[p]\in\Ac_{\Ec}$ because $p\in\Ec$.

We have just proved that $p$ is an effective descent morphism in $\Ac_{\Ec}$. This means that the functor $p^*\colon (\Ac_{\Ec}\downarrow B)\to (\Ac_{\Ec}\downarrow E)$ is monadic or, equivalently (see \cite{JST}), that the functor $(\Ac_{\Ec}\downarrow B)\to \DiscFib(R[p])$ which sends a morphism $f\colon A\to B$ in \\ $\Ac_{\Ec}$ to the discrete fibration obtained by pulling back $f$ along $p$ and then taking kernel pairs, pictured as the left hand square in the diagram
\[
\xymatrix{
R[f^*(p)]  \ar@{}[rd]|<<{\pullback} \ar@<.8 ex>[r]\ar@<-.8 ex>[r] \ar[d] & P \ar[r]^{f^*(p)} \ar@{}[rd]|<<{\pullback} \ar[d] & A \ar[d]^f\\
R[p]   \ar@<.8 ex>[r]\ar@<-.8 ex>[r] & E\ar[r]_p & B}
\]
is an equivalence of categories. To see that $p$ is a monadic extension, it suffices now to note that this category equivalence restricts to an equivalence $\Ext_{\Ec}(B)\to  \DiscFib(R[p])\cap \Ec$ due to the pullback-stability of the class of extensions $\Ec$ and to the fact that $\Ec$ has the strong right cancellation property (Conditions \ref{extension}.3 and \ref{extension}.4).  
\end{proof}

\begin{lemma}\label{torsionrestricts}
Let $\Ac$ be a homological category, and $\Ec$ a class of morphisms in $\Ac$ satisfying Conditions \ref{extension}. If $(\Tc,\Fc)$ is a torsion theory in $\Ac$ such that for any object $A\in\Ac_{\Ec}$ the reflection unit $\eta_A\colon A\to F(A)$ lies in $\Ec$, then the reflection $F\colon \Ac\to\Fc$ and coreflection $T\colon\Ac\to\Tc$ restrict to functors $F\colon \Ac_{\Ec}\to \Fc\cap\Ac_{\Ec}$ and $T\colon \Ac_{\Ec}\to \Tc\cap\Ac_{\Ec}$, and $(\Tc\cap\Ac_{\Ec},\Fc\cap\Ac_{\Ec})$ is a torsion theory in $\Ac_{\Ec}$.

Furthermore, if $f\in\Ec$ is an effective descent morphism in $\Ac$, then $f$ is a trivial extension (respectively a normal extension) with respect to $\Gamma_{\Fc\cap\Ac_{\Ec}}$ if and only if $f$ is a trivial extension (respectively a normal extension) with respect to $\Gamma_{\Fc}$.      
\end{lemma}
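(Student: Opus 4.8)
The plan is to transport the entire situation along the full inclusion $\Ac_{\Ec}\hookrightarrow\Ac$, using Conditions \ref{extension} to keep every pullback and kernel that intervenes inside $\Ac_{\Ec}$, and invoking Lemma \ref{descentlemma} to deal with the one property — being an effective descent morphism — that does not pass formally to a full subcategory.

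First I would check that $F$ and $T$ restrict. For $A\in\Ac_{\Ec}$ the unit $\eta_A\colon A\to F(A)$ lies in $\Ec$ by hypothesis and is a normal epimorphism, so $F(A)\in\Ac_{\Ec}$ (being the codomain of a morphism of $\Ec$) and $T(A)=K[\eta_A]\in\Ac_{\Ec}$ by Condition \ref{extension}.5. Since all three terms of the canonical short exact sequence $0\to T(A)\to A\to F(A)\to 0$ lie in the full subcategory $\Ac_{\Ec}$, it remains a short exact sequence there (a full subcategory containing the vertex of a (co)limit cone computes that (co)limit), and the orthogonality $\hom_{\Ac_{\Ec}}(T,F)=\{0\}$ for $T\in\Tc\cap\Ac_{\Ec}$ and $F\in\Fc\cap\Ac_{\Ec}$ is inherited from $\Ac$. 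Hence $(\Tc\cap\Ac_{\Ec},\Fc\cap\Ac_{\Ec})$ is a torsion theory on $\Ac_{\Ec}$, whose reflector and coreflector are the restrictions of $F$ and $T$ and whose units are literally the $\eta_A$; in particular $\Fc\cap\Ac_{\Ec}$ is torsion-free in $\Ac_{\Ec}$, so that $\Gamma_{\Fc\cap\Ac_{\Ec}}$ is defined.

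For the extensions, let $f\colon A\to B$ be a morphism of $\Ec$. Its defining square for $\Gamma_{\Fc\cap\Ac_{\Ec}}$ is literally the same reflection square, with horizontal arrows $\eta_A,\eta_B$ and vertical arrows $f,F(f)$, as for $\Gamma_{\Fc}$; all four vertices lie in $\Ac_{\Ec}$, and so does the pullback $F(A)\times_{F(B)}B$, since it arises by pulling back $\eta_B\in\Ec$ along the $\Ac_{\Ec}$-morphism $F(f)$, whence the projection $F(A)\times_{F(B)}B\to F(A)$ lies in $\Ec$ by Condition \ref{extension}.3. By fullness of $\Ac_{\Ec}$, this square is a pullback in $\Ac$ if and only if it is one in $\Ac_{\Ec}$; that is, $f$ is a trivial extension for $\Gamma_{\Fc}$ if and only if it is one for $\Gamma_{\Fc\cap\Ac_{\Ec}}$. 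Assume now moreover that $f$ is an effective descent morphism in $\Ac$. By Lemma \ref{descentlemma} it is then also an effective descent (monadic) morphism in $\Ac_{\Ec}$, so the monadicity requirement in the definition of normal extension holds on both sides at once. Its kernel pair $R[f]$ is a pullback of $f\in\Ec$ along the $\Ac_{\Ec}$-morphism $f$, so $R[f]\in\Ac_{\Ec}$ and $f^*(f)=\pi_1\colon R[f]\to A$ lies in $\Ec$; applying the trivial-extension equivalence just established to $\pi_1$ shows that $f^*(f)$ is a trivial extension for $\Gamma_{\Fc}$ if and only if it is one for $\Gamma_{\Fc\cap\Ac_{\Ec}}$. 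Combining the two clauses gives the assertion for normal extensions.

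The only genuine difficulty is the bookkeeping: one must make sure that every object appearing above — the pullbacks $F(A)\times_{F(B)}B$, the kernels $K[\eta_A]$, and the kernel pairs $R[f]$ — really lands inside $\Ac_{\Ec}$, which is exactly what Conditions \ref{extension} are designed to guarantee. The use of Lemma \ref{descentlemma} is essential, since, in contrast with the purely limit-theoretic conditions defining trivial extensions, the property of being an effective descent morphism is not automatically inherited by a full subcategory.
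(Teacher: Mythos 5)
Your proof is correct and follows the paper's own argument: the same two-step structure of (i) restricting the canonical short exact sequence $0\to T(A)\to A\to F(A)\to 0$ to $\Ac_{\Ec}$ using $\eta_A\in\Ec$ and fullness, and (ii) deducing the trivial/normal extension equivalences from Lemma \ref{descentlemma} together with the pullback-stability in Condition \ref{extension}.3 (the paper leaves this second part as ``follows readily''; you have simply written out the details). The only cosmetic difference is that you invoke Condition \ref{extension}.5 to place $T(A)=K[\eta_A]$ in $\Ac_{\Ec}$ where the paper obtains it as a pullback via Condition \ref{extension}.3; both are valid.
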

\begin{proof}
Consider, for any $A\in\Ac_{\Ec}$, the associated short exact sequence
\[
\xymatrix{
0 \ar[r] & T(A) \ar[r] & A \ar[r]^-{\eta_A} & F(A) \ar[r] & 0.}
\]
By assumption, the unit $\eta_A$ is in $\Ec$, which implies that both $F(A)$ (by definition of $\Ac_{\Ec}$) and $T(A)$ (as the kernel of an extension---by Condition \ref{extension}.3) lie in $\Ac_{\Ec}$. Since $\Ac_{\Ec}$ is a full subcategory of $\Ac$, this implies that the sequence above is a short exact sequence in $\Ac_{\Ec}$. Furthermore, for any objects $T\in\Tc\cap\Ac_{\Ec}$ and  $F\in\Fc\cap\Ac_{\Ec}$ we have that
\[
\hom_{\Ac_{\Ec}}(T,F)=\hom_{\Ac}(T,F)=\{0\},
\]
so that $(\Tc\cap\Ac_{\Ec},\Fc\cap\Ac_{\Ec})$ is indeed a torsion theory in $\Ac_{\Ec}$.

The latter part of the statement follows readily from Lemma \ref{descentlemma} and Condition \ref{extension}.3.
\end{proof}

\begin{lemma}\label{unitlemma}
With the same assumptions as in Lemma \ref{torsionrestricts}: if $(\Tc,\Fc)$ satisfies condition $(N)$ then for any $f\in\Ec$ the unit $\eta^1_f\colon f\to F_1(f)$ lies in $\Ec^1$.
\end{lemma}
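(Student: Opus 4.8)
The plan is to compute the unit $\eta^1_f$ explicitly by means of the factorisation system $(\overline{\Ef},\overline{\Mf})$ of Proposition~\ref{inducedfactorisation}, and then to verify, arrow by arrow, that every morphism occurring in the diagram defining $\Ec^1$ lies in $\Ec$. First I would recall that, by Proposition~\ref{firstderivedtt} together with Lemma~\ref{inducedtt}, for a morphism $f\colon A\to B$ in $\Ac$ with kernel $K=K[f]$ the unit $\eta^1_f\colon f\to F_1(f)$ is the commutative square
\[
\xymatrix@=30pt{
A \ar[r]^-{q_{T(K)}} \ar[d]_f & A/T(K) \ar[d]^m \\
B \ar@{=}[r] & B,}
\]
where $q_{T(K)}$ is the cokernel of $\ker(f)\circ t_K\colon T(K)\to A$ and $m$ is the induced morphism with $m\circ q_{T(K)}=f$; thus $F_1(f)=m$, and the two components of $\eta^1_f$ are $q_{T(K)}$ (on top) and $1_B$ (on the bottom). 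Because the bottom component is an identity, the pullback $P=B\times_B(A/T(K))$ appearing in the definition of $\Ec^1$ is canonically isomorphic to $A/T(K)$, with one projection an isomorphism and the other (isomorphic to) $m$, and the comparison map $r\colon A\to P$ corresponds under this isomorphism to $q_{T(K)}$. Hence, using Condition~\ref{extension}.2 (for $1_B$ and the isomorphism $P\cong A/T(K)$) and Condition~\ref{extension}.3 (pullback-stability) for the remaining arrows of the diagram, it suffices to prove that $f$, $q_{T(K)}$ and $m$ all lie in $\Ec$; the first of these is the hypothesis.

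The main task is then $q_{T(K)}\in\Ec$, and I expect this to be the only step where condition $(N)$ is genuinely needed: it guarantees that $\ker(f)\circ t_K\colon T(K)\to A$ is a normal monomorphism, so that $q_{T(K)}$ is a normal epimorphism fitting into a short exact sequence $0\to T(K)\to A\to A/T(K)\to 0$, and in particular $K[q_{T(K)}]=T(K)$. Its domain $A$ lies in $\Ac_{\Ec}$ since $f\in\Ec$; and its kernel $T(K)$ lies in $\Ac_{\Ec}$ too, for $K=K[f]\in\Ac_{\Ec}$ by Condition~\ref{extension}.5 (as $f$ is a normal epimorphism in $\Ec$), whence $\eta_K\colon K\to F(K)$ lies in $\Ec$ by the standing assumption of Lemma~\ref{torsionrestricts}, so that $T(K)=K[\eta_K]\in\Ac_{\Ec}$, again by Condition~\ref{extension}.5. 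Applying Condition~\ref{extension}.5 once more, now to the normal epimorphism $q_{T(K)}$ whose domain and kernel both lie in $\Ac_{\Ec}$, gives $q_{T(K)}\in\Ec$ (and, in passing, $A/T(K)\in\Ac_{\Ec}$). Finally, from $f=m\circ q_{T(K)}$ with $f\in\Ec$ and with $q_{T(K)}$, $m$ morphisms between objects of $\Ac_{\Ec}$, the strong right cancellation of Condition~\ref{extension}.4 yields $m\in\Ec$, and hence $\eta^1_f\in\Ec^1$.

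The only delicate point in this argument is the use of $(N)$ to ensure that $q_{T(K)}$ has kernel precisely $T(K)$ — rather than its normal closure in $A$ — since it is this identification that lets Condition~\ref{extension}.5 be applied to $q_{T(K)}$; once that is in place, everything else is a routine bookkeeping with Conditions~\ref{extension}. (Note that, although the statement concerns only $\Ec^1$, this lemma is exactly what is needed to push the construction through the chain $\Ec^n$ inductively, since passing from $\Ec$ to $\Ec^1$ preserves Conditions~\ref{extension} by Lemma~\ref{up} and, by Proposition~\ref{firstderivedtt}, preserves conditions $(P)$ and $(N)$.)
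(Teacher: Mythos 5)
Your proof is correct and follows essentially the same route as the paper's: describe the unit explicitly as the square with top component $q_{T(K[f])}$ and identity on the bottom, deduce $T(K[f])\in\Ac_{\Ec}$ from Lemma \ref{torsionrestricts}, get $q_{T(K[f])}\in\Ec$ from Condition \ref{extension}.5 and $F_1(f)\in\Ec$ from Condition \ref{extension}.4. The extra bookkeeping you supply (identifying the pullback in the definition of $\Ec^1$ with $A/T(K[f])$ and noting where $(N)$ is used) is just a more explicit rendering of the same argument.
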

\begin{proof}
By Proposition \ref{firstderivedtt}, $(\Tc_1,\Fc_1)$ is a torsion theory in $\Arr(\Ac)$, and the unit $\eta^1_f\colon f\to F_1(f)$ for any $f$ is given by the commutative square
\[
\xymatrix{
 A \ar[d]_f \ar[r]^-{q_{T(K[f])}} & A/T(K[f]) \ar[d]^{F_1(f)} \\
 B \ar@{=}[r] & B,}
\]
where $q_{T(K[f])}$ is the cokernel of the normal monomorphism $\ker(f)\circ t_{K[f]}$. Now suppose that $f$ lies in $\Ec$. Then its kernel $K[f]$ must be in $\Ac_{\Ec}$, which implies that $T(K[f])\in\Ac_{\Ec}$ by Lemma \ref{torsionrestricts}. Consequently, $q_{T(K[f])}\in\Ec$, by Condition \ref{extension}.5, and then also $F_1(f)\in\Ec$, by Condition \ref{extension}.4. From this we conclude that $\eta^1_f\in \Ec^1$.
\end{proof}

Finally, Lemmas \ref{descentlemma}---\ref{unitlemma} together with Propositions \ref{firstderivedtt} and \ref{protocentral} give the following. As before, we write $\Nc$ for the class of normal epimorphisms in $\Ac$.

\begin{theorem}\label{higherderivedT1}
Let $\Ac$ be a homological category in which every normal epimorphism is an effective descent morphism. Then any torsion theory $(\Tc,\Fc)$ in $\Ac$ satisfying conditions $(P)$ and $(N)$ induces, for any $n\geq 1$, a torsion theory $(\Tc_n,\NExt_{\Fc}^n(\Ac))$ in the category $\Ext^n_{\Nc}(\Ac)$ of $n$-fold $\Nc$-extensions. Here $\Tc_n$ is the replete image of $\Tc$ by the functor $\iota^n\colon \Ac\to\Arrn(\Ac)$ and $\NExt_{\Fc}^n(\Ac)=\Fc_n\cap\Ext^n_{\Nc}(\Ac)$ consists of all $n$-fold $\Nc$-extensions that are normal with respect to $\Gamma_{\NExt_{\Fc}^{n-1}(\Ac)}$. Moreover, for any $n\geq 1$, and any $n$-fold $\Nc$-extension $A$, we have that 
\[
A\in\NExt_{\Fc}^n(\Ac) \Leftrightarrow \bigcap_{1\leq i\leq n}K[a_i]\in\Fc.
\]
\end{theorem}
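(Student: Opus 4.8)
The plan is to proceed by induction on $n$, building the chain of torsion theories rib by rib via the preparatory lemmas. For the base case $n=1$, Proposition \ref{firstderivedtt} already gives a torsion theory $(\Tc_1,\Fc_1)$ in $\Arr(\Ac)$ satisfying both $(N)$ and $(P)$, and Lemma \ref{up} tells us that the class $\Nc^1$ of double $\Nc$-extensions again satisfies Conditions \ref{extension}. To restrict $(\Tc_1,\Fc_1)$ to a torsion theory in $\Ext_{\Nc}(\Ac)=(\Arr(\Ac))_{\Nc^1}$, I would invoke Lemma \ref{torsionrestricts}: its hypothesis is that, for every $A\in(\Arr(\Ac))_{\Nc^1}$ — that is, for every $\Nc$-extension $f$ — the unit $\eta^1_f\colon f\to F_1(f)$ lies in $\Nc^1$, and this is exactly the content of Lemma \ref{unitlemma}. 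Thus $(\Tc_1\cap\Ext_{\Nc}(\Ac),\Fc_1\cap\Ext_{\Nc}(\Ac))$ is a torsion theory in $\Ext_{\Nc}(\Ac)$; since $\Ac$ is homological and every normal epimorphism is an effective descent morphism, Proposition \ref{protocentral} (applied in the category $\Ext_{\Nc}(\Ac)$, which we must check is homological — this follows from $\Nc^1$ satisfying Conditions \ref{extension}) identifies $\Fc_1\cap\Ext_{\Nc}(\Ac)$ with $\NExt_\Fc(\Ac)$, the category of normal extensions with respect to $\Gamma_\Fc$, and the latter part of Lemma \ref{torsionrestricts} guarantees that this notion of normal extension is unambiguous whether computed in $\Ac$ or in $\Ac_{\Nc}=\Ac$. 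The torsion part $\Tc_1\cap\Ext_{\Nc}(\Ac)$ is exactly the replete image of $\Tc$ under $\iota\colon\Ac\to\Arr(\Ac)$, since every object $T\to 0$ with $T\in\Tc$ is a split epimorphism, hence lies in $\Nc$ by Remark \ref{remarksplit}.

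For the inductive step, suppose the statement holds at level $n-1$: we have a torsion theory $(\Tc_{n-1},\NExt_\Fc^{n-1}(\Ac))$ in $\Ext^{n-1}_{\Nc}(\Ac)=(\Arr^{n-1}(\Ac))_{\Nc^{n-1}}$, both conditions $(P)$ and $(N)$ hold at that level, and the class $\Nc^{n-1}$ satisfies Conditions \ref{extension} by iterating Lemma \ref{up}. Now I treat $\Ext^{n-1}_{\Nc}(\Ac)$ as the ambient homological category and $\Nc^{n-1}$-as-a-subclass-of-its-arrows, i.e. $\Nc^n=(\Nc^{n-1})^1$, as the relevant extension class on $\Arr(\Ext^{n-1}_{\Nc}(\Ac))=\Arr^n(\Ac)$ restricted appropriately. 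Applying Proposition \ref{firstderivedtt} to $(\Tc_{n-1},\NExt_\Fc^{n-1}(\Ac))$ yields a torsion theory $(\Tc_n,\Fc_n)$ in $\Arr^n(\Ac)$ satisfying $(N)$, and $(P)$ as well since both hold one level down; Lemma \ref{up} gives that $\Nc^n$ satisfies Conditions \ref{extension}; Lemma \ref{unitlemma} (applied at level $n-1$) shows the units $\eta^n_f$ lie in $\Nc^n$ for $f\in\Nc^{n-1}$; and Lemma \ref{torsionrestricts} then restricts $(\Tc_n,\Fc_n)$ to a torsion theory in $\Ext^n_{\Nc}(\Ac)$. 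Finally Proposition \ref{protocentral}, applied in the category $\Ext^{n-1}_{\Nc}(\Ac)$ with its Galois structure $\Gamma_{\NExt_\Fc^{n-1}(\Ac)}$, identifies $\Fc_n\cap\Ext^n_{\Nc}(\Ac)$ with the $n$-fold $\Nc$-extensions that are normal with respect to $\Gamma_{\NExt_\Fc^{n-1}(\Ac)}$ — this is the definition of $\NExt_\Fc^n(\Ac)$ — and the torsion part is the replete image of $\Tc_{n-1}$ under $\iota$, hence of $\Tc$ under $\iota^n$, again using Remark \ref{remarksplit}.

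It remains to establish the explicit characterisation $A\in\NExt_\Fc^n(\Ac)\Leftrightarrow\bigcap_{1\leq i\leq n}K[a_i]\in\Fc$, which I would also prove by induction on $n$. At level $1$ this is precisely the equivalence $(1)\Leftrightarrow(3)$ of Proposition \ref{protocentral} (using that every normal epimorphism is an effective descent morphism, so the hypotheses of that proposition are met for every $\Nc$-extension). For the step, recall from the proof of Proposition \ref{firstderivedtt} that the kernel of an $n$-fold $\Nc$-extension $A$, viewed as a morphism $a\colon A'\to A''$ in $\Arr^{n-1}(\Ac)$ with initial ribs $a_1,\dots,a_n$ (say $a=a_n$ and $A'$ carries the remaining ribs), satisfies
\[
K[a_n] \text{ (as an $(n-1)$-fold object)} \text{ has initial ribs the restrictions of } a_1,\dots,a_{n-1}\text{ to } K[a_n],
\]
and the intersection of \emph{those} $n-1$ kernels inside $K[a_n]$ equals $\bigcap_{1\leq i\leq n}K[a_i]$ computed in $\Ac$ — this is the standard fact that taking kernels commutes with intersections, together with the explicit description of kernels in arrow categories used repeatedly in Proposition \ref{firstderivedtt}. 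By the inductive hypothesis, $K[a_n]\in\NExt_\Fc^{n-1}(\Ac)$ if and only if this intersection lies in $\Fc$; and by the $n=1$ case applied in $\Ext^{n-1}_{\Nc}(\Ac)$, $A\in\NExt_\Fc^n(\Ac)$ if and only if $K[a_n]\in\NExt_\Fc^{n-1}(\Ac)=\Fc_{n-1}\cap\Ext^{n-1}_{\Nc}(\Ac)$. Chaining these gives the claim. The main obstacle I anticipate is purely bookkeeping: keeping straight, at each inductive level, that ``the torsion-free subcategory one level down is $\MC$-hereditary for protosplit monos, satisfies $(N)$, and lives in a homological category where normal epis are effective descent'', so that all of Lemmas \ref{descentlemma}--\ref{unitlemma} and Propositions \ref{firstderivedtt}, \ref{protocentral} genuinely apply — in particular verifying that $\Ext^{n-1}_{\Nc}(\Ac)$ is itself homological with normal epimorphisms effective for descent, which follows from $\Nc^{n-1}$ satisfying Conditions \ref{extension} but should be stated explicitly.
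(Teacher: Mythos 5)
Your proposal is essentially the paper's proof: the paper gives no detailed argument for this theorem, stating only that it follows from Lemmas \ref{descentlemma}--\ref{unitlemma} together with Propositions \ref{firstderivedtt} and \ref{protocentral}, and your induction assembles exactly those ingredients in the intended way (Proposition \ref{firstderivedtt} to lift the torsion theory to $\Arr^n(\Ac)$, Lemma \ref{up} for the extension class, Lemma \ref{unitlemma} to feed the hypothesis of Lemma \ref{torsionrestricts}, which restricts the torsion theory to $\Ext^n_{\Nc}(\Ac)$, and Proposition \ref{protocentral} for the kernel characterisation, with Remark \ref{remarksplit} identifying the torsion part as $\iota^n(\Tc)$).

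The one point to adjust is the obstacle you flag at the end. You propose to apply Proposition \ref{protocentral} \emph{inside} $\Ext^{n-1}_{\Nc}(\Ac)$ and assert that this category is homological with normal epimorphisms effective for descent ``because $\Nc^{n-1}$ satisfies Conditions \ref{extension}''. That implication does not hold: Conditions \ref{extension} only guarantee closure of $\Ac_{\Ec}$ under pullbacks along morphisms of $\Ec$, not under arbitrary finite limits, so $\Ext^{n-1}_{\Nc}(\Ac)$ need not be homological and the paper never claims it is. The lemmas are designed precisely so that you never need this: apply Proposition \ref{protocentral} in the genuinely homological category $\Arr^{n-1}(\Ac)$ to the torsion theory $(\Tc_{n-1},\Fc_{n-1})$ produced by Proposition \ref{firstderivedtt} (which satisfies $(P)$ and $(N)$ there), and then use Lemma \ref{descentlemma} together with the second part of Lemma \ref{torsionrestricts} to conclude that, for morphisms of $\Nc^n$, normality with respect to the relative Galois structure $\Gamma_{\NExt^{n-1}_{\Fc}(\Ac)}$ on $\Ext^{n-1}_{\Nc}(\Ac)$ coincides with normality with respect to $\Gamma_{\Fc_{n-1}}$ on $\Arr^{n-1}(\Ac)$, which is what Proposition \ref{protocentral} characterises by the kernel condition. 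With that substitution the rest of your argument, including the identification of the iterated kernel intersections, goes through as written.
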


\begin{remark}
For $n\geq 1$, and under the conditions of the theorem above, we call \emph{$n$-fold normal extensions} the objects of $\NExt_{\Fc}^n(\Ac)$ with respect to the Galois structure $\Gamma_{\Fc}$. 
\end{remark}

Theorem \ref{protofactorisation} together with Lemma \ref{torsionrestricts} also imply the following:

\begin{theorem}\label{reflectivehigher}
With the same assumptions and notations as in Theorem \ref{higherderivedT1}, for any $n\geq 0$, if $(\Ef_n,\Mf_n)$ is the factorisation system induced by the reflection $F_n\colon \Ext^n_{\Ec}(\Ac)\to\NExt_{\Fc}^n(\Ac)$, then any $(n+1)$-fold extension $f\colon A\to B$ factors uniquely (up to isomorphism) as a composite $f=m\circ e$ of $(n+1)$-fold $\Nc$-extensions, where $e$ is stably in $\Ef_n$ and $m$ is an $(n+1)$-fold normal extension. 
\end{theorem}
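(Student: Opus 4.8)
The plan is to deduce Theorem~\ref{reflectivehigher} from Theorem~\ref{protofactorisation}, applied not to $\Ac$ itself but to the homological category $\Arrn(\Ac)$ carrying the torsion theory $(\Tc_n,\Fc_n)$ --- which satisfies conditions $(P)$ and $(N)$ by iterating Proposition~\ref{firstderivedtt} --- and then to transport the resulting factorisation down into the full subcategory $\Ext^n_{\Nc}(\Ac)$ by means of Lemmas~\ref{descentlemma}, \ref{torsionrestricts} and~\ref{unitlemma}. Throughout one uses that, under the standing hypothesis, every element of $\Nc^n$ is an effective descent morphism of $\Arrn(\Ac)$ (part of the axiomatic framework of \cite{Ev}, and already implicit in Theorem~\ref{higherderivedT1}). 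A preliminary induction on $n$ shows that for every object $A$ of $\Ext^n_{\Nc}(\Ac)$ the reflection unit $\eta^n_A\colon A\to F_n(A)$ lies in $\Nc^n$: for $n=0$ this is the assumption that $\Fc$ is (normal epi)-reflective, and the inductive step is Lemma~\ref{unitlemma} applied to $(\Tc_{n-1},\Fc_{n-1})$ and the class $\Nc^{n-1}$ (which satisfies Conditions~\ref{extension} by Lemma~\ref{up}). This is precisely the hypothesis under which Lemmas~\ref{torsionrestricts} and~\ref{unitlemma} apply at each level.

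By Theorem~\ref{protofactorisation}(3), every effective descent morphism $f$ of $\Arrn(\Ac)$ factors, uniquely up to isomorphism, as $f=m\circ e$ with $e$ stably in $\Ef_n$ and $m$ a normal extension with respect to $\Gamma_{\Fc_n}$, this being the $(\overline{\Ef_n},\overline{\Mf_n})$-factorisation; in particular $e=q_{T_n(K[f])}$ is the domain-component of the unit $\eta^{n+1}_f$ and $K[m]=F_n(K[f])\in\Fc_n$. If $f$ is an $(n+1)$-fold $\Nc$-extension, i.e.\ $f\in\Nc^n$, then $f$ is an effective descent morphism of $\Arrn(\Ac)$, so it has such a factorisation; moreover Lemma~\ref{unitlemma} gives $\eta^{n+1}_f\in(\Nc^n)^1=\Nc^{n+1}$, whence, unravelling the definition of $(\Nc^n)^1$, both $e$ and $m$ lie in $\Nc^n$ and are therefore themselves $(n+1)$-fold $\Nc$-extensions.

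It then remains to recognise the two factors. Since $m\in\Nc^n$ is an effective descent morphism of $\Arrn(\Ac)$, Lemma~\ref{descentlemma} makes it a monadic extension with respect to $\Nc^n$ in $\Ext^n_{\Nc}(\Ac)$; as $K[m]\in\Fc_n\cap\Ext^n_{\Nc}(\Ac)=\NExt^n_{\Fc}(\Ac)$ and $m$ is a normal extension with respect to $\Gamma_{\Fc_n}$, Lemma~\ref{torsionrestricts} then shows that $m$ is a normal extension with respect to $\Gamma_{\NExt^n_{\Fc}(\Ac)}$, i.e.\ an $(n+1)$-fold normal extension (equivalently, by Theorem~\ref{higherderivedT1}, $\bigcap_{1\leq i\leq n+1}K[m_i]\in\Fc$). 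On the other side $e\in\overline{\Ef_n}$ is stably in $\Ef_n$ by Lemma~\ref{stablekernel}, and since $\Ext^n_{\Nc}(\Ac)$ is full in $\Arrn(\Ac)$ and its pullbacks of extensions along extensions coincide with those computed in $\Arrn(\Ac)$, $e$ remains stably in $\Ef_n$ for the factorisation system induced by $F_n\colon\Ext^n_{\Nc}(\Ac)\to\NExt^n_{\Fc}(\Ac)$. For uniqueness, any factorisation $f=m'\circ e'$ of the required shape has $K[m']\in\NExt^n_{\Fc}(\Ac)\subseteq\Fc_n$, so $m'\in\overline{\Mf_n}$, while $e'$ being stably in $\Ef_n$ forces, on considering its pullback $K[e']\to 0$ along $0\to\cod(e')$ (a pullback that exists in $\Ext^n_{\Nc}(\Ac)$ as it does in $\Arrn(\Ac)$), that $K[e']\in\Tc_n$, so $e'\in\overline{\Ef_n}$; hence both are $(\overline{\Ef_n},\overline{\Mf_n})$-factorisations of $f$ and are isomorphic by Proposition~\ref{inducedfactorisation}.

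The main obstacle is not any single deep step but the coherence bookkeeping: one has to make sure that ``effective descent morphism'', ``normal extension'' and ``(stably) in $\Ef_n$'' mean the same thing whether read inside the homological ambient $\Arrn(\Ac)$ or inside $\Ext^n_{\Nc}(\Ac)$, which is only finitely complete along extensions. Lemmas~\ref{descentlemma}, \ref{torsionrestricts} and~\ref{unitlemma} --- the very tools used to prove Theorem~\ref{higherderivedT1} --- are exactly what reconcile these, and the preliminary induction of the first paragraph is what makes their hypotheses available; with all of this in place the argument reduces to assembling Theorem~\ref{protofactorisation} with Propositions~\ref{firstderivedtt} and~\ref{inducedfactorisation} and Theorem~\ref{higherderivedT1}.
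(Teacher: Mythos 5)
Your proposal is correct and follows essentially the same route as the paper: apply Theorem \ref{protofactorisation} to the derived torsion theory $(\Tc_n,\Fc_n)$ on $\Arrn(\Ac)$ and then use Lemmas \ref{torsionrestricts} and \ref{unitlemma} (the paper invokes Condition \ref{extension}.5 directly where you invoke Lemma \ref{unitlemma}, whose proof is that condition) to see that both factors are $(n+1)$-fold $\Nc$-extensions of the required kind. You simply make explicit the inductive bookkeeping on units and the uniqueness argument that the paper's three-line proof leaves implicit.
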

\begin{proof}
If $f=m\circ e$ is the factorisation in $\Arr^n(\Ac)$ given by Theorem \ref{protofactorisation}, then $m$ is an ($n+1$)-fold $\Nc$-extension by Lemma \ref{torsionrestricts}, and $e$ by Condition \ref{extension}.5, since its kernel, which lies in $\Tc_n$, is an  $n$-fold $\Nc$-extension. 
\end{proof}

\section{Birkhoff subcategories with a protoadditive reflector}\label{Birkhoffsection}
Consider a torsion-free subcategory $\Fc$ of a homological category $\Ac$, and write, as usual, $F$ for the reflector $\Ac\to\Fc$ and $T$ for the associated radical. Assume that $\Fc$ satisfies conditions $(P)$ ($F$ is protoadditive) and $(N)$ (for any morphism $f\colon A\to B$ in $\Ac$, the induced monomorphism $T(K[f])\to A$ is normal). In the previous section, we have explained how $\Fc$ induces a chain of ``derived" torsion theories $(\Tc_n,\Fc_n)$ ($n\geq 1$) in the categories $\Ext^n_{\Nc}(\Ac)$ of $n$-fold $\Nc$-extensions (for $\Nc$ the class of normal epimorphisms in $\Ac$) where, for each $n\geq 1$,  $\Fc_n$ consists of all $n$-fold $\Nc$-extensions that are normal with respect to the Galois structure $\Gamma_{\Fc_{n-1}}$. In a similar manner, in \cite{EGV}, ``higher dimensional" Galois structures had been obtained starting from any \emph{Birkhoff subcategory} $\Bc$ of a semi-abelian category $\Ac$. While for this to work there is no need for the reflector $\Ac\to\Bc$ to be protoadditive, the situation where it is so is of interest and will be studied in the present section.

Recall from \cite{JK} that a Birkhoff subcategory of an exact category $\Ac$ is a full reflective subcategory $\Bc$ of $\Ac$ closed under subobjects and regular quotients; or, equivalently, a full replete (regular epi)-reflective subcategory $\Bc$ of $\Ac$, with reflector $I\colon \Ac\to\Bc$, such that, for any regular epimorphism $f\colon A\to B$ in $\Ac$, the canonical square 
\begin{equation}\label{unitsquare}
\xymatrix{
A \ar[r]\ar[d]_f & I(A) \ar[d]^{I(f)}\\
B \ar[r] & I(B)}
\end{equation}
is a pushout. Note that this last condition translates to \eqref{unitsquare} being a double $\Nc$-extension, whenever $\Ac$ is a semi-abelian category---since any semi-abelian category is exact Mal'tsev (see Remark \ref{exactmaltsev}).

\begin{example}
By Birkhoff's theorem characterising equational classes, a full subcategory $\Bc$ of a variety of universal algebras $\Ac$ is a subvariety if and only if $\Bc$ is closed in $\Ac$ under subobjects, quotients and products. It follows that a Birkhoff subcategory of a variety is the same as a subvariety---whence its name. Note that a Birkhoff subcategory is indeed closed under products---it is, in fact, closed under arbitrary limits---since it is a reflective subcategory.   
\end{example}

We shall be needing the following important property of Birkhoff subcategories, which was first observed in \cite{Gran}:
\begin{lemma}\label{Marino}
The reflector $I\colon \Ac\to\Bc$ into a Birkhoff subcategory $\Bc$ of a semi-abelian category $\Ac$ preserves pullbacks of normal epimorphisms along split epimorphisms. In particular, $I$ preserves finite products.
\end{lemma}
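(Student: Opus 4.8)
The plan is to reduce the statement to a claim about kernels, using the Birkhoff condition to control regular epimorphisms and the splitness of one leg of the pullback to get injectivity of a comparison map. So consider a pullback square
\[
\xymatrix{P\ar[r]^g\ar[d]_p & C\ar[d]^h\\ A\ar[r]_k & B}
\]
in $\Ac$ with $k$ a normal epimorphism and $h$ a split epimorphism with section $t\colon B\to C$. Then $p$ is again a normal epimorphism and, being a pullback of $h$, it is moreover a split epimorphism with section $t'=\langle 1_A,t\circ k\rangle$; and $g$ is a normal epimorphism. Writing $D=I(A)\times_{I(B)}I(C)$ for the pullback in $\Bc$ — which, since the inclusion $\Bc\to\Ac$ preserves limits, coincides with the pullback computed in $\Ac$ — I must show that the comparison $\phi\colon I(P)\to D$ is an isomorphism. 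I shall use that $\Bc$, being a Birkhoff subcategory of a semi-abelian category, is itself semi-abelian, and that $I$ preserves regular epimorphisms.

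First I would show that $\phi$ is a regular epimorphism. Since $k$ is a normal (hence regular) epimorphism and $h$ is a split epimorphism, all four morphisms of the square are regular epimorphisms, so by Remark~\ref{exactmaltsev} the square, being a pullback, is also a pushout. As $I$ is a left adjoint it preserves this pushout, and since it preserves regular epimorphisms, $I$ applied to the square is a pushout of regular epimorphisms in the exact Mal'tsev category $\Bc$. Applying Remark~\ref{exactmaltsev} once more (now in $\Bc$, with $\Ec$ the class of all regular epimorphisms), such a pushout is a double $\Nc$-extension, so the comparison $\phi$ to the pullback $D$ is a regular epimorphism.

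It then remains to prove that $\phi$ is a monomorphism, and this is the step I expect to be the crux. Since $\phi$ is the factorisation $\langle I(p),I(g)\rangle$ through $D$, its kernel is $K[\phi]=K[I(p)]\cap K[I(g)]$ as a subobject of $I(P)$. Because $p$ is a split epimorphism, $I(p)$ is a split epimorphism with section $I(t')$, so $K[I(p)]\cap\im(I(t'))=0$; hence it suffices to prove that $K[I(g)]\subseteq\im(I(t'))$. For this I would first observe, using the Birkhoff condition for the regular epimorphism $g$ (which makes the naturality square of the unit at $g$ a double $\Nc$-extension), that $K[I(g)]$ is the image of the composite $\eta_P\circ j\colon K[g]\to I(P)$, where $j=\ker(g)$: restricting the comparison $P\to C\times_{I(C)}I(P)$ to the kernels of the two morphisms down to $C$ shows that $\eta_P$ restricts to a regular epimorphism $K[g]\to K[I(g)]$. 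A direct computation in the pullback $P=A\times_B C$ then gives $K[g]=t'(K[k])$, so that $j$ factors through $t'$; combined with the naturality identity $\eta_P\circ t'=I(t')\circ\eta_A$, this forces $\im(\eta_P\circ j)\subseteq\im(I(t'))$. Therefore $K[\phi]=K[I(p)]\cap K[I(g)]\subseteq K[I(p)]\cap\im(I(t'))=0$, so $\phi$ is a monomorphism; being also a regular epimorphism, it is an isomorphism, and $I$ preserves the pullback. Finally, the ``in particular'' follows by taking $B=0$: in the pointed protomodular category $\Ac$ the morphism $A\to 0$ is a split, hence normal, epimorphism and $C\to 0$ is split, so $A\times C=A\times_0 C$ is preserved by $I$, whence $I(A\times C)\cong I(A)\times I(C)$ since $I(0)=0$.
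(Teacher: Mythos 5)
Your proof is correct, but it follows a genuinely different route from the one in the paper. The paper forms the commutative cube whose front face is the given pullback and whose skew edges are the reflection units; the two unit-naturality faces over the normal epimorphisms are double $\Nc$-extensions by the Birkhoff property, and since the remaining edges are split epimorphisms the whole cube is a three-fold $\Nc$-extension (Remark \ref{remarksplit} and Lemma \ref{up}). Hence the comparison square from the front face to the back face is a double $\Nc$-extension, in particular a pushout, and the map $I(P)\to I(A)\times_{I(B)}I(C)$ is invertible because its mate $P\to A\times_B C$ is. You instead split this comparison $\phi$ into ``regular epimorphism'' (via pushout-preservation by the left adjoint and the exact Mal'tsev characterisation of Remark \ref{exactmaltsev} applied in $\Bc$) and ``monomorphism'' (via an explicit kernel computation exploiting the section $t'$). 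What the paper's argument buys is uniformity and brevity: it is exactly the pattern that gets reused for the relative and higher-dimensional versions later on, and it never touches kernels. What your argument buys is a clear accounting of where each hypothesis enters: the splitness of $h$ is used only to trap $K[I(g)]$ inside the image of $I(t')$, and the regular-pushout property of the unit square at $g$ only to see that $\eta_P$ restricts to a regular epimorphism $K[g]\to K[I(g)]$ (the same device as in the proof of Proposition \ref{caracterisationproto}). One minor simplification: once you know $\phi$ is a regular (hence normal) epimorphism with $K[\phi]=0$, it is the cokernel of its kernel and therefore an isomorphism outright, so the intermediate appeal to ``trivial kernel implies mono'' is not needed.
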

\begin{proof}
Consider a commutative cube 
\[
\xymatrix{
& I(P) \ar@{}[rrdd] \ar@{.>}[dd]  \ar[rr] && I(C) \ar[dd] \\
P \ar@{}[rrdd]|<<{\pullback}   \ar[ur] \ar[rr] \ar[dd] && C \ar[ur] \ar[dd] & \\
& I(A) \ar@{.>}[rr] && I(B) \\
A \ar[ur]  \ar[rr] && B \ar[ur] &}
\]
in $\Ac$, where the front square is the pullback of a split epimorphism $A\to B$ along a normal epimorphism $C\to B$, and the skew morphisms are the reflection units. Since $\Bc$ is a Birkhoff subcategory of $\Ac$, we have that the left and right hand sides are double $\Nc$-extensions, so that the cube is a three-fold $\Nc$-extension as a split epimorphism of double $\Nc$-extensions (via Remark \ref{remarksplit} and Lemma \ref{up}). This implies that the induced square
\[
\xymatrix{
P \ar[r] \ar[d] & I(P) \ar[d]\\
A\times_B C \ar[r] & I(A)\times_{I(B)}I(C)}
\]
is a double $\Nc$-extension. In particular, it is a pushout, so that the right hand vertical map is indeed an isomorphism, because the left hand one is so by assumption.

To see that $I$ preserves binary (hence, finite-) products, it suffices to take $B=0$ in the above, and note that $I(0)=0$ as $I$ preserves the initial object.
\end{proof}

\begin{remark}
Note that for the second part of the lemma above to be true, the assumption that $\Bc$ is a Birkhoff subcategory can be weakened: it suffices that $\Bc$ is a (normal epi)-reflective subcategory of $\Ac$ (because a split epimorphism of $\Nc$-extensions is always a double $\Nc$-extension). In fact, as soon as $\Bc$ is a (normal epi)-reflective subcategory of a homological category $\Ac$, the reflector $I\colon\Ac\to\Bc$ will preserve pullbacks of split epimorphisms along split epimorphisms (this even holds more generally in a regular Mal'tsev category).
\end{remark}

Let us, from now on, assume that $\Ac$ is a semi-abelian category. We know from \cite{JK} that any Birkhoff subcategory $\Bc$ of $\Ac$  determines an admissible Galois structure $\Gamma_{(\Bc,\Nc)}=(\Ac,\Bc,I,H,\Nc)$, where $I\colon \Ac\to\Bc$ is the reflector, $H\colon \Bc\to\Ac$ the inclusion functor and $\Nc$ the class of all normal epimorphisms in $\Ac$.  We shall write $[\cdot]_{\Bc}\colon \Ac\to \Ac$ for the associated radical. For $A\in\Ac$, we denote by $\eta_A\colon A\to I(A)$ the reflection unit and by $\kappa_A\colon [A]_{\Bc}\to A$ the normal monomorphism $\ker(\eta_A)$. 

Note that the normal epimorphisms in $\Ac$ coincide with the regular epimorphisms because $\Ac$ is protomodular, and the regular epimorphisms with the effective descent morphisms because $\Ac$ is exact. Since a semi-abelian category is, in particular, homological, the class  $\Nc$ satisfies Conditions \ref{extension}. 


Recall from \cite{JK} that every central extension with respect to $\Gamma_{(\Bc,\Nc)}$ is a normal extension. The category of normal extensions with respect to $\Gamma_{(\Bc,\Nc)}$ is denoted $\NExt_{(\Bc,\Nc)}(\Ac)$. Just as in the case of a torsion theory satisfying Conditions $(P)$ and $(N)$, we have that $\NExt_{(\Bc,\Nc)}(\Ac)$ is a reflective subcategory of the category of effective descent morphisms in $\Ac$, and we know from \cite{EGV} that the reflection $I_1(f)$ in $\NExt_{(\Bc,\Nc)}(\Ac)$ of a normal epimorphism $f\colon A\to B$ can be obtained as follows: $I _1(f)$ is the normal epimorphism $A/[f]_{1,{\Bc}}\to B$ induced by $f$, where the normal monomorphism $[f]_{1,{\Bc}}\to A$ is obtained as the composite $\kappa^1_f=\kappa_A\circ [\pi_2]_{\Bc}\circ\ker [\pi_1]_{\Bc}$ (where $\pi_1$ and $\pi_2$ denote the projections from the kernel pair $R[f]$ of $f$):
\[
\xymatrix{
[f]_{1,{\Bc}}=K[[\pi_1]_{\Bc}] \ar[r]^-{\ker [\pi_{1}]_{\Bc}} \ar[d] & [R[f]]_{\Bc} \ar[d]_{\kappa_{R[f]}} \ar@<0.8 ex>[r]^-{[\pi_1]_{\Bc}} \ar@<-0.8 ex>[r]_-{[\pi_2]_{\Bc}} & [A]_{\Bc} \ar[d]^{\kappa_A}\\
K[\pi_1]  \ar[r]_-{\ker (\pi_{1})} & R[f]  \ar@<0.8 ex>[r]^-{\pi_1} \ar@<-0.8 ex>[r]_-{\pi_2} & A}
\]

In Section \ref{coveringmorphisms}, we proved that, for any torsion-free subcategory $\Fc$ of a homological category $\Ac$ with protoadditive reflector $F\colon \Ac\to\Fc$, the normal extensions with respect to $\Gamma_{\Fc}$ are exactly the effective descent morphisms $f\colon A\to B$ such that $K[f]\in\Fc$. As shown in \cite{EG} one has the same characterisation for the normal extensions with respect to $\Gamma_{(\Bc,\Nc)}$, where $\Bc$ is a Birkhoff subcategory of a semi-abelian category $\Ac$ with protoadditive reflector $I\colon \Ac\to \Bc$. It turns out that the protoadditivity of $I$ is also necessary for this to be true, as soon as $\Bc$ satisfies condition $(N)$: for any normal epimorphism $f\colon A\to B$ in $\Ac$, the induced monomorphism $\ker (f)\circ \kappa_{K[f]}\colon [K[f]]_{\Bc}\to A$ is normal.  More precisely, we have the following proposition:

\begin{proposition}\label{characterisationbyextensions}
For a Birkhoff subcategory $\Bc$ of a semi-abelian category $\Ac$, the following conditions are equivalent:
\begin{enumerate}
\item the reflector $I \colon \Ac \rightarrow \Bc$ is protoadditive;
\item the associated radical $[\cdot]_{\Bc}\colon \Ac \rightarrow \Ac$ is protoadditive;
\item 
\begin{itemize}
\item[(a)] for any normal epimorphism $f\colon A\to B$, the induced monomorphism $[K[f]]_{\Bc}\to A$ is normal; 
\item[(b)] the normal extensions are precisely the normal epimorphisms $f$ with $K[f] \in \Bc$;
\end{itemize}
\item  
\begin{itemize}
\item[(a)] for any normal epimorphism $f\colon A\to B$, the induced monomorphism $[K[f]]_{\Bc}\to A$ is normal;
\item[(b)] for any normal epimorphism $f \colon A \rightarrow B$, the reflection in $\NExt_{(\Bc,\Nc)}(\Ac)$ is given by the induced morphism
$\overline{f} \colon A/[K[f]]_{\Bc} \to B$.
\end{itemize}
\end{enumerate}
\end{proposition}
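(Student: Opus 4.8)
The plan is to prove the chain $(1)\Leftrightarrow(2)$ and $(2)\Rightarrow(4)\Rightarrow(3)\Rightarrow(1)$. The equivalence $(1)\Leftrightarrow(2)$ is immediate from Proposition~\ref{reflector=radical}, since a Birkhoff subcategory of the semi-abelian (hence homological) category $\Ac$ is in particular a full replete (normal epi)-reflective subcategory, the regular epimorphisms of $\Ac$ coinciding with the normal ones.

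For $(2)\Rightarrow(4)$: part~(a) follows at once from Lemma~\ref{compositeisnormal } applied with $\Fc=\Bc$ to the normal monomorphism $\ker(f)\colon K[f]\to A$, which gives that $\ker(f)\circ\kappa_{K[f]}\colon[K[f]]_{\Bc}\to A$ is normal. For part~(b) one unravels the explicit description $\kappa^{1}_{f}=\kappa_{A}\circ[\pi_{2}]_{\Bc}\circ\ker[\pi_{1}]_{\Bc}$ recalled above: applying the protoadditive radical $[\cdot]_{\Bc}$ to the split short exact sequence $0\to K[\pi_{1}]\to R[f]\xrightarrow{\pi_{1}}A\to 0$, split by the diagonal of $f$, together with the canonical isomorphism $K[\pi_{1}]\cong K[f]$, identifies the subobject $[f]_{1,\Bc}\to A$ with $\ker(f)\circ\kappa_{K[f]}\colon[K[f]]_{\Bc}\to A$; hence the reflection $I_{1}(f)=A/[f]_{1,\Bc}\to B$ is precisely $\overline{f}\colon A/[K[f]]_{\Bc}\to B$. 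Then $(4)\Rightarrow(3)$: since (a) is common, it remains to see that $(4)(b)$ forces the normal extensions to be exactly the normal epimorphisms $f$ with $K[f]\in\Bc$. Indeed, a normal epimorphism $f$ is a normal extension if and only if the unit at $f$ of the reflection $\EDM(\Ac)\to\NExt_{(\Bc,\Nc)}(\Ac)$ is an isomorphism; by $(4)(b)$ this unit has for top edge the quotient $A\to A/[K[f]]_{\Bc}$, which is an isomorphism precisely when $[K[f]]_{\Bc}=0$, i.e.\ when $K[f]\in\Bc$.

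The real content is $(3)\Rightarrow(1)$. By Proposition~\ref{caracterisationproto} it suffices to show that $I$ sends every protosplit monomorphism $k$, sitting in a split short exact sequence $0\to K\xrightarrow{k}A\xrightarrow{f}B\to 0$ with section $s$, to a monomorphism; equivalently, that $[K]_{\Bc}=K\cap[A]_{\Bc}$ (the inclusion $[K]_{\Bc}\subseteq K\cap[A]_{\Bc}$ always holding). Using $(3)(a)$ one forms the quotient $q\colon A\to A/[K]_{\Bc}$ and the induced split epimorphism $\overline{f}\colon A/[K]_{\Bc}\to B$, split by $qs$, whose kernel is $K/[K]_{\Bc}=I(K)\in\Bc$; by $(3)(b)$ the morphism $\overline{f}$ is therefore a normal extension.

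The key step — and the one I expect to be the main obstacle — is the auxiliary fact that \emph{a split normal extension is a trivial extension}. For a split normal extension $g\colon E\to D$ with section $t$, being a normal extension means that $g^{*}(g)=\pi_{1}\colon R[g]\to E$ is trivial, and since $\pi_{1}$ is a normal epimorphism the Birkhoff condition makes the comparison map to the relevant pullback a regular epimorphism, so triviality of $\pi_{1}$ amounts to $[R[g]]_{\Bc}\cap K[\pi_{1}]=0$. Writing $\Delta\colon E\to R[g]$ and $\tau=\langle 1_{E},tg\rangle\colon E\to R[g]$ for the two sections of $\pi_{1}$, one checks — using naturality of the radical as a subfunctor of the identity together with the internal Mal'tsev operation of $R[g]$ — that the monomorphism $K[g]\cong K[\pi_{1}]\hookrightarrow R[g]$ (informally $x\mapsto(0,x)$) carries $[E]_{\Bc}\cap K[g]$ into $[R[g]]_{\Bc}\cap K[\pi_{1}]=0$, because $(0,x)$ is obtained from $\Delta(x)$, $\tau(x)$ and $0$, all of which lie in $[R[g]]_{\Bc}$ when $x\in[E]_{\Bc}\cap K[g]$. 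Hence $[E]_{\Bc}\cap K[g]=0$, and since $\langle\eta_{E},g\rangle$ is a regular epimorphism, $g$ is trivial. Applying this to $\overline{f}$, and noting that $I(A/[K]_{\Bc})\cong I(A)$ because $[K]_{\Bc}\subseteq[A]_{\Bc}$, the triviality square for $\overline{f}$ says that $\langle\overline{f},\eta_{A/[K]_{\Bc}}\rangle\colon A/[K]_{\Bc}\to B\times_{I(B)}I(A)$ is an isomorphism; composing with $q$ gives $\langle f,\eta_{A}\rangle=\langle\overline{f},\eta_{A/[K]_{\Bc}}\rangle\circ q$, whence $K\cap[A]_{\Bc}=K[\langle f,\eta_{A}\rangle]=K[q]=[K]_{\Bc}$, which closes the argument.
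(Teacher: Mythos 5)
Your argument is correct in outline and establishes all the equivalences, but it closes the cycle of implications along a genuinely different path from the paper, and one auxiliary step needs a sounder justification. The paper proves $(1)\Leftrightarrow(2)$ and then $(1)\Rightarrow(3)\Rightarrow(4)\Rightarrow(1)$: the step $(1)\Rightarrow(3b)$ is just the remark that the proof of Proposition \ref{protocentral} carries over, and the return implication $(4)\Rightarrow(1)$ reads off from $[f]_{1,\Bc}=[K]_{\Bc}$ that $[\pi_2]_{\Bc}\circ[\ker(\pi_1)]_{\Bc}$ is the normalisation of the equivalence relation $[R[f]]_{\Bc}$, and then invokes Lemma \ref{Marino} (a Birkhoff reflector preserves kernel pairs of split epimorphisms). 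You instead run $(2)\Rightarrow(4)\Rightarrow(3)\Rightarrow(1)$; your computation identifying $[f]_{1,\Bc}$ with $\ker(f)\circ\kappa_{K[f]}$ via protoadditivity of the radical and naturality of $\kappa$ is correct (it is the same calculation the paper performs in the other direction), your $(4)\Rightarrow(3)$ via the unit of the reflection being an isomorphism is fine, and your closing step $(3)\Rightarrow(1)$ goes through Proposition \ref{caracterisationproto} by reducing protoadditivity of $I$ to the identity $[K]_{\Bc}=K\cap[A]_{\Bc}$ for a protosplit monomorphism. This is a real alternative to the paper's ending: it avoids Lemma \ref{Marino} and produces the commutator-style identity $[K]_{\Bc}=K\cap[A]_{\Bc}$ directly, at the price of needing the auxiliary fact that a split epimorphic normal extension is trivial.

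That auxiliary fact is where your justification is shaky. A general semi-abelian category is a Mal'tsev category, but the object $R[g]$ carries no internal Mal'tsev operation, so the computation $(0,x)=p(\Delta(x),\tau(x),0)$ and the closure of $[R[g]]_{\Bc}$ under such an operation are only a varietal heuristic, not a proof in the abstract setting. Fortunately the statement you need is standard and has a clean categorical proof: with respect to an admissible Galois structure, trivial extensions are stable under pullback along arbitrary morphisms (write a trivial $h$ as the pullback of $I(h)$ along the unit, and use that $\Bc$ is closed under limits and that normal epimorphisms are pullback-stable), and a split epimorphism $g\colon E\to D$ with section $t$ is canonically isomorphic to the pullback $t^{*}(\pi_1)$ of $\pi_1\colon R[g]\to E$ along $t$. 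Hence if $g$ is a normal extension, $\pi_1$ is trivial and therefore so is $g$. Substituting this observation for your element-chase makes the proof of $(3)\Rightarrow(1)$ fully rigorous; the remaining steps need no repair.
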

\begin{proof}
The equivalence $(1) \Leftrightarrow (2)$ follows from Proposition \ref{reflector=radical}, and the implication $(1) \Rightarrow (3a)$ from Lemma \ref{compositeisnormal }. For $(1)\Rightarrow (3b)$ it suffices to note that the proof of Proposition \ref{protocentral} remains valid.

To see that the implication $(3) \Rightarrow (4b)$ holds, consider a normal epimorphism $f\colon A\to B$. By the ``double quotient'' isomorphism theorem (see Theorem $4.3.10$ in \cite{BB}), the kernel of the induced morphism $\overline{f}\colon A/[K[f]]_{\Bc}\to B$ is $K[f]/[K[f]]_{\Bc}$, which lies in $\Bc$, hence $\overline{f}$ is a normal extension.

To see that $\overline{f}$ is the reflection of $f$ in $\NExt_{(\Bc,\Nc)}(\Ac)$, consider a normal extension $g\colon C \to D$ and a morphism $(a,b)\colon f\to g$ of normal epimorphisms. We need to show that there is a (unique) morphism $\overline{a}$ such that the diagram
\[
\xymatrix@=30pt{ 
  A \ar@{-<}`u[r]`[rr]^a[rr] \ar[r] \ar[d]_{f}  & \frac{A}{[K[f]]_{\Bc}} \ar[d]^{\overline{f}} \ar@{.>}[r]^{\overline{a}} & C \ar[d]^g \\
 B  \ar@{=}[r] & B \ar[r]_b & D.
}
\]
commutes. For this, it suffices to note that there is a commutative square
\[
\xymatrix{
[K[f]]_{\Bc} \ar[r] \ar[d]_{\ker(f)\circ \kappa_{K[f]}} & [K[g]]_{\Bc} \ar[d]^{\ker(g)\circ \kappa_{K[g]}} \\
A\ar[r]_a & C}
\]
and that  $[K[g]]_{\Bc}=0$ because $g$ is a normal extension, so that $a\circ\ker(f)\circ \kappa_{K[f]}=0$.

$(4) \Rightarrow (1)$
Consider a split short exact sequence 
\begin{equation}\label{split2}
\xymatrix{0 \ar[r]& K \ar[r]^k & A \ar@<-.8 ex> [r]_f & B \ar@<-.8ex>[l]_s \ar[r] &0 }
\end{equation}
in $\Ac$, and the induced diagram 
\[
\xymatrix@=35pt{& [K]_{\Bc} \ar[d]_{\kappa_{K[f]}}\ar[r]^-{[\ker(\pi_1)]_{\Bc}} & [R[f]]_{\Bc} \ar@<.8ex>[r]^{[\pi_1]_{\Bc}} \ar@<-.8ex>[r]_{[\pi_2]_{\Bc}}  \ar[d]_{\kappa_{R[f]}}&  [A]_{\Bc} \ar[d]^{\kappa_A} \ar@<-.8 ex> [r]_{[f]_{\Bc}} & [B]_{\Bc} \ar[d]^{\kappa_B} \ar@<-.8ex>[l]_{[s]_{\Bc}}  & \\
& K \ar[r]_-{\ker(\pi_1)} &R[f] \ar@<.8ex>[r]^{\pi_1} \ar@<-.8ex>[r]_{\pi_2} & A \ar@<-.8 ex> [r]_f & B \ar@<-.8ex>[l]_s  & }
\]
obtained by factorising $k$ through the kernel pair $R[f]$ of $f$, and applying the radical $[\cdot]_{\Bc}$. The assumption says that $[f]_{1,\Bc}= [K]_{\Bc}$ so that $[\ker(\pi_1)]_{\Bc}$ is the kernel of $[\pi_1]_{\Bc}$; it follows that $[\pi_2]_{\Bc} \circ [\ker(\pi_1)]_{\Bc}$ is the normalisation of the equivalence relation $[R[f]]_{\Bc}$ on $[A]$. Since the reflector $I \colon \Ac \rightarrow \Bc$ preserves kernel pairs of split epimorphisms by Lemma \ref{Marino}, one concludes that the functor $[\cdot]_{\Bc} \colon \Ac \rightarrow \Ac$ preserves the split short exact sequence \eqref{split2}. 
\end{proof}

\begin{remark}
Note that conditions $(3a)$ and $(4a)$ say that for any normal monomorphism $k\colon K\to A$ the composite $k\circ\kappa_K$ is a normal monomorphism as well, and we could equivalently have written ``any morphism" instead of ``any normal epimorphism" in the statement of these conditions. The reason we stated it the way we did is that, as we shall explain below, the proposition can be ``relativised"---in such a way that it depends on a choice of class $\Ec$ of morphisms in $\Ac$ satisfying Conditions \ref{extension}---and in the relative version, the morphism $[K[f]]_{\Bc}\to A$ might not be defined if $f$ is not in $\Ec$.
\end{remark}

\begin{remark}
Proposition \ref{characterisationbyextensions} shows, in particular, that it is meaningful to consider the conditions $(P)$ and $(N)$ from the previous sections beyond the context of torsion theories. 
\end{remark}

\begin{remark}\label{composingce}
Let $(\Ef,\Mf)$ be the reflective (pre)factorisation system associated with a Birkhoff subcategory $\Bc$ of a semi-abelian category $\Ac$, and let $\Ef'$ and $\Mf^*$ be the induced classes of morphisms ``stably in $\Ef$" and ``locally in $\Mf$", respectively, as considered in Section \ref{coveringmorphisms}. Then $(\Ef',\Mf^*)$ need not be a (pre)factorisation system in general. In fact, normal extensions fail to be stable under composition, even if the reflector $I\colon \Ac\to\Bc$ is protoadditive, in contrast to the normal extensions associated with a torsion theory satisfying condition $(P)$, which we discussed in Section \ref{coveringmorphisms}. For instance, let $\Ac=\Ab$ be the variety of abelian groups, and $\Bc=\mathsf{B}_2$ the Burnside variety of exponent $2$ ($\mathsf{B}_2$ consists of all abelian groups $A$ such that $a+a=0$ for every $a\in A$). Then the reflector $\Ab\to\mathsf{B}_2$ is additive, but the composite of two normal extensions need not be normal: if we denote by $C_n$ the cyclic group of order $n$, then the unique map $C_2\to 0$ is a normal extension, as is the only non-trivial morphism $C_4\to C_2$. However, the composite $C_4\to 0$ is not.

Note that the composite $g\circ f\colon A\to B\to C$ of two normal extensions is a normal extension as soon as $[g\circ f]_{1,\Bc}$ is $\Bc$-perfect, i.e. $I([g\circ f]_{1,\Bc})=0$. Indeed, if $q$ is the canonical normal epimorphism $A\to A/[g\circ f]_{1,\Bc}$, then the assumption that $I([g\circ f]_{1,\Bc})=0$ implies that $q$ lies in $\Ef$, since $I$ preserves cokernels. In fact, we have that $q$ lies in $\Ef'$, since pulling back yields isomorphic kernels, and preserves normal epimorphisms. From \cite{CJKP} we recall that $\Mf^*\subseteq (\Ef')^{\downarrow}$ which is easily seen to imply that also composites of morphisms in $\Mf^*$ lie in $(\Ef')^{\downarrow}$. In particular, we have that $q \downarrow (g\circ f)$, since, by assumption, we have that $f$ and $g$ lie in $\Mf^*$. As $g\circ f=I_1(g\circ f)\circ q$, it follows that $q$ is a split monomorphism, hence an isomorphism, and we can conclude that $g\circ f$ is a normal extension.
\end{remark}

Recall from \cite{Ev,EGV} that the notion of Birkhoff subcategory can be ``relativised" as follows. Let $\Ec$ be a class of morphisms in a semi-abelian category $\Ac$ satisfying Conditions \ref{extension}, and $\Bc$ a reflective subcategory of the category $\Ac_{\Ec}$. Denote by $I\colon \Ac_{\Ec}\to\Bc$ the reflector, by $H\colon \Bc\to \Ac_{\Ec}$ the inclusion functor, and write $\eta$ for the unit of the reflection.  Then $\Bc$ is called a \emph{strongly $\Ec$-Birkhoff subcategory} of $\Ac_{\Ec}$ if the square \eqref{unitsquare} is a double $\Ec$-extension for any $\Ec$-extension $f\colon A\to B$. This determines an admissible Galois structure $\Gamma_{(\Bc,\Ec)}=(\Ac_{\Ec},\Bc,I,H,\Ec)$ with respect to which the central and normal extensions coincide, just as in the ``absolute" case. The full subcategory of $\Ext_{\Ec}(\Ac)$ of all normal $\Ec$-extensions with respect to $\Gamma_{(\Bc,\Ec)}$---denoted $\NExt_{(\Bc,\Ec)}(\Ac)$---is reflective in $\Ext_{\Ec}(\Ac)$, and the construction of the reflector $I_1\colon \Ext_{\Ec}(\Ac)\to\NExt_{(\Bc,\Ec)}(\Ac)$ is formally the same as in the ``absolute" case. $\NExt_{(\Bc,\Ec)}(\Ac)$ is, in fact, a strongly $\Ec^1$-Birkhoff subcategory of $\Ext_{\Ec}(\Ac)=(\Arr(\Ac))_{\Ec^1}$, where $\Ec^1$ denotes, as before, the class of double $\Ec$-extensions. This fact allows us to define \emph{double normal $\Ec$-extensions} (with respect to $\Gamma_{(\Bc,\Ec)}$) as those double $\Ec$-extensions that are normal with respect to the Galois structure $\Gamma_{(\Bc_1,\Ec^1)}$, where $\Bc_1=\NExt_{(\Bc,\Ec)}(\Ac)$, and then to define \emph{three-fold normal $\Ec$-extensions}, and so on. For each $n\geq 1$, we use the notation  $\Gamma_{(\Bc,\Ec)}^{n}$ for the induced Galois structure $(\Ext_{\Ec}^n(\Ac), \Bc_n,I_n,H_n,\Ec^n)$, where 
\[
\Bc_n=\NExt_{(\Bc_{n-1},\Ec^{n-1})}(\Arr^{n-1}(\Ac))=\NExt^n_{(\Bc,\Ec)}(\Ac).
\]
Similar to the case $n=1$, for $n\geq 2$ and any $n$-fold $\Ec$-extension $A$, we write $\eta^n_A\colon A\to I_n(A)$ for the reflection unit, and $\kappa^n_A\colon [A]_{n,\Bc}\to A_{\textrm{top}}$ for the morphism in $\Ac$ which appears as the ``top" morphism in the diagram of the kernel $\ker(\eta^n_A)\colon K[\eta^n_A]\to A$. Note that we have that $\iota^n[A]_{n,\Bc}=K[\eta^n_A]$, where the functor $\iota^n\colon \Ac\to\Arr^n(\Ac)$ is as in Section \ref{sectionderived}.

We refer the reader to the articles \cite{Ev,EGV} for more details, and proofs of the statements above.

Replacing ``$\Ac$'' by ``$\Ac_{\Ec}$" and ``normal epimorphism" by ``$\Ec$-extension" in Lemma \ref{Marino} and Proposition \ref{characterisationbyextensions} provides us with relative versions of these results. One easily verifies that the proofs remain valid. We obtain, in particular, for each $n\geq 1$, a characterisation of the $n$-fold normal extensions with respect to $\Gamma_{(\Bc,\Nc)}$, if the reflector $I$ is protoadditive. Indeed, in this case also the $I_n$ are protoadditive. In fact, we have:

\begin{lemma}\label{centralisationisprotoadditive}
$I\colon \Ac_{\Ec}\to\Bc$ is protoadditive if and only if $I_1\colon \Ext_{\Ec}(\Ac)\to\NExt_{(\Bc,\Ec)}(\Ac)$ is protoadditive.
\end{lemma}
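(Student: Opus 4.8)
The plan is to reduce both implications to the corresponding statements about radicals, and then to exhibit the radical of $I_1$ as a composite of protoadditive functors. Write $\mathbf{T}=[\cdot]_{\Bc}\colon\Ac_{\Ec}\to\Ac_{\Ec}$ for the radical associated with $I$, and $\mathbf{T}_1\colon\Ext_{\Ec}(\Ac)\to\Ext_{\Ec}(\Ac)$ for the one associated with $I_1$. By the relative version of Proposition~\ref{characterisationbyextensions}, $I$ is protoadditive if and only if $\mathbf{T}$ is; applying that same relative version inside the semi-abelian category $\Arr(\Ac)$, with the class $\Ec^1$ (which satisfies Conditions~\ref{extension} by Lemma~\ref{up}) and the strongly $\Ec^1$-Birkhoff subcategory $\NExt_{(\Bc,\Ec)}(\Ac)$ of $\Ext_{\Ec}(\Ac)=(\Arr(\Ac))_{\Ec^1}$, we likewise get that $I_1$ is protoadditive if and only if $\mathbf{T}_1$ is. So it suffices to prove that $\mathbf{T}$ is protoadditive if and only if $\mathbf{T}_1$ is. I would bring in two functors: $\mathsf{K}\colon\Ext_{\Ec}(\Ac)\to\Ac_{\Ec}$, sending an $\Ec$-extension $f$ to its kernel object $K[f]$, and $\iota\colon\Ac_{\Ec}\to\Ext_{\Ec}(\Ac)$, sending $C$ to the morphism $C\to 0$. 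Both are well defined by Condition~\ref{extension}.5 (for $\iota(C)$, note that $C\to 0$ is a normal epimorphism whose domain and kernel are $C\in\Ac_{\Ec}$), $\iota$ is a full embedding with $\mathsf{K}\circ\iota=1_{\Ac_{\Ec}}$, and, since $I$ preserves finite products (Lemma~\ref{Marino}), one computes $[\iota(C)]_{1,\Bc}=[C]_{\Bc}$ and hence obtains an isomorphism $I_1\circ\iota\cong\iota\circ I$ not requiring protoadditivity.

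The first key point is that both $\mathsf{K}$ and $\iota$ always preserve split short exact sequences, and that $\iota$ moreover reflects them. Since, by definition, a split short exact sequence is a kernel together with a split epimorphism, and since kernels and split epimorphisms in $\Arr(\Ac)$ are computed pointwise, both assertions for $\mathsf{K}$ and the preservation assertion for $\iota$ reduce to straightforward component-wise checks (using Condition~\ref{extension}.5 to stay inside $\Ac_{\Ec}$, and Remark~\ref{remarksplit}); the reflection property of $\iota$ then follows from its full faithfulness. The second key point is that, \emph{when $I$ is protoadditive}, there is a natural isomorphism $\mathbf{T}_1\cong\iota\circ\mathbf{T}\circ\mathsf{K}$: for an $\Ec$-extension $f\colon A\to B$ the unit $\eta^1_f$ is the identity on codomains, so $\mathbf{T}_1(f)=\ker(\eta^1_f)=\iota([f]_{1,\Bc})$, while the relative version of Proposition~\ref{characterisationbyextensions}(4b) identifies $[f]_{1,\Bc}$ with $[K[f]]_{\Bc}=\mathbf{T}(\mathsf{K}(f))$; naturality is checked directly, both sides being restrictions of the underlying morphism data.

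With these in place the argument concludes. If $I$---equivalently $\mathbf{T}$---is protoadditive, then $\mathbf{T}_1\cong\iota\circ\mathbf{T}\circ\mathsf{K}$ is a composite of functors each preserving split short exact sequences, hence is protoadditive, so $I_1$ is protoadditive. Conversely, if $I_1$ is protoadditive and $\Sigma$ is a split short exact sequence in $\Ac_{\Ec}$, then $\iota(\Sigma)$ is split short exact in $\Ext_{\Ec}(\Ac)$, hence $I_1(\iota(\Sigma))\cong\iota(I(\Sigma))$ is split short exact in $\NExt_{(\Bc,\Ec)}(\Ac)$, and therefore in $\Ext_{\Ec}(\Ac)$; as $\iota$ reflects split short exact sequences, $I(\Sigma)$ is split short exact in $\Ac_{\Ec}$, and then in $\Bc$ because the inclusion $\Bc\hookrightarrow\Ac_{\Ec}$ preserves limits. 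The content of the proof is concentrated in the second key point---the identity $\mathbf{T}_1\cong\iota\circ\mathbf{T}\circ\mathsf{K}$, which rests on the characterisation $[f]_{1,\Bc}=[K[f]]_{\Bc}$ from the relative Proposition~\ref{characterisationbyextensions}(4b)---and this is the step I expect to need the most care, along with checking that the relative versions of Proposition~\ref{characterisationbyextensions} are legitimately available one dimension up and that kernels and limits are computed coherently across the several full and reflective subcategories in play; everything else is formal.
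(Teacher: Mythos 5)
Your proof is correct and follows essentially the same route as the paper: the ``only if'' direction rests on the relative version of Proposition~\ref{characterisationbyextensions} (reducing to the radical via $(1)\Rightarrow(2)$ and identifying $[f]_{1,\Bc}=[K[f]]_{\Bc}$ via $(1)\Rightarrow(4)$) together with a $3\times 3$-lemma-type argument, and the ``if'' direction uses exactly the paper's observation that $I_1(A\to 0)=I(A)\to 0$ by product preservation (Lemma~\ref{Marino}). Your explicit factorisation $\mathbf{T}_1\cong\iota\circ\mathbf{T}\circ\mathsf{K}$ is just a clean packaging of the paper's appeal to the $3\times 3$ lemma, so there is nothing substantively different to compare.
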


\begin{proof}
The ``only if" part of this lemma has already been considered in \cite{EG}: it essentially follows from the implications $(1)\Rightarrow (2)$ and $(1)\Rightarrow (4)$ in the ``relative version" of Proposition \ref{characterisationbyextensions}, and the $3\times 3$ lemma. Now, suppose that $I_1$ is protoadditive. Since, by the ``relative version" of Lemma \ref{Marino}, $I\colon \Ac_{\Ec}\to\Bc$ preserves, for $A\in\Ac_{\Ec}$, the product $A\times A$, it follows from the construction of $I_1$ that $I_1(A\to 0)=I(A)\to 0$. It is then immediate to conclude that also $I$ is protoadditive.
\end{proof}

We are now in a position to prove the following theorem. As before, we write $\Nc$ for the class of normal epimorphisms in $\Ac$. If $A$ is an $n$-fold $\Nc$-extension, the ``initial ribs'' in the diagram of $A$ are denoted $a_i$ ($1\leq i\leq n$), and its ``top vertex'' (the domain of the morphisms $a_i$) $A_{\textrm{top}}$.

\begin{theorem}\label{characterisationbyextensionshigher}
For a Birkhoff subcategory $\Bc$ of a semi-abelian category $\Ac$, the following conditions are equivalent:
\begin{enumerate}
\item
the reflector $I\colon \Ac\to \Bc$ is protoadditive;
\item 
the associated radical $[\cdot]_{\Bc}\colon \Ac \rightarrow \Ac$ is protoadditive;
\item 
the following conditions hold for any $n\geq 1$:
\begin{itemize}
\item[(a)]
the canonical monomorphism $[\bigcap_{1\leq i\leq n}K[a_i]]_{\Bc}\to A_{\textrm{top}}$ is normal for any $n$-fold $\Nc$-extension $A$;
\item[(b)]
the $n$-fold normal extensions are precisely the $n$-fold $\Nc$-extensions $A$ with $\bigcap_{1\leq i\leq n}K[a_i]\in\Bc$;
\end{itemize}
\item
the following conditions hold for any $n\geq 1$:
\begin{itemize}
\item[(a)]
the canonical monomorphism $[\bigcap_{1\leq i\leq n}K[a_i]]_{\Bc}\to A_{\textrm{top}}$ is normal for any $n$-fold $\Nc$-extension $A$;
\item[(b)] for any $n$-fold $\Nc$-extension $A$, the reflection in $\NExt^n_{(\Bc,\Nc)}(\Ac)$ is given by the quotient
$A/\iota^n[\bigcap_{1\leq i\leq n}K[a_i]]_{\Bc}$;
\end{itemize}
\item
either (3) or (4) holds for some $n\geq 1$.
\end{enumerate}
\end{theorem}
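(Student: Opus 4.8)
The plan is to reduce everything to the one-dimensional case, Proposition \ref{characterisationbyextensions}, by climbing the tower of higher extensions with Lemma \ref{centralisationisprotoadditive} and invoking the \emph{relative} version of Proposition \ref{characterisationbyextensions} at each level. First, $(1)\Leftrightarrow(2)$ is exactly Proposition \ref{reflector=radical}, and $(3)\Rightarrow(5)$, $(4)\Rightarrow(5)$ are trivial once $(5)$ is read as ``conditions (a) and (b) hold at some single level $n$''. So the content lies in the two implications $(1)\Rightarrow(3)$, $(1)\Rightarrow(4)$ and in $(5)\Rightarrow(1)$.

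For $(1)\Rightarrow(3)$ and $(1)\Rightarrow(4)$ I would induct on $n$, with $n=1$ being Proposition \ref{characterisationbyextensions}. Assuming $I$ protoadditive, iterating Lemma \ref{centralisationisprotoadditive} (starting from $I=I_0$) gives that each reflector $I_{n-1}\colon\Ext^{n-1}_{\Nc}(\Ac)\to\NExt^{n-1}_{(\Bc,\Nc)}(\Ac)$ is protoadditive. Applying the relative version of Proposition \ref{characterisationbyextensions} to the strongly $\Nc^{n-1}$-Birkhoff subcategory $\Bc_{n-1}=\NExt^{n-1}_{(\Bc,\Nc)}(\Ac)$ of $\Ext^{n-1}_{\Nc}(\Ac)$ then shows, for each $n$-fold $\Nc$-extension $A$ regarded as an $\Nc^{n-1}$-extension $f$, that the induced monomorphism $[K[f]]_{\Bc_{n-1}}\to\dom(f)$ is normal, that $A$ is an $n$-fold normal extension iff $K[f]\in\Bc_{n-1}$, and that its reflection in $\NExt^{n}_{(\Bc,\Nc)}(\Ac)$ is induced by $\dom(f)\to\dom(f)/[K[f]]_{\Bc_{n-1}}$. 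It then remains to translate these into the cube-theoretic statements of $(3)$ and $(4)$: one observes that $K[f]$ is the $(n-1)$-fold $\Nc$-extension with top vertex $K[a_n]$ and ribs the restrictions of $a_1,\dots,a_{n-1}$, so that $\bigcap_{1\leq i\leq n-1}K[(K[f])_i]=\bigcap_{1\leq i\leq n}K[a_i]$; then, applying the inductive hypothesis $(3b)$ and $(4b)$ at level $n-1$ to the $(n-1)$-fold extension $K[f]$, one rewrites ``$K[f]\in\Bc_{n-1}$'' as ``$\bigcap_{1\leq i\leq n}K[a_i]\in\Bc$'' and identifies $[K[f]]_{n-1,\Bc}$ with $[\bigcap_{1\leq i\leq n}K[a_i]]_{\Bc}$; and one notes that the top-vertex component of $[K[f]]_{\Bc_{n-1}}\to\dom(f)$ is normal in $\Ac$ because kernels in $\Arr^{n-1}(\Ac)$ are computed componentwise. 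This delivers $(3a),(3b),(4b)$ at level $n$ (with $(4a)=(3a)$), and hence $(3)$ and $(4)$ in full.

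For $(5)\Rightarrow(1)$, I would first note that $(4)$ at level $n$ implies $(3)$ at level $n$: the reflection $A/\iota^{n}[\bigcap_{1\leq i\leq n}K[a_i]]_{\Bc}$ of $(4b)$ is an isomorphism exactly when $[\bigcap_{1\leq i\leq n}K[a_i]]_{\Bc}=0$, i.e. when $\bigcap_{1\leq i\leq n}K[a_i]\in\Bc$. So assume $(3a),(3b)$ hold at some level $n$, and run a \emph{downward} induction. Given a $(k-1)$-fold $\Nc$-extension $B$ with $2\leq k\leq n$, the object $\iota(B)=(B\to 0)$ of $\Ext^{k}_{\Nc}(\Ac)$ is a $k$-fold $\Nc$-extension with top vertex $B_{\textrm{top}}$ and initial ribs $b_1,\dots,b_{k-1},B_{\textrm{top}}\to 0$, so $\bigcap_{1\leq i\leq k}K[(\iota B)_i]=\bigcap_{1\leq i\leq k-1}K[b_i]$; moreover, by the relative version of Lemma \ref{Marino} (valid because $\Bc_{k-1}$ is a strongly $\Nc^{k-1}$-Birkhoff subcategory), the morphism $B\to 0$ is a trivial, hence normal, $\Nc^{k-1}$-extension iff $B\in\Bc_{k-1}$, i.e. iff $B$ is a $(k-1)$-fold normal extension. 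Feeding $\iota(B)$ into $(3a),(3b)$ at level $k$ therefore yields $(3a),(3b)$ at level $k-1$ for $B$; iterating down to $k=1$ gives condition $(3)$ of Proposition \ref{characterisationbyextensions}, so $I$ is protoadditive.

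The step I expect to be the main obstacle is the translation carried out in the second paragraph: it is not a single diagram chase but an inductive identification of the radical $[K[f]]_{n-1,\Bc}$ of the $(n-1)$-fold extension $K[f]$ with $[\bigcap_{1\leq i\leq n}K[a_i]]_{\Bc}$, which works only because conditions $(3b)$ and $(4b)$ are carried along in the induction, so that the two chains $(1)\Rightarrow(3)$ and $(1)\Rightarrow(4)$ genuinely have to be proved simultaneously. A subsidiary point requiring care, used in $(5)\Rightarrow(1)$, is that the embedding $\iota\colon\Ext^{k-1}_{\Nc}(\Ac)\to\Ext^{k}_{\Nc}(\Ac)$, $B\mapsto(B\to 0)$, both preserves and reflects normal extensions; this is the relative form of the elementary fact that $B\to 0$ is a trivial extension precisely when $B$ lies in the Birkhoff subcategory, which follows from the relative Lemma \ref{Marino} but should be verified explicitly (or extracted from \cite{Ev,EGV}).
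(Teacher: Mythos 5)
Your proposal is correct and follows essentially the same route as the paper: $(1)\Leftrightarrow(2)$ from Proposition \ref{characterisationbyextensions}, the upward implications by induction on $n$ via Lemma \ref{centralisationisprotoadditive} and the relative version of Proposition \ref{characterisationbyextensions}, and $(5)\Rightarrow(1)$ by using the relative Lemma \ref{Marino} (preservation of binary products by $I_k$) to restrict the level-$n$ conditions along $\iota$ down to level $1$. The only cosmetic difference is that the paper descends from level $n$ to level $1$ in one step via the identity $I_n(\iota^n A)=\iota^n I(A)$, whereas you descend one level at a time; the ingredients and the logic are the same, and you usefully make explicit the simultaneous induction on $(3b)$ and $(4b)$ that the paper leaves implicit.
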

\begin{proof}
$(1) \Leftrightarrow (2)$ was proved in Proposition \ref{characterisationbyextensions}. 

To see that $(5)$ implies $(1)$, we note that $I_k$ preserves binary products, for any $k\geq 0$, by the ``relative version" of Lemma \ref{Marino}. Taking this into account, we see from the construction of $I_n$ that $I_n(\iota^nA)=\iota_nI(A)$, for any $n\geq 1$ and any $n$-fold $\Nc$-extension $A$, so that the validity of conditions $(a)$ and $(b)$ for some $n\geq 1$ implies that for $n=1$. Proposition \ref{characterisationbyextensions} then implies that $I$ is protoadditive.

The other implications follow easily by induction on $n$, using Proposition \ref{characterisationbyextensions} and its ``relative version", and Lemma \ref{centralisationisprotoadditive}.
\end{proof}

\section{Composition of Birkhoff and protoadditive reflections}
We have seen in Section \ref{sectionderived} that a torsion theory $(\Tc,\Fc)$ on a homological category $\Ac$ satisfying conditions $(P)$ and $(N)$ induces a chain of torsion theories $(\Tc_n,\Fc_n)$ on the categories $\Ext^n_{\Nc}(\Ac)$ of $n$-fold $\Nc$-extensions such that, for each $n\geq 1$, the torsion-free subcategory $\Fc_n$ consists of all $n$-fold $\Nc$-extensions that are normal extensions with respect to the Galois structure $\Gamma_{\Fc_{n-1}}$ associated with the torsion theory $(\Tc_{n-1},\Fc_{n-1})$. Moreover, an $n$-fold $\Nc$-extension $A$ with ``initial ribs" $a_i$ ($1\leq i\leq n$) is normal if and only if the intersection $\bigcap_{1\leq i\leq n}K[a_i]$ lies in $\Fc$. 

Similarly, a Birkhoff subcategory $\Bc$ of a semi-abelian category $\Ac$ induces a chain of ``strongly $\Nc^{n}$-Birkhoff subcategories" $\Bc_n$ of the categories $\Ext^n_{\Nc}(\Ac)$, where, for each $n\geq 1$, $\Nc^n$ denotes the class of all $(n+1)$-fold $\Nc$-extensions. Moreover, in the case where the reflector $I\colon \Ac\to\Bc$ is protoadditive, the $n$-fold normal (=central) extensions admit the same simple description as in the example of a torsion theory satisfying $(P)$ and $(N)$, as we have explained in Section \ref{Birkhoffsection}. 
 
In general, it is not always easy to characterise the $n$-fold normal extensions (for $n\geq 1$) with respect to a particular Birkhoff subcategory. However, we are going to show that the problem can sometimes be simplified by decomposing the considered adjunction into a pair of  adjunctions such that one of the reflectors is protoadditive. We shall explain this in the present section. In fact, we shall consider, more generally, composite adjunctions
  \begin{equation}\label{compositeadj}
 \xymatrix@=30pt{
{\Ac \, } \ar@<1ex>[r]_-{^{\perp}}^-{I} & {\, \Bc \, }
\ar@<1ex>[l]^H  \ar@<1ex>[r]_-{^{\perp}}^-{J} & \Cc \ar@<1ex>[l]^G   }
 \end{equation}
where $\Ac$ is semi-abelian, $\Bc$ a Birkhoff subcategory of $\Ac$, and where $\Cc$ can be \emph{any} (normal epi)-reflective subcategory of $\Bc$, admissible with respect to $\Nc$, with a protoadditive reflector (but not necessarily Birkhoff). As we shall see, such a situation induces a chain of Galois structures of higher normal extensions such that, for $n\geq 1$, an $n$-fold $\Nc$-extension $A$ in $\Ac$ is normal with respect to $\Gamma_{(\Cc,\Nc)}$ if and only if it is normal with respect to $\Gamma_{(\Bc,\Nc)}$ and the intersection $\bigcap_{1\leq i\leq n}K[a_i]$ lies in $\Cc$. Here we have written, as before, $a_i$ for the ``initial ribs" of $A$. 

First, we consider the one-dimensional case (see also \cite{DEG}):

\begin{proposition}\label{composite}
Consider the composite reflection \eqref{compositeadj} where $\Ac$ is a semi-abelian category, $\Bc$ a Birkhoff subcategory of $\Ac$ and $\Cc$ a (normal epi)-reflective subcategory of $\Bc$, admissible with respect to normal epimorphisms, with protoadditive reflector $J$. Then the composite reflector $J\circ I$ is admissible with respect to normal epimorphisms and, for any normal epimorphism $f \colon A \rightarrow B$ in $\Ac$, the following conditions are equivalent:
\begin{enumerate}
\item $f \colon A \rightarrow B$ is a normal extension with respect to $\Gamma_{(\Cc,\Nc)}$;
\item $f \colon A \rightarrow B$ is a central extension with respect to $\Gamma_{(\Cc,\Nc)}$;
\item $K[f] \in \Cc $ and $f \colon A \rightarrow B$ is a $\Gamma_{(\Bc,\Nc)}$-normal extension.
\end{enumerate}
\end{proposition}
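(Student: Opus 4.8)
The plan is to reduce everything to the one-dimensional results already established: Proposition \ref{protocentral} (characterisation of $\Gamma_{\Fc}$-normal extensions for a protoadditive torsion-free reflector), Proposition \ref{characterisationbyextensions} (characterisation of $\Gamma_{(\Bc,\Nc)}$-normal extensions for a protoadditive Birkhoff reflector), and the admissibility criterion via preservation of pullbacks of the form \eqref{sle}. First I would note that since $\Cc$ is admissible with respect to $\Nc$ as a reflective subcategory of $\Bc$, and $\Bc$ is a Birkhoff (hence admissible) subcategory of $\Ac$, the composite $J\circ I$ is admissible with respect to $\Nc$: given a pullback as in \eqref{sle} with $X\in\Cc$ and $x$ a normal epimorphism over $JI(A)$, one applies $I$ first — which preserves such pullbacks along units and normal epimorphisms since $\Bc$ is Birkhoff — landing in $\Bc$, and then applies $J$, which preserves the resulting pullback by admissibility of $\Cc$ in $\Bc$. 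A small point to check here is that the relevant square after applying $I$ is again of the admissible form over $\Bc$; this follows because the unit of $J\circ I$ at $A$ factors as $\eta^{J}_{I(A)}\circ\eta^{I}_A$ and $I$ sends the reflection unit $\eta^{I}_A$ to an isomorphism.

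Next, $(1)\Rightarrow(2)$ is immediate from the definitions. For $(2)\Rightarrow(3)$ I would argue as in Proposition \ref{protocentral}: localising a central extension $f$ along a monadic extension $p\colon E\to B$ makes $p^*(f)$ trivial, so that $K[f]\cong K[p^*(f)]\cong K[(J\circ I)(p^*(f))]$ lies in $\Cc$; and since $\Cc\subseteq\Bc$, also $K[f]\in\Bc$, which by Proposition \ref{characterisationbyextensions}$(3b)$ (valid because $J\circ I$ need not be protoadditive but the \emph{intermediate} reflector $I$ is still protoadditive? — careful here) gives that $f$ is a $\Gamma_{(\Bc,\Nc)}$-normal extension. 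Actually the cleanest route is: a central extension with respect to $\Gamma_{(\Cc,\Nc)}$ is in particular a central extension with respect to $\Gamma_{(\Bc,\Nc)}$ — because every trivial $\Gamma_{(\Cc,\Nc)}$-extension is a trivial $\Gamma_{(\Bc,\Nc)}$-extension, as $J\circ I$ inverting the comparison to the pullback forces $I$ to do so too (using that $J$ reflects isomorphisms via $G$ being fully faithful and that the units compose) — and for Birkhoff subcategories of semi-abelian categories central equals normal; this is where no protoadditivity of the \emph{composite} is needed. Then the $\Cc$-membership of $K[f]$ comes from the localisation argument above.

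For the essential implication $(3)\Rightarrow(1)$: assume $f$ is $\Gamma_{(\Bc,\Nc)}$-normal and $K[f]\in\Cc$. Form the kernel pair $(\pi_1,\pi_2)\colon R[f]\to A$ and apply $J\circ I$. Because $f$ is $\Gamma_{(\Bc,\Nc)}$-normal, $I(f)$ is a trivial extension over $\Bc$, i.e. the square comparing $R[f]$ with $I(R[f])$ restricted along $\pi_1$ is already well-behaved; concretely $K[I(\pi_1)]\cong I(K[f])$. Then, since $J$ is protoadditive and $K[f]\in\Cc$ means $I(K[f])\in\Cc$ so that $J$ fixes it, applying $J$ to the split short exact sequence $0\to K[f]\to R[f]\rightleftarrows A\to 0$ (split by the diagonal) yields, via Proposition \ref{protocentral}'s argument adapted one level down, that the comparison square for $f^*(f)$ is a pullback, so $f^*(f)$ is a trivial $\Gamma_{(\Cc,\Nc)}$-extension, i.e. $f$ is normal. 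I expect the main obstacle to be bookkeeping the two-step unit: one must verify that "trivial/normal with respect to the composite" decomposes correctly as "($I$ already trivialises enough) and ($J$ trivialises the rest)", which amounts to checking that $K[\eta^{JI}_{A}]$ sits in a short exact sequence with $[A]_{\Bc}$ and $J$-torsion part, and that protoadditivity of $J$ (not of $I$, which may fail) is exactly what makes the kernel-pair comparison a pullback. Once that compatibility is nailed down, the three equivalences follow by chaining Proposition \ref{protocentral} and Proposition \ref{characterisationbyextensions}.
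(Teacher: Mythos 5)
Your proposal follows essentially the same route as the paper's proof: admissibility of the composite by chaining the two admissible reflections, $(2)\Rightarrow(3)$ by localising and reading off the kernel plus ``central $=$ normal'' for Birkhoff subcategories, and $(3)\Rightarrow(1)$ by applying $J\circ I$ to the kernel-pair projection, using that normality over $\Bc$ identifies $K[I(\pi_1)]$ with $K[f]\in\Cc$ and that protoadditivity of $J$ together with protomodularity then forces the $J$-square to be a pullback. That last implication, and the kernel computation $K[JI(p^{*}(f))]\cong K[p^{*}(f)]\cong K[f]$, match the paper exactly.

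One justification in your $(2)\Rightarrow(3)$ is wrong as stated, though the claim it supports is true. You argue that $\Gamma_{(\Cc,\Nc)}$-triviality of $p^{*}(f)$ implies $\Gamma_{(\Bc,\Nc)}$-triviality ``using that $J$ reflects isomorphisms via $G$ being fully faithful''. A reflector onto a proper full subcategory never reflects isomorphisms (it inverts every unit), and in any case the comparison morphism you want to invert lives in $\Ac$, not in $\Cc$, so this cannot be the mechanism. The correct argument --- the one the paper uses --- is: since the composite comparison $E\times_{B}A\to E\times_{JI(E)}JI(E\times_{B}A)$ is an isomorphism, the intermediate comparison $E\times_{B}A\to E\times_{I(E)}I(E\times_{B}A)$ is a monomorphism; but the $I$-unit square at the normal epimorphism $p^{*}(f)$ is a double $\Nc$-extension because $\Bc$ is a Birkhoff subcategory, so that same comparison is a regular epimorphism, hence an isomorphism. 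With that substitution (and noting, as you do, that no protoadditivity of $I$ is needed here, only the Birkhoff property), your proof is complete and coincides with the paper's.
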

\begin{proof}
The admissibility of $J\circ I$ is clear, while the implication $(1) \Rightarrow (2)$ holds by definition.

$(2) \Rightarrow (3)$ 
Let $p:E \rightarrow B$ be an normal epimorphism such that $p^*(f)$ is $\Gamma_{(\Cc,\Nc)}$-trivial. Then in the following commutative diagram the composite of the left hand pointing squares is a pullback (here $\eta$ and $\mu$ are the reflection units): 
\[
\xymatrix@=35pt{
JI(E\times_BA) \ar[d]_{JI(p^*(f))} & I(E\times_BA) \ar[d]_{I(p^*(f))} \ar[l]_-{\mu_{I(E\times_BA)}} & E\times_B A \ar@{}[rd]|<<{\pullback}\ar[d]_{p^*(f)} \ar[l]_-{\eta_{E\times_BA}} \ar[r] & A \ar[d]^f\\
JI(E) & I(E) \ar[l]^{\mu_{I(E)}} & E \ar[l]^{\eta_E} \ar[r]_p & B}
\]
This implies, on the one hand, that  $p^*(f)$ and $\eta_{E\times_BA}$ are jointly monomorphic, and, consequently, that the middle square is a pullback, since it is a double $\Nc$-extension, because $\Bc$ is a Birkhoff subcategory of $\Ac$. Hence, $f$ is a $\Gamma_{(\Bc,\Nc)}$-central extension, and we know that, with respect to a Birkhoff subcategory, the central and normal extensions coincide \cite{JK}. On the other hand, since also the right hand square is a pullback, we have isomorphisms
\[
K[JI(p^*(f))] \cong K[p^*(f)] \cong K[f],
\]
so that $K[f]\in\Cc$, since $\Cc$, being a reflective subcategory,  is closed under limits in $\Ac$. 

$(3) \Rightarrow (1)$ Now let $f \colon A \rightarrow B$ be a normal epimorphism satisfying $(3)$. Consider the commutative diagram
$$
\xymatrix@=40pt{R[f] \ar[d]_{\pi_1} \ar[r]^-{\eta_{R[f]}} & I(R[f]) \ar[d]^{I(\pi_1)}  \ar[r]^-{\mu_{I(R[f])}} & JI (R[f]) \ar[d]^{JI(\pi_1)}  \\
A \ar[r]_{\eta_A}& I(A) \ar[r]_{\mu_{I(A)}} &JI(A)}
$$
where $\pi_1$ is the first projection of the kernel pair of $f$. By assumption, its left hand square is a pullback. Consequently, there is an isomorphism $K[\pi_1] \cong  K[ I(\pi_1)]$, so that $K[I(\pi_1)]$ lies in $\Cc$  because $K[\pi_1]\cong K[f]$ lies in $\Cc$, by assumption. Since the reflector $J$ is protoadditive and the category $\Ac$ is protomodular, this implies that also the right hand square is a pullback.
\end{proof}

We continue with a higher dimensional version of this result. For this, let us first of all remark that Proposition \ref{composite} can be ``relativised" with respect to a class $\Ec$ of morphisms in the semi-abelian category $\Ac$ satisfying Conditions \ref{extension}. More precisely, if we have a composite adjunction
  \begin{equation}\label{relativecompositeadj}
 \xymatrix@=30pt{
{\Ac_{\Ec} \, } \ar@<1ex>[r]_-{^{\perp}}^-{I} & {\, \Bc \, }
\ar@<1ex>[l]^H  \ar@<1ex>[r]_-{^{\perp}}^-{J} & \Cc \ar@<1ex>[l]^G   }
 \end{equation}
 with $\Bc$ a strongly $\Ec$-Birkhoff subcategory of $\Ac_{\Ec}$ and $\Cc$ a full $\Ec$-reflective subcategory of $\Bc$, admissible with respect to $\Ec$, with protoadditive reflector $J$, then an $\Ec$-extension $f\colon A\to B$ is normal with respect to $\Gamma_{(\Cc,\Ec)}=(\Ac_{\Ec},\Cc,J\circ I,H\circ G,\Ec)$ if and only if it is a $\Gamma_{(\Cc,\Ec)}$-central extension if and only if $K[f]\in\Cc$ and $f$ is a $\Gamma_{(\Bc,\Ec)}$-normal extension. We leave it to the reader to verify that the proof of Proposition~\ref{composite} remains valid under our assumptions. 

Now let us consider a composite reflection \eqref{relativecompositeadj} satisfying the conditions above. Write $\eta$ and $\mu$ for the units of the reflections $I$ and $J$, respectively, and $[-]_{\Cc}\colon \Ac_{\Ec}\to\Ac_{\Ec}$ for the radical induced by the $\Ec$-reflection $J\circ I$. We have the following property:

\begin{lemma}\label{Mathieu}
  For any $\Gamma_{(\Bc,\Ec)}$-normal extension $f\colon A\to B$, the monomorphism $\ker(f)\circ \ker(\mu\circ\eta)_{K[f]}\colon [K[f]]_{\Cc}\to A$ is normal.
\end{lemma}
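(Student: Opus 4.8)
The statement concerns a $\Gamma_{(\Bc,\Ec)}$-normal extension $f\colon A\to B$, and we must show that the composite $\ker(f)\circ \ker(\mu\circ\eta)_{K[f]}\colon [K[f]]_{\Cc}\to A$ is a normal monomorphism. The natural strategy is to factor the radical $[-]_{\Cc}$ through the intermediate radical $[-]_{\Bc}$ associated with the Birkhoff reflector $I$: since $J\circ I$ is obtained by first applying $I$ and then $J$, and $J$ is protoadditive, one expects $[K[f]]_{\Cc}$ to sit inside $[K[f]]_{\Bc}$ as the "$\Cc$-torsion part" (in the sense of the radical associated with $J$) of $[K[f]]_{\Bc}$. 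So the first step is to establish the identity $\ker(\mu\circ\eta)_{K[f]} = \kappa_{K[f]}\circ \ell$, where $\kappa_{K[f]}\colon [K[f]]_{\Bc}\to K[f]$ is the kernel of $\eta_{K[f]}$ and $\ell\colon [K[f]]_{\Cc}\to [K[f]]_{\Bc}$ is the kernel (in $\Bc$) of the unit $\mu_{I(K[f])}$, pulled back appropriately; concretely $[K[f]]_{\Cc}$ is the kernel of $\mu\circ\eta$ restricted to $K[f]$, and since $\mu$ kills everything already in $\Bc$ only after $\eta$, one gets this factorisation from the composability of kernels and the $3\times 3$ lemma.

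Given that factorisation, the composite in question becomes $\ker(f)\circ \kappa_{K[f]}\circ \ell$. Now I would invoke the relative version of Proposition~\ref{characterisationbyextensions}\,(3a)---valid because $\Bc$ is a strongly $\Ec$-Birkhoff subcategory, so the implication $(1)\Rightarrow(3a)$ from the relative Proposition, resting on Lemma~\ref{compositeisnormal }, applies---to conclude that $\ker(f)\circ \kappa_{K[f]}\colon [K[f]]_{\Bc}\to A$ is itself a normal monomorphism; equivalently, $[K[f]]_{\Bc}$ is a normal subobject of $A$. Wait---this uses that $I$ is protoadditive, which is \emph{not} assumed here. Instead, the correct tool is that for a $\Gamma_{(\Bc,\Ec)}$-\emph{normal} extension $f$, the object $[K[f]]_{\Bc}$ is precisely $[f]_{1,\Bc}$ (this is what normality of $f$ means: $f^*(f)$ trivial forces the relevant kernel to coincide with $[K[f]]_{\Bc}$), and $[f]_{1,\Bc} = \kappa^1_f$ is a normal subobject of $A$ by its very construction as $\kappa_A\circ[\pi_2]_{\Bc}\circ\ker[\pi_1]_{\Bc}$, i.e. as a regular image of an intersection of normal monomorphisms, exactly as in the proof of Lemma~\ref{compositeisnormal }. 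So $\ker(f)\circ\kappa_{K[f]}$ is normal because $f$ is a normal extension, with no protoadditivity of $I$ needed.

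It then remains to pass from "$[K[f]]_{\Bc}$ is normal in $A$" to "$[K[f]]_{\Cc}$ is normal in $A$", i.e. to show that the further subobject $\ell\colon [K[f]]_{\Cc}\to [K[f]]_{\Bc}$, when composed with the normal mono $[K[f]]_{\Bc}\to A$, is still normal. Here I would use that $[K[f]]_{\Cc}$ is the image of $[K[f]]_{\Bc}$ under the radical $T_J=[-]_{J}$ associated with the protoadditive reflector $J$ (after identifying $[K[f]]_{\Bc}$, which lies in $\Bc$, with its image in $\Bc$): since $J$ is a protoadditive torsion-free-type reflector on $\Bc$, condition $(N)$ holds for it by Lemma~\ref{compositeisnormal } (applied in the semi-abelian---or at least homological---category $\Bc$), which says precisely that for any normal monomorphism $k\colon K\to A$ the composite $k\circ t_K\colon T_J(K)\to A$ is normal. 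Applying this with $k = \ker(f)\circ\kappa_{K[f]}\colon [K[f]]_{\Bc}\to A$ (normal by the previous paragraph) and $T_J([K[f]]_{\Bc}) = [K[f]]_{\Cc}$ gives exactly the desired conclusion. The main obstacle I anticipate is the bookkeeping in the first step---carefully identifying $[K[f]]_{\Cc}$ as $T_J$ applied to $[K[f]]_{\Bc}$ and justifying the factorisation of $\ker(\mu\circ\eta)_{K[f]}$ through $\kappa_{K[f]}$---since this rests on $J$ preserving the relevant kernels and on the interaction of the two units $\eta$ and $\mu$; once that identification is in place, the normality follows cleanly from Lemma~\ref{compositeisnormal } and the structure of normal extensions over a Birkhoff subcategory.
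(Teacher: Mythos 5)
Your overall instinct --- reduce to a Lemma~\ref{compositeisnormal }-style argument using the protoadditivity of $J$ --- is the right one, but the reduction as you set it up does not work, for two reasons. First, the containment between the two radicals goes the other way: since $(\mu\circ\eta)_{K[f]}=\mu_{I(K[f])}\circ\eta_{K[f]}$, one has $[K[f]]_{\Bc}=K[\eta_{K[f]}]\subseteq K[(\mu\circ\eta)_{K[f]}]=[K[f]]_{\Cc}$, so there is no monomorphism $\ell\colon [K[f]]_{\Cc}\to[K[f]]_{\Bc}$ and no factorisation $\ker(\mu\circ\eta)_{K[f]}=\kappa_{K[f]}\circ\ell$. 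Worse, because $f$ is a $\Gamma_{(\Bc,\Ec)}$-normal extension, the first projection $\pi_1\colon R[f]\to A$ is a trivial extension, so $K[f]\cong K[I(\pi_1)]$ is a subobject of $I(R[f])\in\Bc$ and hence lies in $\Bc$; thus $[K[f]]_{\Bc}=0$, while $[K[f]]_{\Cc}$ is in general not zero. Your intermediate object is trivial, the claimed identity $T_J([K[f]]_{\Bc})=[K[f]]_{\Cc}$ is false (the correct identification is $[K[f]]_{\Cc}\cong T_J(K[f])$, where $T_J$ is the radical of $J$ on $\Bc$), and the two-step reduction collapses.

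Second, even after replacing $[K[f]]_{\Bc}$ by $K[f]$, the concluding appeal to Lemma~\ref{compositeisnormal } ``applied in $\Bc$'' is not legitimate: that lemma concerns a protoadditive reflector defined on the ambient category of the normal monomorphism, whereas $J$ is only defined on $\Bc$ and the codomain $A$ of $\ker(f)$ need not lie in $\Bc$. One has to rerun the argument of Lemma~\ref{compositeisnormal } by hand on the kernel pair of $f$, and the essential input --- which your proposal never supplies --- is that $I$ preserves the split short exact sequence $0\to K[f]\to R[f]\to A\to 0$ (split by the reflexivity morphism) precisely because $f$ is normal: $\pi_1$ is a trivial extension and $\eta_{K[f]}$ is an isomorphism. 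Protoadditivity of $J$ then gives preservation by $J\circ I$; in particular $JI(\ker(\pi_1))$ is a monomorphism, so $[K[f]]_{\Cc}=K[f]\cap[R[f]]_{\Cc}$ is an intersection of normal subobjects of $R[f]$, and its regular image under $\pi_2$ is exactly $\ker(f)\circ\ker(\mu\circ\eta)_{K[f]}$, which is therefore normal. This is the step where the hypothesis that $f$ is a normal extension actually does its work, and it is the step missing from your argument.
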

\begin{proof}
First of all note that the radical  $[-]_{\Cc}\colon \Ac_{\Ec}\to\Ac_{\Ec}$ is well-defined since, for any object $A$ of $\Ac_{\Ec}$, the unit $(\mu\circ \eta)_A\colon A\to JI(A)$ is an $\Ec$-extension, so that its kernel lies indeed in $\Ac_{\Ec}$.  

Now consider the short exact sequence 
\[
\xymatrix{
0\ar[r] & K[f] \ar[r]^{\ker(\pi_1)} & R[f] \ar[r]^{\pi_1} & A \ar[r] & 0}
\]
where $\pi_1$ denotes the first projection of the kernel pair of $f$. It is preserved by $I$ since $\pi_1$ is a trivial extension and $\eta_{K[f]}\colon K[f]\to I(K[f])$ an isomorphism. Hence, it is preserved by $J\circ I$ since $J$ is protoadditive. In particular, we have that $JI(\ker(\pi_1))$ is a monomorphism, so that the left hand square in the morphism 
\[
\xymatrix{
0 \ar[r] & [K[f]]_{\Cc} \ar[d] \ar[r] & K[f] \ar[d]^{\ker(\pi_1)} \ar[r] & JI(K[f]) \ar[d]^{JI(\ker(\pi_1))} \ar[r] & 0\\
0 \ar[r] & [R[f]]_{\Cc} \ar[r] & R[f] \ar[r] & JI(R[f]) \ar[r] & 0}
\]
of short exact sequences is a pullback. It follows that the monomorphism $\ker(\pi_1)\circ \ker((\mu\circ\eta)_{K[f]})$ is normal. Hence, so is its normal image along the second projection $\pi_2\colon R[f]\to A$ of the kernel pair of $f$, and this is exactly the monomorphism $\ker(f)\circ \ker(\mu\circ\eta)_{K[f]}\colon [K[f]]_{\Cc}\to A$. 
\end{proof}

The above lemma, together with the ``relative'' version of  Proposition \ref{composite}, now allows us to prove that the pair of reflections \eqref{relativecompositeadj} induces a pair of reflections ``at the level of extensions'', in the following sense:

\begin{lemma}\label{doublecentralisation}
The pair of reflections \eqref{relativecompositeadj} induces new reflections
\[
\xymatrix{
\Ext_{\Ec}(\Ac) \ar@<1 ex>[r]^-{I_1} \ar@{}[r]|-{\perp} & \NExt_{(\Bc,\Ec)}(\Ac) \ar@{}[r]|{\perp}  \ar@<1 ex>[l] \ar@<1 ex>[r]^{J_1} & \NExt_{(\Cc,\Ec)}(\Ac) \ar@<1 ex>[l]}
\]
where $\NExt_{(\Bc,\Ec)}(\Ac)$ is a strongly $\Ec^1$-Birkhoff subcategory of $\Ext_{\Ec}(\Ac)$ with reflector $I_1$ and $\NExt_{(\Cc,\Ec)}(\Ac)$ is an $\Ec^1$-reflective subcategory of  $\NExt_{(\Cc,\Ec)}(\Ac)$, admissible with respect to the class of $\Ec^1$-extensions in $\NExt_{(\Bc,\Ec)}(\Ac)$, with protoadditive reflector $J_1$. We have that $J_1$ sends a $\Gamma_{(\Bc,\Ec)}$-normal extension $f\colon A\to B$ to the induced $\Gamma_{(\Cc,\Ec)}$-normal extension $J_1(f)\colon A/[K[f]]_{\Cc}\to B$.
 \end{lemma}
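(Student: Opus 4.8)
The plan is to build the two reflections separately and then glue them. The first reflection, namely that $\NExt_{(\Bc,\Ec)}(\Ac)$ is a strongly $\Ec^1$-Birkhoff subcategory of $\Ext_{\Ec}(\Ac)$ with reflector $I_1$, is not new: it is exactly the relative Birkhoff theory recalled in Section \ref{Birkhoffsection} (from \cite{Ev,EGV}), so I would simply quote it. The substance of the lemma is the second reflection and its properties. For this, the first step is to observe that, by the ``relative'' version of Proposition \ref{composite}, the objects of $\NExt_{(\Cc,\Ec)}(\Ac)$ are precisely the $\Ec$-extensions $f\colon A\to B$ that are $\Gamma_{(\Bc,\Ec)}$-normal and satisfy $K[f]\in\Cc$; in particular $\NExt_{(\Cc,\Ec)}(\Ac)$ is a full subcategory of $\NExt_{(\Bc,\Ec)}(\Ac)$. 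I would then define the candidate reflector $J_1$ on a $\Gamma_{(\Bc,\Ec)}$-normal extension $f\colon A\to B$ by $J_1(f)=\bigl(\overline{f}\colon A/[K[f]]_{\Cc}\to B\bigr)$, where the quotient makes sense because, by Lemma \ref{Mathieu}, the composite $\ker(f)\circ\ker((\mu\circ\eta)_{K[f]})\colon [K[f]]_{\Cc}\to A$ is a normal monomorphism. The unit at $f$ is the morphism of extensions given by the canonical normal epimorphism $A\to A/[K[f]]_{\Cc}$ over $1_B$.

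The next step is to check that $J_1(f)$ genuinely lands in $\NExt_{(\Cc,\Ec)}(\Ac)$ and that the map I wrote down is a reflection. For the first point: by the double quotient isomorphism theorem (Theorem $4.3.10$ in \cite{BB}), $K[\overline{f}]=K[f]/[K[f]]_{\Cc}=JI(K[f])\in\Cc$; moreover $\overline{f}$ is still a $\Gamma_{(\Bc,\Ec)}$-normal extension because $A\to A/[K[f]]_{\Cc}$ is inverted by $I$ (as $I([K[f]]_{\Cc})$ is a quotient of $I(K[f])$ killed already by the further reflection into $\Bc$ — more carefully, $[K[f]]_{\Cc}\subseteq [K[f]]_{\Bc}$ since $\Cc\subseteq\Bc$, so the quotient factors the $I$-unit and hence is inverted by $I$), and then one applies the relative Proposition \ref{composite}, $(3)\Rightarrow(1)$. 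So $J_1(f)\in\NExt_{(\Cc,\Ec)}(\Ac)$. For the universal property, given a $\Gamma_{(\Cc,\Ec)}$-normal extension $g\colon C\to D$ and a morphism $(a,b)\colon f\to g$ of $\Ec$-extensions, I want a unique factorisation through the unit. This is the same argument as in $(4b)$ of Proposition \ref{characterisationbyextensions}: there is an induced square relating $[K[f]]_{\Cc}$ to $[K[g]]_{\Cc}$, and $[K[g]]_{\Cc}=0$ because $K[g]\in\Cc$ is fixed by the radical $[-]_{\Cc}$ (here one uses that $J$ is a reflector onto $\Cc$, so $JI$ restricted to $\Cc$ is the identity, whence $[-]_{\Cc}$ vanishes on $\Cc$); therefore $a\circ\ker(f)\circ\ker((\mu\circ\eta)_{K[f]})=0$, and since $A\to A/[K[f]]_{\Cc}$ is the cokernel of that composite, $a$ factors uniquely through it, compatibly with $b$.

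It remains to identify $\NExt_{(\Cc,\Ec)}(\Ac)$ inside $\NExt_{(\Bc,\Ec)}(\Ac)$ with the data asserted: an $\Ec^1$-reflective subcategory, admissible with respect to $\Ec^1$-extensions, with protoadditive reflector $J_1$. Reflectivity is what was just proved. For the unit being an $\Ec^1$-extension: the unit is $A\to A/[K[f]]_{\Cc}$ over $1_B$; its domain component lies in $\Ac_{\Ec}$ (being an $\Ec$-extension quotient), its codomain likewise, and by the structure of $\Ec^1$ one checks the comparison map to the relevant pullback is an $\Ec$-extension — this follows from $[K[f]]_{\Cc}\in\Ac_{\Ec}$ (Lemma \ref{torsionrestricts}-type reasoning, using that $(\mu\circ\eta)_{K[f]}$ is an $\Ec$-extension) exactly as in Lemma \ref{unitlemma}. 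For protoadditivity of $J_1$: this is the main obstacle, and I expect to handle it by the same device as in Lemma \ref{centralisationisprotoadditive}, namely to transport protoadditivity of $J$ along the constructions. One approach is to verify directly that $J_1$ preserves pullbacks along split epimorphisms in $\NExt_{(\Bc,\Ec)}(\Ac)$ using Proposition \ref{protoadditive-pullback}: a split epimorphism of $\Gamma_{(\Bc,\Ec)}$-normal extensions gives, on kernels, a split short exact sequence in $\Ac$ to which protoadditivity of $J\circ I$ (and the commutation of the radical $[-]_{\Cc}$ with the relevant kernels, via the pullback in the proof of Lemma \ref{Mathieu}) applies; then one pastes back. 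Admissibility of $J_1$ with respect to $\Ec^1$-extensions then follows formally, either by the same semi-left-exactness/stable-units criterion used in Theorem \ref{torsiontheorem} and the paragraph before Theorem \ref{torsiontheorem}, or simply because a protoadditive reflector between (the relevant) protomodular categories together with the already-established reflectivity gives what is needed. The last sentence of the lemma — the explicit formula for $J_1$ — is precisely the description used to define it, so nothing further is required there.
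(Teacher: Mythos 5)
Your outline for the reflection itself matches the paper's: same formula $J_1(f)\colon A/[K[f]]_{\Cc}\to B$, the same universal-property argument via $[K[g]]_{\Cc}=0$, and essentially the same strategy for protoadditivity (apply the protoadditive radical to the kernels and conclude by the $3\times 3$ lemma — though note it is $J$, i.e.\ $[-]_{\Cc}$ restricted to $\Bc$, that is protoadditive, not $J\circ I$ on all of $\Ac$; this is harmless only because kernels of $\Gamma_{(\Bc,\Ec)}$-normal extensions lie in $\Bc$). The genuine gap is admissibility of $J_1$. It does \emph{not} ``follow formally'' from protoadditivity plus reflectivity: Example~\ref{counterproto}.(2) of this paper exhibits an additive — hence protoadditive — reflector of abelian groups that is not admissible. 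Nor can you invoke the stable-units criterion of Theorem~\ref{torsiontheorem}, which requires the induced radical to be idempotent, i.e.\ the reflection to be torsion-free; for $J_1$ that holds only under the additional hypothesis that $\Cc$ is torsion-free in $\Bc$, which is precisely the content of the later Proposition~\ref{compositetorsion} and is not assumed here. The paper has to verify admissibility by hand: it pulls back a unit $f\to J_1(f)$ along a double $\Ec$-extension $g\to J_1(f)$ with $g\in\NExt_{(\Cc,\Ec)}(\Ac)$, passes to kernels to obtain a pullback in $\Bc$ over $J(K[f])$, applies the admissibility of $J$ there, and transports the resulting isomorphism back through two further pullback-of-short-exact-sequence arguments. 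That is a substantial piece of the proof and is missing from your proposal.

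A second, smaller error is your justification that $J_1(f)$ stays $\Gamma_{(\Bc,\Ec)}$-normal. The inclusion is backwards: since $\Cc\subseteq\Bc$, the unit $X\to JI(X)$ factors through $X\to I(X)$, so $[X]_{\Bc}\subseteq[X]_{\Cc}$, not the reverse; and the conclusion that $q\colon A\to A/[K[f]]_{\Cc}$ is inverted by $I$ is false in general — take $\Ac=\Gp$, $\Bc=\Ab$, $\Cc=B_k$ the Burnside subvariety of exponent $k$, and $f\colon\Z\to 0$: then $q$ is $\Z\to\Z/k\Z$, which abelianisation does not invert. A correct repair is cheap: $(q,1_B)\colon f\to J_1(f)$ is a double $\Ec$-extension, and since $\NExt_{(\Bc,\Ec)}(\Ac)$ is a \emph{strongly $\Ec^1$-Birkhoff} subcategory of $\Ext_{\Ec}(\Ac)$, the naturality square of $I_1$ at $(q,1_B)$ is a pushout; as $I_1(f)=f$, this forces $I_1(J_1(f))=J_1(f)$, i.e.\ $J_1(f)$ is again $\Gamma_{(\Bc,\Ec)}$-normal, and one then concludes with the relative Proposition~\ref{composite} as you intended.
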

\begin{proof}
We already know that $\NExt_{(\Bc,\Ec)}(\Ac)$ is a strongly $\Ec^1$-Birkhoff subcategory of $\Ext_{\Ec}(\Ac)$. Let us then prove, for any $\Gamma_{(\Bc,\Ec)}$-normal extension $f\colon A\to B$, that the induced $\Ec$-extension $J_1(f)\colon A/[K[f]]_{\Cc}\to B$ is indeed its reflection in $\NExt_{(\Cc,\Ec)}(\Ac)$. (Note that the monomorphism $[K[f]]_{\Cc}\to A$ is normal, by Lemma \ref{Mathieu}.)    

On the one hand we have that $J_1(f)$ is a $\Gamma_{(\Cc,\Ec)}$-normal extension since $K[J_1(f)]=K[f]/[K[f]_{\Cc}=J(K[f])$ by the ``double quotient'' isomorphism theorem, and because the reflector $F\colon \Bc\to\Cc$ is protoadditive, by assumption. On the other hand, if $g\colon C\to D$ is a $\Gamma_{(\Cc,\Ec)}$-normal extension as well, and $(a,b)\colon f\to g$ is a morphism of $\Ec$-extensions, there exists a (unique) morphism $\overline{a}$ such that the diagram
 $$\xymatrix@1@=30pt{ 
  A \ar@{->}`u[r]`[rr]^a[rr] \ar[r] \ar[d]_{f}  & \frac{A}{[K[f]]_{\Cc}} \ar[d]^{J_1(f)} \ar@{.>}[r]^{\overline{a}} & C \ar[d]^g \\
 B  \ar@{=}[r] & B \ar[r]_b & D
}
$$
commutes. Indeed, it suffices to note that there is a commutative square
\[
\xymatrix{
[K[f]]_{\Cc} \ar[r] \ar[d] & [K[g]]_{\Cc} \ar[d] \\
A\ar[r]_a & C}
\]
and that  $[K[g]]_{\Cc}=0$. It follows that $\NExt_{(\Cc,\Ec)}(\Ac)$ is a reflective subcategory of $\NExt_{(\Bc,\Ec)}(\Ac)$ with reflector $J_1$. Since, for any $\Cc$-normal extension $f\colon A\to B$, the reflection unit
\[
\xymatrix{
A \ar[r] \ar[d]_f & \frac{A}{[K[f]]_{\Cc}} \ar[d]\\
B \ar@{=}[r] & B}
\]
is clearly a double $\Ec$-extension, $\NExt_{(\Cc,\Ec)}(\Ac)$ is an $\Ec^1$-reflective subcategory of $\NExt_{(\Bc,\Ec)}(\Ac)$.

Next we prove that the reflector $J_1$ is protoadditive. To this end we consider a split short exact sequence
\[
\xymatrix{
0 \ar[r] & K_1 \ar[r] \ar[d]_{k} & A_1 \ar@<-.8 ex> [r] \ar[d]^a & B_1 \ar[d]^b  \ar[r]  \ar@<-.8ex>[l] & 0\\
0 \ar[r] & K_0 \ar[r] & A_0 \ar@<-.8 ex> [r] & B_0 \ar[r]  \ar@<-.8ex>[l] & 0}
\]
in $\NExt_{(\Bc,\Ec)}(\Ac)$ and we note that both rows are split short exact sequences in $\Ac$. By taking kernels vertically and then applying the radical $[-]_{\Cc}\colon \Ac_{\Ec}\to\Ac_{\Ec}$, whose restriction to $\Bc\to\Bc$ is protoadditive by Proposition \ref{reflector=radical}, we obtain a split short exact sequence which is the first row in the diagram
\[
\xymatrix{
0 \ar[r] & [K[k]]_{\Cc} \ar[r] \ar[d] & [K[a]]_{\Cc} \ar[d] \ar@<-.8 ex> [r] & [K[b]]_{\Cc} \ar[d]  \ar@<-.8ex>[l] \ar[r] & 0\\
0 \ar[r] & K_1 \ar[r] \ar[d] & A_1 \ar[d] \ar@<-.8 ex> [r] & B_1 \ar[d]  \ar@<-.8ex>[l] \ar[r] & 0\\
0 \ar[r] & \frac{K_1}{[K[k]]_{\Cc}} \ar[r]  & \frac{A_1}{[K[a]]_{\Cc}}  \ar@<-.8 ex> [r] &  \frac{B_1}{[K[b]]_{\Cc}}   \ar@<-.8ex>[l] \ar[r] & 0}
\]
Since also the second row is split exact, by assumption, the third row is a split short exact sequence as well, by the $3\times 3$ lemma. If follows that the reflector $J_1$ is protoadditive, and this completes the proof. 

Finally, we prove that the reflector $J_1$ is admissible. For this, we consider a pullback
\[
\xymatrix{
& D \ar@{}[rrdd]|<<{\pullback} \ar@{.>}[dd]  \ar[rr] && B \ar@{=}[dd] \\
P \ar@{}[rrdd]|<<{\pullback}   \ar[ur]^p \ar[rr] \ar[dd] && A \ar[ur]_{f} \ar[dd] & \\
& D \ar@{.>}[rr] && B \\
C \ar[ur]^{g}  \ar[rr] && \frac{A}{[K[f]]_{\Cc}} \ar[ur]_{J_1(f)} &}
\]
 in $\NExt_{(\Bc,\Ec)}(\Ac)$ of a reflection unit $f\to J_1(f)$ along some double $\Ec$-extension $g\to J_1(f)$, and we assume that $g\in \NExt_{(\Cc,\Ec)}(\Ac)$. Notice that it is a pointwise pullback in $\Ac$. We have to prove that its image in $\NExt_{(\Cc,\Ec)}(\Ac)$ by $J_1$ is still a pullback. Since the reflection unit $f\to J_1(f)$ is sent to an isomorphism, this amounts to proving that $J_1(p)\to J_1(g)$ is an isomorphism as well. For this it suffices to show that the morphism $P/[K[p]]_{\Cc}\to C/[K[g]]_{\Cc}$ is an isomorphism.

Now, by taking kernels in the cube above, we obtain a pullback 
\[
\xymatrix{
K[p] \ar[r] \ar@{}[rd]|<{\pullback} \ar[d] & K[f] \ar[d] \\
K[g] \ar[r] & J(K[f])}
\]
in $\Bc$. Note that the object in the right hand lower corner is indeed $J(K[f])=K[f]/[K[f]]_{\Cc}$ by the ``double quotient'' isomorphism theorem, and that $K[g]\in \Cc$ because $g\in \NExt_{(\Cc,\Ec)}(\Ac)$, by assumption. Moreover, we have that $K[g]\to J(K[f])$ is an $\Ec$-extension, by Condition \ref{extension}.5 for the class $\Ec^1$. Consequently, using the admissibility of $J$, we find that the image by $J$ of the above square is a pullback in $\Cc$, which implies that $J(K[p])\to J(K[g])$ is an isomorphism. It follows that in the diagram
\[
\xymatrix{
0 \ar[r] & [K[p]]_{\Cc} \ar[r] \ar[d] & K[p] \ar[r] \ar[d] & J(K[p]) \ar@{=}[d] \ar[r] & 0\\
0 \ar[r] & [K[g]]_{\Cc} \ar[r] & K[g] \ar[r] & J(K[g]) \ar[r] & 0}
\]
of short exact sequences in $\Bc$, the left hand square is a pullback. Since, of course, also
\[
\xymatrix{
 K[p] \ar[r] \ar[d] & P \ar[d] \\
 K[g] \ar[r] & C }
\]
is a pullback, we have that the left hand square in the diagram
\[
\xymatrix{
0 \ar[r] & [K[p]]_{\Cc} \ar[r] \ar[d] & P \ar[r] \ar[d] & \frac{P}{[K[p]]_{\Cc}} \ar[d] \ar[r] & 0\\
0 \ar[r] & [K[g]]_{\Cc} \ar[r] & C \ar[r] & \frac{C}{[K[g]]_{\Cc}} \ar[r] & 0}
\]
 of short exact sequences in $\Ac$ is a pullback, and this implies that the $\Ec$-extension $P/[K[p]]_{\Cc}\to C/[K[g]]_{\Cc}$ is a monomorphism (in $\Ac$), hence an isomorphism.
\end{proof}

Thanks to this lemma, we can repeatedly apply the ``relative" version of Proposition \ref{composite}, and we obtain: 
\begin{theorem}\label{highercomposite}
Let $\Ac$ be a semi-abelian category, $\Bc$ a Birkhoff subcategory of $\Ac$, and $\Cc$ a (normal epi)-reflective subcategory, admissible with respect to normal epimorphisms, such that the reflector $J\colon \Bc\to\Cc$ is protoadditive. Then, for any $n\geq 1$ and any $n$-fold $\Nc$-extension $A$ in $\Ac$, the following 
conditions are equivalent:
\begin{enumerate}
\item
$A$ is an $n$-fold normal extension with respect to $\Gamma_{(\Cc,\Nc)}$;
\item
$A$ is an $n$-fold central extension with respect to $\Gamma_{(\Cc,\Nc)}$;
\item
$\bigcap_{1\leq i\leq n}K[a_i]\in \Cc$ and $A$ is an $n$-fold normal extension with respect to~$\Gamma_{(\Bc,\Nc)}$.
\end{enumerate}
\end{theorem}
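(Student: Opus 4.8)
The plan is to argue by induction on $n$. The case $n=1$ is precisely Proposition~\ref{composite}, applied with $\Ec=\Nc$: a $1$-fold $\Nc$-extension $A$ is simply a normal epimorphism $f\colon A\to B$ with single initial rib $a_1=f$, and for such an $f$ condition $(3)$ asserts exactly that $K[f]\in\Cc$ and that $f$ is a normal extension with respect to $\Gamma_{(\Bc,\Nc)}$.

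For the inductive step I would first record that the level-raising construction of Lemma~\ref{doublecentralisation} reproduces, one dimension higher, exactly the hypotheses of the theorem: $\Nc$ satisfies Conditions~\ref{extension}, hence so does $\Nc^{k}$ for each $k$ by Lemma~\ref{up}, and $\Arr^{k}(\Ac)$ is again semi-abelian. Thus, starting from \eqref{compositeadj} and applying Lemma~\ref{doublecentralisation} successively $n-1$ times, one obtains a composite reflection
\[
\xymatrix@=30pt{
{\Ext^{n-1}_{\Nc}(\Ac) \, } \ar@<1ex>[r]_-{^{\perp}}^-{I_{n-1}} & {\, \NExt^{n-1}_{(\Bc,\Nc)}(\Ac) \, }
\ar@<1ex>[l] \ar@<1ex>[r]_-{^{\perp}}^-{J_{n-1}} & \NExt^{n-1}_{(\Cc,\Nc)}(\Ac) \ar@<1ex>[l] }
\]
in which $\NExt^{n-1}_{(\Bc,\Nc)}(\Ac)$ is a strongly $\Nc^{n-1}$-Birkhoff subcategory of $\Ext^{n-1}_{\Nc}(\Ac)=(\Arr^{n-1}(\Ac))_{\Nc^{n-1}}$, and $\NExt^{n-1}_{(\Cc,\Nc)}(\Ac)$ is an $\Nc^{n-1}$-reflective subcategory of $\NExt^{n-1}_{(\Bc,\Nc)}(\Ac)$, admissible with respect to $\Nc^{n-1}$-extensions, whose reflector $J_{n-1}$ is protoadditive. (For $n=1$ this composite reflection is understood to be \eqref{compositeadj} itself.) This is at the same time the chain of Galois structures referred to in the statement: by definition, an $n$-fold $\Nc$-extension $A$, regarded as an $\Nc^{n-1}$-extension $f$ between $(n-1)$-fold $\Nc$-extensions, is an $n$-fold normal (resp.\ central) extension with respect to $\Gamma_{(\Cc,\Nc)}$ if and only if $f$ is a normal (resp.\ central) extension with respect to $\Gamma_{(\NExt^{n-1}_{(\Cc,\Nc)}(\Ac),\Nc^{n-1})}$, and likewise with $\Bc$ replacing $\Cc$.

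The heart of the argument is then the ``relative'' version of Proposition~\ref{composite}, applied to the displayed composite reflection: it yields the equivalence $(1)\Leftrightarrow(2)$, together with the fact that these are equivalent to the conjunction of $K[f]\in\NExt^{n-1}_{(\Cc,\Nc)}(\Ac)$ with ``$A$ is an $n$-fold normal extension with respect to $\Gamma_{(\Bc,\Nc)}$''. What remains is to rewrite the first clause in the shape occurring in $(3)$. Here I would use that $K[f]$ is itself an $(n-1)$-fold $\Nc$-extension (by Condition~\ref{extension}.5 for $\Nc^{n-1}$), and that inspecting the cubes identifies the intersection of the kernels of the initial ribs of $K[f]$ with $\bigcap_{1\leq i\leq n}K[a_i]$; the induction hypothesis applied to $K[f]$ then turns ``$K[f]\in\NExt^{n-1}_{(\Cc,\Nc)}(\Ac)$'' into ``$\bigcap_{1\leq i\leq n}K[a_i]\in\Cc$ and $K[f]$ is an $(n-1)$-fold normal extension with respect to $\Gamma_{(\Bc,\Nc)}$''. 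Finally, this last clause is automatic once $A$ is an $n$-fold normal extension with respect to $\Gamma_{(\Bc,\Nc)}$: since, with respect to a (strongly $\Nc^{n-1}$-)Birkhoff subcategory, normal extensions coincide with central extensions, $f$ is then central with respect to $\Gamma_{(\NExt^{n-1}_{(\Bc,\Nc)}(\Ac),\Nc^{n-1})}$, and the standard argument (as in the implication $(2)\Rightarrow(3)$ of Proposition~\ref{composite}) forces $K[f]\in\NExt^{n-1}_{(\Bc,\Nc)}(\Ac)$, i.e.\ $K[f]$ to be an $(n-1)$-fold normal extension with respect to $\Gamma_{(\Bc,\Nc)}$. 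Assembling these steps gives $(1)\Leftrightarrow(2)\Leftrightarrow(3)$ and closes the induction.

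I expect the principal difficulty to be organisational rather than conceptual: correctly handling the ``relative'' Galois structures $\Gamma_{(-,\Nc^{n-1})}$ and the recursive definition of higher normal and central extensions, and verifying---through the combinatorics of the $n$-dimensional cubes and the choice of the direction along which one regards an $n$-fold extension as a morphism of $(n-1)$-fold extensions---that the kernel cube $K[f]$ has $\bigcap_{1\leq i\leq n}K[a_i]$ as the intersection of the kernels of its own initial ribs.
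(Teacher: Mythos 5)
Your proof is correct and follows essentially the same route as the paper: the paper's proof is exactly ``apply Lemma \ref{doublecentralisation} repeatedly and then the relative version of Proposition \ref{composite}'', and your induction on $n$ merely makes explicit the bookkeeping the paper leaves implicit. In particular, your identification of $\bigcap_{1\leq i\leq n}K[a_i]$ with the intersection of the kernels of the initial ribs of the kernel cube $K[f]$, and your observation that $K[f]\in\NExt^{n-1}_{(\Bc,\Nc)}(\Ac)$ is automatic once $A$ is $n$-fold normal with respect to $\Gamma_{(\Bc,\Nc)}$, are precisely the steps needed to pass from the output of the relative Proposition \ref{composite} to the form of condition $(3)$.
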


We are mainly interested in the situation where, in the composite reflection \eqref{compositeadj}, $\Cc$ is a Birkhoff subcategory of $\Bc$, since in this case the construction of the composite reflectors $J_n\circ I_n$ ($n\geq 1$) obtained from Lemma \ref{doublecentralisation}  can be simplified, as we shall see below. However, the case of a torsion-free $\Cc$ is of interest as well: 

\begin{proposition}\label{compositetorsion}
Let $\Ac$ be a semi-abelian category, $\Bc$ a Birkhoff subcategory of $\Ac$, and $\Cc$ a (normal epi)-reflective subcategory of $\Bc$ whose reflector $F\colon \Bc\to\Cc$ is protoadditive. If $\Cc$ is a torsion-free subcategory of $\Bc$, then, for each $n\geq 1$,  $\NExt^n_{(\Cc,\Nc)}(\Ac)$ is a torsion-free subcategory of $\NExt_{(\Bc,\Nc)}^n(\Ac)$.
\end{proposition}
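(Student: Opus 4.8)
The plan is to handle all $n\ge 1$ uniformly: first produce the reflector by iterating Lemma~\ref{doublecentralisation}, and then check the torsion theory axioms directly. As a Birkhoff subcategory of the semi-abelian category $\Ac$, the category $\Bc$ is again semi-abelian. Since $\Cc$ is a torsion-free subcategory of $\Bc$ with protoadditive reflector $F$, Theorem~\ref{torsiontheorem} shows that the associated radical $[-]_{\Cc}\colon\Bc\to\Bc$ is idempotent and that $\Cc$ is admissible with respect to all morphisms of $\Bc$, in particular with respect to $\Nc$; and Lemma~\ref{compositeisnormal } shows that the torsion theory $(\Tc,\Cc)$ on $\Bc$ satisfies condition $(N)$, while $(P)$ holds by hypothesis. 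Hence the composite reflection $\Ac\to\Bc\to\Cc$ satisfies the hypotheses of Lemma~\ref{doublecentralisation} (with $\Ec=\Nc$), and the conclusion of that lemma reproduces, inside the semi-abelian category $\Arr(\Ac)$ and for the class $\Nc^{1}$ of double $\Nc$-extensions, a situation of exactly the same type one dimension higher. Iterating $n$ times therefore yields, for each $n\ge 1$, a protoadditive reflection $J_{n}\colon\NExt^{n}_{(\Bc,\Nc)}(\Ac)\to\NExt^{n}_{(\Cc,\Nc)}(\Ac)$ whose unit at an $n$-fold $\Bc$-normal extension is a normal epimorphism.

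Next I would make $J_{n}$ explicit: unwinding the iteration of Lemma~\ref{doublecentralisation} exactly as Theorem~\ref{characterisationbyextensionshigher}(4b) is obtained from Proposition~\ref{characterisationbyextensions}, one finds that for an $n$-fold $\Bc$-normal extension $A$ the unit $\eta^{n}_{A}$ is the quotient map with $J_{n}(A)=A/\iota^{n}[\bigcap_{1\le i\le n}K[a_{i}]]_{\Cc}$, so that $K[\eta^{n}_{A}]=\iota^{n}[\bigcap_{1\le i\le n}K[a_{i}]]_{\Cc}$. Here $\bigcap_{1\le i\le n}K[a_{i}]$ lies in $\Bc$ (the higher-dimensional form of the fact that the kernel of a central extension lies in $\Bc$, cf.~\cite{EGV}), so $[\bigcap_{i}K[a_{i}]]_{\Cc}$ is the value of the torsion radical of $(\Tc,\Cc)$ and lies in $\Tc$; and $\iota^{n}$ sends $\Bc$ into $\NExt^{n}_{(\Bc,\Nc)}(\Ac)$, because $I_{k}(\iota^{k}X)=\iota^{k}I(X)$ for every $k$ (as in the proof of Theorem~\ref{characterisationbyextensionshigher}), which is an isomorphism when $X\in\Bc$. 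Consequently $K[\eta^{n}_{A}]$ lies in $\Tc_{n}$, the replete image of $\Tc$ under $\iota^{n}$.

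It then remains to verify that $(\Tc_{n},\NExt^{n}_{(\Cc,\Nc)}(\Ac))$ is a torsion theory in $\NExt^{n}_{(\Bc,\Nc)}(\Ac)$. For orthogonality, a morphism $\iota^{n}T\to F$ with $T\in\Tc$ and $F$ an $n$-fold $\Cc$-normal extension is nothing but a single arrow $T\to\bigcap_{i}K[f_{i}]$ of $\Ac$ (it must vanish after composition with each face map of $F$ to $0$); since $\bigcap_{i}K[f_{i}]\in\Cc$ by Theorem~\ref{highercomposite} and both objects lie in $\Bc$, this arrow belongs to $\hom_{\Bc}(T,\bigcap_{i}K[f_{i}])=\{0\}$. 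For the torsion sequence, the reflection unit provides, for $A\in\NExt^{n}_{(\Bc,\Nc)}(\Ac)$,
\[
0\longrightarrow\iota^{n}\bigl[\,\textstyle\bigcap_{i}K[a_{i}]\,\bigr]_{\Cc}\longrightarrow A\stackrel{\eta^{n}_{A}}{\longrightarrow}J_{n}(A)\longrightarrow 0 ,
\]
whose outer terms lie in $\Tc_{n}$ and in $\NExt^{n}_{(\Cc,\Nc)}(\Ac)$ respectively by the previous paragraph; since all three objects lie in the full subcategory $\NExt^{n}_{(\Bc,\Nc)}(\Ac)$, the kernel and cokernel are computed in the ambient semi-abelian category $\Arr^{n}(\Ac)$, in which $\eta^{n}_{A}$ is a normal epimorphism with the displayed kernel. (Alternatively, one may note that the radical on $\NExt^{n}_{(\Bc,\Nc)}(\Ac)$ induced by $J_{n}$ is idempotent---because $[-]_{\Cc}$ is idempotent on $\Bc$ and the $n$ initial ribs of $\iota^{n}X$ intersect in $X$---and invoke the obvious relative analogue of Theorem~\ref{torsiontheorem}.) This proves the statement and shows in addition that $J_{n}$ is protoadditive. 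I expect the genuine work to lie not in any single step but in the relative, higher-dimensional bookkeeping: verifying that each $\NExt^{n}_{(\Bc,\Nc)}(\Ac)$ is pointed with zero object $\iota^{n}0$ in which these kernels and cokernels agree with the ones in $\Arr^{n}(\Ac)$, that iterating Lemma~\ref{doublecentralisation} really does produce the displayed description of $J_{n}$ and of $K[\eta^{n}_{A}]$, and that $\iota^{n}$ behaves as claimed. Granting this, the axioms reduce, as above, to $\hom_{\Bc}(\Tc,\Cc)=\{0\}$ and the idempotency of $[-]_{\Cc}$, both immediate consequences of $\Cc$ being torsion-free in $\Bc$.
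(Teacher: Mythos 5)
Your proof is correct, and it shares its skeleton with the paper's: both arguments rest on Lemma \ref{doublecentralisation}, which shows that the composite reflection reproduces itself one dimension higher (strongly $\Nc^1$-Birkhoff plus admissible protoadditive reflector), so that induction handles all $n$. Where you diverge is in the concluding step and in how the induction is organised. The paper reduces the whole statement to the one-dimensional \emph{relative} claim that $\NExt_{(\Cc,\Ec)}(\Ac)$ is torsion-free in $\NExt_{(\Bc,\Ec)}(\Ac)$, and settles that in one line: by Lemma \ref{doublecentralisation} the induced radical sends a $\Gamma_{(\Bc,\Ec)}$-normal extension $f$ to $[K[f]]_{\Cc}\to 0$, so it is idempotent because $[-]_{\Cc}$ is (as $\Cc$ is torsion-free in $\Bc$), and Theorem \ref{torsiontheorem} finishes. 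You instead iterate the construction $n$ times, extract the explicit formula $K[\eta^n_A]=\iota^n[\bigcap_i K[a_i]]_{\Cc}$, and verify the two torsion-theory axioms by hand --- which is exactly the route you relegate to your parenthetical ``alternatively''. Your version buys two things: it names the torsion part $\Tc_n$ explicitly (the replete image of $\Tc$ under $\iota^n$), which the paper leaves implicit, and it sidesteps having to check the standing hypotheses of Theorem \ref{torsiontheorem} (finite completeness, pullback-stable normal epimorphisms) inside the categories of higher extensions, since the definition of torsion theory only needs pointedness and the displayed short exact sequences. The price is precisely the higher-dimensional bookkeeping you flag --- the identification of $J_n$ and the agreement of kernels and cokernels with those of $\Arr^n(\Ac)$ --- all of which does go through, but which the paper's reduction to the one-dimensional relative statement avoids almost entirely.
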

\begin{proof}
It suffices to prove, for any composite adjunction \eqref{relativecompositeadj} with $\Ac$ semi-abelian, $\Ec$ a class of morphisms in $\Ac$ satisfying Conditions \ref{extension}, $\Bc$ a strongly $\Ec$-Birkhoff subcategory of $\Ac$, and $\Cc$ an $\Ec$-reflective subcategory of $\Bc$ which is torsion-free and whose reflector is protoadditive, that $\NExt_{(\Cc,\Ec)}(\Ac)$ is a torsion-free subcategory of $\NExt_{(\Bc,\Ec)}(\Ac)$.  But this follows easily from the construction of the reflector $J_1\colon \NExt_{(\Bc,\Ec)}(\Ac)\to \NExt_{(\Cc,\Ec)}(\Ac)$ given in Lemma \ref{doublecentralisation}, which shows us that the associated radical $[-]_{1,\Cc}\colon \NExt_{(\Bc,\Ec)}(\Ac)\to  \NExt_{(\Bc,\Ec)}(\Ac)$ sends a $\Gamma_{(\Bc,\Ec)}$-normal extension $f\colon A\to B$ to the unique morphism $[K[f]]_{\Cc}\to 0$, so that we clearly have that $J_1$ is idempotent. By Theorem \ref{torsiontheorem} the proof is then complete.
\end{proof}

From now on, let us assume that the category $\Cc$ in the composite adjunction \eqref{compositeadj} is a Birkhoff subcategory of $\Bc$.  Since double $\Nc$-extensions are stable under composition, $\Cc$ is then also a Birkhoff subcategory of $\Ac$. Theorem \ref{highercomposite} gives us a characterisation of the higher normal extensions with respect to $\Gamma_{(\Cc,\Nc)}$, and Lemma~\ref{doublecentralisation} shows us how the functors $(J\circ I)_n$ are constructed. Using the following lemma, we shall be able to simplify this construction, by giving a description of the functors $[-]_{n,\Cc}$ in terms of $[-]_{n,\Bc}$ and $[-]_{\Cc}$ ($n\geq1$). 

Note that, in a semi-abelian category $\Ac$, any two normal subobjects $M\to A$ and $N\to A$ admit a supremum (in the lattice of normal subobjects of $A$) which can be obtained as the kernel of the ``diagonal" $A\to P\cong A/(M\vee N)$ in the pushout diagram
 \[
 \xymatrix{
 A 
\ar[r] \ar[d] & A/N \ar[d]_>>{\pushout}
\\
 A/M \ar[r] & P.}
 \]
 Any subobject $S\to A$ admits a \emph{normal closure} $\overline{S}^A\to A$ obtained as the kernel of the cokernel of $S\to A$.
 
 \begin{lemma}
Let $\Ec$ be a class of morphisms in a semi-abelian category $\Ac$ satisfying Conditions \ref{extension}, and $\Bc$ and $\Cc$ strongly $\Ec$-Birkhoff subcategories of $\Ac_{\Ec}$ such that $\Cc\subseteq \Bc$. If the comparison reflector $\Bc\to\Cc$ is protoadditive, then we have, for any $\Ec$-extension $f\colon A\to B$, the identity 
 \[
[f]_{1,\Cc}=[f]_{1,\Bc}\vee \overline{[K[f]]}^{A}_{\Cc}.
\]
\end{lemma}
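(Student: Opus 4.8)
The plan is to realise the reflection of $f$ into $\NExt_{(\Cc,\Ec)}(\Ac)$ as the reflection into $\NExt_{(\Bc,\Ec)}(\Ac)$ followed by the reflection described in Lemma~\ref{doublecentralisation}, and then to recognise the normal subobject of $A$ that comes out of this as the claimed join.

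First, since $\NExt_{(\Cc,\Ec)}(\Ac)$ is a full reflective subcategory of $\NExt_{(\Bc,\Ec)}(\Ac)$, which is in turn a full reflective subcategory of $\Ext_{\Ec}(\Ac)$ (Lemma~\ref{doublecentralisation}), the reflector $\Ext_{\Ec}(\Ac)\to\NExt_{(\Cc,\Ec)}(\Ac)$ is the composite $J_1\circ I_1$; hence $[f]_{1,\Cc}$ is the normal subobject of $A$ for which $J_1I_1(f)=A/[f]_{1,\Cc}\to B$. Now $I_1(f)=A/[f]_{1,\Bc}\to B$, and from the construction of $\kappa^1_f$ recalled above one sees that $[f]_{1,\Bc}$ is a normal subobject of $A$ contained in $K[f]$, so that $K[I_1(f)]=K[f]/[f]_{1,\Bc}$. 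By Lemma~\ref{doublecentralisation}, $J_1(I_1(f))=(A/[f]_{1,\Bc})/[K[f]/[f]_{1,\Bc}]_{\Cc}\to B$.

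The next step is to rewrite the radical $[K[f]/[f]_{1,\Bc}]_{\Cc}$. The canonical quotient $q\colon K[f]\to K[f]/[f]_{1,\Bc}$ is an $\Ec$-extension: its domain $K[f]$ lies in $\Ac_{\Ec}$ as the kernel of the $\Ec$-extension $f$, and its kernel $[f]_{1,\Bc}$ lies in $\Ac_{\Ec}$ as the kernel of the $\Ec$-extension $A\to A/[f]_{1,\Bc}$ (the top component of the double $\Ec$-extension given by the unit $f\to I_1(f)$), so that Condition~\ref{extension}.5 applies. Since $\Cc$ is a strongly $\Ec$-Birkhoff subcategory of $\Ac_{\Ec}$, the unit square of the reflection $J\circ I\colon\Ac_{\Ec}\to\Cc$ at the $\Ec$-extension $q$ is a double $\Ec$-extension, hence a pushout square of regular epimorphisms by Remark~\ref{exactmaltsev}; the standard description of the kernel of an arrow adjacent to such a pushout then yields $[K[f]/[f]_{1,\Bc}]_{\Cc}=q([K[f]]_{\Cc})=([K[f]]_{\Cc}\vee[f]_{1,\Bc})/[f]_{1,\Bc}$, the join being taken in the lattice of subobjects of $A$ (we use $[f]_{1,\Bc}\subseteq K[f]$). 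Applying the double quotient isomorphism theorem (Theorem~4.3.10 in~\cite{BB}) we conclude that $J_1I_1(f)=A/N\to B$ with $N:=[K[f]]_{\Cc}\vee[f]_{1,\Bc}$. Moreover $N$ is a \emph{normal} subobject of $A$: it is the inverse image along $A\to A/[f]_{1,\Bc}$ of $[K[f]/[f]_{1,\Bc}]_{\Cc}$, which is normal in $A/[f]_{1,\Bc}$ by Lemma~\ref{Mathieu}, since $I_1(f)$ is a $\Gamma_{(\Bc,\Ec)}$-normal extension.

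It remains to identify $N$ with the claimed join. Being a normal subobject of $A$ that contains both $[f]_{1,\Bc}$ and the (generally non-normal) subobject $[K[f]]_{\Cc}$, while any subobject of $A$ containing both contains their join $N$, $N$ is the smallest normal subobject of $A$ containing $[f]_{1,\Bc}$ and $[K[f]]_{\Cc}$. A normal subobject of $A$ contains $[K[f]]_{\Cc}$ precisely when it contains its normal closure $\overline{[K[f]]_{\Cc}}^{A}$, so this smallest normal subobject equals $[f]_{1,\Bc}\vee\overline{[K[f]]_{\Cc}}^{A}$ (join in the lattice of normal subobjects of $A$); hence $[f]_{1,\Cc}=N=[f]_{1,\Bc}\vee\overline{[K[f]]_{\Cc}}^{A}$. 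The step I expect to be most delicate is the third paragraph: checking that the radical $[-]_{\Cc}$ is transported along the regular quotient $q$ in the expected way (the relative-Birkhoff behaviour of the radical along $\Ec$-extensions) and, crucially, that the subobject $N$ of $A$ is genuinely normal, since it is this normality that legitimises the final passage to the normal closure. The remaining ingredients — Lemma~\ref{doublecentralisation}, Lemma~\ref{Mathieu}, the double quotient theorem and the elementary calculus of (normal) subobjects in a semi-abelian category — are routine.
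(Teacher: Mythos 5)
Your argument is correct and follows essentially the same route as the paper's: both rest on the formula $J_1I_1(f)=(A/[f]_{1,\Bc})/[K[f]/[f]_{1,\Bc}]_{\Cc}\to B$ from Lemma \ref{doublecentralisation}, on the fact that the strong $\Ec$-Birkhoff property of $\Cc$ makes $[K[f]]_{\Cc}\to[K[f]/[f]_{1,\Bc}]_{\Cc}$ a regular epimorphism, and on Lemma \ref{Mathieu} for the normality needed at the end. The only difference is bookkeeping: you identify the resulting normal subobject of $A$ lattice-theoretically (via direct and inverse images and the double quotient theorem), where the paper reads the same identification off an explicit pushout diagram.
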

\begin{proof}
We  know from the proof of Lemma \ref{doublecentralisation} that the reflection in $\NExt_{(\Cc,\Ec)}(\Ac)$ of an $\Ec$-extension $f\colon A\to B$ is given by the induced $\Ec$-extension
\begin{equation}\label{complexreflection}
J_1\circ I_1(f)=J_1(A/[f]_{1,\Bc}\to B)=\frac{A/[f]_{1,\Bc}}{[K[f]/[f]_{1,\Bc}]_{\Cc}}\to B
\end{equation}
(Notice that $K[f]/[f]_{1,\Bc}$ is indeed the kernel of $A/[f]_{1,\Bc}\to B$ by the ``double quotient" isomorphism theorem.) Now consider the following commutative diagram:
\[
\xymatrix{
[K[f]]_{\Cc}  \ar[r] \ar[rd] & \overline{[K[f]]}^{A}_{\Cc} \ar[r] \ar[d] & A 
 \ar[r] \ar[d] & A/\overline{[K[f]]}^A_{\Cc} \ar[d]_>>{\pushout}\\
&[K[f]/[f]_{1,\Bc}]_{\Cc}\ar[r] & A/[f]_{1,\Bc} \ar[r] & A/([f]_{1,\Bc}\vee \overline{[K[f]]}^A_{\Cc})}
\]
Since the canonical morphism $K[f]\to K[f]/[f]_{1,\Bc}$ is an $\Ec$-extension, and $\Cc$ is a strongly $\Ec$-Birkhoff subcategory of $\Ac_{\Ec}$, we have that the skew morphism in the diagram is an $\Ec$-extension as well, which implies that $\overline{[K[f]]}^{A}_{\Cc}\to [K[f]/[f]_{1,\Bc}]_{\Cc}$ is an epimorphism. Since the right hand square is a pushout, this shows us that, in the bottom row, the right hand morphism is the cokernel of the left hand one, which is a normal monomorphism by Lemma \ref{Mathieu}. Together with \eqref{complexreflection}, this yields the needed identity.
\end{proof}

By repeatedly applying the previous lemma, we find, for any $n$-fold $\Nc$-extension $A$ with ``top'' object $A_{\textrm{top}}$  and ``initial ribs'' $a_i$ ($1\leq i\leq n$), that 
\begin{eqnarray*}
[A]_{n,\Cc} & = & [A]_{n,\Bc}\vee \overline{[K[A]]}^{A_{\textrm{top}}}_{n-1,\Cc}\\
&=&  [A]_{n,\Bc}\vee \overline{[K[A]]}^{A_{\textrm{top}}}_{n-1,\Bc}\vee \overline{[K[K[A]]]}^{A_{\textrm{top}}}_{n-2,\Cc}\\
&=& [A]_{n,\Bc}\vee \overline{[K[K[A]]]}^{A_{\textrm{top}}}_{n-2,\Cc}\\
&=& \cdots\\
&=& [A]_{n,\Bc} \vee \overline{\big[\bigcap_{1\leq i\leq n}K[a_i]\big]_{\Cc}}^{A_{\textrm{top}}}
\end{eqnarray*}
Here we used the fact that taking joins commutes with taking normal closures, and that $[K[A]]_{n-1,\Bc}\subseteq [A]_{n,\Bc}$ (as well as $[K[K[A]]]_{n-2,\Bc}\subseteq [K[A]]_{n-1,\Bc}$ , and so on), which follows easily, for instance from the previous lemma, by taking the two reflective subcategories $\Bc$ and $\Cc$ to be the same.

Thus we have proved the following theorem:
\begin{theorem}\label{compositecommutator}
Consider the composite adjunction \eqref{compositeadj} where $\Ac$ is a semi-abelian category, $\Bc$ a Birkhoff subcategory of $\Ac$ and $\Cc$ a Birkhoff subcategory of $\Bc$ (hence, also of $\Ac$) such that the reflector $J\colon \Bc\to\Cc$ is protoadditive.  Then, for any $n\geq 1$ and $n$-fold $\Nc$-extension $A$ in $\Ac$ with ``initial ribs $a_i$" ($1\leq i\leq n$), we have the identity
\[
[A]_{n,\Cc}= [A]_{n,\Bc} \vee \overline{\big[\bigcap_{1\leq i\leq n}K[a_i]\big]_{\Cc}}^{A_{\textrm{top}}}
\]
\end{theorem}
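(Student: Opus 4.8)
The plan is to obtain the formula for $[A]_{n,\Cc}$ by iterating the one-dimensional identity $[f]_{1,\Cc}=[f]_{1,\Bc}\vee \overline{[K[f]]}^{A}_{\Cc}$ established in the preceding lemma, and this is essentially the computation already displayed in the excerpt just before the theorem statement. So the proof is short: it is a bookkeeping argument combining the lemma with two auxiliary facts. First I would recall that, by the preceding lemma applied in dimension $n$ (using that $\Bc_{n-1}$ and $\Cc_{n-1}$ are strongly $\Nc^{n-1}$-Birkhoff subcategories of $\Ext^{n-1}_{\Nc}(\Ac)$ with protoadditive comparison reflector $J_{n-1}$, which follows from Lemma \ref{centralisationisprotoadditive} and the relative version of Lemma \ref{doublecentralisation}), one has
\[
[A]_{n,\Cc}=[A]_{n,\Bc}\vee \overline{[K[A]]}^{A_{\textrm{top}}}_{n-1,\Cc},
\]
where $K[A]$ denotes the $(n-1)$-fold $\Nc$-extension which is the kernel of $A$ regarded as a morphism in $\Ext^{n-1}_{\Nc}(\Ac)$.

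Next I would feed the analogous identity for $[K[A]]_{n-1,\Cc}$ back into this expression and repeat, exactly as displayed in the excerpt. Two observations make the telescoping work. The first is that taking normal closures commutes with taking suprema of (normal) subobjects, so that $\overline{M\vee N}^{A_{\textrm{top}}}=\overline{M}^{A_{\textrm{top}}}\vee \overline{N}^{A_{\textrm{top}}}$, and that repeated normal closures stabilise; this lets us pull all the $\Bc$-terms together. The second is the inclusion
\[
[K[A]]_{n-1,\Bc}\subseteq [A]_{n,\Bc},
\]
and more generally $[K[K[A]]]_{n-2,\Bc}\subseteq [K[A]]_{n-1,\Bc}$, and so on down the tower of iterated kernels. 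This inclusion is obtained by applying the preceding lemma with $\Bc=\Cc$: in that degenerate case the identity reads $[f]_{1,\Bc}=[f]_{1,\Bc}\vee \overline{[K[f]]}^{A}_{\Bc}$, which forces $\overline{[K[f]]}^{A}_{\Bc}\subseteq [f]_{1,\Bc}$, and hence in dimension $n$ gives $\overline{[K[A]]}^{A_{\textrm{top}}}_{n-1,\Bc}\subseteq [A]_{n,\Bc}$, so the normal closure of $[K[A]]_{n-1,\Bc}$ is absorbed into $[A]_{n,\Bc}$. Because of this absorption, at each step of the telescoping only the deepest $\Cc$-term survives outside $[A]_{n,\Bc}$, and after $n$ steps the deepest iterated kernel is precisely $\bigcap_{1\leq i\leq n}K[a_i]$ (by the standard identification $K[K[\cdots K[A]]]=\bigcap_i K[a_i]$ for an $n$-fold extension).

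Finally, I would note that the normal monomorphism status needed to even write $[\bigcap_{1\leq i\leq n}K[a_i]]_{\Cc}\to A_{\textrm{top}}$, and its normal closure, is guaranteed by Lemma \ref{Mathieu} (in its relative, iterated form). Putting the telescoped chain of equalities together yields
\[
[A]_{n,\Cc}= [A]_{n,\Bc} \vee \overline{\big[\textstyle\bigcap_{1\leq i\leq n}K[a_i]\big]_{\Cc}}^{A_{\textrm{top}}},
\]
as claimed. The main obstacle, such as it is, is purely organisational: making the induction on $n$ precise, i.e. verifying that the preceding lemma genuinely applies in each dimension (that the relevant subcategories really are strongly $\Nc^{k}$-Birkhoff with protoadditive comparison reflector, and that the iterated-kernel identifications are the ones used implicitly), rather than any substantial new difficulty. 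Everything else is the formal manipulation of joins and normal closures already sketched in the text preceding the statement.
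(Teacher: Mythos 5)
Your proposal is correct and follows essentially the same route as the paper: the displayed telescoping computation immediately preceding the theorem is the paper's entire proof, resting on the same three ingredients you identify (the iterated application of the unnumbered lemma in each dimension, the compatibility of joins with normal closures, and the absorption $[K[A]]_{n-1,\Bc}\subseteq [A]_{n,\Bc}$ obtained by taking $\Bc=\Cc$ in that lemma). The organisational points you flag as needing care are exactly the ones the paper leaves implicit, so there is nothing to add.
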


The formula given by the above theorem can be further simplified in the following situation.  Let $\Bc$ and $\Bc'$ be Birkhoff subcategories of a semi-abelian category $\Ac$ such that either of the reflectors  is protoadditive---say the reflector $I'\colon \Ac\to \Bc'$. In this case, the restriction of this reflector to a functor $\Bc\to \Bc\cap\Bc'$ is protoadditive as well, so that if we put $\Cc=\Bc\cap\Bc'$, we are indeed in the situation of Theorem \ref{compositecommutator}. We will obtain a simplified description of the functors $[-]_{n,\Bc\cap\Bc'}\colon \Ext^n_{\Nc}(\Ac)\to\Ext^n_{\Nc}(\Ac)$ ($n\geq 0$) from the next lemma together with Theorem \ref{characterisationbyextensionshigher}. The latter tells us that  $[A]_{n,\Bc'}=[\bigcap_{1\leq i\leq n}K[a_1]]_{\Bc'}$ for any $n\geq 1$ and any $n$-fold $\Nc$-extension $A$ in $\Ac$ with ``initial ribs $a_i$" ($1\leq i\leq n$).

\begin{lemma}\label{intersectionlemma}
Let $\Ec$ be a class of morphisms in a semi-abelian category $\Ac$ satisfying Conditions \ref{extension}, and $\Bc$ and $\Bc'$ strongly $\Ec$-Birkhoff subcategories of $\Ac_{\Ec}$. Then 
\[
\NExt_{(\Bc\cap\Bc',\Ec)}(\Ac)=\NExt_{(\Bc,\Ec)}(\Ac)\cap\NExt_{(\Bc',\Ec)}(\Ac)
\]
and 
\[
[A]_{\Bc\cap\Bc'}=[A]_{n,\Bc}\vee [A]_{n,\Bc'}.
\]
\end{lemma}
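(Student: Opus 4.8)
The plan is to deduce both assertions from the single first‑level identity $[f]_{1,\Bc\cap\Bc'}=[f]_{1,\Bc}\vee[f]_{1,\Bc'}$ for $\Ec$-extensions $f\colon A\to B$. The first point to record is that $\Bc\cap\Bc'$ is again a strongly $\Ec$-Birkhoff subcategory of $\Ac_{\Ec}$: its reflector is the composite of $I$ with the restriction of $I'$ to $\Bc$ (which is well defined since, the reflection units onto $\Bc'$ being $\Ec$-extensions, $I'(X)$ is an $\Ec$-quotient of $X$ and hence lies in $\Bc$ whenever $X$ does), and, double $\Ec$-extensions being closed under composition (Lemma \ref{up}), the unit square of this composite reflector at any $\Ec$-extension is again a double $\Ec$-extension. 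Consequently all the higher Galois structures induced by $\Bc\cap\Bc'$ are defined, and for each of them central and normal extensions coincide \cite{JK}. Granting the displayed identity, the first equation of the lemma is immediate, since a $\Gamma_{(\Cc,\Ec)}$-normal $\Ec$-extension is precisely one with $[f]_{1,\Cc}=0$ and a join of normal subobjects vanishes if and only if both summands do; and the $n$-fold versions follow by induction on $n$, applying the first‑level result with $(\Ac_{\Ec},\Ec,\Bc,\Bc')$ replaced by $\bigl(\Ext^{n-1}_{\Ec}(\Ac),\Ec^{n-1},\NExt^{n-1}_{(\Bc,\Ec)}(\Ac),\NExt^{n-1}_{(\Bc',\Ec)}(\Ac)\bigr)$, where the inductive hypothesis guarantees that $\NExt^{n-1}_{(\Bc\cap\Bc',\Ec)}(\Ac)=\NExt^{n-1}_{(\Bc,\Ec)}(\Ac)\cap\NExt^{n-1}_{(\Bc',\Ec)}(\Ac)$ occupies the position of the ``intersection'' subcategory.

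To prove the first‑level identity I would begin with its object‑level shadow $[X]_{\Bc\cap\Bc'}=[X]_{\Bc}\vee[X]_{\Bc'}$, which is easy: the quotient $X/([X]_{\Bc}\vee[X]_{\Bc'})$ is a common $\Ec$-quotient of $I(X)\in\Bc$ and of $I'(X)\in\Bc'$, hence lies in $\Bc\cap\Bc'$ (both being closed under $\Ec$-quotients), giving ``$\subseteq$'', while ``$\supseteq$'' is forced by the universal properties of the reflections onto $\Bc$ and onto $\Bc'$, through each of which the reflection onto $\Bc\cap\Bc'$ factors. For an $\Ec$-extension $f\colon A\to B$, one inclusion of the radical identity is again formal: since the reflector onto $\Bc\cap\Bc'$ inverts every morphism inverted by $I$ or by $I'$, every $\Gamma_{(\Bc\cap\Bc',\Ec)}$-trivial extension is both $\Gamma_{(\Bc,\Ec)}$-trivial and $\Gamma_{(\Bc',\Ec)}$-trivial, and localising (together with the fact that central equals normal over a strongly $\Ec$-Birkhoff subcategory) shows that every $\Gamma_{(\Bc\cap\Bc',\Ec)}$-normal extension is both $\Gamma_{(\Bc,\Ec)}$-normal and $\Gamma_{(\Bc',\Ec)}$-normal. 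Applying this to the reflection $A/[f]_{1,\Bc\cap\Bc'}\to B$ of $f$ onto $\NExt_{(\Bc\cap\Bc',\Ec)}(\Ac)$ and invoking the universal properties of the reflections onto $\NExt_{(\Bc,\Ec)}(\Ac)$ and $\NExt_{(\Bc',\Ec)}(\Ac)$ yields $[f]_{1,\Bc}\vee[f]_{1,\Bc'}\subseteq[f]_{1,\Bc\cap\Bc'}$.

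The reverse inclusion $[f]_{1,\Bc\cap\Bc'}\subseteq[f]_{1,\Bc}\vee[f]_{1,\Bc'}$ carries the real content, and I expect it to be the main obstacle. Writing $(\pi_1,\pi_2)\colon R[f]\to A$ for the kernel pair of $f$, the relative version of the construction of the reflector onto normal extensions recalled just before Proposition \ref{characterisationbyextensions} identifies $[f]_{1,\Cc}$, as a normal subobject of $A$, with the image under $\pi_2$ of $K[\pi_1]\cap[R[f]]_{\Cc}$ --- and $\pi_2$ restricted to $K[\pi_1]$ is a monomorphism. Combined with the object‑level identity applied to the object $R[f]$, the claim thus comes down to the distributivity statement
\[
K[\pi_1]\cap\bigl([R[f]]_{\Bc}\vee[R[f]]_{\Bc'}\bigr)=\bigl(K[\pi_1]\cap[R[f]]_{\Bc}\bigr)\vee\bigl(K[\pi_1]\cap[R[f]]_{\Bc'}\bigr)
\]
in the lattice of normal subobjects of $R[f]$, which is modular but not distributive, so that this is not automatic. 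The way I would get around it is to pass from the intersection description of $[f]_{1,\Cc}$ to its description as the relative commutator $[K[f],A]_{\Cc}$ --- equivalently, the centrality criterion $[R[f],\nabla_A]_{\Cc}=\Delta_A$ --- and to use that this relative commutator is additive with respect to joins in the Birkhoff parameter, which in turn follows from the object‑level identity $[-]_{\Bc\cap\Bc'}=[-]_{\Bc}\vee[-]_{\Bc'}$ and the standard expression of the relative commutator in terms of the absolute one and the radical. One then reads off $[f]_{1,\Bc\cap\Bc'}=[K[f],A]_{\Bc\cap\Bc'}=[K[f],A]_{\Bc}\vee[K[f],A]_{\Bc'}=[f]_{1,\Bc}\vee[f]_{1,\Bc'}$, which proves the first‑level identity and hence, by the first paragraph, the lemma.
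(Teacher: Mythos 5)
Your reduction runs in the wrong logical direction, and the one step that carries the content of the lemma is never actually proved. The easy inclusion $[f]_{1,\Bc}\vee[f]_{1,\Bc'}\subseteq[f]_{1,\Bc\cap\Bc'}$ and the object-level identity $[X]_{\Bc\cap\Bc'}=[X]_{\Bc}\vee[X]_{\Bc'}$ are fine (the latter is essentially the paper's pushout argument). But for the reverse inclusion you correctly identify the non-distributivity of the lattice of normal subobjects as the obstacle, and then dispose of it by invoking ``additivity of the relative commutator with respect to joins in the Birkhoff parameter'', said to follow from ``the standard expression of the relative commutator in terms of the absolute one and the radical''. No such expression is available here: expressing $[f]_{1,\Cc}$ for $\Cc=\Bc\cap\Bc'$ in terms of the constituent radicals is precisely the nontrivial content of this part of the paper (Theorems \ref{compositecommutator} and \ref{compositeintersection}), and those decompositions need the protoadditivity hypothesis, which the present lemma does not assume. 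In effect you are assuming the conclusion at this point.

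Moreover, the natural route to the hard inclusion --- observe that $A/([f]_{1,\Bc}\vee[f]_{1,\Bc'})\to B$ has vanishing $\Bc$- and $\Bc'$-radicals, hence is normal with respect to both reflections, hence normal with respect to $\Bc\cap\Bc'$ --- already uses the nontrivial direction of the first assertion, namely that an extension which is both $\Gamma_{(\Bc,\Ec)}$-normal and $\Gamma_{(\Bc',\Ec)}$-normal is $\Gamma_{(\Bc\cap\Bc',\Ec)}$-normal. Since you propose to deduce that assertion \emph{from} the radical identity, the argument becomes circular. This direction has to be proved directly, and that is what the paper does: in the cube built from the kernel pair of $f$ and the units of $I$ and $I'$, the two normality hypotheses make the left-hand and front faces pullbacks, and the relative version of Lemma \ref{Marino} (each reflector preserves pullbacks of split epimorphisms along $\Ec$-extensions) then forces the right-hand and back faces to be pullbacks, exhibiting $f$ as normal for the composite reflection; the converse direction uses only the strong $\Ec$-Birkhoff property. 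Note finally that the second identity of the lemma is, as the paper proves it, an object-level statement ($I'(I(A))=A/([A]_{\Bc}\vee[A]_{\Bc'})$ via a pushout square); the extension-level and $n$-fold versions you aim at are not needed here, and in the generality you state them they are only obtained later under protoadditivity.
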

\begin{proof}
For any $\Ec$-extension $f\colon A\to B$, consider the following commutative cube, where $\pi_1$ is the first projection of the kernel pair of $f$: 
\[
\xymatrix@=20pt{
& I'(R[f]) \ar@{}[rrdd] \ar@{.>}[dd]  \ar[rr] && I'(I(R[f])) \ar[dd] \\
R[f]    \ar[ur] \ar[rr] \ar[dd]_{\pi_1} && I(R[f]) \ar[ur] \ar[dd] & \\
& I'(A) \ar@{.>}[rr] && I'(I(A)) \\
A \ar[ur]  \ar[rr] && I(A) \ar[ur] &}
\]
When $f\in\NExt_{(\Bc\cap\Bc',\Ec)}(\Ac)$, the composite of the front and the right side squares is a pullback. Since the front square is a double $\Ec$-extension by the strong $\Ec$-Birkhoff property of $\Bc$, this implies that it is a pullback, hence $f\in \NExt_{(\Bc,\Ec)}(\Ac)$. Similarly, one shows that $f\in \NExt_{({\Bc'},\Ec)}(\Ac)$.  

Conversely, if $f$ is both $\Gamma_{(\Bc,\Ec)}$-normal and $\Gamma_{({\Bc'},{\Ec})}$-normal, then the left hand and the front squares are pullbacks, and then also the right hand and back ones, since both $I$ and $I'$ preserve pullbacks of split epimorphisms along $\Ec$-extensions by the ``relative version'' of Lemma \ref{Marino}. Hence,  $f\in\NExt_{(\Bc\cap\Bc',\Ec)}(\Ac)$. 

The second part of the statement follows from the fact that, for any $A$ in $\Ac_{\Ec}$, the following square is a pushout in $\Ac$, since $\Bc$ and $\Bc'$ are Birkhoff subcategories of $\Ac_{\Ec}$, 
\[
\xymatrix{
A \ar@{}[rd]|>>{\pushout}\ar[r] \ar[d] & I'(A) \ar[d]\\
I(A) \ar[r] & I'(I(A)),}
\]
which implies that  $I'(I(A))=A/([A]_{\Bc}\vee [A]_{\Bc'})$.
\end{proof}

\begin{theorem}\label{compositeintersection}
Let $\Bc$ and $\Bc'$ be Birkhoff subcategories of a semi-abelian category $\Ac$ such that the reflector $I'\colon \Ac\to \Bc'$ is protoadditive.   Then, for any $n\geq 1$ and any $n$-fold $\Nc$-extension $A$ in $\Ac$ with ``initial ribs $a_i$" ($1\leq i\leq n$), there is an identity
\[
[A]_{n,\Bc\cap \Bc'}=[A]_{n,\Bc}\vee [\bigcap_{1\leq i\leq n}K[a_i]]_{\Bc'}.
\]
\end{theorem}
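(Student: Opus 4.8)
The plan is to reduce the $n$-fold statement to the one-dimensional identity $[f]_{1,\Cc} = [f]_{1,\Bc} \vee \overline{[K[f]]}^A_{\Cc}$ established in the lemma preceding Theorem \ref{compositecommutator}, applied to the Birkhoff subcategory $\Cc = \Bc \cap \Bc'$, and then to simplify the resulting expression using the protoadditivity of $I'$. Concretely, Theorem \ref{compositecommutator} with $\Cc = \Bc \cap \Bc'$ already gives, for any $n$-fold $\Nc$-extension $A$ with initial ribs $a_i$ and top object $A_{\textrm{top}}$,
\[
[A]_{n,\Bc\cap\Bc'} = [A]_{n,\Bc} \vee \overline{\Big[\bigcap_{1\leq i\leq n}K[a_i]\Big]_{\Bc\cap\Bc'}}^{A_{\textrm{top}}}.
\]
So the remaining task is to show that
\[
\overline{\Big[\bigcap_{1\leq i\leq n}K[a_i]\Big]_{\Bc\cap\Bc'}}^{A_{\textrm{top}}} \ \text{combined with} \ [A]_{n,\Bc} \ \text{equals} \ [A]_{n,\Bc}\vee\big[\bigcap_{1\leq i\leq n}K[a_i]\big]_{\Bc'}.
\]

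First I would record two facts. By Lemma \ref{intersectionlemma} applied inside $\Ac$ (with $\Ec$ the class of all normal epimorphisms), for any object $S \in \Ac$ we have $[S]_{\Bc\cap\Bc'} = [S]_{\Bc}\vee[S]_{\Bc'}$. Applying this to $S = \bigcap_{1\leq i\leq n}K[a_i] =: K$, we get $[K]_{\Bc\cap\Bc'} = [K]_{\Bc}\vee[K]_{\Bc'}$. Since $K$ is a normal subobject of $A_{\textrm{top}}$ (an intersection of kernels), taking normal closure in $A_{\textrm{top}}$ and using that normal closure commutes with joins of subobjects (as in the proof of the lemma before Theorem \ref{compositecommutator}), we obtain
\[
\overline{[K]_{\Bc\cap\Bc'}}^{A_{\textrm{top}}} = \overline{[K]_{\Bc}}^{A_{\textrm{top}}} \vee \overline{[K]_{\Bc'}}^{A_{\textrm{top}}}.
\]
Now, $[K]_{\Bc}$ is already normal in $A_{\textrm{top}}$ by condition $(3a)$ (= $(4a)$) of Theorem \ref{characterisationbyextensionshigher} — which applies because $I\colon\Ac\to\Bc$ is protoadditive, since $I$ factors through the protoadditive $I'$ (precomposing a Birkhoff reflector... more carefully: $\Bc\supseteq\Cc=\Bc\cap\Bc'$, and we are in the setting of Theorem \ref{compositecommutator} where $J\colon\Bc\to\Cc$ is protoadditive; we do not actually need $I$ itself protoadditive, only that $[K]_{\Bc}$ is a subobject of $[A]_{n,\Bc}$). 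Actually the cleanest route: $\overline{[K]_{\Bc}}^{A_{\textrm{top}}} \subseteq [A]_{n,\Bc}$, because $[K]_{\Bc} = \big[\bigcap K[a_i]\big]_{\Bc} \subseteq [A]_{n,\Bc}$ by the inclusion $[K[A]]_{n-1,\Bc}\subseteq[A]_{n,\Bc}$ (iterated, as in the displayed computation before Theorem \ref{compositecommutator}), and $[A]_{n,\Bc}$ is normal in $A_{\textrm{top}}$, hence contains the normal closure of any of its subobjects. Therefore $\overline{[K]_{\Bc}}^{A_{\textrm{top}}}$ is absorbed into $[A]_{n,\Bc}$ when we take the join, and we are left with
\[
[A]_{n,\Bc\cap\Bc'} = [A]_{n,\Bc} \vee \overline{[K]_{\Bc'}}^{A_{\textrm{top}}}.
\]
Finally, since $I'$ is protoadditive, Theorem \ref{characterisationbyextensionshigher}$(3a)$ applied to $\Bc'$ tells us precisely that $[K]_{\Bc'} = \big[\bigcap_{1\leq i\leq n}K[a_i]\big]_{\Bc'}$ is \emph{already} a normal subobject of $A_{\textrm{top}}$ — indeed, the relevant part of Theorem \ref{characterisationbyextensionshigher} identifies $[A]_{n,\Bc'}$ with $[\bigcap K[a_i]]_{\Bc'}$, and the canonical monomorphism $[\bigcap K[a_i]]_{\Bc'}\to A_{\textrm{top}}$ is normal. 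Hence $\overline{[K]_{\Bc'}}^{A_{\textrm{top}}} = [K]_{\Bc'}$, and substituting gives the claimed identity.

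The main obstacle, and the step I would be most careful about, is the bookkeeping around normal closures and joins: verifying rigorously that (i) normal closure in $A_{\textrm{top}}$ distributes over finite joins of subobjects, (ii) $\big[\bigcap K[a_i]\big]_{\Bc}$ sits inside $[A]_{n,\Bc}$ (which requires unwinding the inductive description $[A]_{n,\Bc} = [A]_{n-1,\Bc}$-level reflections and the inclusions $[K[\,\cdot\,]]_{k,\Bc}\subseteq[\,\cdot\,]_{k+1,\Bc}$ from the computation preceding Theorem \ref{compositecommutator}), and (iii) that Theorem \ref{characterisationbyextensionshigher}, which is stated for a Birkhoff subcategory with protoadditive reflector, legitimately applies to $\Bc'$ here and yields normality of $[\bigcap K[a_i]]_{\Bc'}$ in $A_{\textrm{top}}$ rather than merely in some intermediate object. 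Everything else is a direct chaining of Theorem \ref{compositecommutator}, Lemma \ref{intersectionlemma}, and Theorem \ref{characterisationbyextensionshigher}. Alternatively — and this is perhaps the slicker write-up — one can bypass Theorem \ref{compositecommutator} entirely and run the same telescoping induction as in the displayed computation before that theorem, but using Lemma \ref{intersectionlemma} at each stage in place of the one-dimensional commutator lemma; I would present whichever is shorter once the details of the normal-closure manipulations are pinned down.
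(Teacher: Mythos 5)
Your argument is correct, but it reaches the identity by a different decomposition than the paper's. The paper derives Theorem \ref{compositeintersection} in one line from the $n$-dimensional form of Lemma \ref{intersectionlemma}: writing $\Cc=\Bc\cap\Bc'$, the first part of that lemma identifies the derived subcategories $\NExt^n_{(\Cc,\Nc)}(\Ac)$ with $\NExt^n_{(\Bc,\Nc)}(\Ac)\cap\NExt^n_{(\Bc',\Nc)}(\Ac)$, so that iterating the second part gives $[A]_{n,\Cc}=[A]_{n,\Bc}\vee[A]_{n,\Bc'}$ directly, and then Theorem \ref{characterisationbyextensionshigher} replaces $[A]_{n,\Bc'}$ by $[\bigcap_i K[a_i]]_{\Bc'}$; note that this route needs no protoadditivity of the restricted reflector $\Bc\to\Cc$ until the very last substitution. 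You instead start from Theorem \ref{compositecommutator} applied to $\Cc=\Bc\cap\Bc'$, use only the $0$-dimensional instance of Lemma \ref{intersectionlemma} (which is all its written proof literally establishes), and then dispose of the normal closures by the two observations that $\overline{[\bigcap_i K[a_i]]_{\Bc}}^{A_{\mathrm{top}}}$ is absorbed into the normal subobject $[A]_{n,\Bc}$ (via the chain of inclusions displayed before Theorem \ref{compositecommutator}) and that $[\bigcap_i K[a_i]]_{\Bc'}$ is already normal in $A_{\mathrm{top}}$ by Theorem \ref{characterisationbyextensionshigher}(3a) for $\Bc'$. What your version buys is that the induction through the derived Galois structures, which the paper leaves implicit in upgrading Lemma \ref{intersectionlemma} from objects to $n$-fold extensions, is outsourced to Theorem \ref{compositecommutator}; what it costs is the extra bookkeeping with normal closures and joins, all of which is legitimate (the paper itself uses that normal closure commutes with joins). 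Both chains of reasoning are sound, and each of the three auxiliary facts you flag as delicate is indeed available in the paper exactly where you cite it.
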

The functors $[-]_{n,\Cc}$ were used in \cite{EGV} to define the Hopf formulae for homology. Hence, the previous two theorems give us a simple description of the Hopf formulae: we now recall their definition.

As before, we consider a semi-abelian category $\Ac$. By a \emph{projective presentation} of an object $A\in\Ac$ we mean a normal epimorphism $p\colon P\to A$ such that $P$ is projective with respect to normal epimorphisms: for any normal epimorphism $f\colon B\to C$ the map $\hom_{\Ac}(P,f)\colon \hom_{\Ac}(P,B)\to \hom_{\Ac}(P,C)$ obtained by postcomposing with $f$ is surjective. We shall assume, from now on, that $\Ac$ has \emph{enough projectives}, i.e. that there exists a projective presentation of any object $A\in\Ac$. As before, we consider a Birkhoff subcategory $\Bc$ of $\Ac$ with reflector $I\colon \Ac\to\Bc$.  Then for an object $A\in\Ac$ with projective presentation $p\colon P\to A$ the \emph{Hopf formula} for the second homology was defined in \cite{EverVdL1} as the quotient  
\begin{equation}\label{Hopfformula}
\frac{[P]_{\Bc}\cap K[p]}{[p]_{1,\Bc}}
\end{equation}
As was shown in \cite{EverVdL1,EverVdL2} this object is independent, up to isomorphism, of the chosen projective presentation of $A$ and, when $\Ac$ is monadic over $\Set$, is isomorphic to the first Barr-Beck derived functor \cite{Barr-Beck} of $I$ in $A$ for the associated comonad on $\Ac$.  We shall denote the quotient \eqref{Hopfformula} by  $H_2(A,\Bc)$.

\begin{example}
When $\Ac$ is the variety of groups and $\Bc$ the subvariety of abelian groups, then the above defined ``Hopf formula" coincides with the classical Hopf formula for the second (integral) homology of a group $A$.
\end{example}

For $n\geq 1$, an \emph{$n$-fold projective presentation} of an object $A\in\Ac$ is an $n$-fold $\Nc$-extension $P$ such that the ``bottom vertex" in the diagram of $P$ (an $n$-dimensional cube in $\Ac$) is $A$ and all other ``vertices" are projective objects. It is easily seen (see \cite{Ev}) that such an $n$-fold projective presentation exists for every object $A$ as soon as $\Ac$ has enough projectives. One defines (see \cite{Ev,EGV}) the \emph{Hopf formula for the $(n+1)$st homology} of $A$ as the quotient
\[
\frac{[P_{\textrm{top}}]_{\Bc}\cap \bigcap_{1\leq i\leq n}K[p_i]}{[P]_{n,\Bc}}
\]
where $P_{\textrm{top}}$ denotes the projective object that appears as the ``top vertex" in the diagram of $P$, and the $p_i$ ($1\leq i\leq n$) denote the ``initial ribs": the $n$ morphisms starting from $P_{\textrm{top}}$. Once again, this quotient is independent, up to isomorphism, of the choice of $n$-fold presentation $P$ of $A$ (see \cite{Ev}), and when $\Ac$ is monadic over $\Set$, it is isomorphic to the first Barr-Beck derived functor \cite{Barr-Beck} of $I$ in $A$ for the associated comonad on $\Ac$ (see \cite{EGV}).  It will be denoted by $H_{n+1}(A,\Bc)$.

\begin{corollary}\label{compositehopf}
With the same notations and assumptions as in Theorem \ref{compositecommutator}, and with the extra assumption that $\Ac$ has enough projectives, we have, for any object $A\in\Ac$ and $n\geq 1$, the identity
\[
H_{n+1}(A,\Cc)=\frac{[P_{\textrm{top}}]_{\Cc}\cap \bigcap_{1\geq i\geq n}K[p_i]}{[P]_{n,\Bc}\vee \overline{[\bigcap_{1\leq i\leq n}K[p_i]]}^{P_{\textrm{top}}}_{\Cc}}
\]
where $P$ is an arbitrary $n$-fold presentation of $A$, with ``top" object $P_{\textrm{top}}$ and ``initial ribs" $p_i$ ($1\leq i\leq n$).
\end{corollary}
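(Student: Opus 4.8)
The plan is to obtain the formula by a direct combination of the definition of the Hopf formula recalled just above with the commutator identity of Theorem \ref{compositecommutator}. Fix an object $A\in\Ac$ and, using that $\Ac$ has enough projectives, choose an $n$-fold projective presentation $P$ of $A$, with ``top'' object $P_{\textrm{top}}$ and ``initial ribs'' $p_i$ ($1\leq i\leq n$). Since double $\Nc$-extensions are stable under composition, $\Cc$ is a Birkhoff subcategory of $\Ac$, so that the Hopf formula for the $(n+1)$st homology of $A$ with respect to $\Cc$ reads
\[
H_{n+1}(A,\Cc)=\frac{[P_{\textrm{top}}]_{\Cc}\cap \bigcap_{1\leq i\leq n}K[p_i]}{[P]_{n,\Cc}},
\]
and this object is, up to isomorphism, independent of the chosen presentation.

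First I would observe that $P$ is in particular an $n$-fold $\Nc$-extension, with $\bigcap_{1\leq i\leq n}K[p_i]=K[P]$ its ``kernel''. Hence Theorem \ref{compositecommutator} applies verbatim and gives
\[
[P]_{n,\Cc}=[P]_{n,\Bc}\vee \overline{\big[\bigcap_{1\leq i\leq n}K[p_i]\big]_{\Cc}}^{P_{\textrm{top}}}.
\]
Substituting this expression for the denominator in the Hopf formula for $H_{n+1}(A,\Cc)$ yields exactly the asserted identity. Here one uses that the radical $[-]_{\Cc}$ appearing in both formulae is the same object: it is the kernel of the unit of the reflection $J\circ I\colon \Ac\to\Cc$, which does not depend on whether this reflection is viewed directly or as the composite of $I\colon \Ac\to\Bc$ with $J\colon \Bc\to\Cc$.

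There is essentially no obstacle here: the content of the corollary is entirely carried by Theorem \ref{compositecommutator}, and the only remaining ingredient is the bookkeeping of substituting into the definition of the Hopf formula. In particular, the well-definedness of the right-hand side---the fact that the displayed denominator is contained in the numerator, and that the resulting quotient does not depend on the choice of $P$---requires no separate argument, since it is inherited from the corresponding properties of $H_{n+1}(A,\Cc)$ established in \cite{Ev,EGV}. The single point worth making explicit is the identification of the two radicals $[-]_{\Cc}$ just mentioned, and this is immediate from the uniqueness of reflections.
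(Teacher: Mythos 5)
Your proof is correct and follows exactly the route the paper intends: the corollary is simply the definition of the Hopf formula $H_{n+1}(A,\Cc)$ (available since $\Cc$ is itself a Birkhoff subcategory of $\Ac$) with the denominator $[P]_{n,\Cc}$ rewritten via Theorem \ref{compositecommutator}. The paper in fact gives no separate proof, treating the substitution as immediate, and your remarks on well-definedness and on the identification of the radical $[-]_{\Cc}$ are the right (minor) points to make explicit.
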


\begin{corollary}\label{compositehopf2}
With the same notations and assumptions as in Theorem \ref{compositeintersection}, and with the extra assumption that $\Ac$ has enough projectives, we have, for any object $A\in\Ac$ and $n\geq 1$, the identity
\[
H_{n+1}(A,\Bc\cap\Bc')=\frac{([P_{\textrm{top}}]_{\Bc}\vee [P_{\textrm{top}}]_{\Bc'}) \cap \bigcap_{1\geq i\geq n}K[p_i]}{[P]_{n,\Bc}\vee [\bigcap_{1\leq i\leq n}K[p_i]]_{\Bc'}}
\]
where $P$ is an arbitrary $n$-fold presentation of $A$, with ``top" object $P_{\textrm{top}}$ and ``initial ribs" $p_i$ ($1\leq i\leq n$).
\end{corollary}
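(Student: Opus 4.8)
The plan is to obtain the formula as an immediate consequence of the definition of the Hopf formula $H_{n+1}(A,-)$, specialised to the Birkhoff subcategory $\Cc=\Bc\cap\Bc'$, once its numerator and denominator have been rewritten by means of Lemma \ref{intersectionlemma} and Theorem \ref{compositeintersection} respectively. No genuinely new argument is needed: the corollary stands to Theorem \ref{compositeintersection} exactly as Corollary \ref{compositehopf} stands to Theorem \ref{compositecommutator}.

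First I would record that $\Bc\cap\Bc'$ is a Birkhoff subcategory of $\Ac$; this was already observed in the discussion preceding Theorem \ref{compositeintersection}, where it is noted that the restriction of the protoadditive reflector $I'$ to a functor $\Bc\to\Bc\cap\Bc'$ is again protoadditive, placing us in the situation of Theorem \ref{compositecommutator}. Since $\Ac$ is assumed to have enough projectives, the Hopf formula for the $(n+1)$st homology with coefficients in $\Bc\cap\Bc'$ is therefore defined, and for any $n$-fold projective presentation $P$ of $A$, with top object $P_{\textrm{top}}$ and initial ribs $p_i$ ($1\leq i\leq n$), it is given by
\[
H_{n+1}(A,\Bc\cap\Bc')=\frac{[P_{\textrm{top}}]_{\Bc\cap\Bc'}\cap \bigcap_{1\leq i\leq n}K[p_i]}{[P]_{n,\Bc\cap\Bc'}},
\]
independently of the chosen presentation. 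Next I would substitute: applying Lemma \ref{intersectionlemma} with $\Ec=\Nc$ (so $\Ac_{\Ec}=\Ac$) to the object $P_{\textrm{top}}$ gives $[P_{\textrm{top}}]_{\Bc\cap\Bc'}=[P_{\textrm{top}}]_{\Bc}\vee[P_{\textrm{top}}]_{\Bc'}$, rewriting the numerator; and Theorem \ref{compositeintersection}, whose hypotheses ($\Bc,\Bc'$ Birkhoff, $I'$ protoadditive) coincide with those of the present corollary, gives $[P]_{n,\Bc\cap\Bc'}=[P]_{n,\Bc}\vee[\bigcap_{1\leq i\leq n}K[p_i]]_{\Bc'}$, rewriting the denominator. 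Combining the two yields precisely the stated identity.

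The only point needing a word of care — and the nearest thing to an obstacle — is the bookkeeping of which ambient reflection is in play: $H_{n+1}(A,\Bc\cap\Bc')$ is attached to $\Bc\cap\Bc'$ as a Birkhoff subcategory of $\Ac$, whereas Theorem \ref{compositeintersection} and Lemma \ref{intersectionlemma} are phrased in terms of the pair $\Bc,\Bc'$ (equivalently, of the composite $\Ac\to\Bc\to\Bc\cap\Bc'$). This is harmless, since the radical $[-]_{n,\Cc}$ on $\Ext^n_{\Nc}(\Ac)$ depends only on the reflector with target $\Cc$ and not on any factorisation of it, and the presentation-independence of $H_{n+1}(A,-)$ established in \cite{Ev,EGV} lets one compare the two sides freely. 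I expect the whole argument to occupy only a few lines.
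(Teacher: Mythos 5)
Your proposal is correct and coincides with the paper's (implicit) argument: the corollary is stated as an immediate consequence of the definition of the Hopf formula for $\Cc=\Bc\cap\Bc'$, with the numerator rewritten via Lemma \ref{intersectionlemma} and the denominator via Theorem \ref{compositeintersection}, exactly as you do. Your remark that $\Bc\cap\Bc'$ is a Birkhoff subcategory of $\Ac$ with the radical depending only on the reflector is the same bookkeeping the paper relies on in the paragraph preceding Theorem \ref{compositeintersection}.
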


We conclude this section with some examples of situations where Theorems \ref{compositecommutator} and \ref{compositeintersection} and Corollaries \ref{compositehopf} and  \ref{compositehopf2} apply.
\vspace{5mm}

\noindent {\bf Groups with coefficients in abelian Burnside groups.}
An example of the situation of Corollary \ref{compositehopf} is provided by any abelian Birkhoff subcategory $\Cc$ of a semi-abelian category $\Ac$, by taking for $\Bc$ the category of abelian objects in $\Ac$  \cite{BG1}. Indeed, in this case the reflector $\Bc\to \Cc$ is necessarily additive, hence protoadditive.  For instance, $\Ac$ could be the variety $\Gp$ of groups and $\Cc$ the Burnside subvariety $B_k$ of abelian  groups of exponent $k$ ($k\geq 1$), which consists of all abelian groups $A$ such that $ka=a+\dots +a=0$ for every element $a\in A$:
 \[
 \xymatrix@=30pt{
{\Gp \, } \ar@<1ex>[r]_-{^{\perp}}^-{ab} & {\, \Ab \, }
\ar@<1ex>[l]^H  \ar@<1ex>[r]_-{^{\perp}}^-{J} & B_k \ar@<1ex>[l]^G   }
 \]
Let us denote, for any group $A$, the (normal) subgroup $\{ ka | a\in A\}$ by $kA$. Then from Lemma \ref{intersectionlemma} (with $\Bc=\Ab$ and $\Bc'$ the Burnside variety of arbitrary groups of exponent $k$, not necessarily abelian) we infer that $[A]_{B_k}$ is the (internal) product $kA\cdot [A,A]$ of $kA$ with the (ordinary) commutator subgroup $[A,A]$ of $A$. Since we have, for any $n\geq 1$, a description of the radical $[-]_{n,\Ab}$ in terms of group commutators (see \cite{EGV}), Corollary \ref{compositehopf} provides us with a description of the Hopf formulae. For instance, for $n=1$, we obtain, for any group $A$ and projective presentation $p\colon P\to A$ of $A$:
 \[
 H_2(A,B_k)=\frac{(kP\cdot [P,P])\cap K[p]}{[K[p],P]\cdot kK[p]},
 \]
where the symbol $\cdot$ denotes the usual product of subgroups. Note that $kK[p]$ is a normal subgroup of $P$, and that the product of normal subgroups gives the \emph{supremum} as normal subgroups, in this situation.
\vspace{3mm}

 \noindent {\bf Semi-abelian compact algebras with coefficients in totally disconnected compact algebras.}
Let $\mathbb T$ be a semi-abelian theory. By considering the abelian objects in the semi-abelian category $\TCom$ of compact (Hausdorff) algebras we get the Birkhoff subcategory ${\Ab (\TCom) }$ of $\TCom$, called the category of \emph{abelian compact algebras} \cite{BC}.

The abelianisation functor $\ab \colon \TCom \rightarrow \Ab(\TCom)$ sends an algebra $A$ to its quotient $A/\overline{[A,A]}$ by the (topological) closure $\overline{[A,A]}$ in $A$ of the ``algebraic'' commutator $[A,A]$ computed in the semi-abelian variety $\mathsf{Set}^{\mathbb T}$. We then have the following Birkhoff reflection
\begin{equation}\label{{abel}}
\xymatrix{
{\TCom }\,\, \ar@<1ex>[r]^-{\ab} & {\Ab (\TCom) }
\ar@<1ex>[l]^-{V}_-{_{\perp}}}
\end{equation}
where $V$ is the inclusion functor.
 In general, the categorical commutator (in the sense of Huq \cite{Huq}, see also \cite{BB}) of two normal closed subalgebras is simply given by the (topological) closure of the ``algebraic'' commutator in the corresponding category $\mathsf{Set}^{\mathbb T}$ of algebras:
\begin{lemma}\label{closurecommutator} 
Let $h\colon H\to A$ and $k\colon K\to A$ be two normal closed subalgebras of a compact algebra $A$. Then the commutator of $H$ and $K$ is given by 
$$[H,K]_{\TCom}=\overline{[H,K]}_{\mathsf{Set}^{\mathbb{T}}}.$$
\end{lemma}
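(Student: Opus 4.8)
The plan is to compare the Huq commutator $[H,K]_{\TCom}$ computed in the semi-abelian category of compact $\mathbb{T}$-algebras with the topological closure of the commutator $[H,K]_{\mathsf{Set}^{\mathbb{T}}}$ computed in the underlying semi-abelian variety. Recall that the Huq commutator $[H,K]$ of two normal subobjects $h\colon H\to A$ and $k\colon K\to A$ is the smallest normal subobject $N\to A$ such that $H/N$ and $K/N$ commute in $A/N$ (equivalently, it is obtained via a universal cooperator diagram). I would first observe that the forgetful functor $U\colon \TCom\to \mathsf{Set}^{\mathbb{T}}$ preserves and reflects finite limits and regular epimorphisms, since $\TCom$ is monadic over $\Set$ via a functor factoring through $\mathsf{Set}^{\mathbb{T}}$, and since both categories are semi-abelian with the same underlying algebraic structure. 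In particular $U$ preserves the construction of the commutator \emph{as a subalgebra}: applying $U$ to the cooperator/pushout diagram defining $[H,K]_{\TCom}$ yields the corresponding diagram in $\mathsf{Set}^{\mathbb{T}}$, so the underlying subalgebra of $[H,K]_{\TCom}$ contains $[H,K]_{\mathsf{Set}^{\mathbb{T}}}$. Since $[H,K]_{\TCom}$ is a closed (normal) subalgebra of the compact Hausdorff algebra $A$, and closed subsets containing $[H,K]_{\mathsf{Set}^{\mathbb{T}}}$ contain its closure, we get one inclusion $\overline{[H,K]}_{\mathsf{Set}^{\mathbb{T}}}\subseteq [H,K]_{\TCom}$.

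For the reverse inclusion, the key point is that $\overline{[H,K]}_{\mathsf{Set}^{\mathbb{T}}}$, equipped with the subspace topology from $A$, is itself a \emph{closed normal} subalgebra of the compact algebra $A$: it is the closure of a subalgebra (hence a subalgebra, by continuity of the operations), it is normal because the algebraic commutator $[H,K]_{\mathsf{Set}^{\mathbb{T}}}$ is normal in $A$ as an algebra and normal closed subobjects of $A$ in $\TCom$ are exactly the closed normal subalgebras (continuity of the conjugation-type terms witnessing normality passes to the closure). Now I would show that $H$ and $K$ commute in the quotient $A/\overline{[H,K]}_{\mathsf{Set}^{\mathbb{T}}}$: the relevant cooperator exists because it exists at the level of $\mathsf{Set}^{\mathbb{T}}$ (since $H$ and $K$ commute modulo the algebraic commutator, hence a fortiori modulo its closure) and is automatically continuous, as $A/\overline{[H,K]}_{\mathsf{Set}^{\mathbb{T}}}$ is again compact Hausdorff and the quotient map is open. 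By minimality of the Huq commutator $[H,K]_{\TCom}$ among normal subobjects through which $H$ and $K$ commute, we conclude $[H,K]_{\TCom}\subseteq \overline{[H,K]}_{\mathsf{Set}^{\mathbb{T}}}$, which together with the first inclusion gives the equality.

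The main obstacle I anticipate is the careful verification that the commutator construction (cooperators, or equivalently the relevant pushouts of split extensions) genuinely commutes with the forgetful functor $U\colon \TCom\to\mathsf{Set}^{\mathbb{T}}$ at the level of \emph{underlying algebras}, and that the various universal maps produced in $\mathsf{Set}^{\mathbb{T}}$ are automatically continuous when their sources and targets are compact Hausdorff. This continuity is where compactness is essential (a continuous algebra homomorphism out of a compact algebra into a Hausdorff algebra is determined algebraically, and quotients by closed normal subalgebras stay compact Hausdorff with open projection), and one should point to the analogous reasoning already used in Remark \ref{idealtopological} and in \cite{BC}. Everything else is a routine comparison of universal properties.
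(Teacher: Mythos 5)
Your proposal is correct in substance and follows essentially the same route as the paper: both arguments reduce to showing that the quotient $q\colon A\to A/\overline{[H,K]}_{\mathsf{Set}^{\mathbb{T}}}$ is the universal quotient making $H$ and $K$ commute, the crux being that an algebraic cooperator is automatically continuous. (You package this as two separate inclusions, while the paper gets both at once by matching universal properties, but that is only a difference of presentation.)

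The one point where your justification is off is precisely the step you flag as the main obstacle: the cooperator $\varphi\colon q(H)\times q(K)\to A/\overline{[H,K]}_{\mathsf{Set}^{\mathbb{T}}}$ is \emph{not} continuous merely because its codomain is compact Hausdorff and the quotient map is open --- algebra homomorphisms between compact Hausdorff algebras need not be continuous in general. The correct argument, and the one the paper uses, is that the canonical comparison $H+K\to H\times K$ is an open surjection (a regular epimorphism in $\TCom$), so a map out of $H\times K$ is continuous as soon as its composite with this surjection is; that composite is determined by the continuous inclusions $h$ and $k$, whence $\varphi$ is continuous. With that substitution your proof goes through; everything else (closedness and normality of $\overline{[H,K]}_{\mathsf{Set}^{\mathbb{T}}}$, the fact that a continuous quotient $f$ with Hausdorff codomain killing $[H,K]_{\mathsf{Set}^{\mathbb{T}}}$ also kills its closure) is as in the paper.
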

\begin{proof}
By using the fact that the canonical morphism $H+K\to H\times K$ is an open surjection, it is easy to see that any morphism $\varphi\colon H\times K\to A$ in the category $\mathsf{Set}^{\mathbb{T}}$ such that $\varphi\circ (1_H,0)=h$ and $\varphi\circ (0,1_K)=k$ is also a morphism in the category $\TCom$. We now show that the quotient $q\colon A\to A/\overline{[H,K]}_{\mathsf{Set}^{\mathbb{T}}}$ is universal in making $H$ and $K$ commute. On the one hand, since $[H,K]_{\mathsf{Set}^{\mathbb T}}\subseteq \overline{[H,K]}_{\mathsf{Set}^{\mathbb T}}$, one certainly has that $q(H)$ and $q(K)$ commute in $\mathsf{Set}^{\mathbb{T}}$, hence in $\TCom$. On the other hand, given any other quotient $f\colon A\to B$ in $\TCom$ such that $f(H)$ and $f(K)$ commute, we have that 
$$f(\overline{[H,K]}_{\mathsf{Set}^{\mathbb T}})  \subseteq  \overline{f[H,K]}_{\mathsf{Set}^{\mathbb T}}=  \overline{[f(H),f(K)]}_{\mathsf{Set}^{\mathbb T}} =\overline{0}=0,$$
from which it follows that there is a unique $a\colon A/\overline{[H,K]}_{\mathsf{Set}^{\mathbb T}}\to B$ such that
$a\circ q=f$.
\end{proof}


We obtain an instance of the situation of Corollary \ref{compositehopf2} by choosing 
  $\Ac$ to be the category $\TCom$, $\Bc$ the category $\Ab(\TCom)$ of abelian compact algebras, and $\Bc'$ the category $\mathsf{TotDis}^{\mathbb T}$ of compact totally disconnected algebras. The intersection $\Bc\cap\Bc'$ in this case is the category $\Ab (\TPro)$ of abelian totally disconnected algebras.
We know from Example \ref{exproto}.\ref{exdisc} that the reflector $\TCom\to\mathsf{TotDis}^{\mathbb T}$ is protoadditive,
and the category $\TCom$ has enough regular projectives, since it is monadic over the category of sets (see \cite{Man, BC}). Hence, we can indeed apply Corollary \ref{compositehopf2}: for instance, given a projective presentation $p \colon P \rightarrow A$ of a compact algebra $A$, the second homology algebra $H_2(A, \Ab (\TPro))$ of $A$ is given by:
$$H_2(A, \Ab (\TPro) )= \frac{(\overline{[P,P]} \vee \Gamma_0 (P) )\cap K[p]}{\overline{[K[p] , P]} \vee \Gamma_0 (K[p]) } ,$$
where we have used Lemma \ref{closurecommutator} to compute the denominator.

For some specific algebraic theories we can give a description of higher dimensional homology objects via Hopf formulae. For instance, let $\mathbb{T}$ be the theory of groups, so that $\Ac={\mathsf{Grp(HComp)}}$ is the category of compact groups, $\Bc={\mathsf{Ab(HComp)}}$ the category of abelian compact groups, $\Bc'$ the category of profinite groups (since a topological group is totally disconnected and compact if and only if it is profinite) and $\Bc\cap \Bc'=\mathsf{Ab(Prof)}$ the category of abelian profinite groups. We can then consider a double projective presentation
$$
\label{doublext}
\xymatrix{F \ar[r]^{} \ar[d] & F/K_1 \ar[d] \\
F/K_2  \ar[r] & G}
$$
of a semi-abelian compact group $G$, so that $K_1$ and $K_2$ are closed normal subgroups of a free compact group $F$ with the property that both $F/K_1$ and $F/K_2$ are free. Then the third homology group of $G$ with coefficients in $\mathsf{Ab(Prof)}$ is given by
$$H_3 (G, {\mathsf{Ab(Prof)} } ) = \frac{ (\overline{[F,F]}\cdot \Gamma_0(F))\cap K_1 \cap K_2}{\overline{[K_1,K_2]}\cdot \overline{[K_1 \cap K_2, F]}\cdot \Gamma_0(K_1 \cap K_2)},$$
where the symbol $\cdot$ denotes the product of normal subgroups, and the closure is the topological closure.
\vspace{3mm}

\noindent {\bf Internal groupoids with coefficients in abelian objects.}
Let $\Ac$ be a semi-abelian category with enough regular projectives and $\Gpd(\Ac)$ the category of internal groupoids in $\Ac$. We obtain another instance of Corollary \ref{compositehopf2}, by taking for Birkhoff subcategories of  $\Gpd(\Ac)$ the category $\mathsf{Ab}(\Gpd(\Ac))$ of abelian objects in the category of groupoids in $\Ac$ and $\Ac$ (via the discrete functor $D\colon \Ac\to \Gpd(\Ac)$). Their intersection is the category $\Ab(\Ac)$ of abelian objects of $\Ac$. We know from Example \ref{exproto}.\ref{exgroupoids} that the connected components functor $\pi_0\colon \Gpd(\Ac)\to \Ac$ is protoadditive, and it was shown in \cite{EG} that the category ${\Gpd(\Ac) }$ has enough regular projectives whenever $\Ac$ has enough regular projectives. Hence, we can apply Corollary \ref{compositehopf2} in this situation. For instance, let us consider a projective presentation $p=(p_0,p_1)  \colon P \rightarrow A$
$$
\xymatrix{P_1 \ar[r]^{p_1}   \ar@<-1.2 ex>[d]_{d}  \ar@<+1.2 ex>[d]^{c}& A_1 \ar@<-1.2 ex>[d]_{d}  \ar@<+1.2 ex>[d]^{c} \\
P_0 \ar[r]_{p_0} \ar[u] & A_0 \ar[u]
}
$$
of an internal groupoid $A= (A_1, A_0, m,d,c,i)$. We write $\sem{P,P}$ for the internal groupoid $([P_1, P_1], [P_0,P_0], \overline{m}, \overline{d}, \overline{c}, \overline{i})$, where the arrows $\overline{d}, \overline{c}$ and $\overline{i}$ are the restrictions of $d,c$ and $i$ to the largest commutators $[P_1,P_1]$ and $[P_0,P_0]$ of $P_1$ and $P_0$, respectively, and $\overline{m}$ the induced groupoid composition. In other words, $\sem{P,P}$ is the kernel, in the category ${\Gpd(\Ac)}$, of the quotient sending the groupoid $P$ to a reflexive graph in $\mathsf{Ab}(\Ac)$, universally (recall from \cite{Jon} that the category $\mathsf{\mathsf{Ab}(Gpd}(\Ac))$ is isomorphic to the category of reflexive graphs in $\mathsf{Ab}(\Ac)$). Similar notation is used for the groupoid $\sem{K[p],P}$.

If we write $\Gamma_0 (P)$ and $\Gamma_0 (K[p])$ for the full subgroupoids of the connected components of $0$ in $P$ and in $K[p]$, respectively, then we can express the second homology groupoid of $A$ with coefficients in $\Ab(\Ac)$ as the quotient
 \[
 H_2(A,\mathsf{Ab}(A))=\frac{K[p] \cap (\Gamma_0(P) \vee \sem{P,P}) }{\sem{K[p],P} \vee \Gamma_0(K[p])}.
 \]
Note that $\vee$ indicates the supremum, as normal subobjects, in the category $\mathsf{Gpd} (\Ac)$. 

Now, let $\mathsf{Gpd}^k (\Ac)$ denote the category of $k$-fold internal groupoids in $\Ac$, defined inductively by $\mathsf{Gpd}^k (\Ac) = \mathsf{Gpd} ( \mathsf{Gpd}^{k-1} (\Ac))$. It is clear that, also for $k\geq 2$, $\Ac$ is a Birkhoff subcategory of $\Gpd^k(\Ac)$ with protoadditive reflector $\pi_0\circ \dots \circ \pi_0^k$,
 \[
\xymatrix{
 \mathsf{Gpd}^k({\Ac }) \ar@<1ex>[r]^-{\pi_0^k} & {\mathsf{Gpd}^{k-1}({\Ac })\quad }  \ar@<1ex>[l]^-{D^k}_-{_{\perp}}   {\cdots \quad }  \mathsf{Gpd}^2({\Ac }) \ar@<1ex>[r]^-{\pi_0^2} & {\Gpd(\Ac) }\,\, \ar@<1ex>[r]^-{\pi_0}  \ar@<1ex>[l]^-{D^2}_-{_{\perp}}  & {\Ac, \, } \ar@<1ex>[l]^-{D}_-{_{\perp}}
}
\]
and that ${\Gpd^k(\Ac)}$ has enough regular projectives. Hence, Corollary \ref{compositehopf2} provides us, for any $k\geq 1$, with a description of the homology objects of $k$-fold internal groupoids with coefficients in $\mathsf{Ab}(\Ac)=\Ac\cap\Ab(\Gpd^k(\Ac))$, similar to the one above.

\end{document}